\documentclass[10pt,twoside]{article}
\usepackage[T1]{fontenc}
\usepackage{times}
\usepackage[
  paperwidth=466.252bp,
  paperheight=715.318bp,
  left=54.795bp,
  right=52.799bp,
  top=88.245bp,
  bottom=53bp,
  headheight=11pt,
  headsep=10pt
]{geometry}
\usepackage{amsmath,amssymb,amsthm,mathtools,mathrsfs}
\usepackage{microtype}
\usepackage{hyperref}
\usepackage{cite}
\usepackage{fancyhdr}
\hypersetup{hidelinks,
pdftitle={Principal eigenvalues of nonlocal operators with advection: scaling limits and spectral phase transitions},
pdfauthor={Hoang-Hung Vo},
pdfkeywords={generalized principal eigenvalue, nonlocal dispersal operator, advection, maximum principle, nonlocal-to-local limit, singular scaling, spectral asymptotics}}
\allowdisplaybreaks
\newtheorem{theorem}{Theorem}[section]
\newtheorem{corollary}[theorem]{Corollary}
\newtheorem{lemma}[theorem]{Lemma}
\newtheorem{proposition}[theorem]{Proposition}
\numberwithin{equation}{section}
\newcommand{\R}{\mathbb R}
\newcommand{\cA}{\mathcal A}
\newcommand{\cB}{\mathcal B}
\newcommand{\cM}{\mathcal M}
\newcommand{\cT}{\mathcal T}
\newcommand{\cH}{\mathscr H}

\newcommand{\supp}{\operatorname{supp}}
\newcommand{\dist}{\operatorname{dist}}
\newcommand{\Lp}{\lambda_p}
\newcommand{\papertitle}{Principal eigenvalues of nonlocal operators with advection:\\sharp scaling limits and spectral phase transitions}
\newcommand{\shorttitle}{Principal eigenvalues of nonlocal operators with advection}
\makeatletter
\renewcommand{\tableofcontents}{%
  \begin{center}\bfseries Contents\end{center}
  \vspace{-4pt}
  \@starttoc{toc}}
\renewcommand*{\l@section}{\@dottedtocline{1}{0em}{1.5em}}
\renewenvironment{thebibliography}[1]
     {\begin{center}\normalsize Bibliography\end{center}\vspace{-5pt}%
      \@mkboth{BIBLIOGRAPHY}{BIBLIOGRAPHY}%
      \list{\@biblabel{\@arabic\c@enumiv}}%
           {\settowidth\labelwidth{\@biblabel{#1}}%
            \leftmargin\labelwidth
            \advance\leftmargin\labelsep
            \usecounter{enumiv}%
            \let\p@enumiv\@empty
            \renewcommand\theenumiv{\@arabic\c@enumiv}}%
      \sloppy\clubpenalty4000\widowpenalty4000\sfcode`\.\@m}
     {\def\@noitemerr{\@latex@warning{Empty `thebibliography' environment}}\endlist}
\makeatother
\begin{document}
\raggedbottom
\pagestyle{fancy}
\fancyhf{}
\fancyhead[LE]{\small\thepage}
\fancyhead[CE]{\small HOANG-HUNG VO}
\fancyhead[CO]{\small\MakeUppercase{\shorttitle}}
\fancyhead[RO]{\small\thepage}
\renewcommand{\headrulewidth}{0pt}
\renewcommand{\footrulewidth}{0pt}
\thispagestyle{empty}

\begin{center}
\vspace*{18pt}
{\fontsize{15}{17}\selectfont \papertitle\par}
\vspace{14pt}
{\normalsize HOANG-HUNG VO\footnote{Email: \href{mailto:vhhung@hcmiu.edu.vn}{\texttt{vhhung@hcmiu.edu.vn}}}\par}
\vspace{2pt}
{\small $^{1}$Department of Mathematics, International University, Ho Chi Minh City, Vietnam\par}
{\small $^{2}$Vietnam National University Ho Chi Minh City, Ho Chi Minh City, Vietnam\par}
\end{center}
\vspace{10pt}
\begin{center}\bfseries Abstract\end{center}
\vspace{-5pt}
\begin{quote}\small
We consider generalized principal eigenvalues of one-dimensional nonlocal dispersal operators with a drift of fixed sign. A principal difficulty in this non-self-adjoint setting is the absence of a variational characterization of the principal value. In the symmetric drift-free problem, the critical nonlocal-to-local limit can be treated through a quadratic variational structure and Sobolev-seminorm approximation as Berestycki-Coville-Vo \cite{BCV}. The drift destroys this structure and, on a bounded interval, introduces at the same time a one-sided inflow--outflow boundary geometry. Replacing the missing variational argument by estimates which remain stable under singular rescaling is therefore a central technical issue. Our approach is direct : we work with a single generalized principal value and prove the maximum principle, simplicity in the positive cone and the scaling limits from the equation itself, without passing through auxiliary generalized principal eigenvalues analogous to $\lambda_1'$ and $\lambda_1''$ in Berestycki--Rossi \cite{BR}. The replacement mechanisms are directional Harnack inequalities and coefficient barriers, direct--adjoint identities, logarithmic Collatz--Wielandt transforms, Fourier coercivity of zero extensions and scale-dependent localization. At the critical diffusive scale, Fourier compactness and an exact energy--transport identity recover the missing inflow Dirichlet condition and identify the local Dirichlet limit without a Rayleigh quotient. For variable coefficients we determine the complete small-range phase diagram on bounded intervals and on the line, and we obtain sharp large-range three-term asymptotics by a rank-one reduction and a corner Laplace analysis in the advective travel-time variable. In the homogeneous whole-line problem the critical correction is of order $\sigma^2$.
\end{quote}

\begin{quote}\small
\noindent\textbf{Keywords.} Generalized principal eigenvalue; nonlocal dispersal operator; advection; maximum principle; nonlocal-to-local limit; singular scaling; spectral asymptotics.

\smallskip
\noindent\textbf{2020 Mathematics Subject Classification.} 35P15, 35R09, 35B25, 35B50, 47G20.
\end{quote}

\vspace{8pt}
\tableofcontents
\clearpage

\section{Definitions and main results}

We study principal eigenvalues of nonlocal operators with a first-order drift on bounded and unbounded intervals. The questions considered here concern the maximum principle on the line, existence and simplicity of positive eigenfunctions, dependence on the domain and on the coefficients, and the behavior of the principal value when the dispersal range varies.

For second-order elliptic operators, the generalized principal-eigenvalue framework and its relation with the maximum principle on general domains are developed in \cite{BNV}. On unbounded domains, \cite[Theorem~1.4]{BR} characterizes the positive Dirichlet spectrum, while \cite[Theorem~1.6]{BR} relates the maximum principle to the auxiliary quantities $\lambda_1'$ and $\lambda_1''$. Continuous dependence and large-parameter properties of the local principal value are established in \cite[Propositions~9.1--9.2 and Theorems~9.3,~9.5]{BR}. The earlier whole-space theory is \cite{BR06}. For degenerate, oblique-boundary and parabolic extensions see \cite{BCPR,Rossi20,BNR25}; for propagation questions in which principal eigenvalues enter, see \cite{RossiAiM17}.

For nonlocal dispersal operators, existence of a positive principal eigenfunction is not automatic; bounded-domain existence and nonexistence criteria are studied in \cite{GMR,Coville10,Coville13}. Periodic and time-periodic principal spectrum points are treated in \cite{RawalShen,SX,SXapprox,ShenVo}. Berestycki, Coville and Vo establish equivalence properties for generalized principal values and study dispersal-range rescalings in \cite[Sections~2--5]{BCV}. The critical symmetric drift-free scaling considered in \cite[Section~5]{BCV} converges to the Dirichlet principal eigenvalue of the associated local second-order operator. Its proof uses the symmetric variational structure together with Sobolev-seminorm approximation results; see \cite{BBM,Brezis02,PonceCV,Ponce}.

Coville and Hamel \cite{CH} considered, on an interval $\Omega$, the operator
\[
\mathcal M_\Omega\phi(x)=q(x)\phi'(x)+\int_\Omega J(x,y)\phi(y)\,dy+a(x)\phi(x),
\qquad \mathcal M_\Omega\phi+\lambda\phi=0\quad\hbox{in }\Omega.
\]
For a bounded interval and positive drift, \cite[Proposition~1.1]{CH} constructs a positive eigenpair with the outflow condition at the right endpoint. The equality of the directional value with the generalized principal value is \cite[Theorem~1.2]{CH}. Domain, potential and kernel monotonicity, together with the $L^\infty$-Lipschitz dependence on the potential, are \cite[Proposition~1.3\textup{(i)--(iii),(v)}]{CH}. The local Harnack inequality used below is \cite[Lemma~3.1]{CH}. Exhaustion by bounded intervals and attainment of the limiting generalized principal value are \cite[Theorem~1.4]{CH}. Thus, if $q>0$ and $\Omega=(\alpha,\beta)$, the directional boundary condition is imposed at $\beta$, not at $\alpha$.

The presence of the drift changes both the unbounded-domain problem and the zero-range limit. We keep throughout a single generalized principal value $\lambda_p$. The maximum principle and simplicity are proved by direct comparison arguments, Lyapunov functions, Harnack estimates and minimal-growth arguments. We do not introduce quantities corresponding to $\lambda_1'$ and $\lambda_1''$ of \cite{BR}. This permits the hypotheses in our maximum-principle and simplicity results to be stated directly in terms of the kernel and the coefficients.

At the critical scaling, the nonlocal eigenfunction has only the outflow boundary condition for every positive range, whereas the local limit has two Dirichlet boundary conditions. The variational route of the symmetric problem \cite[Section~5]{BCV} is unavailable: after the drift is introduced there is no Rayleigh quotient for the principal value, and multiplication of the eigenvalue equation by the eigenfunction produces the additional transport flux $q(-L)\phi_\sigma(-L)^2/2$. Thus convergence of the nonlocal quadratic energy alone cannot identify either the spectral limit or the missing trace. We replace the variational argument by a direct compactness scheme. Zero extension and the Fourier estimate $1-\widehat J(\eta)\asymp\min\{\eta^2,1\}$ yield strong $L^2$ compactness and an $H^1(\R)$ limit whose support is contained in $[-L,L]$; this support information is exactly what produces $H_0^1(-L,L)$. Symmetry is then used only to transfer the nonlocal operator onto smooth test functions, while the exact energy--transport identity identifies the energy limit and forces the inflow trace to vanish.

The other scalings lead to different limits. These limits are obtained on bounded intervals for variable $q(x)>0$ and variable $a(x)$. If $q_*=\min q$, then the leading terms in the transport-dominated regimes depend on $q_*$: for $1<m<2$ only the second moment of the kernel remains, at $m=1$ the full exponential moment appears, and for $m<1$ the edge of the support of the kernel enters. For $m>2$ the first Dirichlet mode determines the leading order. On the whole line the critical limit retains the variable coefficients, while in the homogeneous case an exact formula permits the computation of the $\sigma^2$ correction. The large-range limit is expressed in terms of the advective travel time.

Let $\Omega\subset\R$ be an open interval, possibly unbounded, and define
\[
\cA_\Omega\phi(x)=d\left(\int_\Omega J(x-y)\phi(y)\,dy-\phi(x)\right)+q(x)\phi'(x)+a(x)\phi(x),
\]
where $d>0$. Throughout the paper, unless otherwise specified, we assume that
\[
J\in C_c^1(\R),\qquad J\ge0,\qquad J(z)=J(-z),\qquad \int_\R J(z)\,dz=1,
\]
and $J(0)>0$. We denote by $R_J>0$ a number such that $\supp J\subset[-R_J,R_J]$. The coefficients satisfy $q,a\in C(\R)$, $0<q_0\le q(x)\le q_1$ and $a\in L^\infty(\R)$.
Additional $C^1$ or $C^2$ regularity is required explicitly when derivatives of the coefficients are used. The case $q\le-q_0<0$ is obtained by reflection and will not be repeated.

We set
\[
\Lp(\cA_\Omega)=\sup\left\{\lambda\in\R:\ \exists\phi\in C^1(\Omega),\ \phi>0,\ \cA_\Omega\phi+\lambda\phi\le0\ \hbox{in }\Omega\right\}.
\]
When $\Omega$ has a finite outflow endpoint, the boundary regularity is understood in the sense specified in Section~2. The generalized value used here is the nonlocal quantity introduced in \cite{BCV}. In the fixed-sign drift setting, the external facts used below are precisely the positive one-sided eigenpair \cite[Proposition~1.1]{CH}, its identification with the generalized value \cite[Theorem~1.2]{CH}, the order properties \cite[Proposition~1.3]{CH}, the Harnack inequality \cite[Lemma~3.1]{CH}, and exhaustion and attainment \cite[Theorem~1.4]{CH}.

A point which will be important later is that on a bounded interval $\Omega=(\alpha,\beta)$ with $q>0$, the principal eigenfunction vanishes at $\beta$ and no Dirichlet condition is imposed at $\alpha$. We prove directly that the corresponding kernel is one-dimensional, including the possible first contact at the outflow endpoint. This geometric simplicity is what permits the adjoint and perturbation arguments below.

We first consider the maximum principle and simplicity on the whole line. Put
\[
M(s):=\int_\R J(z)e^{sz}\,dz,\qquad \cH(v):=\sup_{s\in\R}\{vs-M(s)+1\}.
\]
We also set $R_+:=\sup\{z>0:J(z)>0\}$ and $H_d(v):=d\,\cH(v/d)$ for $v\ge0$. For $\mu\in\R$,
\[
\mathfrak G(\mu):=\sup_{s\ge0}\left\{\inf_{x\in\R}(q(x)s-a(x)-\mu)-d(M(s)-1)\right\}.
\]
For a bounded interval $\Omega_0=(\alpha_0,\beta_0)$ and $\kappa>0$, set
\[
w_\kappa(x):=e^{\kappa(\beta_0-x)}-1,\qquad B_{\Omega_0,\kappa}:=d\int_{\Omega_0}J(\beta_0-y)w_\kappa(y)\,dy-q(\beta_0)\kappa.
\]
When $B_{\Omega_0,\kappa}>0$, we define
\[
\mathfrak U_{\Omega_0,\kappa}:=\sup_{x\in\Omega_0}\left(-\frac{\cA_{\Omega_0}w_\kappa(x)}{w_\kappa(x)}\right),\qquad
\mathfrak U_*:=\inf_{B_{\Omega_0,\kappa}>0}\mathfrak U_{\Omega_0,\kappa},
\]
where the last infimum is taken over bounded $\Omega_0\Subset\R$ and $\kappa>0$.

\begin{theorem}\label{thm:mainwholeline}
\textup{(i)} If $\mathfrak G(\mu)>0$ and $\limsup_{|x|\to\infty}(a(x)+\mu)<0$, then $\cA_\R+\mu$ satisfies the bounded maximum principle. The condition $\mathfrak G(\mu)>0$ is, in particular, implied by $\sup_\R(a+\mu)<H_d(q_0)$; the tail condition is still required.

\textup{(ii)} For every bounded $\Omega_0$, one has $B_{\Omega_0,\kappa}>0$ for all sufficiently large $\kappa$. Moreover $\mathfrak U_*\in\R$ and $\Lp(\cA_\R)\le\mathfrak U_*$. Put $\Lambda_*:=\max\{\|a\|_\infty,|\mathfrak U_*|\}$ and let $\gamma_*$ be a Harnack growth exponent depending only on $d,J,q_0,q_1,\|a\|_\infty,\Lambda_*$. If there exist $\beta_+,\beta_->\gamma_*$ such that
\[
\mathfrak U_*+\limsup_{x\to+\infty}\{a(x)+d(M(\beta_+)-1)+q(x)\beta_+\}<0
\]
and
\[
\mathfrak U_*+\limsup_{x\to-\infty}\{a(x)+d(M(\beta_-)-1)-q(x)\beta_-\}<0,
\]
then $\Lp(\cA_\R)$ is simple in the positive cone.
\end{theorem}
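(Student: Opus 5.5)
For (i) I would build an explicit positive supersolution with exponential growth and run a contact argument. Since $\mathfrak G(\mu)>0$, pick $s_0\ge0$ and $\delta>0$ with $q(x)s_0-a(x)-\mu-d(M(s_0)-1)\ge\delta$ for all $x$. Then $\psi(x)=e^{s_0x}$ satisfies
\[
\cA_\R\psi+\mu\psi=\bigl(d(M(s_0)-1)+q s_0+a+\mu\bigr)\psi .
\]
This has the wrong sign, so I would instead use $\psi_-(x)=e^{-s_0x}$. For it,
\[
\cA_\R\psi_-+\mu\psi_-=\bigl(d(M(s_0)-1)-q s_0+a+\mu\bigr)\psi_-\le-\delta\psi_-,
\]
using $M(-s)=M(s)$ by symmetry of $J$.

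Now let $u$ be bounded with $\cA_\R u+\mu u\ge0$ and $\sup u>0$, and look at $u-\varepsilon\psi_-$. As $x\to-\infty$, $\psi_-\to\infty$, so any positive supremum of $u-\varepsilon\psi_-$ is pushed to the right. As $x\to+\infty$ I use the tail condition $a+\mu\le-\delta'$ for large $|x|$. If $\sup u$ is approached along a sequence $x_n\to+\infty$, the translates $u(\cdot+x_n)$ converge, by local $C^1$ compactness from the equation (since $q\ge q_0$, $u'$ is controlled by $u$ and the nonlocal term), to a limit $u_\infty$. This limit has an interior maximum $\sup u$ with $u_\infty'=0$ there. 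Then
\[
d(J*u_\infty-u_\infty)\le0,\qquad (a+\mu)u_\infty<0
\]
at that point, contradicting $\ge0$.

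At a finite touching point $x_0$ of $u-\varepsilon\psi_-$ the derivative vanishes, the nonlocal difference is $\le0$, and the supersolution inequality gives a strict contradiction. Letting $\varepsilon\to0$ yields $u\le0$.

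The implication $\sup(a+\mu)<H_d(q_0)$ $\Rightarrow$ $\mathfrak G(\mu)>0$ is direct. Take $s$ optimal in $\cH(q_0/d)$; then $q s-a-\mu-d(M(s)-1)\ge q_0s-dM(s)+d-\sup(a+\mu)=H_d(q_0)-\sup(a+\mu)>0$. If the optimal $s$ is negative, I would use $|s|$ and the sign of $q$.

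For (ii), first the claim $B_{\Omega_0,\kappa}>0$ for large $\kappa$. Since $J(0)>0$, $J\ge c$ on $[-r,r]$, so
\[
\int_{\Omega_0}J(\beta_0-y)w_\kappa\,dy\ge c\int_{\beta_0-r}^{\beta_0}(e^{\kappa(\beta_0-y)}-1)\,dy,
\]
which grows like $e^{\kappa r}/\kappa$ and therefore beats $q(\beta_0)\kappa$.

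Next, $\mathfrak U_{\Omega_0,\kappa}$ is finite. Near $\beta_0$ we have $w_\kappa\sim\kappa(\beta_0-x)$, and the numerator $-\cA w_\kappa$ tends to $-B_{\Omega_0,\kappa}<0$ there, since $q w_\kappa'=-q\kappa e^{\kappa(\beta_0-x)}$. So the ratio tends to $-\infty$ near $\beta_0$ and is bounded elsewhere. Thus $\mathfrak U_*<\infty$.

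To get $\Lp(\cA_\R)\le\mathfrak U_{\Omega_0,\kappa}$, I would suppose a positive $\phi$ on $\R$ with $\cA_\R\phi+\lambda\phi\le0$ and $\lambda>\mathfrak U$. Restricting to $\Omega_0$ only decreases the integral, so $\phi$ is a supersolution there. Compare it with $w_\kappa$, which is a strict subsolution vanishing at the outflow end, via $\tau^*=\sup\{\tau:\tau w_\kappa\le\phi\}$. A touching point must be interior, because at $\beta_0$ we have $\tau w=0<\phi$. At an interior touching point the derivatives agree, and the comparison yields a contradiction.

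For the lower bound $\mathfrak U_*>-\infty$, note $-\cA w/w\ge-\|a\|_\infty-C$ pointwise away from $\beta_0$. Hence $\Lp\le\mathfrak U_*$ together with the trivial bound $\Lp\ge-\|a\|_\infty-d$ (test $\phi\equiv1$) gives $\mathfrak U_*\ge-\|a\|_\infty-d$.

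Simplicity is the main step. Let $\phi_1,\phi_2>0$ solve $\cA_\R\phi+\Lp\phi=0$, with $\Lp\le\mathfrak U_*$; existence comes from exhaustion, \cite[Theorem~1.4]{CH}. The Harnack inequality \cite[Lemma~3.1]{CH} gives $\phi(x\pm h)\le e^{\gamma_*|h|}\phi(x)$, so eigenfunctions grow at most like $e^{\gamma_*|x|}$. The hypotheses with $\beta_\pm>\gamma_*$ make
\[
\Psi=e^{\beta_+x}\ (x\gg1),\qquad \Psi=e^{-\beta_-x}\ (x\ll-1)
\]
strict supersolutions at level $\mathfrak U_*\ge\Lp$ in the tails, dominating any eigenfunction there.

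Then set $\tau^*=\sup\{\tau:\tau\phi_2\le\phi_1\}$ and consider $v=\phi_1-\tau^*\phi_2+\varepsilon\Psi$. Because $\Psi$ dominates in the tails, the infimum of $v$ is attained on a compact set. The strong maximum principle — a zero of a nonnegative solution propagates through $J(0)>0$ and the drift — then forces $v\equiv0$ on a neighbourhood, and letting $\varepsilon\to0$ gives $\phi_1=\tau^*\phi_2$.

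The hardest part is building $\Psi$ uniformly and matching the Harnack exponent $\gamma_*$ against $\beta_\pm$ so that the infimum is really attained on a compact set. I would first try cutting off $\Psi$ smoothly in the middle, absorbing the error there with a multiple of $\phi_2$.
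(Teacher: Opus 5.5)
\textbf{Part (i): the contact step fails.} Your barrier $\psi_-(x)=e^{-s_0x}$ with $(\cA_\R+\mu)\psi_-\le-\delta\psi_-$ is correct; it is the computation behind $\Lp(\cA_\R+\mu)\ge\mathfrak G(\mu)$ in Proposition~\ref{prop:MPcore}(iii). The problem is that you then maximize the \emph{difference} $v=u-\varepsilon\psi_-$.

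At a positive interior maximum $x_0$ you only get $(\cA_\R+\mu)v(x_0)\le(a(x_0)+\mu)v(x_0)$. This contradicts $(\cA_\R+\mu)v\ge\varepsilon\delta\psi_->0$ only when $a(x_0)+\mu\le0$. But $\mathfrak G(\mu)>0$ allows $a+\mu>0$ on compact sets, up to $H_d(q_0)$. At a maximum of a difference the drift term drops out, and the drift is exactly what made $\psi_-$ a supersolution.

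You need the ratio form instead. Take a positive strict supersolution $\Psi$ that tends to $+\infty$ at both ends. Set $t_*=\max u^+/\Psi$ and $w=t_*\Psi-u$. At the contact point the zero-order term vanishes, so $(\cA_\R+\mu)w(x_*)=d\int J(x_*-y)w(y)\,dy\ge0$, which contradicts $(\cA_\R+\mu)w<0$.

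Since $\psi_-\to0$ at $+\infty$, it cannot serve as $\Psi$ alone. The choice $\Psi=\psi_-+\varepsilon W$ with $W=(1+x^2)^{\alpha/2}$ works for small $\varepsilon$, because the tail condition makes $W$ a strict supersolution outside a compact set. The paper uses the whole-line principal eigenfunction at level $\Lp(\cA_\R+\mu)>0$ where you use $\psi_-$; your exponential is a valid and more elementary substitute once the ratio argument is used.

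Your translation-compactness step at $+\infty$ is also unjustified. Here $u$ is only a bounded-above subsolution, so $qu'\ge-dJ*u+(d-a-\mu)u$ bounds $u'$ from below only. This gives no $C^1_{\rm loc}$ compactness of the translates.

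\textbf{Part (ii): the first half is fine.} The positivity of $B_{\Omega_0,\kappa}$, finiteness of the caps, and $\Lp(\cA_\R)\le\mathfrak U_*$ are all correct. Your direct touching of a restricted whole-line supersolution by $\tau w_\kappa$ is a valid alternative to the paper's adjoint-based Proposition~\ref{thm:comparison}. It also avoids the $C^1$ regularity of $q$ under which that proposition is stated. You must add contact at the inflow point $\alpha_0$: there $z'(\alpha_0)\ge0$ and $q>0$ still give $(\cA_{\Omega_0}+\lambda)z(\alpha_0)\ge0$, so the contradiction persists.

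\textbf{Part (ii): the simplicity step does not close.} With $\tau^*=\sup\{\tau:\tau\phi_2\le\phi_1\}$ you know neither that $\tau^*>0$ nor that $\phi_1-\tau^*\phi_2$ vanishes at a finite point. The ratio may approach its extremum only at $\pm\infty$, and that is the whole difficulty. Your $v=\phi_1-\tau^*\phi_2+\varepsilon\Psi$ is strictly positive and is not a solution. Attaining its infimum therefore produces no zero to which the strong maximum principle applies. A multiple of $\phi_2$ also cannot absorb a middle-region error, since $(\cA_\R+\Lp)\phi_2=0$.

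The missing ingredient is an exterior comparison principle on each half-line. Suppose $(\cA_\R+\Lp)u\ge0$ on a tail, $u\le0$ on the interaction collar of width $R_J$, and $u^+=o(\Psi)$. Then $u\le0$ on the whole tail, by the contact argument with $t\Psi$ where $\Psi$ is a strict supersolution. Apply this to $\phi_1-t\phi_2$ with $t=\max_{\mathrm{collar}}\phi_1/\phi_2$, using $\phi_i=O(e^{\gamma_*|x|})=o(\Psi)$. This shows that $\sup_\R\phi_1/\phi_2$ is finite and attained on a compact set. Only then does Lemma~\ref{lem:MPcontact} give $\phi_1=t_*\phi_2$. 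This is the route the paper takes through Propositions~\ref{prop:simplicitycore} and~\ref{prop:tailcore}.
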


The hypotheses of Theorem~\ref{thm:mainwholeline} involve only $J,d,q$ and $a$. Part \textup{(i)} follows from a global strict supersolution and a contact argument. Part \textup{(ii)} uses finite-interval upper bounds and exponential barriers at the two ends of the line.

We next return to bounded intervals. Here the directional boundary condition makes the direct and adjoint problems asymmetric.

\begin{theorem}\label{thm:mainbounded}
\textup{(i)} For every bounded interval $\Omega=(\alpha,\beta)$, $\Lp(\cA_\Omega)$ is finite and is associated with a positive eigenfunction $\phi_\Omega\in C^1(\Omega)\cap C(\overline \Omega)$ satisfying $\phi_\Omega(\beta)=0$. It is unique up to a positive factor. If $q\in C^1(\overline \Omega)$, the adjoint eigenfunction is positive in $\Omega$ and vanishes at $\alpha$. Increasing exhaustions converge to the generalized principal value on every bounded or unbounded interval. For $\Omega_L=(-L,L)$, assume $J\in C_c^2(\R)$, $q,a\in C^2(\R)$ and $q\ge q_0>0$, write $\lambda_p(L):=\Lp(\cA_{\Omega_L})$, and normalize $\int_{-L}^{L}\phi_L\phi_L^*=1$. Then $L\mapsto\lambda_p(L)$ belongs to $C^1((0,\infty))$ and
\[
\lambda_p'(L)=
q(L)\phi_L'(L)\phi_L^*(L)
-d\phi_L(-L)\int_{-L}^{L}J(x+L)\phi_L^*(x)\,dx<0.
\]

\textup{(ii)} Let $\mathcal L\subset(0,\infty)$ be open. Assume that the pulled-back coefficients satisfy the analytic hypothesis $\mathrm{(A_{an})}$ of Section~5: locally uniformly for $L\in\mathcal L$, the functions $J(z(\xi-\eta))$, $q(z\xi)$ and $a(z\xi)$ admit bounded holomorphic extensions with respect to the complex length variable $z$ in a neighborhood of $L$, uniformly for $\xi,\eta\in[-1,1]$. Then $L\mapsto\lambda_p(L)$ is real analytic on $\mathcal L$. After pull-back to $(-1,1)$ and normalization at the origin, the direct eigenfunction is real analytic as a $C^1([-1,1])$-valued map; the normalized adjoint eigenfunction is analytic as well.
\end{theorem}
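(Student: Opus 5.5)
Part (i) starts from the external facts already quoted. Existence of a positive eigenpair $(\Lp(\cA_\Omega),\phi_\Omega)$ with $\phi_\Omega(\beta)=0$ is \cite[Proposition~1.1]{CH}, and its identification with the generalized value, hence finiteness, is \cite[Theorem~1.2]{CH}. Convergence along increasing exhaustions is \cite[Theorem~1.4]{CH}.

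For uniqueness I would argue directly by contact. Let $\psi>0$ be a second eigenfunction and set $t^*=\sup\{t:\phi_\Omega-t\psi>0\}$, so that $w=\phi_\Omega-t^*\psi\ge0$ solves the eigen-equation. If $w$ vanishes at an interior point, the nonlocal term forces $\int J(x-y)w(y)\,dy=0$ there. Since $J(0)>0$, the zero set propagates, and $w\equiv0$. The delicate case is first contact at the outflow endpoint $\beta$, where both functions vanish. Here I would use the equation at $\beta$: there $q(\beta)\phi'(\beta)=-d\int_\Omega J(\beta-y)\phi(y)\,dy<0$, so every eigenfunction has strictly negative slope at $\beta$. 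The ratio $\phi_\Omega/\psi$ therefore extends continuously to $\beta$, with value $\phi_\Omega'(\beta)/\psi'(\beta)$. The same equation applied to $w$ gives $q(\beta)w'(\beta)=-d\int J(\beta-y)w\,dy$. If $w\not\equiv0$, this makes $w'(\beta)<0$, which contradicts the maximality of $t^*$.

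For the adjoint I would compute the formal adjoint
\[
\cA^*\phi^*=d\Big(\int J\phi^*-\phi^*\Big)-(q\phi^*)'+a\phi^*,
\]
with the boundary condition $\phi^*(\alpha)=0$ coming from integration by parts of the transport term. Under the reflection $x\mapsto-x$ this becomes a problem of the same type with positive drift and with the zeroth-order term modified by $-q'$. The positive eigenpair then comes again from \cite{CH}, and its eigenvalue coincides with $\Lp$ by the duality pairing $\langle\cA\phi,\phi^*\rangle=\langle\phi,\cA^*\phi^*\rangle$, which is valid because the boundary terms vanish ($\phi(\beta)=0$, $\phi^*(\alpha)=0$).

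For the derivative formula I would pull back to $(-1,1)$ via $x=L\xi$ and obtain an operator family $T(L)$ depending $C^1$ on $L$ when $J\in C^2_c$ and $q,a\in C^2$. Because the eigenvalue is simple (just proved) and the adjoint is positive, the pairing $\int\phi_L\phi^*_L\neq0$, and the implicit function theorem on $C^1([-1,1])\times\R$, with the normalization fixed, gives $C^1$ dependence. To compute $\lambda_p'(L)$ I would write the equation on $(-L-h,L+h)$ versus $(-L,L)$, test against $\phi_L^*$ and integrate by parts. Two boundary contributions survive. The transport flux at the moving outflow end produces $q(L)\phi_L'(L)\phi^*_L(L)$. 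The change of the integration domain in the nonlocal term at the inflow end $-L$ produces $-d\phi_L(-L)\int J(x+L)\phi^*_L$. For the sign, the first term is negative because $\phi'_L(L)<0$ and $\phi^*_L(L)>0$, and the second is negative because $\phi_L(-L)>0$ and $J(0)>0$. The bookkeeping that isolates exactly these two terms, in particular checking that the inflow trace of $\phi_L$ does not produce an additional flux, is where I expect the main work to lie.

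Part (ii) follows the same implicit-function scheme in the analytic category. Hypothesis $\mathrm{(A_{an})}$ makes $z\mapsto T(z)$ a holomorphic family of bounded operators on $C^1([-1,1])$ near each $L\in\mathcal L$. To see this I would invert the transport part explicitly and write the equation as a compact fixed-point problem. Simplicity makes the eigenvalue isolated and algebraically simple, so the Kato perturbation theory (Riesz projection) gives holomorphic $\lambda(z)$ and a holomorphic eigenvector normalized at the origin. The adjoint family is treated in the same way, and restricting to real $z$ yields real analyticity.
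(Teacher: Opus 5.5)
Your plan for part (i) follows the paper closely:
\begin{itemize}
\item existence, finiteness and exhaustion are taken from Coville--Hamel;
\item uniqueness comes from a first-contact argument, and the adjoint from reflection;
\item the derivative comes from pull-back, the implicit-function theorem, and pairing the differentiated equation with $\phi_L^*$. There the moving outflow condition gives $\dot\phi(L)=-\phi_L'(L)$, and the inflow boundary term drops because $\phi_L^*(-L)=0$.
\end{itemize}

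The gap is between what your contact argument proves and what you later use. You compare two \emph{positive} eigenfunctions $\phi_\Omega,\psi$, so you obtain uniqueness only in the positive cone. You then invoke ``simplicity (just proved)'' to apply the implicit-function theorem and, in part (ii), to assert algebraic simplicity for Kato's theory. Both need geometric simplicity on the whole space $X_\Omega=\{u\in C^1(\overline\Omega):u(\beta)=0\}$.

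Injectivity of the augmented linearization $(\mu,w)\mapsto\big((\cA_\Omega+\lambda)w+\mu\phi,\,w(x_0)\big)$ is obtained by pairing with $\phi^*$, which gives $\mu=0$. After that you must conclude $w\in\operatorname{span}\{\phi\}$ for a $w$ with no sign. Nothing you wrote rules out a sign-changing eigenfunction at the level $\Lp(\Omega)$.

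The paper closes this by putting an arbitrary kernel element in the numerator. Since $\phi_\Omega'(\beta)<0$, the quotient $w/\phi_\Omega$ extends to $\beta$ by $w'(\beta)/\phi_\Omega'(\beta)$. Set $t=\max_{[\alpha,\beta]}w/\phi_\Omega$ and $z=t\phi_\Omega-w\ge0$, and run the contact argument on $z$.

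You also need surjectivity for the implicit-function theorem. The transport part with terminal condition at $\beta$ is an isomorphism $X_\Omega\to C(\overline\Omega)$ and the gain is compact. So the augmented map is Fredholm of index zero, and injectivity suffices.

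There are also two smaller omissions. First, your case analysis skips contact at the inflow endpoint: at $t=t^*$ the function $w=\phi_\Omega-t^*\psi$ may vanish at $\alpha$ while staying positive inside. The equation at $\alpha$ closes this case. From $w(\alpha)=0$ and $w\ge0$ you get $w'(\alpha)\ge0$, while $q(\alpha)w'(\alpha)=-d\int_\Omega J(\alpha-y)w(y)\,dy\le0$. Hence both terms vanish, and the zero set propagates from a right neighborhood of $\alpha$. The same endpoint computation gives $\phi_L(-L)>0$ and $\phi_L^*(L)>0$, which you use without proof for the strict sign of $\lambda_p'(L)$.

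Second, for part (ii), Kato's analytic perturbation theory is a legitimate alternative to the paper's analytic implicit-function theorem for $\mathscr F(z,\mu,v)=(\mathscr A_zv+\mu v,\,v(0)-1)$. However, $\mathscr A_z$ is not a bounded operator on $C^1([-1,1])$. You should treat it as a holomorphic family of type (A) in $C([-1,1])$ with fixed domain $\{v\in C^1([-1,1]):v(1)=0\}$, where the compact resolvent gives isolation. Alternatively, apply Kato to the resolvents $(\mathscr A_z+\mu_0)^{-1}$. Algebraic simplicity then follows from geometric simplicity and $\int\phi\phi^*>0$. You must still identify the real branch with $\lambda_p(L)$, e.g.\ by continuity of $L\mapsto\lambda_p(L)$ or by positivity of the eigenvector.

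The paper's route is more economical. Holomorphy of $z\mapsto\mathscr A_z$, as bounded operators from $\{v\in C^1([-1,1]):v(1)=0\}$ to $C([-1,1])$, plus invertibility of a single augmented derivative, produces the normalized holomorphic eigenpair directly. The adjoint is handled by rerunning the same argument for the reflected adjoint operator.
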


The derivative formula is obtained after pulling the problem back to a fixed interval and pairing the differentiated equation with the adjoint eigenfunction. Both terms are negative. The analytic assertion requires the additional hypothesis $\mathrm{(A_{an})}$; under this assumption the pulled-back operators form a holomorphic family and the augmented linearized problem is invertible. No global analyticity in $L$ is asserted under the standing compact-support assumptions on $J$.

We next consider the critical scale. Set $d_J:=\frac12\int_\R z^2J(z)\,dz$ and, for a function $u$ on $(-L,L)$ with zero extension $\widetilde u$,
\[
E_\sigma(u):=\frac1{2\sigma^2}\iint_{\R^2}J_\sigma(x-y)(\widetilde u(x)-\widetilde u(y))^2\,dx\,dy.
\]

\begin{theorem}\label{thm:maincritical}
Assume $J\in C_c^2(\R)$ satisfies the standing hypotheses, $q,a\in C^2([-L,L])$ and $q\ge q_0>0$. Then
\[
\Lp(\cA_{\sigma,2,L})\longrightarrow\lambda_1^D(d_J\partial_{xx}+q(x)\partial_x+a(x),(-L,L)).
\]
If $\phi_\sigma$ and $\phi_0$ are the positive nonlocal and local eigenfunctions normalized in $L^2(-L,L)$, then $\phi_\sigma\to\phi_0$ strongly in $L^2(-L,L)$,
\[
E_\sigma(\phi_\sigma)\longrightarrow d_J\int_{-L}^{L}|\phi_0'|^2\,dx,\qquad \phi_\sigma(-L)\longrightarrow0.
\]
Thus the limiting eigenfunction satisfies both Dirichlet conditions although the approximating eigenfunctions are prescribed only at the outflow endpoint.
\end{theorem}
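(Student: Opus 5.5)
The plan is to avoid any Rayleigh quotient and instead combine two-sided a priori bounds on $\lambda_\sigma:=\Lp(\cA_{\sigma,2,L})$ with Fourier compactness of the zero extensions $\widetilde\phi_\sigma$. The limit is then identified first weakly and afterwards through the energy--transport identity. I read $\cA_{\sigma,2,L}$ as the operator $d\sigma^{-2}(J_\sigma*\phi-\phi)+q\phi'+a\phi$ on $(-L,L)$, with $J_\sigma=\sigma^{-1}J(\cdot/\sigma)$ and $d$ absorbed into $d_J$. Its eigenfunction $\phi_\sigma$, supplied by Theorem~\ref{thm:mainbounded}(i), satisfies $\phi_\sigma(L)=0$.

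\emph{Step 1: lower bound.} Let $\psi_\delta>0$ be the local Dirichlet eigenfunction of $d_J\partial_{xx}+q\partial_x+a$ on $(-L-\delta,L+\delta)$, and use its restriction to $(-L,L)$ as a test function in the definition of $\Lp$. Since $\psi_\delta>0$ outside $(-L,L)$, truncating the convolution to $\Omega$ only decreases it, so
\[
\cA_{\sigma,2,L}\psi_\delta\le d_J\psi_\delta''+q\psi_\delta'+a\psi_\delta+O(\sigma)
\]
by a Taylor expansion using $J\in C_c^2$. This yields $\liminf_\sigma\lambda_\sigma\ge\lambda_1^D(L+\delta)$, and continuity in $\delta$ finishes this step.

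\emph{Step 2: upper bound and the energy identity.} Multiply the equation by $\phi_\sigma$ and use $\phi_\sigma(L)=0$. This gives the exact identity
\[
\lambda_\sigma=E_\sigma(\phi_\sigma)+\tfrac12 q(-L)\phi_\sigma(-L)^2+\tfrac12\int q'\phi_\sigma^2-\int a\phi_\sigma^2 ,
\]
with the $L^2$ normalization. An upper bound on $\lambda_\sigma$ is therefore needed to control $E_\sigma$ and the inflow trace. To get it, I pair the equation with a smooth nonnegative test function $\chi$ with compact support in $(-L,L)$, so that $\lambda_\sigma\int\phi_\sigma\chi=-\int\phi_\sigma\cA^*_\sigma\chi$. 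The nonlocal part of $\cA^*_\sigma\chi$ converges uniformly to $d_J\chi''$ by the symmetry of $J$, so it suffices to choose $\chi$ with $-(d_J\chi''-(q\chi)'+a\chi)\le(\mu+\varepsilon)\chi$ on the support of $\chi$. I expect this step to be the main obstacle, because the inequality degenerates where $\chi$ vanishes. The first attempt will be the adjoint Dirichlet eigenfunction on $(-L+\delta,L-\delta)$ raised to a power $>2$ and mollified, with the boundary-layer error absorbed using a Harnack lower bound for $\phi_\sigma$ near the support edge. If this fails, I will settle for a crude uniform bound $\lambda_\sigma\le C$ and defer the sharp upper bound to Step~4.

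\emph{Step 3: compactness and weak limit.} The bound $E_\sigma(\phi_\sigma)\le C$, combined with $1-\widehat J(\eta)\asymp\min\{\eta^2,1\}$, gives
\[
\int\min\{\eta^2,\sigma^{-2}\}|\widehat{\widetilde\phi_\sigma}|^2\le C .
\]
Hence $\widetilde\phi_\sigma\to\phi_0$ strongly in $L^2$ along subsequences, and $\phi_0\in H^1(\R)$ is supported in $[-L,L]$, so $\phi_0\in H^1_0(-L,L)$. In addition $\|\phi_0\|_2=1$, $\phi_0\ge0$ and $\lambda_\sigma\to\bar\lambda$. Testing with $\chi\in C_c^\infty(-L,L)$ and moving the nonlocal operator onto $\chi$ shows that $\phi_0$ is a weak Dirichlet eigenfunction with eigenvalue $\bar\lambda$. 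Since it is nonnegative and nontrivial, it must be the principal one, so $\bar\lambda=\lambda_1^D$. Uniqueness of the limit then gives convergence of the whole family.

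\emph{Step 4: energy convergence and the inflow trace.} Lower semicontinuity (Fourier or Bourgain--Brezis--Mironescu type) gives $\liminf E_\sigma(\phi_\sigma)\ge d_J\int|\phi_0'|^2$. On the other side, the local identity reads $\lambda_1^D=d_J\int|\phi_0'|^2+\tfrac12\int q'\phi_0^2-\int a\phi_0^2$. Subtracting it from the nonlocal identity of Step~2 and using strong $L^2$ convergence yields
\[
\limsup\Bigl(E_\sigma(\phi_\sigma)-d_J\!\int|\phi_0'|^2\Bigr)+\limsup\tfrac12 q(-L)\phi_\sigma(-L)^2\le0 .
\]
Both terms are nonnegative in the limit, so $E_\sigma(\phi_\sigma)\to d_J\int|\phi_0'|^2$ and $\phi_\sigma(-L)\to0$, which completes the statement.
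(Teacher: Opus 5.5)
Your Steps 3 and 4 are the paper's argument: Fourier compactness of the zero extensions, identification of the limit by positivity, and then the energy--transport identity with lower semicontinuity, which forces $E_\sigma(\phi_\sigma)\to d_J\int|\phi_0'|^2$ and $\phi_\sigma(-L)\to0$. Your Step 1 is a correct sharp lower bound, after extending $q,a$ past $\pm L$ and using $\min_{[-L,L]}\psi_\delta>0$. It is not needed, though: the energy identity already gives $\lambda_\sigma\ge-\|a-q'/2\|_\infty$, and Step 3 identifies the limit anyway.

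The genuine gap is Step 2. Everything downstream rests on $\sup_\sigma\lambda_\sigma<\infty$, because that is the only source of the uniform energy bound feeding the compactness, and the proposal never establishes it. The route you sketch does not deliver it as described. With $\mathcal L^*v=d_Jv''-(qv)'+av$ and $\mathcal L^*\psi=-\lambda\psi$,
\[
\mathcal L^*(\psi^k)+\mu\psi^k=\bigl(\mu-k\lambda+(k-1)(q'-a)\bigr)\psi^k+d_Jk(k-1)\psi^{k-2}(\psi')^2 .
\]
At the maximum of $\psi$ the equation gives $\lambda-q'+a=d_J|\psi''|/\psi$. So there you need $\mu\ge\lambda+(k-1)d_J|\psi''|/\psi$, a fixed loss that neither $\delta\to0$ nor mollification removes; powers $k>2$ cannot give $\lambda_1^D+\varepsilon$. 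Absorbing the edge errors by a Harnack lower bound for $\phi_\sigma$ would need a Harnack inequality uniform as $\sigma\to0$ at the critical scale. That is a separate nontrivial estimate you do not have. The fallback ``crude bound $\lambda_\sigma\le C$'' would suffice for Steps 3--4, but you give no mechanism for it. Getting it requires exactly the boundary-layer analysis you flagged as the obstacle.

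The missing idea is that the kink of the zero extension at the boundary is favorable, so you should not smooth it away. Take the local Dirichlet eigenfunction $\phi_0$ on the full interval $(-L,L)$ as a supersolution of the direct nonlocal problem:
\begin{itemize}
\item Its zero extension dominates a $C^4$ extension that is negative just outside $[-L,L]$. Hence $\sigma^{-2}(J_\sigma*_{(-L,L)}\phi_0-\phi_0)\ge d_J\phi_0''-C\sigma^2$ on $[-L,L]$.
\item In the innermost collar $x=L-\sigma t$, $0\le t\le\eta$, the truncation contributes $\beta\sigma^{-1}\int_{z\le-t}(-t-z)J(z)\,dz\ge c\sigma^{-1}$, where $\beta=-\phi_0'(L)>0$. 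The same holds at $-L$. This dominates all bounded drift and potential terms.
\item In the intermediate collars the Hopf bound $\phi_0\ge c\,\dist(x,\partial\Omega_L)$ absorbs the $C\sigma^2$ error.
\end{itemize}
Hence $\cA_{\sigma,2,L}\phi_0+(\lambda_1^D+\varepsilon)\phi_0\ge0$ for small $\sigma$. Since $\phi_0(L)=0$, pairing with the positive nonlocal adjoint eigenfunction (Proposition~\ref{thm:comparison}) gives $\lambda_\sigma\le\lambda_1^D+\varepsilon$; this is Proposition~\ref{prop:criticaltools}\textup{(i)}. In your dual formulation the same collar computation works if you pair $\phi_\sigma$ with the unsmoothed local adjoint Dirichlet eigenfunction on $(-L,L)$. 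The drift boundary terms vanish because that function vanishes at $\pm L$. With this bound in place, your Steps 3--4 close the proof.
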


Theorem~\ref{thm:maincritical} is proved without a variational formula. The obstruction is not merely formal. In the drift-free symmetric setting, the quadratic form controls the eigenvalue and the compactness simultaneously. Here the drift destroys that characterization and leaves, after integration by parts, the boundary flux $q(-L)\phi_\sigma(-L)^2/2$, although no condition is imposed at $-L$ for $\sigma>0$.

The replacement is split into two independent estimates collected in Proposition~\ref{prop:criticaltools}. Part \textup{(i)} uses the local Dirichlet eigenfunction as a nonlocal supersolution, including the $O(\sigma)$ boundary collars, and yields the spectral upper bound. Part \textup{(ii)} proves directly from the Fourier symbol that bounded nonlocal energy of the zero extensions implies strong $L^2$ compactness and an $H^1(\R)$ limit. Since the zero extensions are supported in $[-L,L]$, the limit belongs to $H_0^1(-L,L)$ without imposing an inflow boundary condition on the approximating problems. The equation is then passed to the limit by transferring the symmetric nonlocal part to smooth test functions. Finally, the exact energy--transport identity shows that the limiting lower bound for the energy is attained and therefore forces $\phi_\sigma(-L)\to0$. The boundary condition is thus recovered from compactness and the transport flux, rather than inserted into the approximation.

We now turn to the dependence on the zeroth-order and dispersal amplitudes. This is the nonlocal fixed-drift counterpart of the last part of Section~9 in \cite{BR}. Let $\Omega\subset\R$ be an interval, let $V\in C_b(\Omega)$ and, for $\gamma\in\R$, $\alpha>0$, set
\[
\begin{aligned}
\cA^{V}_{\gamma,\Omega}u(x)
&=d\left(\int_\Omega J(x-y)u(y)\,dy-u(x)\right)+q(x)u'(x)+\gamma V(x)u(x),\\
\cA^{D}_{\alpha,\Omega}u(x)
&=\alpha d\left(\int_\Omega J(x-y)u(y)\,dy-u(x)\right)+q(x)u'(x)+V(x)u(x),
\end{aligned}
\]
and
\[
\Lambda_V^\Omega(\gamma)=\Lp(\cA^V_{\gamma,\Omega}),\qquad
\Lambda_D^\Omega(\alpha)=\Lp(\cA^D_{\alpha,\Omega}).
\]

\begin{theorem}\label{thm:coeffamplitudes}
Put $V_+=\sup_\Omega V$ and $V_-=\inf_\Omega V$. Then $\Lambda_V^\Omega$ is concave, $\Lambda_V^\Omega(0)\ge0$ and
\[
|\Lambda_V^\Omega(\gamma_1)-\Lambda_V^\Omega(\gamma_2)|\le\|V\|_\infty|\gamma_1-\gamma_2|,
\qquad
\Lambda_V^\Omega(\gamma)=-\gamma V_\pm+o(|\gamma|)\quad(\gamma\to\pm\infty).
\]
Assume in addition that $\Omega$ is bounded, $q\in C^1(\overline\Omega)$ and $V\in C(\overline\Omega)$. Then $\Lambda_V^\Omega$ and $\Lambda_D^\Omega$ are real analytic on $\R$ and $(0,\infty)$, respectively. With $\int_\Omega\phi\phi^*=1$,
\[
(\Lambda_V^\Omega)'(\gamma)=-\int_\Omega V\phi_\gamma\phi_\gamma^*,\qquad
(\Lambda_D^\Omega)'(\alpha)=-d\int_\Omega\phi_\alpha^*(J*_\Omega\phi_\alpha-\phi_\alpha).
\]
Moreover,
\[
(\Lambda_V^\Omega)'(\gamma)=-V_\pm+o(1)\quad(\gamma\to\pm\infty),
\]
and $\Lambda_V^\Omega$ is strictly concave if $V$ is nonconstant. No sign is imposed in general on $(\Lambda_D^\Omega)'$.
\end{theorem}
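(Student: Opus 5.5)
We take the statement in order: the whole-interval properties, then the bounded-interval analytic facts, then the asymptotics of the derivative.

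\emph{Concavity, the value at $\gamma=0$, and the Lipschitz bound.} For $\gamma_1,\gamma_2$, $t\in(0,1)$ and $\lambda_i<\Lambda_V^\Omega(\gamma_i)$, pick positive test functions $\phi_i$ with $\cA^V_{\gamma_i,\Omega}\phi_i+\lambda_i\phi_i\le0$. The key tool is the logarithmic (Hölder) transform $\phi=\phi_1^t\phi_2^{1-t}$. With $\psi_i=\log\phi_i$, the drift and potential terms are linear in $\psi$. The nonlocal term satisfies
\[
\int_\Omega J(x-y)\phi(y)\,dy\le\Big(\int J\phi_1\Big)^t\Big(\int J\phi_2\Big)^{1-t}
\]
by Hölder. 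Dividing by $\phi$ and using $A^tB^{1-t}\le tA+(1-t)B$ gives $\cA^V_{\gamma,\Omega}\phi+(t\lambda_1+(1-t)\lambda_2)\phi\le0$ with $\gamma=t\gamma_1+(1-t)\gamma_2$, which is concavity.

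For $\Lambda_V^\Omega(0)\ge0$, take $\phi\equiv1$: then $d(\int_\Omega J(x-\cdot)-1)\le0$. The Lipschitz bound follows from the $L^\infty$-Lipschitz dependence on the potential \cite[Proposition~1.3(v)]{CH}, since $\|\gamma_1V-\gamma_2V\|_\infty=\|V\|_\infty|\gamma_1-\gamma_2|$.

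\emph{Asymptotics as $\gamma\to\pm\infty$.} Treat $\gamma\to+\infty$; the other sign is the same argument applied to $-V$.
\begin{itemize}
\item The lower bound $\Lambda\ge-\gamma V_+-C$ comes from the test function $\phi\equiv1$, where $C$ is controlled by $d$.
\item For the upper bound, fix $\varepsilon>0$ and a small interval $I\subset\Omega$ on which $V>V_+-\varepsilon$. Use domain monotonicity \cite[Proposition~1.3]{CH}, $\Lambda^\Omega\le\Lambda^I$, together with Lipschitz dependence on $I$. This gives $\Lambda^I\le\Lp(\cA_I)$ with potential $\gamma(V_+-\varepsilon)$, which equals $-\gamma(V_+-\varepsilon)+\Lp(\text{pure }\cA_I)$. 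The last quantity is a constant independent of $\gamma$ and finite by Theorem~\ref{thm:mainbounded}(i).
\end{itemize}
This yields $\Lambda=-\gamma V_++o(\gamma)$.

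\emph{Analyticity and the derivative formulas on bounded $\Omega$.} Theorem~\ref{thm:mainbounded}(i) gives algebraic simplicity of the positive eigenpair and the positive adjoint eigenfunction. The families $\gamma\mapsto\cA^V_\gamma$ and $\alpha\mapsto\cA^D_\alpha$ are affine, hence holomorphic, in the parameter, with fixed domain and boundary condition $\phi(\beta)=0$. Apply the implicit function theorem to the augmented map
\[
(\phi,\lambda,\gamma)\mapsto\Big(\cA\phi+\lambda\phi,\ \ell(\phi)-1\Big),
\]
where $\ell$ is a normalizing functional. As in Section~5, its linearization is invertible by simplicity together with the Fredholm alternative for the first-order integro-differential operator in $C^1$. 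This gives a real-analytic branch for real $\gamma$ in all of $\R$, and for $\alpha$ in $(0,\infty)$. Since the eigenvalue is isolated, the branch coincides with $\Lp$ on the real axis.

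To get the derivative, differentiate $\cA_\gamma\phi_\gamma+\Lambda\phi_\gamma=0$ in $\gamma$ and pair with $\phi^*_\gamma$. The adjoint boundary condition $\phi^*(\alpha)=0$ and the direct condition $\phi(\beta)=0$ kill the boundary terms from $q\phi'$. This gives $\Lambda'=-\int V\phi\phi^*$, and similarly for $\alpha$. The sign of $\Lambda_D'$ is left undetermined.

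\emph{Derivative asymptotics and strict concavity.}
\begin{itemize}
\item The asymptotics of $\Lambda'$ follow from concavity combined with $\Lambda=-\gamma V_\pm+o(\gamma)$. For a concave differentiable function, difference quotients over $[\gamma,2\gamma]$ and $[\gamma/2,\gamma]$ sandwich $\Lambda'(\gamma)$, which forces $\Lambda'(\gamma)\to-V_\pm$.
\item For strict concavity, concavity plus analyticity means that failure of strictness forces $\Lambda$ to be affine on all of $\R$. The asymptotic slopes are then $-V_+$ and $-V_-$, so an affine $\Lambda$ forces $V_+=V_-$, i.e.\ $V$ constant.
\end{itemize}

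\emph{Main obstacle.} I expect the hardest point to be the Fredholm/invertibility step behind analyticity, because the outflow boundary condition must be handled and the $C^1$ setting must be justified. I plan to reuse the Section~5 argument for this.
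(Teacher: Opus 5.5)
Your proposal is correct, and most of it follows the paper. The geometric-mean interpolation for concavity, the constant test and the localization to a subinterval where $V>V_+-\varepsilon$ for the slopes, and the augmented Fredholm/implicit-function argument with adjoint pairing for analyticity and the derivative formulas all match the paper's argument. Your difference-quotient sandwich over $[\gamma/2,\gamma]$ and $[\gamma,2\gamma]$ is a more direct version of the paper's monotone-derivative/Ces\`aro argument for $(\Lambda_V^\Omega)'\to-V_\pm$.

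The genuinely different step is strict concavity.

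\emph{The paper's route.} It argues locally through the equality case of the interpolation. Suppose $\Lambda_V^\Omega(\gamma_\theta)=\theta\Lambda_V^\Omega(\gamma_1)+(1-\theta)\Lambda_V^\Omega(\gamma_2)$. Then $\phi_1^\theta\phi_2^{1-\theta}$ is a supersolution at level $\Lambda_V^\Omega(\gamma_\theta)$. The paper checks, via the expansion of $\phi_i$ at the outflow endpoint $a_+$, that this geometric mean lies in the outflow space $X$. Pairing with the positive adjoint eigenfunction at $\gamma_\theta$ then forces the residual to vanish pointwise. Equality in H\"older gives $\phi_1\propto\phi_2$, and subtracting the two equations makes $V$ constant.

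\emph{Your route.} You combine results already in hand. A concave function that meets its chord at an interior point is affine on that segment. Analyticity on $\R$ propagates affinity to the whole line, and the two asymptotic slopes then force $-V_+=-V_-$.

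\emph{Comparison.} Your argument is shorter and avoids both the boundary regularity of the geometric mean and the H\"older equality analysis. However, it depends on global analyticity, and hence on the full Fredholm machinery, and on the slope asymptotics at both ends. The paper's argument needs neither analyticity nor the behaviour at $\pm\infty$, and it exposes the mechanism: equality forces proportional eigenfunctions.

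One small point. ``The eigenvalue is isolated'' does not by itself identify the implicit-function branch with $\Lp$. You can close this in either of two ways:
\begin{itemize}
\item Combine algebraic simplicity with the continuity of $\gamma\mapsto\Lambda_V^\Omega(\gamma)$ that your Lipschitz bound gives.
\item Argue as the paper does: the branch stays $C^1$-close to $\phi_0$, so it remains positive with $\phi(a_+)=0$, and the comparison principle identifies its eigenvalue with $\Lp$.
\end{itemize}
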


In the bounded setting of the second part of Theorem~\ref{thm:coeffamplitudes}, for every finite $\gamma_0\in\R$ and $\alpha_0>0$,
\[
\Lambda_V^\Omega(\gamma)=\Lambda_V^\Omega(\gamma_0)-(\gamma-\gamma_0)\int_\Omega V\phi_{\gamma_0}\phi_{\gamma_0}^*+O((\gamma-\gamma_0)^2),
\]
and
\[
\Lambda_D^\Omega(\alpha)=\Lambda_D^\Omega(\alpha_0)-d(\alpha-\alpha_0)\int_\Omega\phi_{\alpha_0}^*(J*_\Omega\phi_{\alpha_0}-\phi_{\alpha_0})+O((\alpha-\alpha_0)^2).
\]
\begin{corollary}\label{cor:constant-potential-amplitude}
If $V\equiv V_0$, then $\Lambda_V^\Omega(\gamma)=\Lambda_V^\Omega(0)-\gamma V_0$.
\end{corollary}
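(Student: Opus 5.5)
The plan is to read the identity off the definition of $\Lp$ directly. When $V\equiv V_0$ the zeroth-order term is a constant multiple of the identity, so
\[
\cA^V_{\gamma,\Omega}u=\cA^V_{0,\Omega}u+\gamma V_0u .
\]
Fix $\lambda\in\R$ and a positive $\phi\in C^1(\Omega)$, with the same boundary regularity convention as in Section~2 when $\Omega$ has a finite outflow endpoint; that convention does not involve the potential. Then
\[
\cA^V_{\gamma,\Omega}\phi+\lambda\phi\le0
\quad\Longleftrightarrow\quad
\cA^V_{0,\Omega}\phi+(\lambda+\gamma V_0)\phi\le0 \quad\hbox{in }\Omega .
\]
So the sets of admissible $\lambda$ for $\gamma$ and for $0$ are translates of each other by $\gamma V_0$: $\lambda$ is admissible for $\cA^V_{\gamma,\Omega}$ exactly when $\lambda+\gamma V_0$ is admissible for $\cA^V_{0,\Omega}$, with the same test function $\phi$. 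Taking suprema gives $\Lambda_V^\Omega(\gamma)+\gamma V_0=\Lambda_V^\Omega(0)$. This holds in $\R\cup\{+\infty\}$ a priori, and Theorem~\ref{thm:coeffamplitudes} shows the values are finite.

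As a consistency check, I would compare with Theorem~\ref{thm:coeffamplitudes}. In the bounded case, the derivative formula gives
\[
(\Lambda_V^\Omega)'(\gamma)=-V_0\int_\Omega\phi_\gamma\phi_\gamma^*=-V_0
\]
by the normalization. Also $V_+=V_-=V_0$, so the asymptotics $-\gamma V_\pm$ hold exactly, and concavity is not strict, consistent with $V$ being constant. Moreover, the eigenfunction $\phi_\gamma$ does not depend on $\gamma$, since the eigenvalue equation is merely shifted.

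No step here is a real obstacle. The only point needing care is that the admissible class of test functions, including the boundary behavior at a finite outflow endpoint, is the same for all $\gamma$. This holds because the class is defined independently of $V$.
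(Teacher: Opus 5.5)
Your proposal is correct and follows the paper's argument: both write $\cA^V_{\gamma,\Omega}=\cA^V_{0,\Omega}+\gamma V_0 I$ and use that adding a constant zeroth-order term shifts the generalized principal value by minus that constant. Your explicit observation that the two admissible sets of $\lambda$ are translates of each other by $\gamma V_0$, with the same test functions, is exactly the justification behind the paper's one-line shift claim.
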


\begin{proof}
If $V\equiv V_0$, then $\cA^V_{\gamma,\Omega}=\cA^V_{0,\Omega}+\gamma V_0 I$. Adding a constant zeroth-order term $\gamma V_0$ shifts the generalized principal eigenvalue by $-\gamma V_0$. Hence
\[
\Lambda_V^\Omega(\gamma)=\Lambda_V^\Omega(0)-\gamma V_0.
\]
\end{proof}

Even in the bounded setting, continuity of $V$ alone does not determine a universal order-one term from $V_+$ and $V_-$. The analyticity assertions use only the affine dependence on $\gamma$ and $\alpha$ and the invertibility of the augmented linearized problem. Strict concavity follows from the equality case in the geometric-interpolation argument.

On the whole line the argument also applies without assuming that $q$ and $V$ are constant. The appropriate object is not a scalar Rayleigh quotient but a nonlinear Collatz--Wielandt Hamiltonian obtained after the logarithmic transform of a positive test function. For $\psi\in C^1(\R)$ define
\[
\mathfrak H_{\alpha}[\psi](x)
=q(x)\psi'(x)-V(x)
-\alpha d\left(\int_\R J(z)e^{\psi(x)-\psi(x-z)}\,dz-1\right).
\]

\begin{theorem}\label{thm:heteroHamiltonian}
Assume $\Omega=\R$, $q,V\in C_b(\R)$ and $q\ge q_0>0$. For every $\alpha>0$,
\[
\Lambda_D^\R(\alpha)=\sup_{\psi\in C^1(\R)}\inf_{x\in\R}\mathfrak H_\alpha[\psi](x).
\]
The supremum is attained at $\psi_\alpha=-\log\phi_\alpha$, where $\phi_\alpha$ is a positive principal eigenfunction, and $\mathfrak H_\alpha[\psi_\alpha]\equiv\Lambda_D^\R(\alpha)$. Moreover,
\[
\Lambda_D^\R(\alpha)\ge\sup_{s\ge0}\left\{\inf_{x\in\R}(q(x)s-V(x))-\alpha d(M(s)-1)\right\}
\ge\alpha d\,\cH\!\left(\frac{q_0}{\alpha d}\right)-\sup_\R V,
\]
and the maximizing affine slope is unique. The kernel Hamiltonian satisfies
\[
\cH(v)=\frac{v^2}{2D_2(J)}-\frac{D_4(J)}{24D_2(J)^4}v^4+O(v^6)\qquad(v\to0)
\]
and
\[
\frac{\cH(v)}{v\log v}\longrightarrow\frac1{R_+}\qquad(v\to+\infty).
\]
If $J(R_+-t)=Kt^\nu(1+o(1))$ as $t\downarrow0$ for some $K>0$ and $\nu>1$, with $p=\nu+1$ and $A=K\Gamma(p)$, then
\[
\cH(v)=\frac v{R_+}\left[\log v+p\log\log v-p\log R_+-\log(R_+A)-1+o(1)\right].
\]
\end{theorem}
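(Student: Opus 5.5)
We split the proof into three parts: the Collatz--Wielandt formula, the affine lower bound with uniqueness of its slope, and the asymptotics of $\cH$, which concern only the kernel.

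\emph{Collatz--Wielandt identity.} For a positive $\phi\in C^1(\R)$ put $\psi=-\log\phi$, so that $\phi'/\phi=-\psi'$ and $\phi(x-z)/\phi(x)=e^{\psi(x)-\psi(x-z)}$. Substituting $y=x-z$ and using the symmetry of $J$ gives
\[
-\frac{\cA^D_{\alpha,\R}\phi}{\phi}=q\psi'-V-\alpha d\Bigl(\int_\R J(z)e^{\psi(x)-\psi(x-z)}\,dz-1\Bigr)=\mathfrak H_\alpha[\psi].
\]
Conversely, every $\psi\in C^1(\R)$ gives a positive $C^1$ test function $\phi=e^{-\psi}$. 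Hence $\cA^D_{\alpha,\R}\phi+\lambda\phi\le0$ holds if and only if $\inf_x\mathfrak H_\alpha[\psi]\ge\lambda$, and the definition of $\Lp$ becomes exactly the stated sup-inf.

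For attainment we invoke the exhaustion and attainment result \cite[Theorem~1.4]{CH}, recorded in Theorem~\ref{thm:mainbounded}(i). It gives a positive $\phi_\alpha$ with $\cA^D_{\alpha,\R}\phi_\alpha+\Lambda_D^\R(\alpha)\phi_\alpha=0$, so $\mathfrak H_\alpha[-\log\phi_\alpha]\equiv\Lambda_D^\R(\alpha)$. The main point to check is that the limit eigenfunction is $C^1$. This follows from the equation itself, because $q\ge q_0$ lets us solve for $\phi'$ in terms of continuous quantities.

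\emph{Affine lower bound and uniqueness of the slope.} Take $\psi(x)=-sx$, that is, $\phi=e^{sx}$ with $s\ge0$. Then $\psi(x)-\psi(x-z)=-sz$, and since $J$ is even, $\int_\R J(z)e^{-sz}\,dz=M(s)$. This gives $\mathfrak H_\alpha[\psi]=q s-V-\alpha d(M(s)-1)$, which is the first inequality. For the second, use $q\ge q_0$ and $V\le\sup V$, and note that $M$ is even and convex. For $v=q_0/(\alpha d)\ge0$ the supremum over $s\in\R$ in $\cH(v)$ is therefore attained at some $s\ge0$, and $\sup_{s\ge0}\{q_0s-\alpha d(M(s)-1)\}=\alpha d\,\cH(q_0/(\alpha d))$.

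For uniqueness, $g(s)=\inf_x(q s-V)$ is concave as an infimum of affine functions, while $-\alpha d(M(s)-1)$ is strictly concave because $M''(s)=\int z^2J(z)e^{sz}\,dz>0$. Their sum is strictly concave and tends to $-\infty$ as $s\to\infty$ (as $M$ grows exponentially and $g$ at most linearly). So the maximizer over $[0,\infty)$ exists and is unique.

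\emph{Small-$v$ expansion of $\cH$.} Expand $M(s)-1=\frac{D_2}{2}s^2+\frac{D_4}{24}s^4+O(s^6)$, with $D_k=\int z^kJ$; odd moments vanish by symmetry. The optimality condition $v=M'(s)$ inverts to
\[
s=\frac{v}{D_2}-\frac{D_4v^3}{6D_2^4}+O(v^5).
\]
We then substitute into $vs-M(s)+1$. The cleanest route is Legendre duality: $\cH'(v)=s(v)$, and integrating this series gives the $v^2$ and $v^4$ coefficients directly.

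\emph{Large-$v$ expansion of $\cH$ (main obstacle).} As $s\to\infty$, $M(s)$ is dominated by the edge of $\supp J$ near $R_+$. Since $\int_0^{R_+}J>0$ near the edge, we have $\log M(s)=R_+s+o(s)$. The equation $v=M'(s)$ then gives $s\sim R_+^{-1}\log v$. Moreover $M(s)=o(vs)$, because $M\le M'/(R_+-\varepsilon)+C$ eventually. Hence $\cH(v)\sim vs\sim v\log v/R_+$.

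For the refined expansion, Watson's lemma at the corner gives
\[
M(s)=e^{R_+s}\int_0^\infty Kt^\nu e^{-st}\,dt\,(1+o(1))=A\,e^{R_+s}s^{-p}(1+o(1)).
\]
The derivative satisfies $M'(s)=R_+M(s)(1+O(1/s))$; this needs the assumption $\nu>1$ so that the corner term is differentiable. Solving $v=R_+A\,e^{R_+s}s^{-p}(1+o(1))$ gives
\[
R_+s=\log v+p\log s-\log(R_+A)+o(1),\qquad \log s=\log\log v-\log R_++o(1).
\]
Finally,
\[
\cH(v)=vs-M(s)+1=vs-\frac{v}{R_+}(1+o(1)),
\]
which yields the stated bracket. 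The delicate step is the control of the $o(1)$ errors. The error in $M'/M$ must be $o(1/s)$ relative to the leading term, so that it contributes only $o(v/R_+)$ to $\cH$. I would therefore apply Watson's lemma to $M$ and to $M'$ separately, rather than differentiating the asymptotic formula for $M$.
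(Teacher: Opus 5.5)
Your three-part plan is the paper's own proof, and those parts are fine:
\begin{itemize}
\item the logarithmic transform $\phi=e^{-\psi}$ turning the definition of $\Lambda_D^\R(\alpha)$ into the sup--inf of $\mathfrak H_\alpha$;
\item attainment via the whole-line exhaustion theorem;
\item strict concavity of $g(s)-\alpha d(M(s)-1)$ for uniqueness of the slope;
\item series inversion of $M'(s)=v$ near $0$, with $\cH'=s$;
\item Laplace/Watson asymptotics of $M$ and $M'$ at the edge $R_+$.
\end{itemize}
Your route to $\cH(v)\sim v\log v/R_+$ through the optimality condition $v=M'(s)$ is a harmless variant of the paper's explicit test slopes.

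\textbf{The affine step fails as written.} With $\psi(x)=-sx$ one has $\psi'=-s$, so the drift contributes $q\psi'=-qs$, not $+qs$. Hence
\[
\mathfrak H_\alpha[-sx](x)=-q(x)s-V(x)-\alpha d\,(M(s)-1)\le -V(x)\qquad(s\ge0).
\]
These phases therefore give nothing beyond the constant test $s=0$, namely $\Lambda_D^\R(\alpha)\ge-\sup_\R V$.

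Evenness of $M$ makes the nonlocal term insensitive to the orientation of the exponential, but the drift term is not. Since $q>0$, you need $\phi'/\phi=-s<0$. That means $\psi(x)=sx$ and $\phi=e^{-sx}$ with $s\ge0$, exactly as in Proposition~\ref{prop:MPcore}\textup{(iii)} and in Step~2 of the paper's proof. Then
\[
\psi(x)-\psi(x-z)=sz,\qquad \int_\R J(z)e^{sz}\,dz=M(s),\qquad q\psi'=qs,
\]
so $\mathfrak H_\alpha[sx]=qs-V-\alpha d(M(s)-1)$.

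With this correction both displayed inequalities follow as you describe. Your evenness argument for restricting the supremum in $\cH$ to $s\ge0$ is a valid substitute for the paper's remark that negative slopes never beat $s=0$.

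Two minor remarks on the edge expansion:
\begin{itemize}
\item The relation $M'=R_+M\,(1+O(1/s))$ follows from Watson's lemma applied to $\int tJ(R_+-t)e^{-st}\,dt$. It does not use $\nu>1$.
\item To get $M(s(v))=v/R_++o(v)$ you only need $M'/M\to R_+$, not an $o(1/s)$ relative error.
\end{itemize}
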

\begin{corollary}\label{cor:homogeneous-wholeline-amplitude}
Assume $q\equiv c>0$ and $V\in C_b(\R)$. Then
\[
\alpha d\,\cH\!\left(\frac{c}{\alpha d}\right)-\sup_\R V
\le\Lambda_D^\R(\alpha)\le
\alpha d\,\cH\!\left(\frac{c}{\alpha d}\right)-\inf_\R V.
\]
Consequently,
\[
\frac{\Lambda_D^\R(\alpha)}{|\log\alpha|}\longrightarrow\frac{c}{R_+}
\qquad(\alpha\downarrow0).
\]
Under the edge hypothesis of Theorem~\ref{thm:heteroHamiltonian},
\[
\Lambda_D^\R(\alpha)
=\frac{c}{R_+}\left[\log\frac{c}{\alpha d}
+p\log\log\frac{c}{\alpha d}-p\log R_+-\log(R_+A)-1\right]+O(1).
\]
If, in addition, $V\equiv a_0$, then
\[
\Lambda_D^\R(\alpha)=\alpha d\,\cH\!\left(\frac{c}{\alpha d}\right)-a_0
=\sup_{s\in\R}\{cs-\alpha d(M(s)-1)\}-a_0.
\]
If $s_\alpha>0$ is the optimizer, then $\alpha dM'(s_\alpha)=c$ and
\[
(\Lambda_D^\R)'(\alpha)=-d(M(s_\alpha)-1)<0,\qquad
(\Lambda_D^\R)''(\alpha)=\frac{c^2}{\alpha^3dM''(s_\alpha)}>0.
\]
Moreover,
\[
\Lambda_D^\R(\alpha)=-a_0+\frac{c^2}{2\alpha dD_2(J)}
-\frac{D_4(J)c^4}{24\alpha^3d^3D_2(J)^4}+O(\alpha^{-5})
\qquad(\alpha\to+\infty),
\]
and, under the edge hypothesis,
\[
\Lambda_D^\R(\alpha)=\frac{c}{R_+}\left[\log\frac{c}{\alpha d}
+p\log\log\frac{c}{\alpha d}-p\log R_+-\log(R_+A)-1+o(1)\right]-a_0
\]
as $\alpha\downarrow0$. Finally, for
$\cA_{\sigma,m,\R}u=\sigma^{-m}(J_\sigma*u-u)+cu'+a_0u$,
\[
\Lp(\cA_{\sigma,m,\R})=\sigma^{-m}\cH(c\sigma^{m-1})-a_0.
\]
\end{corollary}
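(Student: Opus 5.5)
Most of the corollary follows directly from Theorem~\ref{thm:heteroHamiltonian}. For the lower bound, apply the affine-slope inequality of that theorem with $q\equiv c$:
\[
\Lambda_D^\R(\alpha)\ge\sup_{s\ge0}\{cs-\alpha d(M(s)-1)\}-\sup_\R V.
\]
Since $M$ is even and convex, the supremum over $s\ge0$ coincides with the supremum over $s\in\R$ when $c>0$, so it equals $\alpha d\,\cH(c/(\alpha d))$.

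For the upper bound, I would use the defining supersolution characterization with the test function $e^{-s x}$, compared against $V\ge\inf V$. The cleanest route is monotonicity in the potential: $\Lp$ is nonincreasing in the zeroth-order coefficient by \cite[Proposition~1.3]{CH}. This reduces the claim to the constant-potential case $V\equiv\inf V$. There I must show $\Lambda_D^\R(\alpha)\le\sup_s\{cs-\alpha d(M(s)-1)\}-a_0$. I would take the positive eigenfunction $\phi_\alpha$ and $\psi_\alpha=-\log\phi_\alpha$, for which $\mathfrak H_\alpha[\psi_\alpha]\equiv\Lambda$, and average the identity over long intervals $[-T,T]$. By Jensen,
\[
\int J(z)e^{\psi(x)-\psi(x-z)}dz\ge\exp\Big(\int J(z)(\psi(x)-\psi(x-z))dz\Big).
\]
Using translation invariance and a second Jensen in $x$, the mean slope $s_T=(\psi(T)-\psi(-T))/2T$ should satisfy $\Lambda\le cs_T-\alpha d(M(s_T)-1)-a_0+o(1)$. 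Hence $\Lambda\le\alpha d\cH(c/\alpha d)-a_0$. The boundedness of $s_T$ comes from the Harnack inequality \cite[Lemma~3.1]{CH}.

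This averaging/Jensen step is where I expect the main difficulty. Jensen in $x$ requires exploiting the convexity of $t\mapsto e^t$ jointly in $x$ and $z$, and the boundary terms must be $o(1)$ after division by $2T$. If this fails, the fallback is a direct computation: in the homogeneous case, the eigenfunction is an exponential $e^{-s_\alpha x}$ by simplicity and translation invariance, and the exact identity then follows immediately.

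The remaining assertions are then calculus on the Legendre transform:

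\emph{Exact formula.} With bounds equal for $V\equiv a_0$, we get $\Lambda=\sup_s\{cs-\alpha d(M(s)-1)\}-a_0$.

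\emph{First derivative.} The optimizer satisfies $\alpha dM'(s_\alpha)=c$. By the envelope theorem, $\Lambda'=-d(M(s_\alpha)-1)<0$, since $M(s)>1$ for $s\ne0$.

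\emph{Second derivative.} Differentiating the optimality condition gives $s_\alpha'=-M'(s_\alpha)/(\alpha M''(s_\alpha))$. Hence
\[
\Lambda''=-dM'(s_\alpha)s_\alpha'=\frac{dM'^2}{\alpha M''}=\frac{c^2}{\alpha^3dM''}.
\]

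\emph{Asymptotics.} The large-$\alpha$ expansion and the $\alpha\downarrow0$ expansions are obtained by substituting $v=c/(\alpha d)$ into the expansions of $\cH$ stated in Theorem~\ref{thm:heteroHamiltonian}:
\[
\alpha d\Big(\frac{v^2}{2D_2}-\frac{D_4v^4}{24D_2^4}\Big)=\frac{c^2}{2\alpha dD_2}-\frac{D_4c^4}{24\alpha^3d^3D_2^4}.
\]
For small $\alpha$ we have $\alpha d\cdot v/R_+=c/R_+$, which gives both the edge asymptotics and, combined with the two-sided bound (whose gap is $\sup V-\inf V=O(1)$), the limit $\Lambda/|\log\alpha|\to c/R_+$ and the $O(1)$ version.

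\emph{Scaled operator.} Finally, $\cA_{\sigma,m,\R}$ is of the form $\cA^D_\alpha$ with $d\alpha=\sigma^{-m}$ and kernel $J_\sigma$. Since $M_{J_\sigma}(s)=M(\sigma s)$, the rescaled Hamiltonian is $\cH_{J_\sigma}(v)=\cH(v/\sigma)$. Therefore
\[
\Lp=\sigma^{-m}\cH_{J_\sigma}(c\sigma^m)-a_0=\sigma^{-m}\cH(c\sigma^{m-1})-a_0.
\]
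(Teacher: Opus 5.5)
\textbf{Gap in the constant-potential upper bound.} Most of your proposal is correct and agrees with the paper. That includes the lower bound from the affine phases of Theorem~\ref{thm:heteroHamiltonian}, the reduction to $V\equiv\inf_\R V$ by potential monotonicity, the envelope-theorem formulas for $(\Lambda_D^\R)'$ and $(\Lambda_D^\R)''$, the asymptotics from substituting $v=c/(\alpha d)$, and the scaling identity $\cH_{J_\sigma}(v)=\cH(v/\sigma)$. The gap is the step you flagged: the bound $\Lambda\le\alpha d\,\cH(c/(\alpha d))-a_0$ does not follow from the chain you wrote. Your displayed Jensen inequality is taken in $z$. Once the exponential sits outside the $z$-integral, the moment $M$ cannot reappear. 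Averaging in $x$ and applying Jensen again gives the exponent $\int_\R J(z)\,(zs_T+O(1/T))\,dz=O(1/T)$, because $J$ is even. You would only get $\Lambda\le cs_T-a_0+o(1)$, which is not sharp. Jensen for the joint measure $J(z)\,dz\,dx/(2T)$ has the same defect.

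Your fallback fails too. Positive-cone simplicity on the line is proved in the paper only under the tail conditions of Theorem~\ref{thm:mainwholeline}\textup{(ii)}, and they fail here. Indeed $\mathfrak U_*\ge\Lp\ge-a_0$, so $\mathfrak U_*+a_0+\alpha d(M(\beta_+)-1)+c\beta_+>0$.

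\textbf{How to repair it.} Reverse the order: apply Jensen in $x$ alone for each fixed $z$, then integrate exactly in $z$. Lemma~\ref{thm:growth} gives $|\psi_\alpha(x)-\psi_\alpha(y)|\le C$ whenever $|x-y|\le R_J$. Hence
\[
\frac1{2T}\int_{-T}^{T}(\psi_\alpha(x)-\psi_\alpha(x-z))\,dx=zs_T+O(1/T)
\]
uniformly for $|z|\le R_J$, and $s_T$ stays bounded. Therefore
\[
\frac1{2T}\int_{-T}^{T}\int_\R J(z)e^{\psi_\alpha(x)-\psi_\alpha(x-z)}\,dz\,dx\ \ge\ \int_\R J(z)e^{zs_T-C/T}\,dz=e^{-C/T}M(s_T).
\]
Averaging $\mathfrak H_\alpha[\psi_\alpha]\equiv\Lambda$ over $[-T,T]$ then gives
\[
\Lambda\le cs_T-\alpha d(M(s_T)-1)-a_0+O(1/T)\le\alpha d\,\cH(c/(\alpha d))-a_0+O(1/T),
\]
and you let $T\to\infty$. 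With this correction your route is a valid, self-contained whole-line argument; it needs only a positive eigenfunction and the Harnack bound.

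The paper's route is shorter. Domain monotonicity gives $\Lambda_D^\R(\alpha)\le\Lp((-R,R))$. Proposition~\ref{thm:largeintervalrate}, with $d$ replaced by $\alpha d$, bounds the right-hand side by $\alpha d\,\cH(c/(\alpha d))-a_0+C/R^2$, and one lets $R\to\infty$.
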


\begin{proof}
Put $K_\alpha=\alpha d\,\cH(c/(\alpha d))$. Potential monotonicity between the
constant potentials $\inf_\R V$ and $\sup_\R V$ yields
\[
K_\alpha-\sup_\R V\le\Lambda_D^\R(\alpha)\le K_\alpha-\inf_\R V.
\]
The assertions for $\alpha\downarrow0$ follow from the large-argument asymptotics of
$\cH$ in Theorem~\ref{thm:heteroHamiltonian}; the bounded potential contributes only
an $O(1)$ term.

Assume now $V\equiv a_0$. For $s\in\R$, $\phi_s(x)=e^{-sx}$ satisfies
$J*\phi_s=M(s)\phi_s$ and
\[
\frac{\cA^D_{\alpha,\R}\phi_s}{\phi_s}
=\alpha d(M(s)-1)-cs+a_0.
\]
Theorem~\ref{thm:heteroHamiltonian} yields the lower bound after optimization over $s$.
For the reverse inequality, exhaust $\R$ by $(-R,R)$ and use
Proposition~\ref{thm:largeintervalrate}; the $R^{-2}$ correction vanishes. Hence
$\Lambda_D^\R(\alpha)=K_\alpha-a_0$.

Strict convexity of $M$ implies that the optimizer $s_\alpha$ is unique and satisfies
$\alpha dM'(s_\alpha)=c$. Differentiation yields
\[
(\Lambda_D^\R)'(\alpha)=-d(M(s_\alpha)-1),\qquad
s_\alpha'=-\frac{c}{\alpha^2dM''(s_\alpha)},
\]
and the second-derivative formula follows. The asymptotic expansions are those of
Theorem~\ref{thm:heteroHamiltonian}.

For the scaled operator take $\phi(x)=e^{-sx/\sigma}$. Then
\[
-\frac{\cA_{\sigma,m,\R}\phi}{\phi}
=\sigma^{-m}\{c\sigma^{m-1}s-M(s)+1\}-a_0.
\]
The same exhaustion argument proves
$\Lp(\cA_{\sigma,m,\R})=\sigma^{-m}\cH(c\sigma^{m-1})-a_0$.
\end{proof}

Theorem~\ref{thm:heteroHamiltonian} is formulated for variable $q$ and $V$. The finite increment $\psi(x)-\psi(x-z)$ is essential; for heterogeneous coefficients it cannot be replaced by a pointwise function of $\psi'(x)$. Corollary~\ref{cor:homogeneous-wholeline-amplitude} is a specialization of the exact formula, not an assumption used in its proof.

The zeroth-order assertions are analogous to \cite[Theorem~9.3]{BR}. In contrast with the self-adjoint result \cite[Theorem~9.5]{BR}, the derivative of the dispersal-amplitude function has no fixed sign on a heterogeneous bounded interval. In the homogeneous whole-line case it is strictly negative and the function is strictly convex.

The next result is the analogue of \cite[Propositions~9.1 and~9.2]{BR} for the present operator.

\begin{theorem}\label{thm:wholestability}
For $n\in\mathbb N$, let
\[
\cA_nu=d(J_n*u-u)+q_n(x)u'+a_n(x)u,
\qquad
\cA u=d(J*u-u)+q(x)u'+a(x)u.
\]
Assume that the kernels satisfy common support, normalization and nondegeneracy hypotheses and that $q_n,q\ge q_0>0$.

\smallskip
\noindent\emph{(i) Semicontinuity under local convergence.}
Suppose that $J_n\to J$ in $C_c^1(\R)$ with a common compact support, and that $q_n\to q$, $a_n\to a$ locally uniformly in $\R$, with common local bounds. Then $\Lp(\cA)\ge \limsup_{n\to\infty}\Lp(\cA_n)$.

\smallskip
\noindent\emph{(ii) Quantitative global stability.}
Let $\cA_1,\cA_2$ satisfy common global structural bounds, including a common bound for the supports and $C^1$ norms of their kernels, the bounds for $q_i,a_i$, and the positive lower bound for $q_i$. There exists $C>0$, depending only on these structural data, such that
\[
\big|\Lp(\cA_1)-\Lp(\cA_2)\big|
\le C\Big(
\|J_1-J_2\|_{L^1(\R)}
+\|q_1-q_2\|_{L^\infty(\R)}
+\|a_1-a_2\|_{L^\infty(\R)}
\Big).
\]
Consequently, under uniform convergence of the coefficients and $C_c^1$ convergence of the kernels with common structural bounds, $\Lp(\cA_n)\to\Lp(\cA)$. If $\phi_n>0$ is a principal eigenfunction normalized by $\phi_n(0)=1$, then every subsequence contains a further subsequence converging in $C^1_{\rm loc}(\R)$ to a positive eigenfunction associated with $\Lp(\cA)$. If $\Lp(\cA)$ is simple in the positive cone, the whole sequence converges to the normalized principal eigenfunction.
\end{theorem}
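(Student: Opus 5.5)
The plan is to prove (ii) first and deduce the convergence statements from it. Part (i) is handled separately by compactness of normalized eigenfunctions.

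\emph{Part (i).} Put $\ell:=\limsup_n\Lp(\cA_n)$ and pass to a subsequence along which $\Lp(\cA_n)\to\ell$. If $\ell=-\infty$ there is nothing to prove. Otherwise take $\lambda<\ell$; for $n$ large, $\Lp(\cA_n)>\lambda$. By \cite[Theorem~1.4]{CH} each $\Lp(\cA_n)$ is attained by a positive eigenfunction $\phi_n$ on $\R$, which we normalize by $\phi_n(0)=1$. The local Harnack inequality \cite[Lemma~3.1]{CH} applies with constants that are uniform in $n$, because the supports of the $J_n$, the bounds on $q_n,a_n$ on compacts and the lower bound $q_0$ are common. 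It bounds $\phi_n$ above and below on every compact set. Since $q_n\ge q_0$, the equation gives
\[
\phi_n'=-\bigl(d(J_n*\phi_n-\phi_n)+a_n\phi_n+\Lp(\cA_n)\phi_n\bigr)/q_n,
\]
so $\phi_n'$ is locally bounded. Differentiating the convolution term using the $C^1$ bounds on $J_n$ gives local equicontinuity of $\phi_n'$. Arzel\`a--Ascoli then yields $\phi_n\to\phi$ in $C^1_{\rm loc}$ along a further subsequence. Because the supports are common, $J_n*\phi_n\to J*\phi$ locally uniformly, so $\phi>0$ satisfies $\cA\phi+\ell\phi=0$. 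Hence $\Lp(\cA)\ge\ell$ by the definition of $\Lp$.

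\emph{Part (ii).} The approach is a perturbation argument with a test function. Let $\phi_2>0$ be an eigenfunction of $\cA_2$ on $\R$, again given by \cite[Theorem~1.4]{CH}. Then
\[
\cA_1\phi_2+\Lp(\cA_2)\phi_2=d\bigl((J_1-J_2)*\phi_2\bigr)+(q_1-q_2)\phi_2'+(a_1-a_2)\phi_2 .
\]
To obtain $\cA_1\phi_2+(\Lp(\cA_2)-\varepsilon)\phi_2\le0$ it suffices to bound each term on the right by a multiple of $\phi_2$, with $\varepsilon$ equal to $C$ times the perturbation size.

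\begin{itemize}
\item The zeroth-order term is immediate: it is at most $\|a_1-a_2\|_\infty\phi_2$.
\item The drift term needs $|\phi_2'|\le C\phi_2$. This follows from the equation together with a Harnack bound $\sup_{|y-x|\le R}\phi_2(y)\le C\phi_2(x)$, whose constant is uniform on $\R$ since all bounds are global.
\item The kernel term is at most $\|J_1-J_2\|_{L^1}\sup_{|y-x|\le R}\phi_2(y)\le C\|J_1-J_2\|_{L^1}\phi_2(x)$ by the same Harnack bound.
\end{itemize}

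The main obstacle is the uniform two-sided Harnack estimate over the whole line. The local inequality \cite[Lemma~3.1]{CH} is directional, so it must be iterated in both directions. I would first try to combine it with the nondegeneracy $J(0)>0$, i.e.\ $J\ge c>0$ on $[-r,r]$, and the drift bound to control $\phi$ on the downstream side. Its constants must depend only on the structural data and on a bound for $|\Lp|$. That bound comes from $-\|a\|_\infty\lesssim\Lp\lesssim C$: the lower side from the constant test function, the upper side from $\mathfrak U_*$ in Theorem~\ref{thm:mainwholeline}(ii).

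Granting the estimate, the definition of $\Lp$ gives $\Lp(\cA_1)\ge\Lp(\cA_2)-C(\cdots)$, and swapping the roles of $\cA_1,\cA_2$ gives the Lipschitz bound. The convergence $\Lp(\cA_n)\to\Lp(\cA)$ follows, since $C_c^1$ convergence with common support implies $L^1$ convergence. The compactness argument of part (i), now with $\Lp(\cA_n)\to\Lp(\cA)$, gives subsequential $C^1_{\rm loc}$ limits that are positive eigenfunctions for $\Lp(\cA)$ with $\phi(0)=1$. If $\Lp(\cA)$ is simple in the positive cone, this limit is unique, so the whole sequence converges.
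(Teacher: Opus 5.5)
Your proposal is essentially the paper's proof. Part (i) goes by Harnack compactness of the normalized whole-line eigenfunctions and passage to the limit. Part (ii) tests one principal eigenfunction against the other operator, using the global Harnack bound $\sup_{|y-x|\le R_J}\phi(y)/\phi(x)\le C_H$ and the derived bound $|\phi'|\le C\phi$. The convergence statements then follow from the Lipschitz estimate plus compactness and simplicity. The whole-line Harnack estimate you single out as the main obstacle is exactly Lemma~\ref{thm:growth}: the local two-sided estimate of \cite[Lemma~3.1]{CH}, chained over unit steps, with constants depending on the structural data and a bound for $|\lambda|$.

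There is one step in (i) that does not go through as written. You pass to a subsequence with $\Lp(\cA_n)\to\ell$ but never exclude $\ell=+\infty$. Two of your steps need $\sup_n\Lp(\cA_n)<\infty$:
\begin{itemize}
\item the claimed $n$-uniformity of the local Harnack constants, since these constants depend on a bound for the eigenvalue, not only on the coefficients;
\item your bound on $\phi_n'$ read off from the equation.
\end{itemize}
If $\ell=+\infty$, the limit equation makes no sense, and the conclusion $\Lp(\cA)\ge\ell$ would be false, since $\Lp(\cA)$ is finite. The paper closes this case before the compactness argument. Domain monotonicity on one fixed bounded interval $\Omega\ni0$ gives $\Lp(\cA_n)\le\Lp(\cA_n,\Omega)$. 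The terminal barrier $w_\kappa$ of Proposition~\ref{thm:stability} then gives a common upper bound, using only the common local coefficient bounds and the common positivity of the kernels near the origin. This is the same device you already invoke for the structural upper bound in (ii). With it added, and the unused choice of $\lambda<\ell$ removed, your argument is complete.
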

Upper semicontinuity follows from local Harnack compactness. For the two-sided estimate, the equation controls $|\phi'/\phi|$ and Harnack controls $\phi(x-z)/\phi(x)$ on the support of the kernels. This bounds $(\cA_2-\cA_1)\phi/\phi$ by the stated coefficient distances.

The second half of the paper concerns the scaled operators
\[
\begin{aligned}
\cA_{\sigma,m,L}\phi(x)
&=\frac1{\sigma^m}\left(\int_{-L}^{L}J_\sigma(x-y)\phi(y)\,dy-\phi(x)\right)
+q(x)\phi'(x)+a(x)\phi(x),\\
J_\sigma(z)&=\sigma^{-1}J(z/\sigma).
\end{aligned}
\]
When $0\le m<2$, the transport dominates the centered nonlocal diffusion and a different regime occurs.  We state the bounded phase diagram for variable coefficients.

\begin{theorem}\label{thm:mainsubcritical}
Assume $J\in C_c^2(\R)$ satisfies the standing hypotheses, $q,a\in C^2([-L,L])$, and
$q_*:=\min_{[-L,L]}q>0$. Then, as $\sigma\downarrow0$,
\[
\begin{array}{ll}
m>2:&\displaystyle \sigma^{m-2}\Lp(\cA_{\sigma,m,L})\longrightarrow d_J\frac{\pi^2}{4L^2},\\[1mm]
m=2:&\displaystyle \Lp(\cA_{\sigma,2,L})\longrightarrow
\lambda_1^D(d_J\partial_{xx}+q(x)\partial_x+a(x),(-L,L)),\\[1mm]
1<m<2:&\displaystyle \sigma^{2-m}\Lp(\cA_{\sigma,m,L})\longrightarrow\frac{q_*^2}{2D_2(J)},\\[1mm]
m=1:&\displaystyle \sigma\Lp(\cA_{\sigma,1,L})\longrightarrow\cH(q_*),\\[1mm]
0\le m<1:&\displaystyle \frac{\sigma}{|\log\sigma|}\Lp(\cA_{\sigma,m,L})
\longrightarrow\frac{q_*(1-m)}{R_+}.
\end{array}
\]
For $m>2$, the zero extensions of the $L^2$-normalized eigenfunctions converge strongly in $L^2(\R)$ to the positive normalized first Dirichlet eigenfunction of $-\partial_{xx}$ on $(-L,L)$; the nonlocal energies converge to $d_J\pi^2/(4L^2)$ and $\sigma^{m-2}\phi_{\sigma,m}(-L)^2\to0$.
\end{theorem}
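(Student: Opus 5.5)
The proof splits by regime. For each regime we prove an upper bound on $\Lp$ by an explicit positive test function and a lower bound by the Collatz--Wielandt/logarithmic argument of Theorem~\ref{thm:heteroHamiltonian}, localized to $(-L,L)$.

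The case $m=2$ is exactly Theorem~\ref{thm:maincritical} and needs no further work.

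For $m>2$ we rescale by $\sigma^{m-2}$. Then $\sigma^{m-2}\cA_{\sigma,m,L}=\sigma^{-2}(J_\sigma*_\Omega\cdot-\cdot)+\sigma^{m-2}(q\partial_x+a)$, so the drift and the potential become perturbations of size $o(1)$. We rerun the scheme of Proposition~\ref{prop:criticaltools} with $q,a$ replaced by $\sigma^{m-2}q,\sigma^{m-2}a$.
\begin{itemize}
\item[(a)] \emph{Upper bound.} Use $\cos(\pi x/2L)$, with its $O(\sigma)$ collars, as a supersolution. This gives $\limsup\sigma^{m-2}\Lp\le d_J\pi^2/4L^2$.
\item[(b)] \emph{Energy bound.} Multiply the equation by $\phi_\sigma$ and integrate to obtain the energy--transport identity. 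After scaling it reads
\[
E_\sigma(\phi_\sigma)+\sigma^{m-2}\tfrac12 q(-L)\phi_\sigma(-L)^2=\sigma^{m-2}\Lp+O(\sigma^{m-2}),
\]
using $\int q\phi\phi'=-\tfrac12 q(-L)\phi(-L)^2-\tfrac12\int q'\phi^2$. This bounds the energy.
\item[(c)] \emph{Compactness and lower bound.} Fourier compactness gives an $H_0^1$ limit $\phi_0$, and the liminf of the energy is at least $d_J\|\phi_0'\|^2\ge d_J(\pi^2/4L^2)\|\phi_0\|^2$. Combined with (a), all inequalities become equalities. Hence $\phi_0$ is the first Dirichlet mode, the energies converge, and the boundary flux $\sigma^{m-2}\phi_\sigma(-L)^2\to0$.
\end{itemize}

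For $m<2$ the drift dominates and the eigenfunctions concentrate as boundary layers, so we argue via exponential ansatz. For the lower bound, take $\phi=e^{-s x/\sigma}$ with $s\ge0$, truncated to $(-L,L)$. Dropping the part of the convolution outside $\Omega$ only decreases $J*_\Omega\phi$, so the inequality goes in the correct direction. This gives
\[
-\frac{\cA_{\sigma,m,L}\phi}{\phi}\ge \sigma^{-m}\Big(\min_x q(x)\,s\,\sigma^{m-1}-(M(s)-1)\Big)-\|a\|_\infty .
\]
Since $q\ge q_*$, the lower bound is $\Lp\ge \sigma^{-m}\cH(q_*\sigma^{m-1})-\|a\|_\infty$. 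We then insert the asymptotics of $\cH$ from Theorem~\ref{thm:heteroHamiltonian}:
\begin{itemize}
\item $1<m<2$: $v=q_*\sigma^{m-1}\to0$ gives $\sigma^{-m}v^2/2D_2=\sigma^{m-2}q_*^2/2D_2$.
\item $m=1$: $v=q_*$ is fixed, so the bound is $\cH(q_*)/\sigma$.
\item $m<1$: $v\to\infty$ gives $\sigma^{-m}v\log v/R_+\sim q_*(1-m)\sigma^{-1}|\log\sigma|/R_+$.
\end{itemize}

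For the matching upper bound, localize near a point $x_*$ where $q(x_*)=q_*$, and use domain monotonicity to pass to a small interval $I_\delta$ around $x_*$ on which $q\le q_*+\delta$. The bound needed is $\Lp(\cA_{\sigma,m,I_\delta})\le \sigma^{-m}\cH((q_*+\delta)\sigma^{m-1})(1+o(1))$. Comparing frozen coefficients with the $L^\infty$-Lipschitz dependence reduces this to the homogeneous problem $q\equiv q_*+\delta$ on a fixed interval. After the change of variables $x=\sigma y$ that problem becomes a whole-line-type problem on an interval of length $\asymp\delta/\sigma\to\infty$, with operator $\sigma^{-m}[(J*\cdot-\cdot)+c\sigma^{m-1}\partial_y]$. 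Proposition~\ref{thm:largeintervalrate} (the large-interval rate used in Corollary~\ref{cor:homogeneous-wholeline-amplitude}) then gives an eigenvalue $\sigma^{-m}\cH(c\sigma^{m-1})+O(\sigma^{-m}(\sigma/\delta)^2)$ times a factor controlled by the optimizer, and letting $\delta\to0$ closes the argument.

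\textbf{Main obstacle.} The hardest step is making this last correction uniform when $v=c\sigma^{m-1}$ is not bounded, i.e.\ $m\le1$. There the optimizer $s_v\to\infty$, and the confinement error must be shown to be $o(v\log v)$. I would first construct the upper-bound supersolution directly. Take $e^{-s_v y}\sin(\pi y/\ell)$ on an interval of length $\ell=\ell(\sigma)$ with $1\ll\ell\ll\delta/\sigma$, and verify that the tilted kernel $J(z)e^{s_v z}/M(s_v)$ has variance bounded by $M''/M$. This makes the sine correction cost $O(\ell^{-2}M''(s_v)/M(s_v))\cdot\sigma^{-m}M(s_v)$, which is negligible against $\sigma^{-m}v\log v$ since $M(s_v)\approx v/R_+$.
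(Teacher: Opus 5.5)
Your treatment of $m=2$, your lower bounds for $m<2$ (the affine exponential test plus the asymptotics of $\cH$), and your steps (b)--(c) for $m>2$ match the paper. Two steps fail as written; the second is a genuine gap.

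\textbf{The barrier in (a) for $m>2$.} Put $\varepsilon=\sigma^{m-2}$ and $\phi=\cos(\pi x/2L)$, and take $x=L-t$ with $t>R_J\sigma$.
\begin{itemize}
\item There is no truncation gain at such points, and $\cM_{\sigma,2}\phi=-d_J\frac{\pi^2}{4L^2}\phi+O(\sigma^2)$.
\item The residual is therefore $\eta\phi+\varepsilon q\phi'+O(\varepsilon\phi+\sigma^2)\approx\frac{\pi}{2L}\bigl(\eta t-\sigma^{m-2}q(L)\bigr)$.
\item This is negative on the band $R_J\sigma<t<\sigma^{m-2}q(L)/\eta$. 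The band is nonempty for $2<m<3$, and for $m=3$ as soon as $\eta<q(L)/R_J$.
\end{itemize}
So the outflow drift term, of size $\sigma^{m-2}\gg\sigma^2$, is not absorbed. The fix is to do literally what you announced: use the Dirichlet eigenfunction $u_\varepsilon$ of $d_J\partial_{xx}+\varepsilon(q\partial_x+a)$ with $\varepsilon=\sigma^{m-2}$. It absorbs the drift exactly and leaves the residual $\eta u_\varepsilon-C\sigma^2$. The uncontrolled collar then has width $O(\sigma^2)$ and lies inside the truncation-gain collar. This needs $\nu_\varepsilon\to d_J\pi^2/(4L^2)$ and Hopf and $C^4$ bounds uniform in $\varepsilon$, as in Step~1 of Proposition~\ref{prop:supercritical}.

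\textbf{Freezing the drift for $1<m<2$.} No available estimate justifies replacing $q$ by $q_*+\delta$.
\begin{itemize}
\item Lemma~\ref{thm:monotonicity} gives Lipschitz dependence on the potential only.
\item The drift estimate of Theorem~\ref{thm:wholestability}(ii) is a whole-line statement. Its constant is controlled by $\sup|\phi'/\phi|$, which is infinite at an outflow endpoint and, after scaling, only bounded by a multiple of $\sigma^{-m}$.
\end{itemize}
Let $\varphi$ be the eigenfunction with frozen drift $c=q_*+\delta$ and potential $\min a$. The variable-coefficient operator applied to $\varphi$, plus $\lambda^c\varphi$, equals $(q-c)\varphi'+(a-\min a)\varphi$. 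You therefore need a bound on $(c-q)(\varphi')^+/\varphi$.
\begin{itemize}
\item For $m<1$ one has $\varphi'<0$, because $\lambda^c$ exceeds the loss coefficient; this is the paper's argument.
\item At $m=1$ the equation gives $c\varphi'\le\sigma^{-1}\varphi$, which costs only $O(\delta)$ on the $\sigma\Lp$ scale.
\item For $1<m<2$, however, $\varphi\approx e^{-\kappa x}u$ with $\kappa\asymp\sigma^{m-2}$ and $u$ vanishing at the inflow end. So $\varphi$ increases on an inflow layer of width $\asymp\sigma^{2-m}\gg\sigma$, where $\varphi'/\varphi$ reaches order $\sigma^{-1}$.
\end{itemize}
The resulting error is of order $\delta\sigma^{-1}$ (the crude bound from the equation even gives $\delta\sigma^{-m}$), which is not $o(\sigma^{m-2})$. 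Hence $\limsup\sigma^{2-m}\Lp\le q_*^2/(2D_2(J))$ is not proved. The rest of your $1<m<2$ argument would be fine: Proposition~\ref{thm:largeintervalrate} with drift $c\sigma^{m-1}\to0$ has bounded constants and error $O(\sigma^{2-m}/\delta^2)=o(\sigma^{m-2})$.

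The paper never freezes the drift. It localizes at the critical window $x=x_\sigma+\sigma^{2-m}y$. There the rescaled problem is critical, with kernel scale $\sigma^{m-1}$ and drift converging to $q_*$ in $C^2([-R,R])$. Lemma~\ref{lem:criticalstability} then gives $d_J\pi^2/(4R^2)+q_*^2/(4d_J)$, and one lets $R\to\infty$. At $m=1$ the paper instead uses Proposition~\ref{thm:stability} on $x_\sigma+\sigma(-R,R)$.

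\textbf{A smaller point for $m<1$.} Your growing sine window is unnecessary. On a fixed window $x_\sigma+\sigma(-R,R)$ the affine-collar barrier gives an $O(C_R)$ error, which is already $o(|\log\sigma|)$. If you keep the sine, note that the outflow truncation gain carries the weight $e^{s_vz}$ with $z<0$. It is therefore only of order $\ell^{-1}s_v^{-2}$, against a cubic Taylor term of order $\ell^{-3}v$. You need $\ell\gtrsim s_v\sqrt v$, not merely $\ell\gg1$.
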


\begin{corollary}\label{cor:constant-drift-subcritical}
Assume $q\equiv c>0$ and $a\in C([-L,L])$. For every fixed $0\le m<2$,
\[
\Lp(\cA_{\sigma,m,L})
=\sigma^{-m}\cH(c\sigma^{m-1})-\max_{[-L,L]}a+o(1).
\]
For $1<m<2$ the Taylor series of $\cH$ determines every term which does not vanish as $\sigma\downarrow0$. If $0\le m<1$ and the edge hypothesis of Theorem~\ref{thm:heteroHamiltonian} holds, the corresponding logarithmic edge expansion follows. If $a\equiv a_0$, these formulae reduce to the constant-potential expressions.
\end{corollary}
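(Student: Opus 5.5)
The strategy is to sandwich the bounded-interval value between two constant-potential problems, so that the variable potential only shifts the answer by an additive constant. The constant drift $c$ then feeds directly into the exact whole-line formula of Corollary~\ref{cor:homogeneous-wholeline-amplitude}. Throughout write $a_M:=\max_{[-L,L]}a$ and $K_\sigma:=\sigma^{-m}\cH(c\sigma^{m-1})$.

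\emph{Potential monotonicity.} By potential monotonicity and the $L^\infty$-Lipschitz dependence on the potential (\cite[Proposition~1.3]{CH}), replacing $a$ by $a_M$ can only lower $\Lp$. The value with constant potential $a_M$ is exactly the $a\equiv0$ value minus $a_M$. Hence
\[
\Lp(\cA_{\sigma,m,L})\ge \Lp(\cA^{0}_{\sigma,m,L})-a_M,
\]
where $\cA^0$ denotes the operator with $a\equiv0$ and $q\equiv c$.

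\emph{Lower bound for $\cA^0$.} For the drift-free potential, test with the exponential $\phi_s(x)=e^{-sx/\sigma}$, $s\ge0$, on $(-L,L)$. The truncated convolution satisfies $J_\sigma*_{(-L,L)}\phi_s\le M(s)\phi_s$ because $J\ge0$. This gives
\[
-\frac{\cA^0_{\sigma,m,L}\phi_s}{\phi_s}\ge\sigma^{-m}\bigl(c\sigma^{m-1}s-M(s)+1\bigr),
\]
and hence, optimizing over $s$, $\Lp(\cA^{0}_{\sigma,m,L})\ge K_\sigma$ (the maximizing slope is nonnegative since $c>0$). Since $\Lp$ is defined as a supremum over positive supersolutions on the open interval, no boundary condition is needed for this test function.

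\emph{Upper bound: localization near a point.} Potential monotonicity alone only yields the upper bound with $\min a$, so this step uses localization instead. The key input is domain monotonicity. For $x_0$ with $a(x_0)=a_M$ and any $\delta>0$, choose $I_\delta=(x_0-\delta,x_0+\delta)\cap(-L,L)$, an interval of length at least $\delta$ on which $a\ge a_M-\varepsilon(\delta)$ with $\varepsilon(\delta)\to0$. Domain monotonicity gives $\Lp(\cA_{\sigma,m,L})\le\Lp(\cA_{\sigma,m,I_\delta})$. On $I_\delta$ the potential is at least $a_M-\varepsilon$, so
\[
\Lp(\cA_{\sigma,m,I_\delta})\le \Lp(\cA^0_{\sigma,m,I_\delta})-a_M+\varepsilon .
\]
It therefore suffices to show
\[
\Lp(\cA^0_{\sigma,m,I_\delta})\le K_\sigma+o(1)
\]
as $\sigma\downarrow0$, for fixed $\delta$. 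Rescaling $x=\sigma y$ turns $I_\delta$ into an interval of length $\delta/\sigma\to\infty$. The operator becomes
\[
\sigma^{-m}\Bigl[(J*u-u)+c\sigma^{m-1}u_y\Bigr],
\]
whose whole-line value is $\sigma^{-m}\cH(c\sigma^{m-1})$. What remains is a finite-length correction estimate of the form
\[
\Lp\le \sigma^{-m}\bigl[\cH(v)+C\,\ell^{-2}\bigr],\qquad v=c\sigma^{m-1},
\]
on intervals of length $\ell$, uniform in $v$ in the relevant range. This is Proposition~\ref{thm:largeintervalrate}, the one the paper uses to obtain the reverse inequality in Corollary~\ref{cor:homogeneous-wholeline-amplitude}. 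Here $\ell=\delta/\sigma$ gives an error of $C\sigma^{2-m}/\delta^2$, which is $o(1)$ precisely because $m<2$. This is where the restriction $m<2$ enters.

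\emph{Main obstacle.} The hard step is uniformity of the finite-interval correction in the drift parameter $v$. When $m<1$, $v\to\infty$, and the natural test function is the conjugated principal eigenfunction $e^{-s_v y}\psi(y)$ with $\psi$ a cosine-type profile. The cost of the profile is controlled by $M''(s_v)\ell^{-2}$ relative to the Hamiltonian scale, so I will need to check that this stays $O(\ell^{-2})$ times a factor absorbed into $o(1)$. If it is not uniform, I would instead take $\delta=\delta(\sigma)\to0$ slowly, balancing the continuity of $a$ against the growth of $M''(s_v)$.

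\emph{Consequences.} Letting $\delta\to0$ gives $\Lp(\cA_{\sigma,m,L})=K_\sigma-a_M+o(1)$. The Taylor statement for $1<m<2$ follows by inserting the expansion of $\cH$ from Theorem~\ref{thm:heteroHamiltonian}. Terms $\sigma^{-m}v^{2k}$ with $v=c\sigma^{m-1}$ scale as $\sigma^{(2k)(m-1)-m}$, and only finitely many of these do not vanish. The edge expansion for $0\le m<1$ follows likewise from the edge asymptotics of $\cH$, and the case $a\equiv a_0$ is immediate.
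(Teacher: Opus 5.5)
You follow the paper's structure for two of the three steps. The exponential test $e^{-sx/\sigma}$ is the constant function in the conjugated frame of Proposition~\ref{thm:conjugation}. Passing to a subinterval where $a\ge\max a-\varepsilon$, by domain and potential monotonicity, is exactly Lemma~\ref{thm:centerlimit}.

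\textbf{The gap.} It is the one step that carries the content: $\Lp(\cA^0_{\sigma,m,I_\delta})\le\sigma^{-m}\cH(c\sigma^{m-1})+o(1)$ for fixed $\delta$. Proposition~\ref{thm:largeintervalrate} does not supply this.
\begin{itemize}
\item Its constants $R_0$ and $C$ depend on $(J,d,c)$. After your rescaling the drift is $v=c\sigma^{m-1}$, which tends to $0$ for $1<m<2$ and to $+\infty$ for $0\le m<1$.
\item At $m=1$ it applies verbatim. For $1<m<2$ one can check in its proof that everything stays bounded as the tilt $s(v)\to0$.
\item For $m<1$ you need $C(v)\sigma^{2-m}\to0$ and $R_0(v)\le\delta/\sigma$, and you leave this open. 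The paper itself remarks at the start of Section~8 that the bounded-tilt sine estimate is no longer uniform there.
\end{itemize}
Your fallback goes the wrong way. Shrinking $\delta$ only enlarges $C(v)\sigma^{2-m}/\delta^2$; the obstruction is the growth of $C(v)$, not the choice of $\delta$.

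The difficulty is also not the interior cost you name. By compact support, $M''(s)\le C(1+M'(s))$ for $s\ge0$, so $M''(s_v)\lesssim 1+v$ and the interior error is $O(\sigma/\delta^2)$. The real issue is the endpoint layer of the sine argument. There $w_R$ vanishes, and the proof needs the truncation gain $\approx k_R\int_{z<-t}J(z)e^{s_vz}(-t-z)\,dz$ to dominate the odd-moment error $|B_R|\lesssim k_R^3M'''(s_v)$. That gain decays like $e^{-s_vt}/s_v^2$ as $s_v\to\infty$, so the positive constant used in that proof degenerates. It can be re-quantified, but you have not given that argument.

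\textbf{How the paper closes it.} Lemma~\ref{thm:centeredbarrier} avoids the issue entirely. In the conjugated frame it uses a barrier on the fixed subinterval that is exactly affine on boundary collars of fixed width.
\begin{itemize}
\item On the affine pieces the full-line centered expression vanishes identically, by the first-moment identity $M'(s_\sigma)=c\sigma^{m-1}$.
\item Truncation there only deletes the negative affine continuation, so no quantitative positivity near the endpoints is needed.
\item On the middle region the barrier is bounded below, and the Taylor error is at most $C(\sigma^{2-m}M''(s_\sigma)+\sigma^{3-m}M_3(s_\sigma))$. The inequality $M''+M_3\le C(1+M')$ makes this $O(\sigma)$ for $m<1$ and $O(\sigma^{2-m})$ for $1\le m<2$.
\end{itemize}
This gives $0\le\Lp(\cB_{\sigma,\Omega})\le C_\Omega\eta_\sigma\to0$, uniformly in the tilt, which is exactly the estimate your proposal is missing. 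With it inserted into your localization, the sharp representation follows. Your derivation of the Taylor expansion for $1<m<2$ and the edge expansion for $0\le m<1$ from Theorem~\ref{thm:heteroHamiltonian} is then fine.
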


\begin{proof}
The sharp representation is Proposition~\ref{thm:subcritical}. For $1<m<2$, Proposition~\ref{thm:higher} gives the complete expansion through every nonvanishing order. For $0\le m<1$, substituting $v=c\sigma^{m-1}$ into the edge asymptotics of $\cH$ in Theorem~\ref{thm:heteroHamiltonian} gives the stated logarithmic expansion. If $a\equiv a_0$, then $\max_{[-L,L]}a=a_0$, and the formulas reduce to the constant-potential case.
\end{proof}

Theorem~\ref{thm:mainsubcritical} contains no constancy assumption on $q$ or $a$, and none of its five limits is obtained from a Rayleigh quotient. The proof changes with the scale. An affine exponential test function yields a common lower bound through the convex Hamiltonian $\cH$. For $1<m<2$, the matching upper bound is obtained by blowing up around points where $q$ is close to $q_*$. The rescaled problem becomes a critical nonlocal problem on a fixed interval, so Lemma~\ref{lem:criticalstability} transfers the critical compactness mechanism to this new scale. At $m=1$ the same localization leaves a fixed nonlocal kernel and the upper bound follows from bounded-interval stability followed by exhaustion. For $m<1$, neither a quadratic expansion nor critical compactness is appropriate: comparison with a constant-drift problem produces a monotone eigenfunction, exponential conjugation removes the leading drift, and a centered boundary barrier controls the remaining weak-dispersal operator. For $m>2$, after the normalization $\sigma^{m-2}$ the drift and the potential vanish at leading order; Fourier compactness and the exact energy identity reduce the limit to the sharp Dirichlet Poincar\'e constant. The occurrence of $D_2(J)$, then the full exponential moment, and finally the support edge $R_+$ is therefore a consequence of three distinct non-variational mechanisms rather than of one formal Taylor expansion.
\begin{theorem}\label{thm:mainwholephase}
Assume $J\in C_c^2(\R)$ satisfies the standing hypotheses, $q,a\in C_b^2(\R)$ and $q_*:=\inf_\R q>0$. Let $\lambda_{\sigma,m}:=\Lp(\cA_{\sigma,m,\R})$. Then, as $\sigma\downarrow0$,
\[
\begin{array}{ll}
m>2:&\displaystyle \sigma^{m-2}\lambda_{\sigma,m}\to0,\\[1mm]
m=2:&\displaystyle \lambda_{\sigma,2}\to\Lp(d_J\partial_{xx}+q(x)\partial_x+a(x),\R),\\[1mm]
1<m<2:&\displaystyle \sigma^{2-m}\lambda_{\sigma,m}\to\frac{q_*^2}{2D_2(J)},\\[1mm]
m=1:&\displaystyle \sigma\lambda_{\sigma,1}\to\cH(q_*),\\[1mm]
0\le m<1:&\displaystyle \frac{\sigma}{|\log\sigma|}\lambda_{\sigma,m}\to\frac{q_*(1-m)}{R_+}.
\end{array}
\]

\end{theorem}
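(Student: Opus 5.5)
The plan is to treat the whole-line phase diagram as a combination of the bounded-interval phase diagram (Theorem~\ref{thm:mainsubcritical}), exhaustion (Theorem~\ref{thm:mainbounded}(i)), and the affine-exponential lower bound of Theorem~\ref{thm:heteroHamiltonian}. The key structural fact is domain monotonicity: $\Lp(\cA_{\sigma,m,\R})\le\Lp(\cA_{\sigma,m,L})$ for the operator restricted to any bounded interval $(x_0-L,x_0+L)$. Restricting the kernel to the interval only decreases the integral term for positive functions, so a positive supersolution on $\R$ restricts to one on the interval.

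\emph{Lower bounds for $0\le m<2$.} Insert $\phi(x)=e^{-sx/\sigma}$ into $\cA_{\sigma,m,\R}$. This gives
\[
-\frac{\cA_{\sigma,m,\R}\phi}{\phi}=\sigma^{-1}q(x)s-\sigma^{-m}(M(s)-1)-a(x)\ge\sigma^{-m}\{q_*\sigma^{m-1}s-M(s)+1\}-\|a\|_\infty .
\]
Optimizing over $s\ge0$ yields $\lambda_{\sigma,m}\ge\sigma^{-m}\cH(q_*\sigma^{m-1})-\|a\|_\infty$. The small- and large-argument asymptotics of $\cH$ in Theorem~\ref{thm:heteroHamiltonian} then produce the three lower limits:
\begin{itemize}
\item[] for $1<m<2$, $\cH(v)\sim v^2/(2D_2)$ with $v\to0$, giving $q_*^2/(2D_2(J))$;
\item[] for $m=1$, the argument is fixed and the limit is $\cH(q_*)$;
\item[] for $m<1$, $\cH(v)\sim v\log v/R_+$ with $v=q_*\sigma^{m-1}$, giving $q_*(1-m)|\log\sigma|/(R_+\sigma)$ at leading order.
\end{itemize}

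\emph{Upper bounds for $0\le m<2$.} Choose $x_n$ with $q(x_n)\to q_*$ (the infimum need not be attained). By the $C^2_b$ bounds, $q$ is within $\varepsilon$ of $q_*$ on $(x_n-L,x_n+L)$ for a fixed small $L$. Domain monotonicity bounds $\lambda_{\sigma,m}$ by the bounded-interval value there. Potential and drift comparison (Theorem~\ref{thm:wholestability}(ii) or \cite[Proposition~1.3]{CH}) replaces $q$ by a $C^2$ drift whose minimum on that interval is at most $q_*+\varepsilon$, at a cost of $O(\varepsilon\cdot\sigma^{-1})$. This error has to be examined against each normalization. For $1<m<2$ it is $O(\varepsilon\sigma^{1-m})$, which vanishes only relative to $\sigma^{m-2}$ if $\varepsilon$ is also small. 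To avoid this issue I would instead use Theorem~\ref{thm:mainsubcritical} directly on the translated interval with its own $q_*^{(n)}\le q_*+\varepsilon$. The bounded limits depend only on the interval minimum and are continuous in it, so the upper limits are at most the formulas evaluated at $q_*+\varepsilon$; then let $\varepsilon\to0$.

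The main obstacle is uniformity: Theorem~\ref{thm:mainsubcritical} is stated for one fixed interval and coefficient set, while here the translated coefficients vary with $n$. I would fix a compact family in $C^2$, namely translates with uniform $C^2_b$ bounds, which is precompact in $C^1$ on $[-L,L]$ by Arzel\`a--Ascoli. Passing to a limit profile $\bar q$ with $\min\bar q\le q_*+\varepsilon$ and using Theorem~\ref{thm:wholestability}(ii)-type Lipschitz stability at the rescaled level, one interval suffices.

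\emph{The case $m=2$.} The upper bound comes from exhaustion: for each $L$ and center, $\limsup\lambda_{\sigma,2}\le\lambda_1^D(d_J\partial_{xx}+q\partial_x+a,(-L,L))$ by Theorem~\ref{thm:maincritical}, and letting $L\to\infty$ gives the local generalized value $\Lp(\ldots,\R)$. For the lower bound, take the positive local eigenfunction, or a positive strict supersolution for $\Lp-\delta$ with bounded $\phi'/\phi$ and $\phi''/\phi$, via a Harnack-type local estimate. Taylor-expanding $J_\sigma*\phi-\phi=\sigma^2d_J\phi''+O(\sigma^3)$ relative to $\phi$ shows it is a nonlocal supersolution for $\lambda-\delta-o(1)$.

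\emph{The case $m>2$.} Apply the $m=2$ type argument after multiplying by $\sigma^{m-2}$. The upper bound is $d_J\pi^2/(4L^2)$ for every $L$ by Theorem~\ref{thm:mainsubcritical}, so it tends to $0$ as $L\to\infty$. The lower bound $\ge0$ follows from the exponential lower bound with $s=0$, which gives $\sigma^{m-2}\lambda\ge-\sigma^{m-2}\|a\|_\infty\to0$.
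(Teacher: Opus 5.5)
Your proof is correct. For $m=2$ and $m>2$ it matches the paper. The upper bounds come from domain monotonicity plus Theorem~\ref{thm:maincritical}, resp.\ Theorem~\ref{thm:mainsubcritical}. The lower bounds come from a Taylor-expanded positive local eigenfunction with relative derivative bounds, resp.\ the affine exponential. The affine lower bounds for $m<2$ are also the paper's.

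The transport-dominated upper bounds follow a genuinely different route. The paper re-runs the blow-up on the whole line at $\sigma$-dependent centers $x_\sigma$ with $q(x_\sigma)\le q_*+\sigma$:
\begin{itemize}
\item for $1<m<2$ it rescales by $\sigma^{2-m}$ and applies Lemma~\ref{lem:criticalstability};
\item for $m=1$ it uses Proposition~\ref{thm:stability} followed by exhaustion;
\item for $0\le m<1$ it repeats the constant-drift comparison, the monotone eigenfunction and the centered weak-dispersal barrier.
\end{itemize}
You instead restrict to a single bounded interval $I$ and apply the bounded phase diagram, Theorem~\ref{thm:mainsubcritical}. Its limits are nondecreasing continuous functions of $\min_{\overline I}q$, so letting $\min_{\overline I}q\downarrow q_*$ finishes. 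This is the same reduction the paper uses for $m\ge2$.

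What each route buys:
\begin{itemize}
\item Yours is shorter and not circular, since the bounded theorem is proved without the whole-line one. It needs only $C^2$ regularity of $q,a$ on one compact interval, at the price of an outer limit in $\varepsilon$.
\item The paper's is self-contained at the whole-line level. Because the global $C^2_b$ bounds make the blown-up drifts converge to $q_*$ itself, it needs no auxiliary $\varepsilon$.
\end{itemize}

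Some clean-ups are in order.

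\emph{The ``uniformity obstacle'' is spurious.} For each fixed $\varepsilon$ you apply Theorem~\ref{thm:mainsubcritical} to one interval with one fixed set of coefficients. So neither compactness of translates nor a limit profile $\bar q$ is needed. You do not even need $q$ close to $q_*$ on all of $I$, since $\min_{\overline I}q\le q(x_n)\le q_*+\varepsilon$ for every bounded $I\ni x_n$.

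\emph{Discard two of your intermediate steps.} Drop the drift-comparison attempt: the order properties in Lemma~\ref{thm:monotonicity} concern the domain, the potential and the kernel, not the drift. Also drop the limit-profile plus Lipschitz step. The constant in Theorem~\ref{thm:wholestability}(ii) is not uniform in $\sigma$, because the scaled amplitudes $\sigma^{-m}$ and kernels $J_\sigma$ satisfy no common structural bounds.

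\emph{For $m=2$, the remainder estimate needs more than you state.} An $O(\sigma^3)$ relative remainder requires $|\phi'''|\le C\phi$ and a uniform Harnack comparison of $\phi(x-\theta\sigma z)$ with $\phi(x)$, not only bounds on $\phi'/\phi$ and $\phi''/\phi$. The third-derivative bound follows by differentiating $d_J\phi''=-q\phi'-(a+\lambda_0)\phi$ once. The bound $|\phi'|\le C\phi$ itself needs an argument, such as the paper's combination of the mean value theorem, Gronwall and Harnack.
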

No periodicity, recurrence or limit of the coefficients at infinity is assumed in Theorem~\ref{thm:mainwholephase}. The lack of a variational formula is handled here by a different device: the logarithmic transform of a positive test function yields the exact nonlinear Collatz--Wielandt representation of Theorem~\ref{thm:heteroHamiltonian}. Its finite increment $\psi(x)-\psi(x-z)$ retains the full nonlocal geometry and replaces the scalar Rayleigh quotient. Affine phases yield the sharp lower bounds, while the matching upper bounds are obtained by localization near points $x_\sigma$ for which $q(x_\sigma)\to q_*$. At the critical scale, bounded-interval exhaustion supplies the upper bound and a positive generalized eigenfunction of the local whole-line operator supplies the reverse inequality through a uniform Taylor estimate. Thus the critical row retains the full heterogeneous local operator. In the transport-dominated rows the localization selects only the slowest drift $q_*$ at leading order, whereas the bounded potential is lower order. In the supercritical row the normalization eliminates both drift and potential. An unscaled limit for $m>2$ is not universal for arbitrary heterogeneous $a$ and is therefore not asserted.

Theorems~\ref{thm:mainsubcritical} and~\ref{thm:mainwholephase} summarize the bounded and whole-line small-range limits. The critical limit is Dirichlet on a bounded interval and remains a whole-line problem on $\R$.

For variable drift, Theorems~\ref{thm:mainsubcritical} and~\ref{thm:mainwholephase} concern the sharp leading orders. Higher-order additive expansions depend on the geometry of the minimizing set of $q$ and are not asserted without further hypotheses. When $q$ is constant, the exact conjugation below permits all nonvanishing corrections to be read from the Taylor expansion of $\cH$; for $m<1$ the edge of the support of $J$ then determines the sharper logarithmic expansion. We shall also use the variable-coefficient estimate
\[
\Lp(\cA_{\sigma,m,L})\ge
\sup_{s\ge0}\left\{
\inf_{x\in[-L,L]}\left(\frac{q(x)s}{\sigma}-a(x)\right)
-\frac{M(s)-1}{\sigma^m}
\right\}.
\]
If $q_0=\min_{[-L,L]}q$, this implies the simpler estimate
\[
\Lp(\cA_{\sigma,m,L})\ge \sigma^{-m}\cH(q_0\sigma^{m-1})-\max_{[-L,L]}a.
\]
In particular, $\Lp(\cA_{\sigma,m,L})\to+\infty$ as $\sigma\to0$ for every $0\le m<2$, without assuming that $q$ is constant.

At the opposite end, put $T_q:=\int_{-L}^{L}q(x)^{-1}\,dx$, $A_q:=\int_{-L}^{L}a(x)q(x)^{-1}\,dx$ and
\[
C_m:=\mathbf1_{\{m=0\}}+T_q^{-1}\left[2\log\frac{m+1}{T_q}-\log(J(0)q(-L))-A_q\right].
\]

\begin{theorem}\label{thm:mainlarge}
Assume $q,a\in C([-L,L])$, $q\ge q_0>0$, $m\ge0$ and $J(0)>0$. Then
\[
\Lp(\cA_{\sigma,m,L})=\frac{m+1}{T_q}\log\sigma+\frac{2}{T_q}\log\log\sigma+C_m+o(1)\qquad(\sigma\to\infty).
\]
For $0\le m<2$, the map $\sigma\mapsto\Lp(\cA_{\sigma,m,L})$ attains its minimum on a nonempty compact subset of $(0,\infty)$.
\end{theorem}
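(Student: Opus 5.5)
The plan is to reduce the problem, for large $\sigma$, to an explicitly solvable rank-one transport problem and then read off the three terms by a corner Laplace analysis in the travel-time variable $\tau(x)=\int_{-L}^x q^{-1}$.

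\emph{Rank-one reduction.} For $x,y\in[-L,L]$ we have $J_\sigma(x-y)=\sigma^{-1}J((x-y)/\sigma)$, with $|x-y|/\sigma\le 2L/\sigma\to0$. Since $J$ is continuous and $J(0)>0$, for every $\delta\in(0,J(0))$ and all large $\sigma$,
\[
\sigma^{-1}(J(0)-\delta)\le J_\sigma(x-y)\le \sigma^{-1}(J(0)+\delta)
\qquad\hbox{on }[-L,L]^2.
\]
By the kernel monotonicity of \cite[Proposition~1.3]{CH}, $\Lp(\cA_{\sigma,m,L})$ is then sandwiched between the principal values of the operators with constant kernels $\sigma^{-1}(J(0)\pm\delta)$ on the interval. (If monotonicity is stated only for kernels of convolution type, I would rerun its comparison proof, which uses only the order of the kernels.) It therefore suffices to compute, for a constant $k>0$ and $\varepsilon:=k\sigma^{-m-1}$, the principal value of
\[
u\mapsto \varepsilon\int_{-L}^{L}u-\sigma^{-m}u+qu'+au .
\]

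\emph{Exact characteristic equation.} For such a pair, set $c=a+\lambda-\sigma^{-m}$, $\Phi(x)=\exp\int_{-L}^x c/q$ and $K=\int_{-L}^L\phi$. The equation $q\phi'+c\phi=-\varepsilon K$ with $\phi(L)=0$ integrates to
\[
\phi(x)=\varepsilon K\int_x^L\frac{\Phi(y)}{\Phi(x)q(y)}\,dy .
\]
This is positive in $[-L,L)$. Integrating over $x$ gives the scalar equation
\[
1=\varepsilon F(\lambda),\qquad F(\lambda):=\iint_{-L<x<y<L}\frac{\Phi(y)}{\Phi(x)q(y)}\,dy\,dx .
\]
The function $F$ is continuous and increasing in $\lambda$, and tends to $+\infty$ as $\lambda\to+\infty$, so the equation has a unique large root. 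The resulting $\phi$ is a positive eigenfunction vanishing at the outflow endpoint. By the uniqueness in Theorem~\ref{thm:mainbounded}(i), together with the identification \cite[Theorem~1.2]{CH}, this root is the principal value.

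\emph{Corner Laplace analysis.} This is the main computational step. In the variables $s=\tau(x)$, $t=\tau(y)$ we have $\Phi=\exp\big((\lambda-\sigma^{-m})\tau+\int a/q\big)$, and the integral concentrates at the corner $x=-L$, $y=L$. There $dy/q(y)=dt$ and $dx=q(-L)\,ds+O(s\,ds)$. Using only the continuity of $q$ and $a$, I expect
\[
F(\lambda)=\frac{q(-L)\,e^{(\lambda-\sigma^{-m})T_q+A_q}}{\lambda^2}\,(1+o(1))
\qquad(\lambda\to\infty).
\]
The moduli of continuity of $a/q$ and $q$ at the two endpoints give the $o(1)$; this uniform control is the delicate part. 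Taking logarithms in $1=\varepsilon F(\lambda)$ yields
\[
\lambda T_q=(m+1)\log\sigma-\log k-\log q(-L)-A_q+2\log\lambda+\sigma^{-m}T_q+o(1).
\]
A first iteration gives $\lambda\sim(m+1)\log\sigma/T_q$. Hence $2\log\lambda=2\log\log\sigma+2\log\frac{m+1}{T_q}+o(1)$, and $\sigma^{-m}T_q\to T_q\mathbf 1_{\{m=0\}}$. This produces exactly the constant $C_m$, with $J(0)$ replaced by $k=J(0)\pm\delta$. Letting $\delta\to0$ after $\sigma\to\infty$ gives the three-term expansion.

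\emph{Minimum for $0\le m<2$.} I would argue in three steps.
\begin{itemize}
\item $\sigma\mapsto\Lp(\cA_{\sigma,m,L})$ is continuous on $(0,\infty)$. The kernels $\sigma^{-m}J_\sigma$ vary continuously in $L^1$, so the Lipschitz stability of Theorem~\ref{thm:wholestability}(ii) applies, adapted to the bounded interval via the same Harnack ratio argument.
\item The value tends to $+\infty$ as $\sigma\downarrow0$, by the lower bound $\Lp\ge\sigma^{-m}\cH(q_0\sigma^{m-1})-\max a$ stated before the theorem.
\item The value tends to $+\infty$ as $\sigma\to\infty$, by the expansion just proved.
\end{itemize}
Hence the sublevel set at the value $\Lp(\cA_{1,m,L})$ is a nonempty compact subset of $(0,\infty)$, and the minimum is attained on a nonempty compact set.
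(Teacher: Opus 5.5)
Your derivation of the three-term expansion follows the paper's own argument (Propositions~\ref{thm:largerange} and~\ref{thm:constantkernel}). You squeeze the gain kernel between rank-one kernels, apply kernel monotonicity, reduce to the exact scalar equation by terminal integration, and carry out the corner Laplace expansion in the travel-time variable. Working with $J(0)\pm\delta$ and sending $\delta\to0$ afterwards is equivalent to the paper's $j_\pm(\sigma)\to J(0)$.

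Two small repairs are needed in that part:
\begin{itemize}
\item You must first show that the root tends to $+\infty$. The integrand depends on $\sigma$ only through $\rho=\lambda-\sigma^{-m}$ with $0<\sigma^{-m}\le1$, so $F$ is bounded on bounded $\lambda$-sets uniformly in $\sigma$, while $F=1/\varepsilon\to\infty$.
\item The step you call delicate is just dominated convergence after the dilation $U=\rho u$, $V=\rho v$ at the corner, with majorant $q_1e^{2\|a\|_\infty T_q}e^{-(U+V)}$. With $q$ merely continuous you get $q(X(s))=q(-L)+o(1)$ rather than $O(s)$, and no moduli of continuity are needed.
\end{itemize}

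The genuine gap is the continuity of $\sigma\mapsto\Lp(\cA_{\sigma,m,L})$. Attaining the minimum needs at least lower semicontinuity, and that is the harder direction: the compactness argument of Theorem~\ref{thm:wholestability}\textup{(i)} gives only upper semicontinuity. The Lipschitz argument of Theorem~\ref{thm:wholestability}\textup{(ii)} rests on a uniform bound for $\phi(y)/\phi(x)$ over the interaction range. That bound fails on a bounded interval, because the principal eigenfunction satisfies $\phi_\sigma(L)=0$ with $\phi_\sigma'(L)<0$.

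Concretely, set $K_\sigma(x,y)=\sigma^{-(m+1)}J((x-y)/\sigma)$. As $x\uparrow L$, the difference $(\cA_{\sigma',m,L}-\cA_{\sigma,m,L})\phi_\sigma(x)$ tends to $\int_{-L}^{L}(K_{\sigma'}-K_\sigma)(L,y)\phi_\sigma(y)\,dy$. This limit does not vanish in general, whereas $C\delta\phi_\sigma(x)\to0$. Two consequences follow:
\begin{itemize}
\item $\phi_\sigma$ is in general not admissible for $\cA_{\sigma',m,L}$ at level $\Lp(\cA_{\sigma,m,L})-C\delta$;
\item $\phi_\sigma$ is in general not a valid comparison function in Proposition~\ref{thm:comparison} at level $\Lp(\cA_{\sigma,m,L})+C\delta$.
\end{itemize}
In addition, the $\sigma$-dependent support of $K_\sigma$ rules out a multiplicative kernel comparison.

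The paper avoids this in Lemma~\ref{thm:sigmacontinuity}. It treats $K_\sigma$ as a general kernel with potential $a-\sigma^{-m}$, and notes uniform convergence on $[-L,L]^2$ together with a common positivity neighborhood of the diagonal. It then applies Proposition~\ref{thm:generalKstability}. There the terminal-integration operators act on the weighted lattice $X_\beta$ with norm $\|u/(\beta-\cdot)\|_\infty$, which absorbs the linear vanishing at the outflow endpoint. Krein--Rutman, norm convergence of these operators, and strict monotonicity of their spectral radii in $\lambda$ then give continuity of the principal value using only $q\in C([-L,L])$. Once continuity is in place, your remaining two bullets (divergence as $\sigma\downarrow0$ and as $\sigma\to\infty$) finish the proof exactly as in Lemma~\ref{thm:optimalrange}.
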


Theorem~\ref{thm:mainlarge} is also obtained without a variational characterization. For $\sigma\to\infty$, the gain kernel is uniformly squeezed between two rank-one kernels because $J((x-y)/\sigma)\to J(0)$ on the bounded interval. Kernel monotonicity transfers this squeeze to the principal value. The resulting rank-one eigenproblem is still non-self-adjoint, but the first-order equation can be integrated exactly and reduced to one scalar equation. After the travel-time change of variable $T(x)=\int_{-L}^xq^{-1}$, its large spectral parameter is concentrated at the corner joining the inflow point to the outflow point. A two-dimensional corner Laplace expansion then yields the logarithmic and order-one terms. This explains why the expansion records the travel time $T_q$, the inflow value $q(-L)$, the path integral $\int a/q$ and the local kernel value $J(0)$, rather than a quadratic energy.

We use only $\Lp$ throughout. The relations involving $\lambda_1'$ and $\lambda_1''$ in \cite{BR} are not needed in the arguments below. We also do not consider exterior perturbations of unbounded domains.

\textbf{The organization of the paper is as follows.} Section~2 records the external fixed-sign spectral input and the basic stability statements. Section~3 concentrates the maximum-principle tools in Proposition~\ref{prop:MPcore} and Lemma~\ref{lem:MPcontact}. Section~4 collects the minimal-growth, tail and fixed-domain perturbation arguments in Propositions~\ref{prop:simplicitycore}, \ref{prop:tailcore} and~\ref{prop:perturbcore}. Section~5 contains the interval-size theory in Propositions~\ref{prop:sizecore} and~\ref{prop:sizeanalytic}. Section~6 contains the critical compactness, crossover, supercritical and adjoint limits in Proposition~\ref{prop:criticaltools}, Lemma~\ref{lem:criticalstability}, and Propositions~\ref{prop:crossovers}, \ref{prop:supercritical} and~\ref{prop:criticaladjoint}. Sections~7--8 treat the subcritical regimes and their constant-drift refinements. Section~9 is devoted to large dispersal ranges. Section~10 proves the coefficient-amplitude and whole-line asymptotic results stated in Section~1.

\section{Preliminary properties of the principal eigenvalue}

For an interval $\Omega\subset\R$, define
\[
\Lp(\Omega)=\sup\left\{\lambda\in\R:\ \exists\phi\in C^1(\Omega),\ \phi>0,\ \cA_\Omega\phi+\lambda\phi\le0\ \hbox{in }\Omega\right\}.
\]
If $\Omega=(\alpha,\beta)$ is bounded, functions are assumed continuous up to the endpoints. When $q>0$, the positive principal eigenfunction vanishes at $\beta$.

\begin{proposition}\label{thm:CHbounded}
Let $\Omega=(\alpha,\beta)$ be a bounded interval. Assume that $J$ satisfies the structural hypotheses stated above and that $q,a\in C(\overline \Omega)$ with $q\ge q_0>0$. Then $\Lp(\Omega)$ is finite and there exists $\phi_\Omega\in C^1(\overline \Omega)$ such that
\[
\phi_\Omega>0\ \hbox{in }\Omega,\quad \phi_\Omega(\beta)=0,\quad \cA_\Omega\phi_\Omega+\Lp(\Omega)\phi_\Omega=0\ \hbox{in }\Omega.
\]
On
\[
X_\Omega=\{u\in C^1(\overline \Omega):u(\beta)=0\}
\]
one has the full geometric-simplicity statement
\[
\ker(\cA_\Omega+\Lp(\Omega))=\operatorname{span}\{\phi_\Omega\}.
\]
Moreover, if $\Omega_n\uparrow \Omega$ is an increasing exhaustion of an interval $\Omega$, bounded or unbounded, then $\Lp(\Omega_n)\downarrow\Lp(\Omega)$,
and, after normalization at one fixed interior point, the corresponding eigenfunctions are locally compact in $C^1$ and converge along subsequences to a positive eigenfunction associated with $\Lp(\Omega)$.
\end{proposition}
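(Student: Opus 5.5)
Existence, finiteness and the outflow condition come from the external input. \cite[Proposition~1.1]{CH} gives a positive eigenpair $(\mu,\phi_\Omega)$ with $\phi_\Omega(\beta)=0$. \cite[Theorem~1.2]{CH} identifies $\mu$ with $\Lp(\Omega)$, so $\Lp(\Omega)$ is finite. Regularity up to the boundary follows from the equation itself. We can write $\phi_\Omega'=q^{-1}\big(-d\int_\Omega J(\cdot-y)\phi_\Omega(y)\,dy+(d-a-\Lp)\phi_\Omega\big)$. The right-hand side is continuous on $\overline\Omega$ because $\phi_\Omega$ is bounded and continuous on $\overline\Omega$ and $q\ge q_0$. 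Hence $\phi_\Omega\in C^1(\overline\Omega)$. Exhaustion monotonicity, with $\Lp(\Omega_n)$ nonincreasing, is domain monotonicity \cite[Proposition~1.3]{CH}. Convergence to $\Lp(\Omega)$ together with the subsequential limit of the normalized eigenfunctions is \cite[Theorem~1.4]{CH}. Local $C^1$ compactness comes from the Harnack inequality \cite[Lemma~3.1]{CH}, which bounds $\phi_n$ locally after normalization at a fixed point, and then from the same first-order formula for $\phi_n'$. The Arzel\`a--Ascoli theorem and passage to the limit in the integral equation finish this part.

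The genuinely new point is geometric simplicity on $X_\Omega$, including sign-changing kernel elements. Let $u\in X_\Omega$ solve $\cA_\Omega u+\Lp u=0$. I would first show that $u$ cannot be strictly positive somewhere and strictly negative elsewhere without being a multiple of $\phi_\Omega$. Set
\[
t^*=\sup\{t\in\R:\ \phi_\Omega-tu\ge0\ \hbox{in }\Omega\}.
\]
Replacing $u$ by $-u$ if needed, we may assume $u$ is positive somewhere, so $t^*<\infty$. The obstacle is that both functions vanish at $\beta$, so the supremum may be "attained" only at the outflow endpoint. This means $t^*$ could even be $0$ unless we compare derivatives at $\beta$.

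Here I use the equation at $\beta$. Evaluating at $x=\beta$ with $\phi_\Omega(\beta)=0$ gives
\[
q(\beta)\phi_\Omega'(\beta)=-d\int_\Omega J(\beta-y)\phi_\Omega(y)\,dy<0,
\]
using $J(0)>0$. So $\phi_\Omega'(\beta)<0$, which means $\phi_\Omega$ vanishes exactly linearly at $\beta$. Since $u\in C^1(\overline\Omega)$ with $u(\beta)=0$, we have $|u|\le C\phi_\Omega$ near $\beta$. Combined with positivity of $\phi_\Omega$ on compact subsets, this shows $t^*\in(0,\infty)$.

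Then $w=\phi_\Omega-t^*u\ge0$ solves the same equation. There are two cases.

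(a) $w(x_0)=0$ at an interior point. At an interior minimum $w'(x_0)=0$, so the equation gives $d\int_\Omega J(x_0-y)w(y)\,dy=0$. Hence $w\equiv0$ near $x_0$. Propagating through the kernel support yields $w\equiv0$; alternatively one can invoke the strong maximum principle contained in the Harnack inequality.

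(b) $w>0$ in $\Omega$ but $w$ has first contact at $\beta$ in the derivative sense, i.e.
\[
\liminf_{x\to\beta}\frac{w}{\phi_\Omega}=0.
\]
This means $w'(\beta)=0$. Evaluating the equation at $\beta$ for $w$ gives $q(\beta)w'(\beta)=-d\int J(\beta-y)w\,dy<0$, which is a contradiction unless $w\equiv0$ near $\beta$. That in turn reduces to case (a).

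In both cases $w\equiv0$, so $u=\phi_\Omega/t^*$, and the kernel is spanned by $\phi_\Omega$. The step I expect to be most delicate is case (b): one must make rigorous that maximality of $t^*$ forces vanishing of the first-order ratio at the outflow point. I would do this by comparing $w'(\beta)$ with $\phi_\Omega'(\beta)$. If $w'(\beta)<0$, then $w\ge\varepsilon\phi_\Omega$ near $\beta$ and also on compacts, which contradicts maximality of $t^*$. So $w'(\beta)=0$, and the boundary identity above gives the contradiction.
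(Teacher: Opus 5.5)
Your route is the paper's. Existence, finiteness, boundary regularity and exhaustion come from the same Coville--Hamel input, together with Harnack compactness and the first-order formula for $\phi_n'$. Simplicity is proved by the same touching argument:
\begin{itemize}
\item the endpoint identity $q(\beta)\phi_\Omega'(\beta)=-d\int_\Omega J(\beta-y)\phi_\Omega(y)\,dy<0$ controls the ratio at the outflow end;
\item kernel positivity propagates an interior zero;
\item the same identity applied to $w$ excludes derivative contact at $\beta$.
\end{itemize}
The paper maximizes the extended quotient $w/\phi_\Omega$ instead of using $\sup\{t:\phi_\Omega-tu\ge0\}$, which is equivalent.

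There is one hole in your contact analysis. Cases (a) and (b) do not cover contact at the inflow endpoint, namely $w>0$ in $\Omega$ but $w(\alpha)=0$. Elements of $X_\Omega$ are unconstrained at $\alpha$, so the constraint $\phi_\Omega-tu\ge0$ can bind exactly there; in that case $u(\alpha)=\phi_\Omega(\alpha)/t^*>0$. Your closing step, ``$w\ge\varepsilon\phi_\Omega$ near $\beta$ and also on compacts'', then does not contradict maximality of $t^*$, because that needs $w(\alpha)>0$. Relatedly, $t^*>0$ requires $\phi_\Omega(\alpha)>0$, not just positivity of $\phi_\Omega$ on compact subsets of $\Omega$.

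Both points are settled by evaluating the equation at $\alpha$. If $w(\alpha)=0$ and $w>0$ in $\Omega$, then $q(\alpha)w'(\alpha)=-d\int_\Omega J(\alpha-y)w(y)\,dy<0$. This is impossible for a nonnegative function vanishing at the left endpoint. The same computation with $\phi_\Omega$ in place of $w$ gives $\phi_\Omega(\alpha)>0$. This is exactly the inflow-contact case the paper treats separately, and once you add it your argument is complete.
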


\begin{proof}[\textbf{Proof of Proposition~\ref{thm:CHbounded}}]
For bounded intervals, existence of the positive eigenpair, finiteness of the value and its identification with the generalized value are the fixed-sign case of the results of Coville and Hamel \cite[Theorem~1.2 and Proposition~1.1]{CH}, after absorbing the loss term $-d\phi$ into the zeroth-order coefficient. Their hypotheses on the kernel are satisfied because $J$ is continuous, $J(0)>0$, and hence there exist $\delta>0$ and constants $0<j_0\le j_1$ such that
\[
j_0\mathbf 1_{(-\delta,\delta)}(x-y)\le J(x-y)\le j_1\mathbf 1_{(-R_J,R_J)}(x-y)
\]
after decreasing $\delta$ and increasing $j_1$ if necessary. The fixed-sign assumption is $q\ge q_0>0$, and the outflow endpoint is $\beta$. The equation expresses $\phi_\Omega'$ as a continuous function on $\overline \Omega$, so $\phi_\Omega\in C^1(\overline \Omega)$. Passing to $x=\beta$ yields
\[
q(\beta)\phi_\Omega'(\beta)
=-d\int_\Omega J(\beta-y)\phi_\Omega(y)\,dy<0.
\]

We prove the kernel statement. Let $w\in X_\Omega$ solve $(\cA_\Omega+\Lp(\Omega))w=0$. Since $w(\beta)=\phi_\Omega(\beta)=0$ and $\phi_\Omega'(\beta)<0$, the quotient $w/\phi_\Omega$ extends continuously to $\beta$ by
\[
\left.\frac{w}{\phi_\Omega}\right|_{x=\beta}
=\frac{w'(\beta)}{\phi_\Omega'(\beta)}.
\]
Set
\[
t=\max_{[\alpha,\beta]}\frac{w}{\phi_\Omega},
\qquad z=w-t\phi_\Omega,
\]
where the quotient at $\beta$ has this extended value. Then $z\le0$ and the quotient has a contact point in $\overline \Omega$. At an interior contact point $x_0$, one has $z(x_0)=z'(x_0)=0$ and therefore
\[
0=(\cA_\Omega+\Lp(\Omega))z(x_0)
=d\int_\Omega J(x_0-y)z(y)\,dy.
\]
The integrand is nonpositive and $J$ is strictly positive near the origin; hence $z=0$ in a neighborhood of $x_0$. If $(a_*,b_*)$ is the maximal interval containing $x_0$ on which $z=0$, evaluation at a finite endpoint of that interval, where also $z'=0$, extends the zero set across the endpoint. Iteration in steps smaller than the positivity radius of $J$ yields $z\equiv0$ on $[\alpha,\beta]$.

If contact occurs at the inflow endpoint $\alpha$, then $z'(\alpha)\le0$ and
\[
q(\alpha)z'(\alpha)+d\int_\Omega J(\alpha-y)z(y)\,dy=0.
\]
Both terms are nonpositive, so both vanish. Kernel positivity yields $z=0$ on a right neighborhood of $\alpha$, and the preceding propagation again yields $z\equiv0$.

It remains to consider contact occurring only at the outflow endpoint. There
\[
q(\beta)z'(\beta)=-d\int_\Omega J(\beta-y)z(y)\,dy\ge0.
\]
If the integral is strictly negative, then $z'(\beta)>0$ and
\[
\lim_{x\uparrow\beta}\frac{w(x)}{\phi_\Omega(x)}
=t+\frac{z'(\beta)}{\phi_\Omega'(\beta)}<t.
\]
Thus the maximum defining $t$ is attained away from a left neighborhood of $\beta$, reducing to the interior or inflow case; the identity $z\equiv0$ then contradicts strict negativity of the integral. Hence the integral is zero and $z'(\beta)=0$. Positivity of $J(\beta-\cdot)$ near $\beta$ yields $z=0$ on a left neighborhood of $\beta$. Propagating its left endpoint as above again yields $z\equiv0$. Therefore $w=t\phi_\Omega$ and the kernel is one-dimensional.

For an unbounded interval, the exhaustion statement and the existence of a positive eigenfunction at the limiting value are \cite[Theorem~1.4]{CH}. We recall the compactness argument because it will be used below. If $\Omega_n\uparrow \Omega$, domain monotonicity yields a decreasing sequence $\Lp(\Omega_n)$. The constant test function gives
\[
\Lp(\Omega_n)\ge-\sup_{\Omega_n}a\ge-\|a\|_{L^\infty(\Omega)},
\]
because $\int_{\Omega_n}J(x-y)\,dy\le1$. Thus the decreasing eigenvalue sequence is bounded from below and hence bounded. Normalize $\phi_n$ by $\phi_n(x_0)=1$ at a fixed $x_0\in \Omega_1$. The Harnack estimate \cite[Lemma~3.1]{CH}, applied on every compact interval $K\Subset \Omega$, yields constants $C_K$ independent of $n$ such that $C_K^{-1}\le\phi_n(x)\le C_K\quad x\in K$
for all sufficiently large $n$. The eigenvalue equation can be written
\[
q(x)\phi_n'(x)=-d\int_{\Omega_n}J(x-y)\phi_n(y)\,dy+\big(d-a(x)-\Lp(\Omega_n)\big)\phi_n(x).
\]
The right-hand side is uniformly bounded on $K$, and equicontinuity follows first for $\phi_n$ and then for $\phi_n'$. Arzel\`a--Ascoli and a diagonal extraction yield $C^1_{\rm loc}(\Omega)$ convergence. Passing to the limit in the equation yields a positive eigenfunction on $\Omega$. The identification of the exhaustion limit with $\Lp(\Omega)$ and attainment at that value are \cite[Theorem~1.4]{CH}; this completes the proof.
\end{proof}

\begin{lemma}\label{thm:monotonicity}
Let $\Omega_1\subset \Omega_2$ be intervals. Then $\Lp(\Omega_1)\ge\Lp(\Omega_2)$. If $a_1\le a_2$ on a fixed interval $\Omega$, then
\[
\Lp(d\mathcal J_\Omega+q\partial_x+a_1)\ge\Lp(d\mathcal J_\Omega+q\partial_x+a_2),
\]
where $\mathcal J_\Omega u=\int_\Omega J(x-y)u(y)\,dy-u(x)$. Moreover,
\[
|\Lp(a_1)-\Lp(a_2)|\le\|a_1-a_2\|_{L^\infty(\Omega)}.
\]
If $K_1,K_2$ are nonnegative kernels on $\Omega\times \Omega$ and $K_1\le K_2$, then, with the drift and zeroth-order coefficient fixed,
\[
\Lp(dK_1+q\partial_x+a)\ge\Lp(dK_2+q\partial_x+a).
\]
\end{lemma}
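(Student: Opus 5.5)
All four assertions will be proved directly from the definition of $\Lp$ as a supremum over positive supersolutions. In each case I take an admissible pair for one operator and show that it, or a small modification of it, is admissible for the other. The paper also says these order properties are \cite[Proposition~1.3]{CH}. I would still record the direct arguments, because the kernel comparison is stated here for general nonnegative kernels $K_1\le K_2$.

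\emph{Domain monotonicity.} Let $\lambda<\Lp(\Omega_2)$, and let $\phi\in C^1(\Omega_2)$ with $\phi>0$ and $\cA_{\Omega_2}\phi+\lambda\phi\le0$ in $\Omega_2$. I restrict $\phi$ to $\Omega_1$. For $x\in\Omega_1$,
\[
\int_{\Omega_1}J(x-y)\phi(y)\,dy\le\int_{\Omega_2}J(x-y)\phi(y)\,dy,
\]
since $J\ge0$ and $\phi>0$. The local terms $-d\phi+q\phi'+a\phi$ do not change, so $\cA_{\Omega_1}\phi+\lambda\phi\le0$ in $\Omega_1$. Hence $\lambda\le\Lp(\Omega_1)$, and letting $\lambda\uparrow\Lp(\Omega_2)$ gives the claim.

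The one point needing care is the boundary regularity convention of Section~2 when $\Omega_1$ is bounded with outflow endpoint $\beta_1$ inside $\Omega_2$. There the restricted $\phi$ is continuous up to $\beta_1$ but does not vanish at it. The definition only requires $\phi\in C^1(\Omega)$ with $\phi>0$, so the restriction is admissible. If the convention instead demanded vanishing at $\beta$, I would compare with the eigenfunction directly through Proposition~\ref{thm:CHbounded}. I expect this bookkeeping to be the only real obstacle; the inequalities themselves are immediate.

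\emph{Potential monotonicity and the Lipschitz bound.} If $a_1\le a_2$ and $\phi$ is admissible for $a_2$ at level $\lambda$, then
\[
d\mathcal J_\Omega\phi+q\phi'+a_1\phi+\lambda\phi\le d\mathcal J_\Omega\phi+q\phi'+a_2\phi+\lambda\phi\le0.
\]
So $\phi$ is admissible for $a_1$ at the same level, and $\Lp(a_1)\ge\Lp(a_2)$. For the Lipschitz estimate, put $\varepsilon=\|a_1-a_2\|_{L^\infty(\Omega)}$. Then $a_1\le a_2+\varepsilon$ pointwise. Adding a constant $\varepsilon$ to the zeroth-order coefficient shifts $\Lp$ by exactly $-\varepsilon$, since the admissible sets correspond under $\lambda\mapsto\lambda-\varepsilon$. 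Together with monotonicity this gives
\[
\Lp(a_1)\ge\Lp(a_2+\varepsilon)=\Lp(a_2)-\varepsilon.
\]
Exchanging $a_1$ and $a_2$ gives the reverse inequality. If $\Lp$ could be $\pm\infty$, the identities are read in the extended sense. On bounded intervals finiteness is provided by Proposition~\ref{thm:CHbounded}; in general the constant test function gives $\Lp\ge-\sup a$.

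\emph{Kernel monotonicity.} If $K_1\le K_2$ and $\phi>0$ is admissible for $dK_2+q\partial_x+a$, then $K_1\phi\le K_2\phi$ pointwise. Here the loss term is understood as part of the fixed zeroth-order coefficient, consistent with the statement that the drift and $a$ are held fixed. Thus $\phi$ is admissible for $K_1$ at the same level, and taking suprema yields $\Lp(dK_1+q\partial_x+a)\ge\Lp(dK_2+q\partial_x+a)$.
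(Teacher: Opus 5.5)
Your proposal is correct and follows the paper's proof essentially line by line. You use restriction of an admissible supersolution for domain monotonicity, direct comparison in the defining inequality for the potential and kernel orderings, and the constant-shift identity together with $a_2-\delta\le a_1\le a_2+\delta$ for the Lipschitz bound. Your reading of the endpoint convention also matches Section~2: admissible test functions need only be positive and continuous up to the endpoints, not vanish at the outflow point, so the fallback you mention is not needed.
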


\begin{proof}[\textbf{Proof of Lemma~\ref{thm:monotonicity}}]
Let $\lambda<\Lp(\Omega_2)$ and choose $\phi>0$ in $\Omega_2$ such that $\cA_{\Omega_2}\phi+\lambda\phi\le0$. For $x\in \Omega_1$,
\[
\cA_{\Omega_1}\phi(x)=\cA_{\Omega_2}\phi(x)-d\int_{\Omega_2\setminus \Omega_1}J(x-y)\phi(y)\,dy\le\cA_{\Omega_2}\phi(x).
\]
Hence the restriction of $\phi$ is admissible for $\lambda$ on $\Omega_1$, and $\Lp(\Omega_1)\ge\Lp(\Omega_2)$. The potential monotonicity follows directly from the defining inequality. If $\delta=\|a_1-a_2\|_\infty$, then $a_2-\delta\le a_1\le a_2+\delta$. Adding a constant $c$ to the potential shifts the generalized principal eigenvalue by $-c$, and therefore $\Lp(a_2)-\delta\le\Lp(a_1)\le\Lp(a_2)+\delta$.
If $K_1\le K_2$, then for every positive test function $\phi$,
\[
\int_\Omega K_1(x,y)\phi(y)\,dy\le
\int_\Omega K_2(x,y)\phi(y)\,dy.
\]
Every level admissible for $K_2$ is therefore admissible for $K_1$.
\end{proof}

\begin{proposition}\label{thm:comparison}
Let $\Omega=(\alpha,\beta)$ be bounded and assume $q\in C^1(\overline \Omega)$. Let $\phi_\Omega$ be the positive principal eigenfunction associated with $\Lp(\Omega)$. Then the formal adjoint
\[
\cA_\Omega^*v=d\left(\int_\Omega J(x-y)v(y)\,dy-v(x)\right)-(qv)'(x)+a(x)v(x)
\]
has a positive eigenfunction $\phi_\Omega^*\in C^1(\Omega)\cap C(\overline \Omega)$ satisfying
\[
\cA_\Omega^*\phi_\Omega^*+\Lp(\Omega)\phi_\Omega^*=0\ \hbox{in }\Omega,\quad \phi_\Omega^*(\alpha)=0,\quad \phi_\Omega^*>0\ \hbox{in }\Omega.
\]
If $v\in C^1(\Omega)\cap C(\overline \Omega)$ satisfies $v>0$ in $\Omega$, $v(\beta)=0$ and $\cA_\Omega v+\mu v\ge0$ in $\Omega$, then $\Lp(\Omega)\le\mu$. If equality holds and $v$ is an eigenfunction, then $v$ is proportional to $\phi_\Omega$.
\end{proposition}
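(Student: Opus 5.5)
The proof has two parts: an existence statement for the adjoint eigenfunction, and a comparison statement for positive subsolutions that vanish at $\beta$.

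\emph{Existence of $\phi_\Omega^*$ by reflection.} I would reduce the adjoint problem to the direct problem with the drift direction reversed. Expanding the adjoint gives
\[
\cA_\Omega^*v=d(J*_\Omega v-v)-qv'+(a-q')v,
\]
where $J*_\Omega v(x)=\int_\Omega J(x-y)v(y)\,dy$. This is an operator of the same type, with drift $-q\le -q_0<0$ and a continuous potential $a-q'$ (here $q\in C^1(\overline\Omega)$ is used). Since $J$ is even, the reflection $x\mapsto\alpha+\beta-x$ turns it into an operator with positive drift $\tilde q$ and potential $\tilde a-\tilde q'$ on $\Omega$. Applying Proposition~\ref{thm:CHbounded} to this reflected operator gives a positive eigenpair $(\mu^*,\psi)$ with $\psi$ vanishing at the reflected outflow endpoint. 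Undoing the reflection gives $\phi_\Omega^*>0$ in $\Omega$ with $\phi_\Omega^*(\alpha)=0$ and $\cA_\Omega^*\phi_\Omega^*+\mu^*\phi_\Omega^*=0$.

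\emph{Identifying $\mu^*=\Lp(\Omega)$.} I would pair the direct and adjoint equations. Integrate $\phi_\Omega^*\,\cA_\Omega\phi_\Omega-\phi_\Omega\,\cA_\Omega^*\phi_\Omega^*$ over $\Omega$. The nonlocal terms cancel by the symmetry $J(x-y)=J(y-x)$. The drift terms combine to $\int_\Omega(q\phi_\Omega\phi_\Omega^*)'$, which equals the boundary term $[q\phi_\Omega\phi_\Omega^*]_\alpha^\beta=0$, since $\phi_\Omega(\beta)=0$ and $\phi_\Omega^*(\alpha)=0$. This gives
\[
(\mu^*-\Lp(\Omega))\int_\Omega\phi_\Omega\phi_\Omega^*=0,
\]
and the integral is positive, so $\mu^*=\Lp(\Omega)$. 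The step needing care is integrability of $(\phi_\Omega^*)'$ near $\beta$ and of the boundary values, since $\phi_\Omega^*$ is only $C^1(\Omega)\cap C(\overline\Omega)$. I would integrate over $(\alpha+\varepsilon,\beta-\varepsilon)$ and let $\varepsilon\to0$, using continuity up to the boundary. In fact, after reflection $\psi$ is $C^1$ up to the boundary by Proposition~\ref{thm:CHbounded}, so this is harmless.

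\emph{Comparison statement.} Suppose $v>0$ in $\Omega$, $v(\beta)=0$ and $\cA_\Omega v+\mu v\ge0$. Apply the same pairing with $\phi_\Omega^*$. The boundary term $[qv\phi_\Omega^*]_\alpha^\beta$ again vanishes, because $v(\beta)=0$ and $\phi_\Omega^*(\alpha)=0$. This yields
\[
0\le\int_\Omega\phi_\Omega^*(\cA_\Omega v+\mu v)=(\mu-\Lp(\Omega))\int_\Omega v\phi_\Omega^*.
\]
Since $\int_\Omega v\phi_\Omega^*>0$, we get $\Lp(\Omega)\le\mu$. This needs $v$ regular enough for the integration by parts; I would again use a cutoff $(\alpha+\varepsilon,\beta-\varepsilon)$. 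The boundary contributions $q v\phi_\Omega^*$ tend to $0$ at both ends by continuity, and the interior integrals converge because the integrand $\phi_\Omega^*(\cA_\Omega v+\mu v)$ is nonnegative, so monotone convergence applies.

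\emph{Equality case.} If $\mu=\Lp(\Omega)$ and $v$ is an eigenfunction, I would deduce proportionality from the geometric simplicity in Proposition~\ref{thm:CHbounded}. That requires $v\in X_\Omega$, that is, $v\in C^1(\overline\Omega)$. This follows because the equation expresses $qv'$ as a continuous function on $\overline\Omega$. Then $v\in\ker(\cA_\Omega+\Lp(\Omega))=\operatorname{span}\{\phi_\Omega\}$.

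I expect the main obstacle to be justifying the integration by parts with only interior $C^1$ regularity. The $\varepsilon$-truncation together with the continuous boundary limits should handle it.
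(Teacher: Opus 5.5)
Your proposal is correct and follows essentially the same route as the paper: reflection plus Proposition~\ref{thm:CHbounded} for the adjoint eigenpair, the Green identity (with the boundary term $[q\,u\,v]_\alpha^\beta$ killed by $u(\beta)=0$ and $v(\alpha)=0$) to identify $\mu^*=\Lp(\Omega)$ and to obtain $\Lp(\Omega)\le\mu$, and geometric simplicity on $X_\Omega$ for the equality case. Your $\varepsilon$-truncation with the monotone-convergence remark is slightly more careful than the paper, which applies the Green identity, stated for $C^1(\overline\Omega)$ functions, directly to $v\in C^1(\Omega)\cap C(\overline\Omega)$. The only blemish is cosmetic: if $\tilde q'$ denotes the derivative of the reflected drift, the reflected potential is $\tilde a+\tilde q'$ rather than $\tilde a-\tilde q'$, which does not matter since only its continuity is used.
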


\begin{proof}[\textbf{Proof of Proposition~\ref{thm:comparison}}]
The adjoint drift is $-q$. After the reflection $x\mapsto-x$, it has a positive drift bounded away from zero. Proposition~\ref{thm:CHbounded} therefore provides a positive adjoint eigenfunction, with its outflow condition at the reflected right endpoint, which is $\alpha$ in the original variable. Let the corresponding adjoint eigenvalue be $\mu_*$. For $u,v\in C^1(\overline \Omega)$ with $u(\beta)=0$ and $v(\alpha)=0$, symmetry of $J$ and Fubini's theorem yield
\[
\int_\Omega v(x)\int_\Omega J(x-y)u(y)\,dy\,dx=\int_\Omega u(x)\int_\Omega J(x-y)v(y)\,dy\,dx,
\]
while integration by parts yields
\[
\int_\Omega vq u'\,dx=[qvu]_{\alpha}^{\beta}-\int_\Omega u(qv)'\,dx=-\int_\Omega u(qv)'\,dx.
\]
Thus
\[
\int_\Omega v\cA_\Omega u\,dx=\int_\Omega u\cA_\Omega^*v\,dx.
\]
Taking $u=\phi_\Omega$ and $v=\phi_\Omega^*$ yields
\[
-\Lp(\Omega)\int_\Omega\phi_\Omega\phi_\Omega^*\,dx=-\mu_*\int_\Omega\phi_\Omega\phi_\Omega^*\,dx.
\]
The integral is strictly positive, hence $\mu_*=\Lp(\Omega)$.

Now let $v$ be the supersolution in the statement. Multiplying its inequality by $\phi_\Omega^*$ and integrating yields
\[
0\le\int_\Omega\phi_\Omega^*(\cA_\Omega v+\mu v)\,dx=\int_\Omega v(\cA_\Omega^*\phi_\Omega^*+\mu\phi_\Omega^*)\,dx=(\mu-\Lp(\Omega))\int_\Omega v\phi_\Omega^*\,dx.
\]
Since $v\phi_\Omega^*>0$ in $\Omega$, this proves $\mu\ge\Lp(\Omega)$. If $\mu=\Lp(\Omega)$ and $v$ is an eigenfunction, geometric simplicity in Proposition~\ref{thm:CHbounded} yields $v=C\phi_\Omega$ for some $C>0$.
\end{proof}

\begin{proposition}\label{thm:stability}
Let $\Omega_n=(\alpha_n,\beta_n)$ converge to $\Omega=(\alpha,\beta)$, and suppose that, after the affine identification of $\overline \Omega_n$ with $\overline \Omega$, the coefficients satisfy
\[
J_n\to J\ \hbox{in }C^1_c(\R),\quad q_n\to q\ \hbox{in }C^1(\overline \Omega),\quad a_n\to a\ \hbox{in }C(\overline \Omega),
\]
with a common support bound for $J_n$, a common nondegeneracy neighborhood at the origin, and $q_n\ge q_0>0$. Then
\[
\Lp(\cA_{\Omega_n}^{(n)})\longrightarrow\Lp(\cA_\Omega).
\]
If the principal eigenfunctions are normalized at a fixed interior point after pull-back, then they converge in $C^1(\overline \Omega)$ to the normalized principal eigenfunction of the limiting operator.
\end{proposition}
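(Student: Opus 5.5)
We pull every problem back to the fixed interval $\overline\Omega$ by the affine map $x=\alpha+(\beta-\alpha)\xi'$ with $\xi'=(y-\alpha_n)/(\beta_n-\alpha_n)$. Under this map $\cA_{\Omega_n}^{(n)}$ becomes an operator on $\Omega$ with kernel $\rho_n J_n(\rho_n(x-y))$, drift $\rho_n^{-1}q_n$ and potential $a_n$, where $\rho_n=(\beta_n-\alpha_n)/(\beta-\alpha)\to1$. All these coefficients converge to those of $\cA_\Omega$ with common bounds. Let $\lambda_n$ denote the principal value and $\phi_n$ the eigenfunction of Proposition~\ref{thm:CHbounded}, normalized by $\phi_n(x_0)=1$ at a fixed interior point.

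\textbf{Step 1 (bounds on $\lambda_n$).} The lower bound $\lambda_n\ge-\|a_n\|_\infty$ follows from the constant test function, as in the proof of Proposition~\ref{thm:CHbounded}. For an upper bound, I would take a fixed positive function $v\in C^1(\overline\Omega)$ with $v(\beta)=0$ and $v'(\beta)<0$, for instance $v=w_\kappa$ with $\beta_0=\beta$ and $\kappa$ large, so that $\cA^{(n)}v+\mu v\ge0$ with $\mu$ uniform in $n$. The comparison statement of Proposition~\ref{thm:comparison}, applicable since $q_n\in C^1$, then gives $\lambda_n\le\mu$. The ratio $-\cA^{(n)}v/v$ is bounded near $\beta$ precisely when the boundary term $d\int J(\beta-y)v-q\kappa$ is positive, which is the condition $B>0$; this condition is uniform in $n$ by the common nondegeneracy neighborhood. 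So $(\lambda_n)$ is bounded.

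\textbf{Step 2 (compactness up to the boundary).} The equation gives
\[
q_n\phi_n'=-d\int J_n\phi_n+(d-a_n-\lambda_n)\phi_n .
\]
Hence a uniform sup bound on $\phi_n$ gives uniform $C^1$ bounds, and then equicontinuity of $\phi_n'$ on $\overline\Omega$, since $J_n\to J$ in $C^1$ and $q_n,a_n$ converge uniformly. The sup bound is the main obstacle: Harnack \cite[Lemma~3.1]{CH} only controls compact subsets, and we must also rule out blow-up near $\alpha$ or $\beta$. Near $\beta$ I would integrate the linear ODE backward from the outflow condition $\phi_n(\beta)=0$. Near $\alpha$ I would integrate forward: the nonlocal term is controlled by $\sup\phi_n$ on $[\alpha,\alpha+R]$, and the Harnack chain from $x_0$ controls interior values. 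If boundary control still fails, I would instead normalize by $\|\phi_n\|_\infty=1$, pass to the limit, and show afterwards that the limit is nonzero at $x_0$ using the strong maximum principle propagation argument of Proposition~\ref{thm:CHbounded}.

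\textbf{Step 3 (identification of the limit).} Along a subsequence $\lambda_n\to\lambda$ and $\phi_n\to\phi$ in $C^1(\overline\Omega)$. The limit satisfies $\phi\ge0$, $\phi(x_0)=1$, $\phi(\beta)=0$ and $\cA_\Omega\phi+\lambda\phi=0$. The contact and propagation argument of Proposition~\ref{thm:CHbounded} gives $\phi>0$ in $\Omega$, since a zero at an interior point forces $\phi\equiv0$ nearby and then everywhere. The comparison part of Proposition~\ref{thm:comparison} then gives $\lambda\ge\Lp(\Omega)$. The reverse inequality follows since $\phi$ is itself admissible in the definition of $\Lp$, giving $\lambda\le\Lp(\Omega)$. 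Hence $\lambda=\Lp(\Omega)$. Geometric simplicity on $X_\Omega$ identifies $\phi$ with the normalized principal eigenfunction. Since the limit is unique, the whole sequence converges, both for the eigenvalues and for the eigenfunctions in $C^1(\overline\Omega)$.
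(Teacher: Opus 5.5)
Your proposal is correct. Step~1 (constant test function, and the terminal barrier $w_\kappa$ made uniform in $n$ by the common positivity neighbourhood) and Step~3 (identification via Proposition~\ref{thm:comparison}, admissibility of the limit, and geometric simplicity) are exactly the paper's argument. The genuine difference is in Step~2.

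\textbf{Your primary plan.} Terminal integration from $\phi_n(\beta)=0$ is the paper's route, but as you state it, it does not close by itself. The gain $\int_\Omega J_n(t-y)\phi_n(y)\,dy$ at collar points $t$ involves the unknown collar values. The paper closes the argument by absorption: with $M_n=\max_{[\beta-\eta,\beta]}\phi_n$ it obtains $M_n\le C\eta(C_\eta+M_n)$ and chooses $\eta$ with $C\eta\le 1/2$. The same absorption would make your forward integration near $\alpha$ work. The paper itself just extends the Harnack chain from $x_0$ down to $\alpha$.

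\textbf{Your fallback.} This is a complete proof, and simpler than the paper's.
\begin{itemize}
\item Normalize $\|\phi_n\|_\infty=1$. With $\lambda_n$ bounded, the equation $q_n\phi_n'=-d\int_\Omega J_n\phi_n+(d-a_n-\lambda_n)\phi_n$ gives uniform $C^1$ bounds. It also gives equicontinuity of $\phi_n'$ on $\overline\Omega$, using the common $C^1$ kernel bounds and the uniform convergence of $q_n,a_n$.
\item Any limit $\phi$ satisfies $\|\phi\|_\infty=1$ and $\phi(\beta)=0$. The contact propagation argument then gives $\phi>0$ in $\Omega$. Hence $\lambda=\Lp(\cA_\Omega)$, and $\phi$ is the unique sup-normalized principal eigenfunction.
\item Therefore the whole sup-normalized sequence converges. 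In particular $\phi_n(x_0)$ tends to a positive limit, and dividing by it gives the $x_0$-normalized convergence in $C^1(\overline\Omega)$.
\end{itemize}
This route never invokes the Harnack inequality and needs no endpoint-uniform estimate. It thereby avoids the paper's unproved claim that the Harnack chain reaches the inflow endpoint.

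\textbf{What the paper's route buys.} It gives uniform two-sided Hopf-type bounds $c(\beta-x)\le\phi_n(x)\le C(\beta-x)$ on the terminal collar, plus uniform positivity on $[\alpha,\beta-\eta]$. This is the kind of quantitative boundary control that underlies the weighted space $X_\beta$ used in Proposition~\ref{thm:generalKstability}.
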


\begin{proof}[\textbf{Proof of Proposition~\ref{thm:stability}}]
After the affine pull-back we may work on one fixed interval $\Omega$. The comparison with the functions $\beta-x$ and $1$ yields a uniform two-sided bound for the principal eigenvalues. Indeed, the constant function yields the lower bound $\Lp(\cA_\Omega^{(n)})\ge-\|a_n\|_\infty$,
because the truncated gain satisfies $\int_\Omega J_n(x-y)\,dy\le1$. For the upper bound choose $w_\kappa(x)=e^{\kappa(\beta-x)}-1$.
This function is positive in $\Omega$, vanishes at $\beta$ and satisfies $w_\kappa'(\beta)=-\kappa$. The common lower bound for $J_n$ near the origin implies
\[
d\int_\Omega J_n(\beta-y)w_\kappa(y)\,dy-q_n(\beta)\kappa>0
\]
for one sufficiently large $\kappa$, uniformly for all large $n$: the integral contains a fixed positive multiple of $e^{\kappa\delta/2}$, whereas the negative drift term is only linear in $\kappa$. By uniform continuity, $\cA_\Omega^{(n)}w_\kappa$ is positive in a fixed collar of $\beta$. On the complement of that collar, $w_\kappa$ has a positive minimum and all coefficients are uniformly bounded. Hence a fixed constant $C$ can be chosen so that $\cA_\Omega^{(n)}w_\kappa+Cw_\kappa\ge0$
for all large $n$. Enlarging $C$ handles the finitely many remaining indices. Proposition~\ref{thm:comparison} yields $\Lp(\cA_\Omega^{(n)})\le C$.

Let $\lambda_n$ denote the principal values and normalize $\phi_n(x_0)=1$. We spell out the endpoint-uniform consequence of the bounded-interval Harnack estimate. Let $r_0,j_0>0$ be such that $J_n(z)\ge j_0$ for $|z|\le r_0$, uniformly in $n$. Fix
\[
0<\eta<\min\{r_0/4,(\beta-\alpha)/4\}.
\]
A finite chain of overlapping intervals joining $x_0$ to $[\alpha,\beta-\eta]$ gives
\[
C_\eta^{-1}\le \phi_n(x)\le C_\eta
\qquad (\alpha\le x\le\beta-\eta),
\]
with $C_\eta$ independent of $n$. On the last collar the eigenvalue equation and the terminal value $\phi_n(\beta)=0$ give
\[
\phi_n(x)=d\int_x^\beta\frac{1}{q_n(t)}
\exp\!\left(\int_x^t\frac{a_n(r)+\lambda_n-d}{q_n(r)}\,dr\right)
\left(\int_\Omega J_n(t-y)\phi_n(y)\,dy\right)dt .
\]
All exponential factors in this formula are bounded above and below by positive constants independent of $n$. Put
\[
M_n:=\max_{[\beta-\eta,\beta]}\phi_n.
\]
Splitting the gain integral at $\beta-\eta$ and using the preceding Harnack bound gives
\[
\int_\Omega J_n(t-y)\phi_n(y)\,dy\le C_\eta+M_n
\qquad(\beta-\eta\le t\le\beta).
\]
Consequently $M_n\le C\eta(C_\eta+M_n)$. Decrease $\eta$, independently of $n$, so that $C\eta\le1/2$; then $M_n\le 2C\eta C_\eta$. Hence the gain is uniformly bounded on the terminal collar and the formula yields the upper linear estimate below. For the lower estimate, if $\beta-\eta\le t\le\beta$ and $\beta-3\eta\le y\le\beta-2\eta$, then $|t-y|\le3\eta<r_0$; therefore the Harnack lower bound gives
\[
\int_\Omega J_n(t-y)\phi_n(y)\,dy
\ge j_0\int_{\beta-3\eta}^{\beta-2\eta}\phi_n(y)\,dy
\ge j_0\eta C_\eta^{-1}.
\]
Substitution in the terminal formula proves
\[
c(\beta-x)\le\phi_n(x)\le C(\beta-x)
\qquad (\beta-\eta\le x\le\beta),
\]
where $c,C>0$ are independent of $n$. Thus $\phi_n$ is uniformly bounded on the closed interval. The identity
\[
q_n(x)\phi_n'(x)=-d\int_\Omega J_n(x-y)\phi_n(y)\,dy+\big(d-a_n(x)-\lambda_n\big)\phi_n(x)
\]
now gives a uniform $C^1$ bound on $\overline\Omega$. Moreover, the uniform convergence of $a_n,q_n,J_n,J_n'$ makes the right-hand side equicontinuous; hence $\{\phi_n'\}$ is equicontinuous. Arzel\`a--Ascoli therefore makes $\{\phi_n\}$ precompact in $C^1(\overline \Omega)$. Along a subsequence, $\lambda_n\to\lambda$ and $\phi_n\to\phi$ in $C^1$. Passing to the limit yields
\[
\cA_\Omega\phi+\lambda\phi=0,\quad \phi>0\ \hbox{in }\Omega,\quad \phi(\beta)=0,\quad \phi(x_0)=1.
\]
Proposition~\ref{thm:CHbounded} identifies $\lambda=\Lp(\cA_\Omega)$ and the normalized eigenfunction uniquely. Every subsequence has the same limit, so the whole sequence converges.
\end{proof}

\begin{proposition}\label{thm:generalKstability}
Let $\Omega=(\alpha,\beta)$ be fixed and
\[
\cT_nu(x)=\int_\Omega K_n(x,y)u(y)\,dy+q_n(x)u'(x)+b_n(x)u(x).
\]
Assume $K_n,K\in C(\overline \Omega\times\overline \Omega)$ are nonnegative,
\[
\|K_n-K\|_{C(\overline \Omega^2)}
+\|q_n-q\|_{C(\overline \Omega)}
+\|b_n-b\|_{C(\overline \Omega)}\longrightarrow0,
\]
$q_n,q\ge q_0>0$, and there exist $\rho,\kappa>0$ such that
\[
K_n(x,y),K(x,y)\ge\kappa
\quad\hbox{whenever }x,y\in\overline \Omega,\ |x-y|<\rho .
\]
Then
\[
\Lp(\cT_n,\Omega)\longrightarrow\Lp(\cT,\Omega).
\]
If the positive eigenfunctions are normalized at a fixed $x_0\in \Omega$, then they converge to the normalized limiting eigenfunction in $C^1(\overline \Omega)$.
\end{proposition}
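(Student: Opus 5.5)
The argument follows the proof of Proposition~\ref{thm:stability}, with the convolution $J_n(x-y)$ replaced by the general kernel $K_n(x,y)$. It has three steps: uniform two-sided bounds on the eigenvalues, uniform $C^1(\overline\Omega)$ bounds on the normalized eigenfunctions, and identification of the limit through Proposition~\ref{thm:CHbounded}.

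Before that, one has to say where the eigenpair comes from, since Proposition~\ref{thm:CHbounded} is stated for convolution kernels. The Coville--Hamel construction only uses positivity of the kernel near the diagonal, continuity, and the fixed-sign drift, so I would invoke it in that generality. Alternatively, one can rerun the proof of Proposition~\ref{thm:CHbounded}: it needs only $K(x,y)\ge\kappa$ for $|x-y|<\rho$. The same contact argument then gives the eigenpair $(\lambda_n,\phi_n)$ with $\phi_n(\beta)=0$ and $\phi_n'(\beta)<0$, and one-dimensionality of the kernel on $X_\Omega$.

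\textbf{Eigenvalue bounds.} Testing with the constant function $1$ gives
\[
\lambda_n\ge-\sup_\Omega\Big(\int_\Omega K_n(x,y)\,dy+b_n\Big),
\]
which is uniformly bounded below because $K_n$ and $b_n$ are uniformly bounded. For the upper bound I would use the barrier $w_\kappa(x)=e^{\kappa(\beta-x)}-1$ together with the comparison principle of Proposition~\ref{thm:comparison}, exactly as before. Near $\beta$ the lower bound $K_n\ge\kappa$ on $\{|x-y|<\rho\}$ makes $\int_\Omega K_n(\beta,y)w_\kappa(y)\,dy$ grow exponentially in $\kappa$, while the drift contributes only $-q_n(\beta)\kappa$. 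So for one fixed large $\kappa$, and by uniform continuity of $K_n,q_n$ in $n$, $\cT_nw_\kappa>0$ on a fixed collar of $\beta$. Off the collar, $w_\kappa$ is bounded below, so a constant $C$ with $\cT_nw_\kappa+Cw_\kappa\ge0$ exists uniformly in $n$.

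The adjoint step in Proposition~\ref{thm:comparison} needs some care. The general $K$ is not symmetric, so the adjoint kernel is $K(y,x)$, and the derivation there assumed $q\in C^1$. The adjoint kernel $K(y,x)$ still satisfies the same near-diagonal positivity bound, so the reflected construction goes through. If $q_n$ is merely continuous, I would instead compare $w_\kappa$ directly with $\phi_n$ through a contact argument: take the maximal $t$ with $t\phi_n\le w_\kappa$ and evaluate at the contact point, which yields $\lambda_n\le C$ without needing an adjoint.

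\textbf{Eigenfunction bounds and compactness.} Normalize $\phi_n(x_0)=1$. The local Harnack chain argument from Proposition~\ref{thm:stability} uses only $K_n\ge\kappa$ near the diagonal and the bounds on the coefficients. It gives $C_\eta^{-1}\le\phi_n\le C_\eta$ on $[\alpha,\beta-\eta]$. On the terminal collar I would use the explicit integrating-factor representation
\[
\phi_n(x)=\int_x^\beta\frac1{q_n(t)}e^{\int_x^t (b_n+\lambda_n)/q_n}\int_\Omega K_n(t,y)\phi_n(y)\,dy\,dt
\]
(sign conventions as in that proof, with $b_n$ absorbing $-d$). The absorption argument $M_n\le C\eta(C_\eta+M_n)$ then bounds $\phi_n$ uniformly on the collar. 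The equation gives a uniform bound on $\phi_n'$. Equicontinuity of $\phi_n'$ follows from uniform convergence of $K_n,q_n,b_n$, since $x\mapsto\int K_n(x,y)\phi_n(y)\,dy$ is equicontinuous by uniform continuity of the $K_n$ as a convergent family. Arzel\`a--Ascoli then yields subsequential limits $\lambda_n\to\lambda$ and $\phi_n\to\phi$ in $C^1(\overline\Omega)$.

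\textbf{Identification.} Passing to the limit in the equation gives $\cT\phi+\lambda\phi=0$ with $\phi\ge0$, $\phi(x_0)=1$ and $\phi(\beta)=0$. Positivity of $\phi$ in $\Omega$ comes from the uniform Harnack lower bound combined with the uniform linear lower bound $\phi_n(x)\ge c(\beta-x)$ on the collar. The characterization of $\Lp(\cT,\Omega)$ as the unique eigenvalue with a positive eigenfunction vanishing at $\beta$, via comparison with the adjoint eigenfunction, gives $\lambda=\Lp(\cT,\Omega)$. Simplicity then forces the whole sequence to converge.

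The main obstacle is the non-symmetric kernel in this identification step and in the upper bound. Both invoke Proposition~\ref{thm:comparison}, whose proof used the symmetry of $J$. I would handle it by defining the adjoint with the transposed kernel $K(y,x)$ and rechecking that the duality identity uses only Fubini. If $q$ lacks $C^1$ regularity, I would replace the duality argument with the direct sup-contact comparison between the limit eigenfunction and the limiting principal eigenfunction, in the spirit of the kernel argument in Proposition~\ref{thm:CHbounded}.
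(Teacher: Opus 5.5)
Your proposal is correct in outline, but it takes a different route from the paper. The paper uses no Harnack chain, no barrier $w_\kappa$ and no a priori collar estimate here. It rewrites $\cT_n\phi+\lambda\phi=0$, $\phi(\beta)=0$ as the fixed-point problem $T_{n,\lambda}\phi=\phi$ for the terminal integral operator. It then argues in four steps:
\begin{itemize}
\item $T_{n,\lambda}$ is compact and primitive on the weighted lattice $X_\beta=\{u\in C(\overline\Omega):u(\beta)=0,\ u/(\beta-\cdot)\in L^\infty\}$.
\item Krein--Rutman gives existence and simplicity, and $\lambda_n$ is located as the unique root of $r(T_{n,\lambda})=1$.
\item Norm convergence $T_{n,\lambda}\to T_\lambda$, uniform on compact $\lambda$-sets, gives uniform convergence of the spectral radii.
\item Strict monotonicity of $\lambda\mapsto r(T_{n,\lambda})$ and a uniform bracket $r(T_{n,A})<1<r(T_{n,B})$ then trap $\lambda_n$ in $(\lambda-\varepsilon,\lambda+\varepsilon)$.
\end{itemize}
That route gives existence and simplicity for non-convolution kernels at no extra cost, and it never needs an adjoint containing $(qv)'$. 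So a merely continuous $q$ and a non-symmetric $K$ cause no difficulty.

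Your route reuses the machinery of Proposition~\ref{thm:stability} and yields explicit uniform bounds such as $c(\beta-x)\le\phi_n\le C(\beta-x)$. In exchange, you must import existence and the Harnack inequality for general kernels from \cite{CH}. This is legitimate, since their operator carries a general kernel $J(x,y)$. Note, however, that your alternative of rerunning the proof of Proposition~\ref{thm:CHbounded} only gives simplicity, not existence. You also have to replace Proposition~\ref{thm:comparison} by contact comparisons, as you anticipate.

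One step fails as written. In the adjoint-free upper bound you take the maximal $t$ with $t\phi_n\le w_\kappa$. Put $z=w_\kappa-t\phi_n\ge0$ and suppose $\lambda_n>C$. Then $(\cT_n+\lambda_n)z\ge(\lambda_n-C)w_\kappa>0$. At a contact point, $(\cT_n+\lambda_n)z(x_0)=\int_\Omega K_n(x_0,y)z(y)\,dy\ge0$ as well, so there is no contradiction. This orientation only reproduces the trivial inequality for supersolutions.

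Reverse the roles as follows:
\begin{itemize}
\item Set $t=\max_{\overline\Omega}w_\kappa/\phi_n$. This is finite because $w_\kappa'(\beta)=-\kappa$ and $\phi_n'(\beta)<0$.
\item Put $z=t\phi_n-w_\kappa\ge0$. Then $(\cT_n+\lambda_n)z\le-(\lambda_n-C)w_\kappa<0$ in $\Omega$.
\item This contradicts the nonnegative value of $(\cT_n+\lambda_n)z$ at an interior contact point, or at an inflow contact, where $z'(\alpha)\ge0$.
\item A contact occurring only at $\beta$ forces $z'(\beta)=0$ and $\int_\Omega K_n(\beta,y)z(y)\,dy=0$. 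Hence $z\equiv0$ near $\beta$, which produces an interior contact.
\end{itemize}
The same orientation is needed when you identify the limiting eigenvalue $\lambda$ with $\Lp(\cT,\Omega)$. The inequality $\lambda\le\Lp$ is trivial. For $\lambda\ge\Lp$, compare through $t=\max\phi/\phi_*$ with $z=t\phi_*-\phi$, where $\phi_*$ is the principal eigenfunction. With these corrections your proof goes through.
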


\begin{proof}[\textbf{Proof of Proposition~\ref{thm:generalKstability}}]
We record the argument because the scaled tilted kernels used below are not convolutions. For a real parameter $\lambda$ and $f\in C(\overline \Omega)$, terminal integration of $q_nu'+(b_n+\lambda)u=-K_nf,\qquad u(\beta)=0$,
defines the compact positive operator
\[
(T_{n,\lambda}f)(x)
=\int_x^\beta\frac1{q_n(s)}
\exp\!\left(\int_x^s\frac{b_n(r)+\lambda}{q_n(r)}\,dr\right)
\left(\int_\Omega K_n(s,y)f(y)\,dy\right)ds.
\]
Work on the weighted Banach lattice
\[
X_\beta=\left\{u\in C(\overline \Omega):u(\beta)=0,\
\|u\|_{X_\beta}:=\left\|\frac{u}{\beta-\cdot}\right\|_\infty<\infty\right\}.
\]
The terminal formula maps $X_\beta$ into itself. On the unit ball of $X_\beta$, the family
\[
x\longmapsto \frac{(T_{n,\lambda}f)(x)}{\beta-x}
\]
is uniformly bounded and equicontinuous. Indeed, the quotient is the average over $[x,\beta]$ of the uniformly continuous integrand in the terminal formula. Hence $T_{n,\lambda}$ is compact on $X_\beta$. The common diagonal lower bound implies positivity improvement after finitely many iterates: if $f\ge0$, $f\not\equiv0$, positivity first appears on one interval and then propagates, through a finite chain of overlapping intervals of length smaller than $\rho$, from that interval to every compact subinterval of $\Omega$. The terminal integral then gives strict positivity in the weighted cone up to $\beta$. Thus $T_{n,\lambda}$ and $T_\lambda$ are primitive compact positive operators. By Krein--Rutman their spectral radii are algebraically simple positive eigenvalues with positive eigenvectors. Moreover, $T_{n,\lambda}\phi=\phi$ is exactly the terminal integral form of
$\cT_n\phi+\lambda\phi=0$, $\phi(\beta)=0$. The comparison principle therefore identifies the unique level-one point with $\Lp(\cT_n,\Omega)$.

On every compact set of $\lambda$-values, the displayed integral formula and the uniform coefficient convergence yield
\[
\sup_\lambda\|T_{n,\lambda}-T_\lambda\|_{\mathcal L(X_\beta)}\longrightarrow0.
\]
Indeed, both the quotient by $\beta-x$ and its limit at $\beta$ are controlled directly by the terminal formula. Norm convergence of compact operators, together with simplicity and isolation of their positive spectral radii, yields uniform convergence $r(T_{n,\lambda})\longrightarrow r(T_\lambda)$
on compact $\lambda$-intervals. This can be seen explicitly by integrating the resolvent around a circle enclosing only $r(T_\lambda)$: the resolvents converge there by a Neumann series, hence so do the rank-one Riesz projections.

If $\lambda_2>\lambda_1$, the exponential factor in the terminal formula is strictly larger for $s>x$; positivity improvement and the strict spectral-radius comparison for primitive positive compact operators give
$r(T_{n,\lambda_2})>r(T_{n,\lambda_1})$. Uniformly for large $n$, the terminal formula also gives $\|T_{n,\lambda}\|\to0$ as $\lambda\to-\infty$. In the opposite direction, choose two fixed nested subintervals separated from $\beta$ and use the diagonal lower bound along a finite interaction chain; an iterate $T_{n,\lambda}^N$ then dominates on a nontrivial positive cone a factor $Ce^{c\lambda}$, with $C,c>0$ independent of large $n$. Hence $r(T_{n,\lambda})\to+\infty$ as $\lambda\to+\infty$, uniformly in the sense needed to choose numbers $A<B$, independent of large $n$, such that
$r(T_{n,A})<1<r(T_{n,B})$. Therefore the unique level-one point $\lambda_n$ lies in $[A,B]$. If $\lambda$ is the corresponding point for the limit, then for every $\varepsilon>0$,
\[
r(T_{\lambda-\varepsilon})<1<r(T_{\lambda+\varepsilon}),
\]
and uniform spectral-radius convergence yields $\lambda-\varepsilon<\lambda_n<\lambda+\varepsilon$ for large $n$. Hence $\lambda_n\to\lambda=\Lp(\cT,\Omega)$.

Normalize $\phi_n(x_0)=1$. The level-one equation and the uniform compactness of the terminal operators make $\{\phi_n\}$ relatively compact in $X_\beta$. Every limit is the normalized positive eigenfunction of $\cT$. The first-contact proof of Proposition~\ref{thm:CHbounded} applies to $K$, and geometric simplicity yields convergence of the full sequence. Finally,
\[
\phi_n'
=-\frac{1}{q_n}\left(\int_\Omega K_n(\cdot,y)\phi_n(y)\,dy
+(b_n+\lambda_n)\phi_n\right)
\]
converges uniformly. Thus $\phi_n\to\phi$ in $C^1(\overline \Omega)$.
\end{proof}

\begin{lemma}\label{thm:growth}
Let $u>0$ solve $\cA_\R u+\lambda u=0$ in $\R$, where $|\lambda|\le\Lambda$, $\|a\|_\infty\le A$, $q_0\le q\le q_1$, and the kernel satisfies the structural assumptions above. There exist constants $C_H\ge1$ and $\gamma_H>0$, depending only on $d,J,q_0,q_1,A,\Lambda$, such that
\[
C_H^{-1}e^{-\gamma_H|x-y|}\le\frac{u(x)}{u(y)}\le C_He^{\gamma_H|x-y|}\;\hbox{for all }x,y\in\R.
\]
\end{lemma}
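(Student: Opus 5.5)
The estimate will follow from a uniform local Harnack inequality on intervals of a fixed length $r$, iterated along a chain of about $|x-y|/r$ overlapping intervals. The constants must not depend on the location of the interval, which holds because only the global bounds $q_0\le q\le q_1$, $\|a\|_\infty\le A$, $|\lambda|\le\Lambda$ and the kernel data enter. Fix $\delta>0$ and $j_0>0$ with $J\ge j_0$ on $(-\delta,\delta)$, which is possible since $J(0)>0$, and set $r=\delta/2$. The claim is that there is $C_0\ge1$, depending only on $d,J,q_0,q_1,A,\Lambda$, such that $u(x)\le C_0u(y)$ whenever $|x-y|\le r$. Chaining then gives $u(x)/u(y)\le C_0^{1+|x-y|/r}$, so $C_H=C_0$ and $\gamma_H=r^{-1}\log C_0$ work, and the lower bound follows by exchanging $x$ and $y$.

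For the local bound I would use two one-sided mechanisms. Write the equation as
\[
q u'+(a+\lambda-d)u=-d\,J*u\le0 .
\]
This gives $(\log u)'\le (d+A+\Lambda)/q_0=:\kappa$. Hence $u$ grows at most exponentially to the right: $u(x)\le e^{\kappa(x-y)}u(y)$ for $x\ge y$. This is the easy direction.

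For the other direction, I would integrate the equation backward, in the form of the terminal formula in Proposition~\ref{thm:stability}. For $t\in[y,y+r]$,
\[
u(y)=e^{\int_y^{t}\frac{a+\lambda-d}{q}}u(t)+d\int_y^{t}\frac1{q(s)}e^{\int_y^s\frac{a+\lambda-d}{q}}(J*u)(s)\,ds .
\]
Dropping the first, nonnegative term gives $u(y)\ge c\int_y^{t}(J*u)(s)\,ds$. Since $J\ge j_0$ on $(-\delta,\delta)$, for $s\in[y,y+r]$ we have $(J*u)(s)\ge j_0\int_{y-r}^{y+r}\ldots$; more usefully, $(J*u)(s)\ge j_0\int_{s-\delta}^{s+\delta}u$. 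I would then use the exponential-growth bound backward, namely $u(z)\ge e^{-\kappa(x-z)}u(x)$ for $z\le x$, to bound $u$ from below on an interval to the left of $x$ by a multiple of $u(x)$.

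Taking $s\in[y,y+r]$ with $|s-x|\le\delta/2$, the interval $[s-\delta,s+\delta]$ contains a subinterval of length $\ge\delta/2$ lying left of $x$ and within distance $\delta$ of it. This gives $(J*u)(s)\ge j_0(\delta/2)e^{-\kappa\delta}u(x)$. Integrating over $s\in[y,y+r]$ then yields $u(y)\ge c' u(x)$ for all $x$ with $|x-y|\le r$. This covers both $x>y$ and $x<y$, and together with the forward bound it gives $C_0$.

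The main obstacle is this backward step: the drift only transports information one way, so a lower bound for $u(y)$ in terms of values to its right must come from the nonlocal gain term. The difficulty is to make the integral representation for $u(y)$ independent of the unknown value $u(t)$, which is handled by positivity (discarding that term). Uniformity is then automatic, since every constant involves only $q_0,q_1,d,A,\Lambda,j_0,\delta$. Alternatively, one could cite \cite[Lemma~3.1]{CH} for the local step and check that its constants depend only on these data, but the direct argument above avoids relying on that.
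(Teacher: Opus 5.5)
Your proof is correct. It differs from the paper's in where the local estimate comes from.

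\textbf{What the paper does.} It does not prove the local Harnack inequality. It quotes \cite[Lemma~3.1]{CH} on intervals $[x-2,x+2]$ and asserts that the constant $C_0$ is independent of $x$ because the coefficients are uniformly bounded and $q\ge q_0$. It then chains over steps of length at most $1$ to get $\gamma_H=\log C_0$.

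\textbf{What you do.} You derive the local estimate from the equation itself, in two parts.
\begin{itemize}
\item The sign of the gain term gives the one-sided bound $(\log u)'\le\kappa=(d+A+\Lambda)/q_0$. This alone settles $u(x)\le e^{\kappa r}u(y)$ for $y\le x\le y+r$.
\item The terminal (Duhamel) representation, together with $J\ge j_0$ on $(-\delta,\delta)$, handles the nontrivial direction, in which $u$ must not decay too fast to the right.
\end{itemize}
Your chaining step is the same as the paper's, with step $\delta/2$ instead of $1$. Your route gives a self-contained proof with explicit constants depending only on $d,q_0,q_1,A,\Lambda,j_0,\delta$. In particular it verifies the uniformity in $x$ that the paper only asserts, and it uses only $J\ge0$ with $J\ge j_0$ near the origin. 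The paper's route is shorter but rests on the external lemma.

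\textbf{One piece of bookkeeping to fix.} The backward step is needed only for $x<y$. There, your restriction $|s-x|\le\delta/2$ leaves only $s\in[y,x+r]$, an interval of length $r-(y-x)$. This shrinks to a point when $y-x=r$, so integrating ``over $s\in[y,y+r]$'' does not give a uniform constant as written. Either of two fixes works:
\begin{itemize}
\item Keep the whole overlap $[s-\delta,x]$. Its length $\delta-(s-x)$ integrates over $s\in[y,y+r]$ to at least $r^2/2$, and $u\ge e^{-\kappa\delta}u(x)$ there. This yields $u(y)\ge \frac{d\,j_0r^2}{2q_1}e^{-\kappa(r+\delta)}u(x)$.
\item Prove the local bound only for $|x-y|\le r/2$ and chain with step $r/2$.
\end{itemize}
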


\begin{proof}[\textbf{Proof of Lemma~\ref{thm:growth}}]
The Harnack inequality \cite[Lemma~3.1]{CH} yields a constant $C_0\ge1$, depending only on the structural bounds, such that for every interval $[x-2,x+2]$ one has $\sup_{[x-1,x+1]}u\le C_0\inf_{[x-1,x+1]}u$.
The dependence is uniform in $x$ because the coefficients are uniformly bounded and $q$ is uniformly separated from zero. Let $x<y$ and choose integers $x=x_0<x_1<\cdots<x_N=y$ with $|x_{j+1}-x_j|\le1$ and $N\le|x-y|+1$. Applying the local estimate on overlapping intervals yields $u(x_{j+1})\le C_0u(x_j)$ and the reverse inequality with the same constant. Hence $C_0^{-N}\le\frac{u(y)}{u(x)}\le C_0^N$.
Taking $\gamma_H=\log C_0$ and enlarging the prefactor yields the assertion.
\end{proof}

\section{Maximum principle on the whole line}

We say that $\cA_\R+\lambda$ satisfies the bounded maximum principle if every $u\in C^1(\R)$ with $\sup_\R u<+\infty$ and $\cA_\R u+\lambda u\ge0$ in $\R$ satisfies $u\le0$.

\begin{proposition}\label{prop:MPcore}
The following three estimates will be used in the proof of Theorem~\ref{thm:mainwholeline}.

\textup{(i)} Assume $\lambda<\Lp(\R)$. Suppose that there exist $W\in C^1(\R)$, $W\ge1$, $W(x)\to+\infty$ as $|x|\to\infty$, a compact interval $K$, and $\delta>0$ such that
\[
\cA_\R W+\lambda W\le-\delta W\;\hbox{in }\R\setminus K.
\]
Then $\cA_\R+\lambda$ satisfies the bounded maximum principle.

\textup{(ii)} Let $\lambda<\Lp(\R)$ and assume $\limsup_{|x|\to\infty}(a(x)+\lambda)<0$.
Then $\cA_\R+\lambda$ satisfies the bounded maximum principle. In particular, if $\Lp(\R)>0$ and $\limsup_{|x|\to\infty}a(x)<0$, then $\cA_\R$ satisfies the bounded maximum principle.

\textup{(iii)} For every $\mu\in\R$,
\[
\Lp(\cA_\R+\mu)\ge \mathfrak G(\mu).
\]
In particular, $\Lp(\cA_\R+\mu)\ge H_d(q_0)-\sup_\R(a+\mu)$.
\end{proposition}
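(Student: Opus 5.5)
We prove the three parts in the order (iii), (i), (ii), because (iii) is a direct computation and (ii) will follow by building the Lyapunov function required in (i).

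\emph{Part (iii).} For $s\ge0$ take the affine exponential test function $\phi_s(x)=e^{-sx}$. By symmetry of $J$, $J*\phi_s=M(s)\phi_s$, so
\[
\frac{(\cA_\R+\mu)\phi_s}{\phi_s}=d(M(s)-1)-q(x)s+a(x)+\mu .
\]
Set $\lambda=\inf_x(q(x)s-a(x)-\mu)-d(M(s)-1)$. Then $(\cA_\R+\mu)\phi_s+\lambda\phi_s\le0$ on $\R$, so $\Lp(\cA_\R+\mu)\ge\lambda$. Taking the supremum over $s\ge0$ gives $\Lp(\cA_\R+\mu)\ge\mathfrak G(\mu)$.

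For the second inequality we use $q\ge q_0$ and $s\ge0$, which give $\inf_x(q s-a-\mu)\ge q_0 s-\sup(a+\mu)$. The remaining quantity is $\sup_{s\ge0}\{q_0s-d(M(s)-1)\}$. Since $M$ is even and convex with $M(0)=1$, the supremum over $s\in\R$ of $q_0s-d(M(s)-1)$ is attained at some $s\ge0$ when $q_0\ge0$. That supremum equals $d\,\cH(q_0/d)=H_d(q_0)$.

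\emph{Part (i).} Let $u$ be bounded above with $\cA_\R u+\lambda u\ge0$, and suppose $\sup u>0$.

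First we obtain a positive strict supersolution. Fix $\lambda<\lambda'<\Lp(\R)$ and choose $\phi>0$ with $\cA_\R\phi+\lambda'\phi\le0$; then $(\cA_\R+\lambda)\phi\le-(\lambda'-\lambda)\phi$. By Proposition~\ref{thm:CHbounded} we may take $\phi$ to be a principal eigenfunction on $\R$ at the value $\Lp(\R)$, or on a large bounded interval after exhaustion.

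Next we build the comparison function $\Psi_\varepsilon=t\phi+\varepsilon W$, with $\varepsilon>0$ small and $t$ the smallest value with $u\le\Psi_\varepsilon$. Because $W\to\infty$ and $u$ is bounded above, $u-\varepsilon W\to-\infty$, so a finite contact point $x_0$ exists, or else $t=0$.

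At the contact point $u-\Psi_\varepsilon$ attains a maximum $0$, and its derivative vanishes there. We then compare $\cA_\R u+\lambda u\ge0$ with $(\cA_\R+\lambda)\Psi_\varepsilon<0$.

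\begin{itemize}
\item Outside $K$, the inequality $(\cA_\R+\lambda)\Psi_\varepsilon<0$ holds automatically.
\item Inside $K$, we need $t\phi$ to dominate $\varepsilon|(\cA_\R+\lambda)W|$. This follows from $\phi\ge c_K>0$ on $K$ once $\varepsilon\ll t$.
\end{itemize}

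The nonlocal term at a maximum point of $u-\Psi_\varepsilon$ has the correct sign, so we reach a contradiction. Hence $t\to0$ as $\varepsilon\to0$ along a suitable normalization, and letting $\varepsilon\to0$ gives $u\le0$.

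The delicate point is that $t$ must stay bounded away from zero while $\varepsilon$ shrinks. We handle this by fixing the contact rule as $t=\inf\{\tau\ge0: u\le\tau\phi+\varepsilon W\}$. If $t>0$, contact occurs, and the argument above rules out contact both in $K$ and outside $K$, since there $(\cA_\R+\lambda)\Psi_\varepsilon\le-\delta\varepsilon W-(\lambda'-\lambda)t\phi<0$. Hence $t=0$, so $u\le\varepsilon W$ for every $\varepsilon$, and $u\le0$.

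\emph{Part (ii).} It suffices to construct $W$ as in (i). Take $W(x)=\cosh(\eta x)$ with $\eta>0$ small. Then
\[
\frac{(\cA_\R+\lambda)W}{W}\le d(M(\eta)-1)+q_1\eta+a(x)+\lambda+o(1)\qquad(|x|\to\infty).
\]
Both $d(M(\eta)-1)$ and $q_1\eta$ tend to $0$ as $\eta\to0$. Using $\limsup_{|x|\to\infty}(a+\lambda)<0$, we choose $\eta$ and $K$ so that this quotient is at most $-\delta$ outside $K$. Part (i) then applies.

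The \emph{in particular} statement is the case $\lambda=0<\Lp(\R)$.

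The main obstacle is the contact argument in (i). It needs the eigenfunction $\phi$ from exhaustion to exist and be positive, and it needs the first-contact analysis to control both regions simultaneously.
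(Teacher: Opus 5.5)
Your part (iii) is the paper's argument: affine phases $e^{-sx}$, then $q\ge q_0$ and evenness and convexity of $M$ to reach $H_d(q_0)$.

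Part (ii) is correct, but it uses a different Lyapunov function. With $W=\cosh(\eta x)$ one has exactly $J*W=M(\eta)W$, because $J$ is even and the $\sinh(\eta z)$ part integrates to zero. One also has $|qW'/W|\le q_1\eta$. Hence
\[
\frac{(\cA_\R+\lambda)W}{W}\le d(M(\eta)-1)+q_1\eta+a+\lambda,
\]
and a small $\eta$ absorbs the first two terms into the negative tail gap. The paper uses the sublinear weight $(1+x^2)^{\alpha/2}$ instead. There the nonlocal and drift contributions are $O(1/|x|)$, so no smallness parameter needs tuning. Both routes work.

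Part (i) has one step that fails as written. With $\varepsilon$ fixed and $t=\inf\{\tau\ge0:u\le\tau\phi+\varepsilon W\}$, a contact point inside $K$ is excluded only when
\[
t(\lambda'-\lambda)\min_K\phi>\varepsilon\max_K|(\cA_\R+\lambda)W|.
\]
So your final claim is unjustified when $0<t\le C\varepsilon$, where $C=\max_K|(\cA_\R+\lambda)W|/((\lambda'-\lambda)\min_K\phi)$. That claim reads: if $t>0$, contact is ruled out both in $K$ and outside $K$, hence $t=0$. Your intermediate remark that $t$ must stay bounded away from zero points in the wrong direction.

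What the contact argument actually proves is $t\le C\varepsilon$, i.e.\ $u\le\varepsilon(C\phi+W)$ on $\R$ for every $\varepsilon>0$. Letting $\varepsilon\to0$ pointwise gives $u\le0$, and with this correction your proof is complete. The paper avoids the issue by fixing the ratio first. It takes $\Psi=\phi_p+\varepsilon W$ with $\varepsilon\max_K|(\cA_\R+\lambda)W|<\tfrac12(\Lp(\R)-\lambda)\min_K\phi_p$, which is a strict supersolution on all of $\R$. It then touches $u$ with the single multiple $t_*\Psi$, where $t_*=\max u^+/\Psi$.

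You do not need Proposition~\ref{thm:CHbounded} in (i). The definition of $\Lp(\R)$ as a supremum already gives a positive $C^1$ function $\phi$ with $\cA_\R\phi+\lambda'\phi\le0$ on $\R$. That is all your argument uses, and it is slightly more economical than the paper's appeal to the whole-line principal eigenfunction. Your alternative of an eigenfunction on a large bounded interval would not work, since such a function is not defined on $\R$.
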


\begin{proof}[\textbf{Proof of Proposition~\ref{prop:MPcore}\textup{(i)}}]
Let $\lambda_p=\Lp(\R)$ and let $\phi_p>0$ be the eigenfunction provided by Proposition~\ref{thm:CHbounded}. Put $\theta=\lambda_p-\lambda>0$. Then $(\cA_\R+\lambda)\phi_p=-\theta\phi_p$.
Since $\phi_p$ is continuous and positive, $m_K=\min_K\phi_p>0$. Set $C_K=\max_K|(\cA_\R+\lambda)W|$.
Choose $\varepsilon>0$ so small that $\varepsilon C_K<\theta m_K/2$ and define $\Psi=\phi_p+\varepsilon W$. On $K$,
\[
(\cA_\R+\lambda)\Psi\le-\theta m_K+\varepsilon C_K<-\frac{\theta m_K}{2}.
\]
Outside $K$,
\[
(\cA_\R+\lambda)\Psi\le-\theta\phi_p-\varepsilon\delta W<0.
\]
Thus $\Psi>0$, $\Psi(x)\to+\infty$, and $(\cA_\R+\lambda)\Psi<0$ everywhere.

Let $u$ satisfy the hypotheses of the maximum principle and suppose that $u(x_0)>0$ somewhere. Since $u$ is bounded above and $\Psi\to\infty$, the number $t_* =\max_{x\in\R}\frac{u^+(x)}{\Psi(x)}$
is positive and is attained at some $x_*\in\R$. Set $w=t_*\Psi-u$. Then $w\ge0$ and $w(x_*)=0$. Consequently $w'(x_*)=0$, and
\[
d\int_\R J(x_*-y)(w(y)-w(x_*))\,dy=d\int_\R J(x_*-y)w(y)\,dy\ge0.
\]
The zero-order contribution also vanishes at $x_*$. Hence $(\cA_\R+\lambda)w(x_*)\ge0$. On the other hand,
\[
(\cA_\R+\lambda)w=t_*(\cA_\R+\lambda)\Psi-(\cA_\R+\lambda)u<0,
\]
a contradiction.
\end{proof}

\begin{proof}[\textbf{Proof of Proposition~\ref{prop:MPcore}\textup{(ii)}}]
Fix $0<\alpha<1$ and let $W(x)=(1+x^2)^{\alpha/2}$. We prove that
\[
\frac{\cA_\R W(x)+\lambda W(x)}{W(x)}=a(x)+\lambda+o(1)
\]
as $|x|\to\infty$. First,
\[
\frac{W'(x)}{W(x)}=\frac{\alpha x}{1+x^2}=O(|x|^{-1}).
\]
For $|z|\le R_J$, Taylor's formula for $\log W$ yields
\[
\left|\log\frac{W(x-z)}{W(x)}\right|\le C\frac{|z|}{1+|x|}
\]
for $|x|\ge2R_J+1$. Hence
\[
\left|\frac{W(x-z)}{W(x)}-1\right|\le C\frac{|z|}{1+|x|}.
\]
Therefore
\[
\left|d\int_\R J(z)\left(\frac{W(x-z)}{W(x)}-1\right)\,dz\right|\le\frac{C}{1+|x|},
\]
and the drift contribution satisfies $|q(x)W'(x)/W(x)|\le C/(1+|x|)$. The asserted asymptotic follows. By the strict negativity of the limsup, there exist $R$ and $\delta>0$ such that
\[
(\cA_\R+\lambda)W\le-\delta W\;\hbox{for }|x|\ge R.
\]
Proposition~\ref{prop:MPcore}\textup{(i)} applies.
\end{proof}

\begin{proof}[\textbf{Proof of Proposition~\ref{prop:MPcore}\textup{(iii)}}]
For $s\ge0$ set $\phi_s(x)=e^{-sx}$. Since $J$ is a convolution kernel,
\[
\int_\R J(x-y)\phi_s(y)\,dy=M(s)\phi_s(x),\qquad \phi_s'(x)=-s\phi_s(x).
\]
For the operator with zeroth-order coefficient $a+\mu$,
\[
\frac{(\cA_\R+\mu)\phi_s}{\phi_s}
=d(M(s)-1)-sq(x)+a(x)+\mu.
\]
Hence every number
\[
\inf_{x\in\R}\{q(x)s-a(x)-\mu\}-d(M(s)-1)
\]
is admissible in the definition of $\Lp(\cA_\R+\mu)$. Taking the supremum over $s\ge0$ proves the first assertion. The second follows from $q\ge q_0$ and $a+\mu\le\sup_\R(a+\mu)$.
\end{proof}

\begin{proof}[\textbf{Proof of Theorem~\ref{thm:mainwholeline}\textup{(i)}}]
Apply Proposition~\ref{prop:MPcore}\textup{(iii)} to the shifted operator $\cA_\R+\mu$. The hypothesis $\mathfrak G(\mu)>0$ yields $\Lp(\cA_\R+\mu)>0$. Since $\limsup_{|x|\to\infty}(a(x)+\mu)<0$, Proposition~\ref{prop:MPcore}\textup{(ii)}, with $\lambda=0$ and zeroth-order coefficient $a+\mu$, proves the bounded maximum principle. Finally,
\[
\mathfrak G(\mu)\ge H_d(q_0)-\sup_\R(a+\mu),
\]
which proves the sufficient condition stated in the theorem.
\end{proof}

The coefficient-stability theorem has been stated among the main results in Section~1. Its proof uses the same compactness argument as the exhaustion theorem, while the quantitative estimate uses the global Harnack control from Lemma~\ref{thm:growth}.

\begin{proof}[\textbf{Proof of Theorem~\ref{thm:wholestability}}]
We first prove (i). Write $\lambda_n=\Lp(\cA_n)$ and $\lambda=\Lp(\cA)$. If $\limsup_n\lambda_n=-\infty$, there is nothing to prove. We may therefore pass to a subsequence, still indexed by $n$, such that
\[
\lambda_n\longrightarrow\lambda_*:=\limsup_{k\to\infty}\lambda_k\in\R.
\]
The sequence is bounded from above: if $\Omega\Subset\R$ is one fixed interval containing the origin, domain monotonicity yields $\lambda_n\le \Lp(\cA_n,\Omega)$, and the bounded-interval stability theorem yields a common upper bound on the right-hand side. Let $\phi_n>0$ be a whole-line principal eigenfunction, normalized by $\phi_n(0)=1$. The local Harnack estimate, the common local bounds on the coefficients and the eigenvalue equations imply that, for every compact interval $K$,
\[
C_K^{-1}\le\phi_n\le C_K,
\qquad
|\phi_n'|\le C_K,
\]
for all sufficiently large $n$. Since the kernels have a common compact support and $J_n\to J$ in $C^1$, the derivative equations make $(\phi_n')_n$ locally equicontinuous. Hence, after extraction,
\[
\phi_n\longrightarrow\phi_*
\quad\hbox{in }C^1_{\rm loc}(\R),
\qquad
\phi_*(0)=1.
\]
The convolution terms converge locally uniformly, because only a fixed compact enlargement of $K$ is involved. Passing to the limit in $\cA_n\phi_n+\lambda_n\phi_n=0$ therefore yields $\cA\phi_*+\lambda_*\phi_*=0$ in $\R$. The lower Harnack bound passes to the limit and yields $\phi_*>0$ on every compact interval. Thus $\lambda_*$ is admissible in the definition of $\Lp(\cA)$, and $\lambda_*\le\lambda$. This proves (i).

We next prove (ii). Let $\lambda_i=\Lp(\cA_i)$ and let $\phi_i>0$ be normalized principal eigenfunctions. The principal values themselves are bounded by structural constants: the constant function gives $\lambda_i\ge-\|a_i\|_\infty$, while domain monotonicity on one fixed interval, followed by the bounded-interval upper barrier in Proposition~\ref{thm:stability}, gives a common upper bound. The common structural assumptions and the whole-line Harnack estimate therefore yield a constant $C_H$, independent of $i$, such that
\[
\sup_{|y-x|\le R_J}\frac{\phi_i(y)}{\phi_i(x)}\le C_H
\quad\hbox{for all }x\in\R.
\]
Dividing the eigenvalue equation by $q_i\phi_i$ and using the common bounds for $\lambda_i$, $a_i$ and the gain term yields another structural constant $C_0$ such that
\[
\frac{|\phi_i'(x)|}{\phi_i(x)}\le C_0
\quad\hbox{for all }x\in\R.
\]
Apply these estimates to $\phi_1$. For every $x\in\R$,
\[
\begin{aligned}
\frac{|(\cA_2-\cA_1)\phi_1(x)|}{\phi_1(x)}
&\le d\int_\R |J_2(x-y)-J_1(x-y)|\frac{\phi_1(y)}{\phi_1(x)}\,dy\\
&\quad +|q_2(x)-q_1(x)|\frac{|\phi_1'(x)|}{\phi_1(x)}
+|a_2(x)-a_1(x)|\\
&\le C\Big(
\|J_2-J_1\|_{L^1}
+\|q_2-q_1\|_{L^\infty}
+\|a_2-a_1\|_{L^\infty}
\Big)=:C\delta.
\end{aligned}
\]
Since $\cA_1\phi_1+\lambda_1\phi_1=0$, we obtain $\cA_2\phi_1+(\lambda_1-C\delta)\phi_1\le0$,
and therefore $\lambda_2\ge\lambda_1-C\delta$. Exchanging the roles of the two operators yields $\lambda_1\ge\lambda_2-C\delta$, which proves the claimed estimate.

The convergence of the principal values is now immediate. The compactness argument used in (i) shows that every normalized eigenfunction subsequence has a further $C^1_{\rm loc}$ limit, and the convergence of the eigenvalues shows that every such limit is a positive eigenfunction associated with $\Lp(\cA)$. Simplicity yields convergence of the whole normalized sequence whenever it holds.
\end{proof}

\begin{lemma}\label{lem:MPcontact}
Let $w\in C^1(\R)$ satisfy $w\ge0$ and $\cA_\R w+c(x)w\le0$
for a bounded continuous $c$. If $w(x_0)=0$ at some point, then $w\equiv0$.
\end{lemma}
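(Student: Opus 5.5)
The plan is the contact argument already used in the kernel part of Proposition~\ref{thm:CHbounded}, now carried out on the whole line. Everything rests on two facts: at a minimum point of a nonnegative $C^1$ function, both the function and its derivative vanish; and $J$ is strictly positive on a neighborhood $(-\delta,\delta)$ of the origin, which holds because $J$ is continuous and $J(0)>0$.

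First, evaluate the inequality at a zero. Let $x_0$ be a point with $w(x_0)=0$. Since $w\ge0$, the point $x_0$ is a global minimum, so $w'(x_0)=0$. The drift term $q(x_0)w'(x_0)$ vanishes, and so do the terms $-dw(x_0)$, $a(x_0)w(x_0)$ and $c(x_0)w(x_0)$. The inequality $\cA_\R w+cw\le0$ at $x_0$ therefore reduces to
\[
d\int_\R J(x_0-y)w(y)\,dy\le0 .
\]
The integrand is nonnegative and $J(x_0-\cdot)>0$ on $(x_0-\delta,x_0+\delta)$. Since $w$ is continuous, this forces $w\equiv0$ on $(x_0-\delta,x_0+\delta)$.

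Second, propagate the zero set. Let $Z=\{w=0\}$. The set $Z$ is closed because $w$ is continuous. By the first step, every point of $Z$ has a $\delta$-neighborhood contained in $Z$, so $Z$ is also open. Since $Z$ is nonempty and $\R$ is connected, $Z=\R$. This replaces the step-by-step iteration of Proposition~\ref{thm:CHbounded} and removes any need to treat endpoints. The boundedness and continuity of $c$ enter only to make the inequality meaningful pointwise, since $c$ is multiplied by $w(x_0)=0$.

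There is no real obstacle here. The only points to check are that the inequality holds pointwise everywhere, which is true since $w\in C^1$, and that the gain integral converges, which holds because $J$ has compact support and $w$ is continuous. The argument also shows that the sign of $c$ plays no role, which is why no sign or smallness condition is placed on $c$ in the statement.
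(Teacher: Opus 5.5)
Your proof is correct and follows the paper's argument essentially verbatim. As in the paper, you evaluate at a zero, where $w'=0$ and all local terms vanish. You then use positivity of $J$ near the origin to get that $w$ vanishes on a fixed neighborhood of every zero, and conclude that the zero set is open, closed and nonempty, hence all of $\R$.
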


\begin{proof}[\textbf{Proof of Lemma~\ref{lem:MPcontact}}]
At $x_0$, $w'(x_0)=0$ and the zero-order term vanishes. Therefore $0\ge d\int_\R J(x_0-y)w(y)\,dy\ge0$.
Thus $J(x_0-y)w(y)=0$ for almost every $y$. Choose $r>0$ and $j>0$ such that $J(z)\ge j$ for $|z|<r$. Continuity and nonnegativity of $w$ imply $w=0$ on $(x_0-r,x_0+r)$. More generally, at every point $x$ of the zero set
$Z=\{w=0\}$ the same contact computation gives $w=0$ on $(x-r,x+r)$. Hence $Z$ is open; it is closed by continuity and nonempty because $x_0\in Z$. Since $\R$ is connected, $Z=\R$ and $w\equiv0$.
\end{proof}

\section{Minimal growth and simplicity}

Let $W\in C^1(\R)$ be positive and tend to $+\infty$ at both ends. We write $u=o(W)$ if $u(x)/W(x)\to0$ as $|x|\to\infty$. A positive eigenfunction $\phi_p$ associated with $\Lp(\R)$ is said to have $W$-minimal growth if, for every compact interval $K$ and every positive $v\in C^1(\R\setminus K)$ satisfying
\[
\cA_\R v+\Lp(\R)v\le0\;\hbox{in }\R\setminus K,
\]
with $v=o(W)$, domination of $\phi_p$ by $v$ on an interaction collar of $\partial K$ implies the same domination outside $K$.

\begin{proposition}\label{prop:simplicitycore}
The comparison and rigidity arguments used below are collected in the following proposition.

\textup{(i)} Assume that for some compact interval $K$, $\delta>0$ and $W\ge1$,
\[
W(x)\to+\infty,\; (\cA_\R+\lambda)W\le-\delta W\;\hbox{in }\R\setminus K.
\]
Let $U$ be one of the two components of $\R\setminus K$. If $u\in C^1(U)$ extends continuously to the interaction collar of $U$, satisfies $u^+=o(W)$ at the infinite end, $(\cA_\R+\lambda)u\ge0$ in $U$, and $u\le0$ on the finite interaction collar of width $R_J$, then $u\le0$ in $U$.

\textup{(ii)} Let $\lambda_p=\Lp(\R)$. Assume that there exist a compact interval $K$, $\delta>0$ and $W\in C^1(\R)$ such that $W\ge1$, $W(x)\to+\infty$, and
\[
(\cA_\R+\lambda_p)W\le-\delta W\;\hbox{in }\R\setminus K.
\]
Suppose that every positive eigenfunction associated with $\lambda_p$ is $o(W)$. Then every such eigenfunction has $W$-minimal growth, and $\lambda_p$ is simple in the positive cone: any two positive eigenfunctions associated with $\lambda_p$ are proportional.

\textup{(iii)} Assume the hypotheses of Proposition~\ref{prop:simplicitycore}. Let $u>0$ satisfy $u=o(W)$ and $(\cA_\R+\lambda_p)u\le0$ in $\R$, where $\lambda_p=\Lp(\R)$. Then $u$ is a positive multiple of the principal eigenfunction. In particular the inequality is necessarily an equality.

\textup{(iv)} Under the same hypotheses, let $f\in C_c(\R)$ satisfy $f\ge0$ and $f\not\equiv0$. There is no positive $u\in C^1(\R)$ with $u=o(W)$ satisfying $(\cA_\R+\lambda_p)u=-f$ in $\R$.

\textup{(v)} Assume the Lyapunov hypotheses of Proposition~\ref{prop:simplicitycore}. Let $u,v>0$, $u=o(W)$, $v=o(W)$, satisfy
\[
(\cA_\R+\lambda_p)u\ge0,\qquad (\cA_\R+\lambda_p)v\le0.
\]
If $u\le Cv$ on one compact interaction collar containing the Lyapunov core, then $u\le Cv$ on $\R$. If equality of the optimal constant is attained at one point and both differential inequalities are equalities, then $u$ and $v$ are proportional.
\end{proposition}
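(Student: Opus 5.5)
The whole proposition rests on one device: a first-contact comparison on a half-line, made rigorous by the Lyapunov function $W$ and then iterated. I would prove (i) first, derive (v) from it, and obtain (ii)--(iv) as consequences of (v) together with Lemma~\ref{lem:MPcontact}.

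\textbf{Part (i).} Take $U=(b,\infty)$ with $K\subset(-\infty,b]$; the other component is handled by reflection. Extend $u$ by its collar values, which are $\le0$ on $[b-R_J,b]$, so that $\cA_\R u$ makes sense in $U$. Suppose $\sup_U u>0$ and, for $\varepsilon>0$, consider $u_\varepsilon=u-\varepsilon W$. Since $u^+=o(W)$, the function $u_\varepsilon$ is negative far out. It is also $\le -\varepsilon<0$ on the collar, because $W\ge1$.
\begin{itemize}
\item If $u_\varepsilon\le0$ on $U$ for every small $\varepsilon$, then letting $\varepsilon\to0$ gives $u\le0$, as required.
\item Otherwise $\max u_\varepsilon>0$ is attained at some $x_*\in U$, and there $u_\varepsilon'(x_*)=0$.
\end{itemize}
In the second case, $x_*$ is a global maximum over $U$ together with the collar, so the gain term satisfies $\int J(x_*-y)(u_\varepsilon(y)-u_\varepsilon(x_*))\,dy\le0$. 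This yields
\[
(\cA_\R+\lambda)u_\varepsilon(x_*)\le\bigl(a(x_*)+\lambda\bigr)u_\varepsilon(x_*).
\]
Directly from the hypotheses, $(\cA_\R+\lambda)u_\varepsilon\ge\varepsilon\delta W>0$ at $x_*$. This is not yet a contradiction when $a+\lambda>0$. The fix is to compare the ratio $u/W$ instead, i.e.\ to take $t_*=\sup u^+/W$, which is attained because $u^+=o(W)$, and to look at the contact of $t_*W-u\ge0$, exactly as in the proof of Proposition~\ref{prop:MPcore}(i). At the contact point the nonlocal, drift and zero-order terms of $t_*W-u$ all vanish or are nonnegative. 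Hence $(\cA_\R+\lambda)(t_*W-u)\ge0$ there, while the hypotheses give $\le-t_*\delta W<0$, a contradiction. The collar condition guarantees that the contact point lies inside $U$, since on the collar $t_*W-u\ge t_*W>0$.

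\textbf{Part (v).} Let $C^*=\inf\{C: u\le Cv \text{ on } \R\}$. Apply (i) to $u-Cv$ on each component of $\R\setminus K$, where $K$ is the Lyapunov core. It is a subsolution there because $(\cA_\R+\lambda_p)(u-Cv)\ge0$. Its positive part is $o(W)$ since both $u$ and $v$ are, and it is $\le0$ on the collar. Hence $u\le Cv$ outside the collar, and therefore everywhere. For rigidity, if $u$ and $v$ are eigenfunctions and $C^*v-u\ge0$ touches zero, then Lemma~\ref{lem:MPcontact} with $c=a+\lambda_p$ gives $C^*v\equiv u$.

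\textbf{Parts (ii)--(iv).} Minimal growth in (ii) is exactly (i) applied to $\phi_p-Cv$. For simplicity, take two positive eigenfunctions $\phi,\psi$ and let $C$ be the optimal constant with $\phi\le C\psi$ on the compact collar; it is finite and attained by continuity. Part (v) extends $\phi\le C\psi$ to $\R$. Contact then occurs at a collar point, and Lemma~\ref{lem:MPcontact} gives $\phi=C\psi$.
\begin{itemize}
\item For (iii), take $\phi_p=\phi$ and $u$ the supersolution, and choose $C$ optimal with $\phi_p\le Cu$ on the collar. The same argument gives $\phi_p\le Cu$ everywhere with contact. Then $w=Cu-\phi_p\ge0$ satisfies $(\cA_\R+\lambda_p)w\le0$, so $w\equiv0$ by the contact lemma.
\item For (iv), such a $u$ would be a supersolution, so (iii) forces equality, i.e.\ $f\equiv0$, a contradiction.
\end{itemize}

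\textbf{Main obstacle.} The delicate point is part (i): making sure the supremum is attained in the interior of $U$ and not lost on the nonlocal collar or at infinity. The plan is to use the $t_*W$ contact argument rather than subtracting $\varepsilon W$, because the potential $a+\lambda$ has no sign.
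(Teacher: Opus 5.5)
Your proposal is correct and is essentially the paper's proof. Part (i) is the same first-contact argument with $t_*=\max_U u^+/W$ and $z=t_*W-u$ (you rightly discard the $u-\varepsilon W$ attempt), and (ii)--(v) follow by propagating the collar comparison to both tails via (i) and then applying the strong comparison of Lemma~\ref{lem:MPcontact}; routing (ii) and (iii) through (v) instead of repeating the propagation is only a reorganization, and the one slip is harmless: since $\cA_\R$ already contains $a$, the lemma should be invoked with $c=\lambda_p$, not $c=a+\lambda_p$.
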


\begin{proof}[\textbf{Proof of Proposition~\ref{prop:simplicitycore}\textup{(i)}}]
Suppose that $u$ is positive somewhere. Since $u^+=o(W)$ at the infinite end and $u\le0$ on the finite interaction collar, the quantity $t_* =\max_U\frac{u^+}{W}$
is positive and is attained at an interior point $x_*\in U$. Put $z=t_*W-u$. Then $z\ge0$ on $U$ and on the interaction collar, and $z(x_*)=0$. Therefore $z'(x_*)=0$ and $d\int J(x_*-y)z(y)\,dy\ge0$.
The loss and zero-order terms vanish at the contact point. Hence $(\cA_\R+\lambda)z(x_*)\ge0$. On the other hand,
\[
(\cA_\R+\lambda)z=t_*(\cA_\R+\lambda)W-(\cA_\R+\lambda)u\le-t_*\delta W<0,
\]
a contradiction. Thus $u\le0$.
\end{proof}

\begin{proof}[\textbf{Proof of Proposition~\ref{prop:simplicitycore}\textup{(ii)}}]
Let $\phi>0$ be a principal eigenfunction. Fix a compact interval $K_1$ containing $K$ together with an interaction collar of width $R_J$, and let $v>0$ satisfy $(\cA_\R+\lambda_p)v\le0$
outside $K_1$, with $v=o(W)$. Suppose that $\phi\le Cv$ on the interaction collar of $\partial K_1$. On either exterior component, the function $u=\phi-Cv$ satisfies $(\cA_\R+\lambda_p)u\ge0$, $u=o(W)$ and $u\le0$ on the finite collar. Proposition~\ref{prop:simplicitycore}\textup{(i)} yields $u\le0$. Thus $\phi$ has $W$-minimal growth.

Now let $\phi,\psi>0$ be two eigenfunctions. Choose $R$ so large that $[-R,R]$ contains $K$ and both interaction collars. Set $t=\max_{[-R-R_J,R+R_J]}\frac{\phi}{\psi}$.
Then $\phi\le t\psi$ on the collars. Applying the exterior maximum principle to $u=\phi-t\psi$ on both tails yields $\phi\le t\psi$ on all of $\R$. Hence the global supremum of $\phi/\psi$ is attained on the compact interval $[-R-R_J,R+R_J]$. Let $t_* =\max_{\R}\frac{\phi}{\psi}$.
The function $w=t_*\psi-\phi$ is nonnegative, solves $(\cA_\R+\lambda_p)w=0$, and vanishes at a point where the maximum ratio is attained. The strong comparison principle yields $w\equiv0$. Therefore $\phi=t_*\psi$.
\end{proof}

\begin{proof}[\textbf{Proof of Proposition~\ref{prop:simplicitycore}\textup{(iii)}}]
Let $\phi>0$ be a principal eigenfunction. Proposition~\ref{prop:simplicitycore}\textup{(ii)} shows that $\phi$ has $W$-minimal growth. Choose $R$ so large that the strict Lyapunov inequality holds outside $[-R,R]$. On the compact interaction collar $[-R-R_J,R+R_J]$, both $u$ and $\phi$ are positive. Hence there exists $t>0$ such that $\phi\le tu$
on that collar. The function $\phi-tu$ satisfies
\[
(\cA_\R+\lambda_p)(\phi-tu)=-t(\cA_\R+\lambda_p)u\ge0
\]
on both exterior components and is $o(W)$. The exterior maximum principle yields $\phi\le tu$ on all of $\R$.

Fix one interaction collar $C$ containing the Lyapunov core and set $t_* =\max_C\frac{\phi}{u}$.
The maximum exists and is positive. Applying the exterior maximum principle on the two components of $\R\setminus C$ to $\phi-t_*u$ yields $\phi\le t_*u$ on $\R$. Hence $t_*$ is also the global maximum of $\phi/u$, and it is attained at some $x_*\in C$. Put $w=t_*u-\phi$. Then $w\ge0$, $w(x_*)=0$, and $(\cA_\R+\lambda_p)w=t_*(\cA_\R+\lambda_p)u\le0$.
At the zero $x_*$, $w'(x_*)=0$ and the loss and zero-order terms vanish. Thus $\cA_{\R}w(x_*)=d\int_\R J(x_*-y)w(y)\,dy\ge0$.
The differential inequality yields the reverse inequality. Therefore the integral vanishes, and $w$ vanishes in a neighborhood of $x_*$. At every zero the same argument applies. Positivity of $J$ near the origin propagates the zero set through the connected line, so $w\equiv0$. Hence $\phi=t_*u$. Substitution into the equations yields $(\cA_\R+\lambda_p)u=0$.
\end{proof}

\begin{proof}[\textbf{Proof of Proposition~\ref{prop:simplicitycore}\textup{(iv)}}]
If such a function existed, it would satisfy the inequality in Proposition~\ref{prop:simplicitycore}\textup{(iii)}. Hence $u=C\phi$ for the principal eigenfunction $\phi$. But then $(\cA_\R+\lambda_p)u=0$, contradicting $f\not\equiv0$.
\end{proof}

\begin{proof}[\textbf{Proof of Proposition~\ref{prop:simplicitycore}\textup{(v)}}]
On each exterior component the function $u-Cv$ is $o(W)$ and satisfies $(\cA_\R+\lambda_p)(u-Cv)\ge0$.
The collar assumption yields the boundary sign. Applying Proposition~\ref{prop:simplicitycore}\textup{(i)} separately on the two tails propagates the comparison to the whole line. If the optimal ratio is attained and both functions solve the same equation, the nonnegative difference at the optimal scaling solves the homogeneous equation and has a zero. The strong comparison theorem forces the difference to vanish identically.
\end{proof}

\begin{proposition}\label{prop:tailcore}
The exponential criteria used to close the simplicity argument are the following.

\textup{(i)} Assume the hypotheses of Lemma~\ref{thm:growth}. Let $\lambda_p=\Lp(\R)$ and choose $\beta>\gamma_H$. If
\[
\limsup_{|x|\to\infty}\left[a(x)+\lambda_p+d\big(M_J(\beta)-1\big)+q_1\beta\right]<0,
\]
where $M_J(\beta)=\int_\R J(z)e^{\beta|z|}\,dz$, then $W(x)=e^{\beta\sqrt{1+x^2}}$ satisfies $(\cA_\R+\lambda_p)W\le-\delta W$
outside a compact interval for some $\delta>0$. Every positive eigenfunction associated with $\lambda_p$ is $o(W)$. Consequently $\lambda_p$ is simple in the positive cone.

\textup{(ii)} Let $\lambda_p=\Lp(\R)$. Assume that every positive principal eigenfunction satisfies the exponential Harnack bound of Lemma~\ref{thm:growth} with exponent $\gamma_H$. Suppose there exist $\beta_+,\beta_->\gamma_H$ such that
\[
\limsup_{x\to+\infty}\left[a(x)+\lambda_p+d(M(\beta_+)-1)+q(x)\beta_+\right]<0
\]
and
\[
\limsup_{x\to-\infty}\left[a(x)+\lambda_p+d(M(\beta_-)-1)-q(x)\beta_-\right]<0,
\]
where $M(\beta)=\int_\R J(z)e^{\beta z}\,dz$; symmetry makes $M$ even. Then $\lambda_p$ is simple in the positive cone.
\end{proposition}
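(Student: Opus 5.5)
Both parts follow one pattern. We build an exponential Lyapunov function $W$ that is a strict supersolution outside a compact set and grows faster than any positive principal eigenfunction. We then invoke Proposition~\ref{prop:simplicitycore}\textup{(ii)}, whose hypotheses are exactly a strict Lyapunov inequality $(\cA_\R+\lambda_p)W\le-\delta W$ outside a compact interval $K$, together with $\phi=o(W)$ for every positive eigenfunction $\phi$.

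For \textup{(i)}, take $W(x)=e^{\beta\sqrt{1+x^2}}$. Set $g(x)=\sqrt{1+x^2}$. Then $|g'|\le1$, so for $|z|\le R_J$ we have $g(x-z)-g(x)\le|z|$. This gives
\[
\frac{\int_\R J(x-y)W(y)\,dy}{W(x)}=\int_\R J(z)e^{\beta(g(x-z)-g(x))}\,dz\le M_J(\beta).
\]
Also $|qW'/W|=q\beta|g'|\le q_1\beta$. Hence
\[
\frac{(\cA_\R+\lambda_p)W}{W}\le d(M_J(\beta)-1)+q_1\beta+a(x)+\lambda_p,
\]
and the limsup hypothesis supplies $\delta>0$ and $K$. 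The growth comparison comes from Lemma~\ref{thm:growth}. Its constants are uniform because $|\lambda_p|$ is bounded structurally: below by $-\|a\|_\infty$, above by domain monotonicity together with the bounded-interval barrier. Normalizing $\phi(0)=1$, the lemma gives $\phi(x)\le C_He^{\gamma_H|x|}$. Since $\beta>\gamma_H$ and $\beta g(x)\ge\beta|x|$, we get $\phi/W\to0$. Proposition~\ref{prop:simplicitycore}\textup{(ii)} then yields simplicity.

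For \textup{(ii)}, the drift is asymmetric, so we use different exponents at the two ends. Take $W\in C^1(\R)$, $W\ge1$, with $W(x)=e^{\beta_+x}$ for $x\ge R$ and $W(x)=e^{-\beta_-x}$ for $x\le-R$, smoothly interpolated in between. For $x\ge R+R_J$ the kernel sees only the exponential branch, and we compute
\[
\frac{J*W}{W}=\int J(z)e^{-\beta_+z}\,dz=M(\beta_+),\qquad \frac{qW'}{W}=q\beta_+ .
\]
This is precisely the bracket in the $+\infty$ hypothesis, which gives $(\cA_\R+\lambda_p)W\le-\delta W$ for large $x$. Symmetrically, for $x\le-R-R_J$ we get $M(\beta_-)$ and $-q\beta_-$, matching the $-\infty$ hypothesis. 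The bounded region is absorbed into $K$. By the assumed Harnack bound, $\phi(x)\le Ce^{\gamma_H|x|}=o(W)$ at both ends, since $\beta_\pm>\gamma_H$. Proposition~\ref{prop:simplicitycore}\textup{(ii)} applies again.

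The main obstacle is bookkeeping rather than analysis. First, the Harnack exponent $\gamma_H$ must be fixed before $\beta$ is chosen and must not depend on the particular eigenfunction. This requires the a priori structural bound on $\lambda_p$, which we take from Proposition~\ref{thm:stability} via domain monotonicity. Second, $W$ must be $C^1$ and $\ge1$, so that the contact argument in Proposition~\ref{prop:simplicitycore}\textup{(i)} is legitimate. Near $x=0$ the function $\sqrt{1+x^2}$ is used precisely to avoid the kink of $|x|$.
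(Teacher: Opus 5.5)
Your proposal is correct and follows the paper's proof essentially line by line. It uses the same Lyapunov functions ($e^{\beta\sqrt{1+x^2}}$ in (i), and $e^{\beta_+x}$ and $e^{-\beta_-x}$ glued across a compact core in (ii)), the same exact moment computation on the tails, the Harnack growth bound $\phi\le Ce^{\gamma_H|x|}$ giving $\phi=o(W)$, and Proposition~\ref{prop:simplicitycore}(ii). The only difference is that your a priori structural bound on $\lambda_p$ is redundant here. The statement already assumes the hypotheses of Lemma~\ref{thm:growth} in (i) and the Harnack bound itself in (ii); the paper does that bookkeeping later, in the proof of Theorem~\ref{thm:mainwholeline}(ii) via $\mathfrak U_*$.
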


\begin{proof}[\textbf{Proof of Proposition~\ref{prop:tailcore}\textup{(i)}}]
Let $r(x)=\sqrt{1+x^2}$. Since $|r'(x)|\le1$ and $|r(x-z)-r(x)|\le|z|$,
\[
\frac{W(x-z)}{W(x)}=e^{\beta(r(x-z)-r(x))}\le e^{\beta|z|}.
\]
Also $W'/W=\beta r'(x)$, hence $qW'/W\le q_1\beta$. Therefore
\[
\frac{(\cA_\R+\lambda_p)W}{W}\le d\int_\R J(z)(e^{\beta|z|}-1)\,dz+q_1\beta+a(x)+\lambda_p.
\]
The tail hypothesis yields the strict negative bound. By Lemma~\ref{thm:growth}, every positive eigenfunction $u$ satisfies $u(x)\le Ce^{\gamma_H|x|}$ after normalization at the origin. Since $\beta>\gamma_H$, $u/W\to0$. Proposition~\ref{prop:simplicitycore}\textup{(ii)} now yields simplicity.
\end{proof}

\begin{proof}[\textbf{Proof of Proposition~\ref{prop:tailcore}\textup{(ii)}}]
Choose $R>1$ so large that the first strict inequality holds on $(R,\infty)$ and the second on $(-\infty,-R)$. Construct $W\in C^1(\R)$, $W\ge1$, so that $W(x)=e^{\beta_+x}\quad x\ge R+R_J$,
and $W(x)=e^{-\beta_-x}\quad x\le-R-R_J$,
with an arbitrary positive smooth interpolation on the middle interval.

For $x\ge R+2R_J$, every point $x-z$ occurring in the convolution remains in the right exponential region. Therefore
\[
\frac{J*W(x)}{W(x)}=\int J(z)e^{-\beta_+z}\,dz=M(\beta_+),\qquad \frac{W'(x)}{W(x)}=\beta_+.
\]
Thus
\[
\frac{(\cA_\R+\lambda_p)W(x)}{W(x)}
=d(M(\beta_+)-1)+q(x)\beta_++a(x)+\lambda_p,
\]
which is bounded above by a negative constant for sufficiently large $x$.

For $x\le-R-2R_J$ we have instead
\[
\frac{J*W(x)}{W(x)}=M(\beta_-),\qquad \frac{W'(x)}{W(x)}=-\beta_-.
\]
Hence
\[
\frac{(\cA_\R+\lambda_p)W(x)}{W(x)}
=d(M(\beta_-)-1)-q(x)\beta_-+a(x)+\lambda_p< -\delta
\]
for large negative $x$. After enlarging the middle compact interval, $W$ is a strict Lyapunov supersolution on both tails.

Let $\phi$ be any positive principal eigenfunction normalized at the origin. Lemma~\ref{thm:growth} yields $\phi(x)\le Ce^{\gamma_H|x|}$.
Since $\beta_+,\beta_->\gamma_H$, $\frac{\phi(x)}{W(x)}\longrightarrow0$
at both ends. Proposition~\ref{prop:simplicitycore}\textup{(ii)} applies and proves simplicity.
\end{proof}

\begin{proof}[\textbf{Proof of Theorem~\ref{thm:mainwholeline}\textup{(ii)}}]
Fix $\Omega_0=(\alpha_0,\beta_0)$. We first justify that the class of admissible $\kappa$'s in the statement is nonempty. Since $J(0)>0$ and $J$ is continuous, we can choose $\rho>0$ so small that
\[
0<\rho<\min\{R_J,\beta_0-\alpha_0\},\qquad
j_0:=\min_{|z|\le\rho}J(z)>0.
\]
For $y\in[\beta_0-\rho,\beta_0-\rho/2]$ one has $J(\beta_0-y)\ge j_0$ and $\beta_0-y\in[\rho/2,\rho]$. Hence
\[
\begin{aligned}
B_{\Omega_0,\kappa}
&\ge d j_0\int_{\rho/2}^{\rho}(e^{\kappa t}-1)\,dt-q(\beta_0)\kappa\\
&=\frac{d j_0}{\kappa}\big(e^{\kappa\rho}-e^{\kappa\rho/2}\big)
-\frac{d j_0\rho}{2}-q(\beta_0)\kappa.
\end{aligned}
\]
The right-hand side tends to $+\infty$ as $\kappa\to+\infty$. Thus $B_{\Omega_0,\kappa}>0$ for all sufficiently large $\kappa$.

Fix such a $\kappa$ and write $w=w_\kappa$. Since $w>0$ in $\Omega_0$ and $w(\beta_0)=0$, while
\[
\lim_{x\uparrow\beta_0}\cA_{\Omega_0}w(x)
=d\int_{\Omega_0}J(\beta_0-y)w(y)\,dy-q(\beta_0)\kappa
=B_{\Omega_0,\kappa}>0,
\]
we have
\[
-\frac{\cA_{\Omega_0}w(x)}{w(x)}\longrightarrow-\infty
\qquad (x\uparrow\beta_0).
\]
On every compact subinterval of $\Omega_0$ the quotient is continuous. Therefore
\[
\mathfrak U_{\Omega_0,\kappa}
=\sup_{x\in\Omega_0}\left(-\frac{\cA_{\Omega_0}w(x)}{w(x)}\right)
\]
is a finite real number and, by definition,
\[
\cA_{\Omega_0}w+\mathfrak U_{\Omega_0,\kappa}w\ge0
\qquad\hbox{in }\Omega_0.
\]
The bounded-interval comparison theorem, Proposition~\ref{thm:comparison}, yields $\Lp(\Omega_0)\le\mathfrak U_{\Omega_0,\kappa}$,
and domain monotonicity yields
\begin{equation}
\Lp(\R)\le\mathfrak U_{\Omega_0,\kappa}
\label{eq:4_2}
\end{equation}
for every admissible pair $(\Omega_0,\kappa)$. Taking the infimum proves $\Lp(\R)\le\mathfrak U_*$.
The admissible class is nonempty by the first part of the proof, and every cap is finite. Moreover \eqref{eq:4_2} shows that all caps are bounded below by the finite number $\Lp(\R)$. Hence $\mathfrak U_*\in\R$.

There is also a coefficient lower bound sufficient for the Harnack constants. Since $J*1=1$ on $\R$, $\cA_\R1=a(x)$,
and hence the constant function is admissible at the level $-\sup_\R a$. Thus
\begin{equation}
-\|a\|_\infty\le-\sup_\R a\le\Lp(\R)\le\mathfrak U_*.
\label{eq:4_3}
\end{equation}
It follows that $|\Lp(\R)|\le\Lambda_*$. The uniform Harnack theorem, Lemma~\ref{thm:growth}, therefore supplies constants $C_*$ and $\gamma_*$ depending only on $d,\ J,\ q_0,\ q_1,\ \|a\|_\infty,\ \Lambda_*$,
such that every positive principal eigenfunction $\phi$, normalized by $\phi(0)=1$, satisfies
\begin{equation}
\phi(x)\le C_*e^{\gamma_*|x|}.
\label{eq:4_4}
\end{equation}

Let $\lambda_p=\Lp(\R)$. The two tail assumptions of Theorem~\ref{thm:mainwholeline} and $\lambda_p\le\mathfrak U_*$ yield
\[
\limsup_{x\to+\infty}
\left[a(x)+\lambda_p+d(M(\beta_+)-1)+q(x)\beta_+\right]<0
\]
and
\[
\limsup_{x\to-\infty}
\left[a(x)+\lambda_p+d(M(\beta_-)-1)-q(x)\beta_-\right]<0.
\]
These are exactly the strict tail inequalities of Proposition~\ref{prop:tailcore}\textup{(ii)}. Since $\beta_+,\beta_->\gamma_*$, \eqref{eq:4_4} is precisely the growth condition needed there. Proposition~\ref{prop:tailcore}\textup{(ii)} therefore yields simplicity of $\Lp(\cA_\R)$ in the positive cone.

The argument also shows the non-circular character and the optimization built into the criterion. Each $\mathfrak U_{\Omega_0,\kappa}$ is computed from $J,d,q,a$ on one finite interval through the explicit terminal test function $w_\kappa$, and the infimum over all such tests is taken before a whole-line eigenfunction is used. The principal eigenfunction enters only after \eqref{eq:4_3} has fixed a data-dependent Harnack class.
\end{proof}

\begin{proposition}\label{thm:largeintervalrate}
Assume $q\equiv c>0$ and $a\equiv a_0$. Define $H_d(c)=\sup_{s\in\R}\{cs-d(M(s)-1)\}$.
Let $s_c>0$ be the unique solution of $dM'(s_c)=c$. There exist $R_0>0$ and $C>0$, depending only on $J,d,c$, such that for $R\ge R_0$,
\[
H_d(c)-a_0\le\Lp((-R,R))\le H_d(c)-a_0+\frac{C}{R^2}.
\]
Consequently $\Lp((-R,R))\downarrow H_d(c)-a_0$
as $R\to\infty$.
\end{proposition}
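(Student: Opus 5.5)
For the lower bound I would use exponential test functions. Put $\phi_s(x)=e^{-sx}$. On $(-R,R)$ the truncated gain is at most the full convolution, so
\[
\cA_{(-R,R)}\phi_s\le \big(d(M(s)-1)-cs+a_0\big)\phi_s .
\]
Hence $\Lp((-R,R))\ge cs-d(M(s)-1)-a_0$ for every $s$. Optimizing over $s$ gives $\Lp\ge H_d(c)-a_0$. Because $M$ is even and strictly convex and $c>0$, the supremum is attained at the unique $s_c>0$ with $dM'(s_c)=c$.

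For the upper bound I would conjugate by the optimal exponential. Write $u=e^{-s_cx}v$. The operator becomes
\[
d\Big(\int_{-R}^{R}J_c(x-y)v(y)\,dy-M(s_c)v\Big)+cv'-cs_cv+(d(M(s_c)-1)+a_0)v,
\qquad J_c(z):=J(z)e^{s_cz}.
\]
The kernel $J_c$ has total mass $M(s_c)$ and first moment $\int zJ_c=M'(s_c)=c/d$. Therefore the drift $cv'$ cancels the leading transport part of the tilted gain, and the remaining operator is a centered tilted diffusion with coefficient $\tfrac d2M''(s_c)>0$. The eigenvalue shifts accordingly: $\Lp$ of the original problem equals $H_d(c)-a_0$ plus the principal value of
\[
\mathcal T_Rv:=d\Big(\int_{-R}^{R}J_c(x-y)v(y)\,dy-M(s_c)v\Big)+cv'.
\]
The outflow condition $v(R)=0$ is preserved. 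It thus suffices to produce a positive $v$ on $(-R,R)$ with $v(R)=0$ and $\mathcal T_Rv+\tfrac{C}{R^2}v\ge0$. Proposition~\ref{thm:comparison} then gives $\Lp\le H_d(c)-a_0+C/R^2$.

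For the subsolution I would take $v(x)=\cos\big(\pi x/(2R')\big)$ with $R'=R+\ell$, where $\ell$ is a fixed collar width, and replace it near $x=R$ by a profile that vanishes at $R$. In the interior, a Taylor expansion to second order (centered first moment) gives $\mathcal T_Rv\ge -C R^{-2}v$ away from an $O(1)$ collar at each end. The constant depends on $\int z^2J_c$ and on third moments, which contribute $O(R^{-3})$.

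The collars are the main obstacle, because the truncation removes gain mass there and $v$ is only $O(1/R)$ in size. I would handle them in two steps.

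\textbf{Outflow collar.} I would follow the terminal barrier idea behind $B_{\Omega_0,\kappa}$: near $R$, use $v$ proportional to $e^{\kappa(R-x)}-1$, matched in $C^1$ to the cosine. The term $cv'$ with $v'(R)<0$ must be dominated by the exponentially large gain, and this has to be done at the $O(1/R)$ scale of $v$.

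\textbf{Inflow collar.} Near $-R$ the lost gain mass is only $O(v)=O(1/R)$ times bounded quantities. Since the drift pushes mass toward $R$, the term $cv'\approx c\pi/(2R')>0$ there has order $1/R$. I would choose $\ell$ large enough that this positive drift term beats the truncation loss of order $v(-R)\sim \ell/R$.

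If this explicit construction fails, my fallback is to pass to whole-line periodic approximants. The monotone decrease in $R$ follows from domain monotonicity (Lemma~\ref{thm:monotonicity}), and the convergence follows from the two-sided bound.
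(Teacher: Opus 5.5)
Your lower bound, the conjugation by $e^{-s_cx}$, the reduction to the centered operator $\mathcal T_R$ and the interior Taylor estimate are correct and follow the paper's route. The gap is in the barrier.

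\textbf{The inflow collar fails.} Moving the zero of the cosine to $-R'=-R-\ell$ makes $v(-R)\approx\pi\ell/(2R)>0$, and this cannot be repaired. Taking $\ell$ large goes in the wrong direction. In the conjugated operator the drift $cv'$ cannot be used a second time, because it has already cancelled the first moment $dM'(s_c)v'$ of the tilted gain. Let $V$ be the cosine on the whole line, $\mathcal T^{\R}$ the untruncated centered operator, and $x=-R+t$ with $0\le t\le R_J$. Then
\[
\mathcal T_Rv(x)=\mathcal T^{\R}V(x)-d\int_{z>t}J_c(z)V(x-z)\,dz,\qquad V(x-z)=\frac{\pi(\ell+t-z)}{2R'}+O(R^{-3}),
\]
and $\mathcal T^{\R}V(x)=O(R^{-3})$ there. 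Hence the truncation correction is
\[
\frac{\pi d}{2R'}\int_{z>t}(z-t-\ell)J_c(z)\,dz .
\]
The only surviving drift part, $\int_{z>t}zJ_c$, does not grow with $\ell$, while the lost mass $(\ell+t)\int_{z>t}J_c$ does. For $t\in(\max\{0,R_+-\ell\},R_+)$ the integrand is negative on the whole range of integration, and $J$ has positive mass there. So $\mathcal T_Rv(x)\le -c_t/R$ with $c_t>0$ independent of $R$. The target inequality, however, needs $\mathcal T_Rv\ge -CR^{-2}v=O(R^{-3})$. For every fixed $\ell>0$ the barrier therefore has a relative defect of order one, and it worsens as $\ell$ grows.

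\textbf{The outflow patch cannot be built either.} The convex profile $A(e^{\kappa(R-x)}-1)$ cannot be joined in $C^1$ to the concave cosine. At a junction $R-\ell_1$ its value-to-slope ratio is $(1-e^{-\kappa\ell_1})/\kappa<\ell_1$, whereas that of the cosine is at least $\ell+\ell_1$. The periodic-approximant fallback is not an argument as stated.

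\textbf{The missing idea.} Make the whole-line continuation of the barrier \emph{negative} just outside both endpoints, so that truncation deletes only negative contributions. Take $\ell=0$, i.e.
\[
v(x)=\cos\big(\pi x/(2R)\big)=\sin\big(\pi(x+R)/(2R)\big).
\]
Vanishing at the inflow endpoint is allowed, since Proposition~\ref{thm:comparison} only requires $v>0$ inside and $v(R)=0$. The correction above becomes
\[
\frac{\pi d}{2R}\int_{z>t}(z-t)J_c(z)\,dz\ge0 .
\]
It is at least $c/R$ for $t\le t_0$ because $J>0$ near the origin, so it dominates the $O(R^{-3})$ full-line error in the innermost collar. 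For $t\ge t_0$ one has $v\ge c/R$, so that error is absorbed into $CR^{-2}v$. The outflow endpoint is the mirror computation. No exponential profile or matching is needed, and this is exactly the paper's proof with $w_R=\sin(k_R(x+R))$.
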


\begin{proof}[\textbf{Proof of Proposition~\ref{thm:largeintervalrate}}]
We use an exact exponential conjugation with a parameter independent of $R$. Put $\kappa=s_c$ and write $\phi(x)=e^{-s_c(x+R)}u(x)$.
A direct calculation yields
\[
\cA\phi=e^{-s_c(x+R)}\left(\cB_Ru+a_0u-H_d(c)u\right),
\]
where
\[
\cB_Ru=d\left(\int_{-R}^{R}J(x-y)e^{s_c(x-y)}u(y)\,dy-M(s_c)u(x)\right)+cu'(x).
\]
Thus
\[
\Lp(\cA,(-R,R))=H_d(c)-a_0+\Lp(\cB_R,(-R,R)).
\]
We prove $0\le\Lp(\cB_R,(-R,R))\le C/R^2$.

The constant function yields the lower bound because
\[
\cB_R1=d\left(\int_{-R}^{R}J(x-y)e^{s_c(x-y)}\,dy-M(s_c)\right)\le0.
\]
For the upper bound let $k_R=\frac{\pi}{2R},\qquad w_R(x)=\sin(k_R(x+R))$.
The function is positive in $(-R,R)$ and vanishes at both endpoints. Let $W_R$ be the same sine function on the whole line. At points satisfying $\dist(x,\partial(-R,R))\ge R_J$, the full and truncated convolutions agree. Using
\[
W_R(x-z)=W_R(x)\cos(k_Rz)-\cos(k_R(x+R))\sin(k_Rz),
\]
we obtain $\cB_RW_R=A_R W_R+B_R\cos(k_R(x+R))$,
where $A_R=d\int J(z)e^{s_cz}(\cos(k_Rz)-1)\,dz$
and $B_R=ck_R-d\int J(z)e^{s_cz}\sin(k_Rz)\,dz$.
Taylor's theorem and the centering identity $dM'(s_c)=c$ yield $A_R=-\frac{dk_R^2}{2}M''(s_c)+O(k_R^4)$,
and
\[
B_R=ck_R-dk_RM'(s_c)+\frac{dk_R^3}{6}M^{(3)}(s_c)+O(k_R^5)=\frac{dk_R^3}{6}M^{(3)}(s_c)+O(k_R^5).
\]
Hence
\[
|A_R|\le\frac{C}{R^2},\qquad |B_R|\le\frac{C}{R^3}.
\]
If $w_R(x)\ge C_0/R$, then the $B_R$ term is bounded by $C R^{-2}w_R(x)$, and therefore $\cB_Rw_R+\frac{C}{R^2}w_R\ge0$.

It remains to treat points within distance $O(1)$ of the endpoints where $w_R$ is small. The full sine continuation is negative immediately outside both endpoints. Therefore replacing the full convolution by the interval convolution removes a negative contribution and creates a nonnegative correction. Near $R$, write $x=R-t$ with $0\le t\le R_J$. Then $-W_R(R-t-z)=k_R(-t-z)+O(k_R^3|t+z|^3)$
whenever $z<-t$, and the truncation correction equals
\[
d\int_{z<-t}J(z)e^{s_cz}\big(-W_R(R-t-z)\big)\,dz.
\]
For $t\le t_0$ with fixed small $t_0$, positivity of $J$ near the origin yields a lower bound $c k_R$, which dominates the full-space error of order $R^{-3}$. For $t_0\le t\le R_J$, the sine satisfies $w_R(x)\ge c/R$ and the preceding interior estimate applies after increasing $C$. The left endpoint is identical. We conclude that
\[
\cB_Rw_R+\frac{C}{R^2}w_R\ge0\quad \hbox{in }(-R,R).
\]
The bounded-interval comparison theorem yields $\Lp(\cB_R)\le C/R^2$, proving the assertion.
\end{proof}

\begin{proposition}\label{thm:plateausimple}
Let $\Omega_0=(\alpha_0,\beta_0)\Subset\R$, $\kappa>0$, and
$w_\kappa(x)=e^{\kappa(\beta_0-x)}-1$. Assume
\[
B_{\Omega_0,\kappa}
=d\int_{\Omega_0}J(\beta_0-y)w_\kappa(y)\,dy-q(\beta_0)\kappa>0
\]
and put
\[
\mathfrak C_{\Omega_0,\kappa}
:=\sup_{x\in\Omega_0}
\left[
-\frac{d\{\int_{\Omega_0}J(x-y)w_\kappa(y)\,dy-w_\kappa(x)\}
+q(x)w_\kappa'(x)}{w_\kappa(x)}
\right].
\]
Suppose $a(x)\ge a_*$ on $\Omega_0$. Set
$\Lambda_0=\max\{\|a\|_\infty,|\mathfrak C_{\Omega_0,\kappa}-a_*|\}$,
and let $\gamma_0$ be a Harnack growth exponent corresponding to
$d,J,q_0,q_1,\|a\|_\infty,\Lambda_0$. If there exist
$\beta_+,\beta_->\gamma_0$ such that
\[
\mathfrak C_{\Omega_0,\kappa}-a_*+
\limsup_{x\to+\infty}\{a(x)+d(M(\beta_+)-1)+q(x)\beta_+\}<0
\]
and
\[
\mathfrak C_{\Omega_0,\kappa}-a_*+
\limsup_{x\to-\infty}\{a(x)+d(M(\beta_-)-1)-q(x)\beta_-\}<0,
\]
then $\Lp(\cA_\R)$ is simple in the positive cone.
\end{proposition}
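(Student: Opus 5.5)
The idea is to reduce Proposition~\ref{thm:plateausimple} to the proof of Theorem~\ref{thm:mainwholeline}\textup{(ii)}. The only change is that the cap $\mathfrak U_{\Omega_0,\kappa}$ is replaced by the potential-free quantity $\mathfrak C_{\Omega_0,\kappa}$, corrected by the lower bound $a_*$. The key observation is pointwise: since $w_\kappa>0$ in $\Omega_0$ and $a\ge a_*$ there,
\[
-\frac{\cA_{\Omega_0}w_\kappa(x)}{w_\kappa(x)}
=-\frac{d\{\int_{\Omega_0}J(x-y)w_\kappa(y)\,dy-w_\kappa(x)\}+q(x)w_\kappa'(x)}{w_\kappa(x)}-a(x)
\le \mathfrak C_{\Omega_0,\kappa}-a_* .
\]
Hence $\mathfrak U_{\Omega_0,\kappa}\le\mathfrak C_{\Omega_0,\kappa}-a_*$.

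First, $\mathfrak C_{\Omega_0,\kappa}$ is finite. The hypothesis $B_{\Omega_0,\kappa}>0$ makes the numerator tend to $B_{\Omega_0,\kappa}>0$ as $x\uparrow\beta_0$, because $w_\kappa(\beta_0)=0$ and the $a$-term drops out there. So the quotient tends to $-\infty$ near $\beta_0$, exactly as in the proof of Theorem~\ref{thm:mainwholeline}\textup{(ii)}. On compact subsets the quotient is continuous. Near $\alpha_0$ it stays bounded, since $w_\kappa(\alpha_0)>0$.

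Next, as in that proof, $\cA_{\Omega_0}w_\kappa+(\mathfrak C_{\Omega_0,\kappa}-a_*)w_\kappa\ge0$ in $\Omega_0$ with $w_\kappa(\beta_0)=0$. Proposition~\ref{thm:comparison} then gives $\Lp(\Omega_0)\le\mathfrak C_{\Omega_0,\kappa}-a_*$, and Lemma~\ref{thm:monotonicity} gives $\Lp(\R)\le\Lp(\Omega_0)$. Combining this with the constant test function,
\[
-\|a\|_\infty\le\Lp(\R)\le\mathfrak C_{\Omega_0,\kappa}-a_*,
\]
so $|\Lp(\R)|\le\Lambda_0$.

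Lemma~\ref{thm:growth}, applied with $\Lambda=\Lambda_0$, now gives a Harnack exponent $\gamma_0$ that depends only on the listed data. Every positive principal eigenfunction normalized at the origin therefore satisfies $\phi(x)\le Ce^{\gamma_0|x|}$.

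Finally, substituting $\lambda_p\le\mathfrak C_{\Omega_0,\kappa}-a_*$ into the two assumed tail inequalities gives
\[
\limsup_{x\to\pm\infty}\left[a(x)+\lambda_p+d(M(\beta_\pm)-1)\pm q(x)\beta_\pm\right]<0 .
\]
These are exactly the hypotheses of Proposition~\ref{prop:tailcore}\textup{(ii)}. Since $\beta_\pm>\gamma_0$, that proposition yields simplicity in the positive cone.

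The only delicate point is the bookkeeping of the Harnack class. Lemma~\ref{thm:growth} must be applied with a bound on $|\lambda_p|$ fixed before any eigenfunction is used, and $\Lambda_0$ is designed to be that bound. The rest is a direct transcription of the argument for Theorem~\ref{thm:mainwholeline}\textup{(ii)}.
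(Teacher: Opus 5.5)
Your proposal is correct and follows the paper's proof essentially step for step. It uses the pointwise bound $-\cA_{\Omega_0}w_\kappa/w_\kappa\le\mathfrak C_{\Omega_0,\kappa}-a_*$, then the bounded comparison theorem plus domain monotonicity to get $-\|a\|_\infty\le\Lp(\cA_\R)\le\mathfrak C_{\Omega_0,\kappa}-a_*$ and hence $|\Lp(\cA_\R)|\le\Lambda_0$, and closes with Lemma~\ref{thm:growth} and Proposition~\ref{prop:tailcore}\textup{(ii)}; you only add a little more detail on the finiteness of $\mathfrak C_{\Omega_0,\kappa}$ and the transfer of the tail hypotheses.
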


\begin{proof}[\textbf{Proof of Proposition~\ref{thm:plateausimple}}]
The condition $B_{\Omega_0,\kappa}>0$ implies, as in the proof of
Theorem~\ref{thm:mainwholeline}, that $\mathfrak C_{\Omega_0,\kappa}$ is finite.
Since $a\ge a_*$ on $\Omega_0$,
\[
-\frac{\cA_{\Omega_0}w_\kappa}{w_\kappa}
\le\mathfrak C_{\Omega_0,\kappa}-a_*.
\]
The bounded comparison theorem and domain monotonicity imply
\[
-\|a\|_\infty\le\Lp(\cA_\R)
\le\mathfrak C_{\Omega_0,\kappa}-a_*.
\]
Thus $|\Lp(\cA_\R)|\le\Lambda_0$, and Lemma~\ref{thm:growth} applies with
the exponent $\gamma_0$. The two assumptions at infinity are the strict tail
inequalities of Proposition~\ref{prop:tailcore}\textup{(ii)} with $\lambda=\Lp(\cA_\R)$.
The conclusion follows.
\end{proof}

\begin{proposition}\label{prop:perturbcore}
The fixed-domain perturbation formulas used later are collected here.

\textup{(i)} Assume the hypotheses of Proposition~\ref{prop:simplicitycore}, so that the principal eigenvalue is simple in the positive cone. Let $h\in C_c(\R)$ satisfy $h\ge0$ and $h\not\equiv0$. Then $\Lp(\cA_\R+h)<\Lp(\cA_\R)$.

\textup{(ii)} Let $b\in C_c(\R)$ satisfy $b\ge0$ and $b\not\equiv0$, and assume that the Lyapunov hypotheses of Proposition~\ref{prop:simplicitycore} hold for every operator $\cA_\R+rb$ under consideration. Then $r\longmapsto\Lp(\cA_\R+rb)$
is strictly decreasing and locally Lipschitz. If there exist $r_-<r_+$ such that the two values have opposite signs, there is a unique $r_*$ with $\Lp(\cA_\R+r_*b)=0$.

\textup{(iii)} Let $\Omega=(\alpha,\beta)$ be fixed. Let
\[
\cA_\tau u=d\int_\Omega\big(J(x-y)+\tau H(x,y)\big)u(y)\,dy+\big(q(x)+\tau r(x)\big)u'(x)+\big(b(x)+\tau h(x)\big)u(x),
\]
where $q,r\in C^1(\overline \Omega)$, $b,h\in C(\overline \Omega)$ and $H\in C(\overline \Omega^2)$.  We restrict $\tau$ to a real interval on which the drift is bounded away from zero and the kernels $J(x-y)+\tau H(x,y)$ are nonnegative and satisfy a common positivity condition in a neighborhood of the diagonal. Let $\lambda(\tau)=\Lp(\cA_\tau)$. Then the principal eigenvalue and the normalized principal eigenfunction are real analytic in $\tau$ on that interval. If $\phi,\phi^*>0$ are the direct and adjoint principal eigenfunctions at $\tau=0$, normalized by $\int_\Omega\phi\phi^*\,dx=1$,
then
\[
\begin{aligned}
\lambda'(0)={}&-d\int_\Omega\int_\Omega\phi^*(x)H(x,y)\phi(y)\,dy\,dx\\
&-\int_\Omega r(x)\phi'(x)\phi^*(x)\,dx
-\int_\Omega h(x)\phi(x)\phi^*(x)\,dx.
\end{aligned}
\]
In particular, for a potential perturbation alone,
\[
\lambda'(0)=-\int_\Omega h(x)\phi(x)\phi^*(x)\,dx.
\]

\textup{(iv)} On a fixed bounded interval let
\[
\cA_{d,c}u=d\left(\int_\Omega J(x-y)u(y)\,dy-u(x)\right)+c\,r(x)u'(x)+a(x)u(x),
\]
where $d>0$, $c$ ranges in an interval on which $cr(x)$ has a fixed sign bounded away from zero, and $r\in C^1(\overline \Omega)$. Normalize the direct and adjoint eigenfunctions by $\int_\Omega\phi\phi^*=1$. Then the principal eigenvalue and the normalized eigenfunction are real analytic in $(d,c)$ on every connected parameter region on which $d>0$ and $cr$ keeps a fixed sign bounded away from zero, and
\[
\partial_d\Lp=-\int_\Omega\phi^*(x)\left(\int_\Omega J(x-y)\phi(y)\,dy-\phi(x)\right)dx,
\]
\[
\partial_c\Lp=-\int_\Omega r(x)\phi'(x)\phi^*(x)\,dx.
\]
No sign is asserted for these derivatives in general.

\textup{(v)} For the scaled operator
\[
\cA_{\sigma,m,L}u=\frac1{\sigma^m}\left(\int_{-L}^{L}J_\sigma(x-y)u(y)\,dy-u(x)\right)+q(x)u'(x)+a(x)u(x),
\]
put $\rho=\sigma/L$ and $v(\xi)=u(L\xi)$. Then
\[
\Lp(\cA_{\sigma,m,L})=\Lp\left(L^{-m}\rho^{-m}(\mathcal J_\rho-I)+L^{-1}q(L\cdot)\partial_\xi+a(L\cdot),(-1,1)\right),
\]
where
\[
\mathcal J_\rho v(\xi)=\int_{-1}^{1}J_\rho(\xi-\eta)v(\eta)\,d\eta.
\]
\end{proposition}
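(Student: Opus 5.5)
The plan is to prove the five items in order. Items (i)--(ii) are comparison statements on the whole line. Items (iii)--(iv) are an analytic implicit-function argument on a bounded interval, carried out in the terminal-integral (Volterra) formulation of Proposition~\ref{thm:generalKstability}. Item (v) is a change of variables.

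\emph{Item (i).} Potential monotonicity (Lemma~\ref{thm:monotonicity}, whose proof uses only the defining inequality and so holds on $\R$) gives $\mu:=\Lp(\cA_\R+h)\le\Lp(\cA_\R)=:\lambda_p$.
\begin{itemize}
\item Suppose equality holds. Proposition~\ref{thm:CHbounded} (exhaustion and attainment) supplies $\psi>0$ with $(\cA_\R+h+\lambda_p)\psi=0$, that is, $(\cA_\R+\lambda_p)\psi=-h\psi=:-f$.
\item Here $f\in C_c(\R)$, $f\ge0$, and $f\not\equiv0$ because $\psi>0$.
\item To invoke Proposition~\ref{prop:simplicitycore}\textup{(iv)} I need $\psi=o(W)$. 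Outside $K\cup\supp h$ the operators $\cA_\R+h$ and $\cA_\R$ coincide, so the same Lyapunov function serves for both.
\item I would read the hypothesis ``every positive eigenfunction is $o(W)$'' as applying to the perturbed operator as well. When $W$ comes from Proposition~\ref{prop:tailcore}, this follows from Lemma~\ref{thm:growth} with constants depending only on the structural bounds, which include $\|a+h\|_\infty$.
\item Proposition~\ref{prop:simplicitycore}\textup{(iv)} then gives a contradiction, so the inequality is strict.
\end{itemize}

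\emph{Item (ii).} For $r_1<r_2$, apply (i) to the operator $\cA_\R+r_1b$ with $h=(r_2-r_1)b$; this gives strict decrease. The shift-by-constant argument of Lemma~\ref{thm:monotonicity} gives the Lipschitz bound $|\Lp(\cA_\R+r_1b)-\Lp(\cA_\R+r_2b)|\le\|b\|_\infty|r_1-r_2|$. Continuity together with a sign change yields a zero $r_*$ by the intermediate value theorem, and strict monotonicity makes it unique.

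\emph{Item (iii).} The unbounded drift $(q+\tau r)\partial_x$ depends on $\tau$, so I would avoid working with $\cA_\tau$ directly.
\begin{itemize}
\item Rewrite the eigenproblem in terminal form $u=T_{\tau,\lambda}u$ on the weighted lattice $X_\beta$ of Proposition~\ref{thm:generalKstability}.
\item Since $q+\tau r$ stays bounded away from zero on the parameter interval, $1/(q+\tau r)$ is analytic in $\tau$. The exponential factors and the kernel $J+\tau H$ are entire in $(\tau,\lambda)$. Hence $(\tau,\lambda)\mapsto T_{\tau,\lambda}$ is analytic into the compact operators on $X_\beta$.
\item Consider the augmented map
\[
F(\tau,\lambda,u)=\big(u-T_{\tau,\lambda}u,\ \ell(u)-1\big),
\]
where $\ell$ is evaluation at a fixed interior point.
\item Its partial derivative in $(\lambda,u)$ at $(0,\lambda(0),\phi)$ is $(\nu,v)\mapsto(v-T v-\nu\,\partial_\lambda T\phi,\ \ell(v))$. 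By the Fredholm alternative and algebraic simplicity of the Krein--Rutman eigenvalue, $\ker(I-T)=\operatorname{span}\{\phi\}$ with range complementary to it. Moreover $\partial_\lambda T\phi\notin\operatorname{Ran}(I-T)$, because $\partial_\lambda T$ is positivity improving and the spectral radius is strictly increasing in $\lambda$. So the partial derivative is an isomorphism.
\item The analytic implicit function theorem then gives analyticity of $\lambda(\tau)$ and of $\phi_\tau$. The $C^1$ analyticity follows from the expression for $\phi_\tau'$.
\item By Proposition~\ref{thm:generalKstability}, the level-one point is $\Lp(\cA_\tau)$, so the analytic branch is the principal one.
\end{itemize}
For the derivative formula, differentiate $\cA_\tau\phi_\tau+\lambda(\tau)\phi_\tau=0$ at $\tau=0$:
\[
(\cA_0+\lambda(0))\dot\phi+d\!\int H\phi+r\phi'+h\phi+\lambda'(0)\phi=0 .
\]
Pair this with $\phi^*$. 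Since $\dot\phi(\beta)=0$ and $\phi^*(\alpha)=0$, the adjoint identity from the proof of Proposition~\ref{thm:comparison} kills the first term, and the normalization $\int\phi\phi^*=1$ gives the stated formula.

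\emph{Item (iv)} is the two-parameter version of the same argument. The operator is affine in $(d,c)$, with derivative directions $H=J$, $h=-d^{-1}\cdot d=-1$ for $\partial_d$, and $r$ for $\partial_c$. The formulas follow by the same pairing.

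\emph{Item (v)} is a direct computation. With $x=L\xi$ and $y=L\eta$, one has $L\,J_\sigma(L(\xi-\eta))=(L/\sigma)J((\xi-\eta)/\rho)=J_\rho(\xi-\eta)$ and $u'(x)=L^{-1}v'(\xi)$. The map $u\mapsto v$ is a bijection between positive $C^1$ test functions on the two intervals, so it preserves the defining supremum.

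The main obstacle I expect is the invertibility step in (iii), in particular excluding $\partial_\lambda T\phi\in\operatorname{Ran}(I-T)$. I would try first to show this via the positive eigenvector $\psi^*$ of the dual of $T$: pairing gives $\langle\psi^*,\partial_\lambda T\phi\rangle>0$, whereas $\psi^*$ annihilates $\operatorname{Ran}(I-T)$.
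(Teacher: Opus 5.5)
Items (ii) and (v) follow the paper, and your route for (iii)--(iv) is valid, but the argument for (i) has a gap. You invoke Proposition~\ref{prop:simplicitycore}\textup{(iv)} with $u=\psi$, the positive eigenfunction of $\cA_\R+h$ at the level $\lambda_p$. As stated, that result needs $\psi=o(W)$. The hypotheses only say that positive eigenfunctions \emph{of $\cA_\R$} at $\lambda_p$ are $o(W)$, and $\psi$ is not one of them, so extending the hypothesis to the perturbed operator is an extra assumption. Your fallback does not supply it either. Lemma~\ref{thm:growth} with the bound $\|a+h\|_\infty$ gives an exponent that may exceed the $\beta$ with which $W$ was built for $\cA_\R$, and in general $W$ need not come from Proposition~\ref{prop:tailcore} at all.

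The repair is to compare in the opposite direction, as the paper does. Let $\phi$ be the principal eigenfunction of $\cA_\R$, and take $K\cup\supp h\subset[-R,R]$ and $t=\max_{[-R-R_J,R+R_J]}\phi/\psi$. Then $z=\phi-t\psi$ satisfies $(\cA_\R+\lambda_p)z=th\psi\ge0$ and $z^+\le\phi=o(W)$, so the exterior maximum principle gives $\phi\le t\psi$ on $\R$. Hence $w=t\psi-\phi\ge0$ vanishes at a point of the collar and satisfies $(\cA_\R+\lambda_p)w=-th\psi\le0$. The contact argument then gives $w\equiv0$ and $h\psi\equiv0$, a contradiction. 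Only the growth of the unperturbed eigenfunction is used. Equivalently, the proof of Proposition~\ref{prop:simplicitycore}\textup{(iii)} never actually uses $u=o(W)$, because the compared function satisfies $(\phi-tu)^+\le\phi$. Your (ii), which applies (i) to $\cA_\R+r_1b$, is fixed by the same change.

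For (iii), the paper does work with $\cA_\tau$ directly, and the drift causes no difficulty there. The map $u\mapsto ru'$ is bounded from $C^1(\overline\Omega)$ to $C(\overline\Omega)$, so $\tau\mapsto\cA_\tau$ is affine in $\mathcal L(X,Z)$ with $X=\{u\in C^1(\overline\Omega):u(\beta)=0\}$ and $Z=C(\overline\Omega)$. Invertibility of the augmented linearization then follows from four facts: the transport part is an isomorphism $X\to Z$, the gain is compact, injectivity holds by pairing with $\phi^*$, and geometric simplicity comes from Proposition~\ref{thm:CHbounded}.

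Your route is also correct and stays inside the framework of Proposition~\ref{thm:generalKstability}. It combines analytic dependence of $T_{\tau,\lambda}$ on $X_\beta$ (through $1/(q+\tau r)$ and the exponential factor), algebraic simplicity from Krein--Rutman, and the transversality $\partial_\lambda T\phi\notin\operatorname{Ran}(I-T)$ via a strictly positive dual eigenfunctional. It makes positivity of the branch immediate, because $\phi$ lies in the interior of the cone of $X_\beta$. You should say this explicitly: knowing that $1$ is an eigenvalue of $T_{\tau,\lambda(\tau)}$ identifies $\lambda(\tau)$ with the level-one point only once the eigenvector is known to be positive. The cost of your route is a nonlinear parameter dependence, and you still need $\phi^*$ for the derivative formula.

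One slip in (iv): the template in (iii) carries the prefactor $d$ in front of $J+\tau H$, so the $\partial_d$ direction is $H=J/d$ with $h=-1$. Taking $H=J$ would produce an extra factor $d$.
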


\begin{proof}[\textbf{Proof of Proposition~\ref{prop:perturbcore}\textup{(i)}}]
Monotonicity of the defining inequality yields $\Lp(\cA_\R+h)\le\Lp(\cA_\R)$. Suppose equality holds and denote the common value by $\lambda_p$. Let $\phi>0$ and $\psi>0$ be principal eigenfunctions of $\cA_\R$ and $\cA_\R+h$, respectively. By the hypotheses of Proposition~\ref{prop:simplicitycore}\textup{(ii)}, $\phi=o(W)$ for the coercive weight $W$. Choose $R$ so that $\supp h\subset[-R,R]$ and so that the exterior maximum principle applies outside this interval. Set $t=\max_{[-R-R_J,R+R_J]}\frac{\phi}{\psi}$.
Outside $[-R,R]$ the two functions satisfy the same eigen-equation. For $z=\phi-t\psi$ one has $z^+\le\phi=o(W)$, so the exterior maximum principle yields $\phi\le t\psi$ on the whole line. Let $t_*$ be the smallest such constant. As in the proof of Proposition~\ref{prop:simplicitycore}\textup{(ii)}, the extremal ratio is attained on the compact collar. Put $w=t_*\psi-\phi$. Then $w\ge0$, $w$ vanishes somewhere, and $(\cA_\R+\lambda_p)w=-t_*h\psi\le0$.
At a zero $x_0$ of $w$, the drift and zero-order terms vanish and positivity of the kernel yields $\cA_\R w(x_0)\ge0$. Since $(\cA_\R+\lambda_p)w=-t_*h\psi\le0$, both sides at $x_0$ must vanish; in particular $h(x_0)=0$ and the nonlocal integral of $w$ is zero. Hence $w$ vanishes on a neighborhood of $x_0$. The same argument applies at every zero, so the zero set of $w$ is open; it is also closed and nonempty. Connectedness of $\R$ gives $w\equiv0$. Substitution in the equation then gives $h\psi\equiv0$, contradicting $h\not\equiv0$ and $\psi>0$. Therefore the inequality between the two principal values is strict.
\end{proof}

\begin{proof}[\textbf{Proof of Proposition~\ref{prop:perturbcore}\textup{(ii)}}]
Strict monotonicity follows from Proposition~\ref{prop:perturbcore}\textup{(i)}. The Lipschitz estimate
\[
|\Lp(a+r_1b)-\Lp(a+r_2b)|\le|r_1-r_2|\|b\|_\infty
\]
follows from Lemma~\ref{thm:monotonicity}. Continuity and the intermediate value theorem yield existence of $r_*$, and strict monotonicity yields uniqueness.
\end{proof}

\begin{proof}[\textbf{Proof of Proposition~\ref{prop:perturbcore}\textup{(iii)}}]
The proof uses the same Fredholm structure as the domain derivative below. Set $X=\{u\in C^1(\overline \Omega):u(\beta)=0\}$ and $Z=C(\overline \Omega)$. The transport operator $q\partial_x+(b+\lambda)$ is an isomorphism from $X$ onto $Z$. The integral term is compact. The full kernel statement in Proposition~\ref{thm:CHbounded} and the positive adjoint eigenfunction show that the augmented linearization of the normalized eigenproblem is invertible. After complexification, $\tau\mapsto\cA_\tau$ is an entire operator-valued affine map from $X_{\mathbb C}$ to $Z_{\mathbb C}$. The augmented linearization of the normalized eigenproblem is Fredholm of index zero and injective, hence invertible, exactly as in the proof of Proposition~\ref{prop:sizeanalytic}. The analytic implicit-function theorem therefore yields a holomorphic eigenpair branch near every real parameter value. Positivity on the real axis identifies it with the principal branch, which proves real analyticity.

Differentiate $\cA_\tau\phi_\tau+\lambda(\tau)\phi_\tau=0$
at $\tau=0$. Writing a dot for $\partial_\tau|_{0}$ yields
\[
(\cA_0+\lambda(0))\dot\phi+\dot\lambda\phi+d\int_\Omega H(x,y)\phi(y)\,dy+r(x)\phi'(x)+h(x)\phi(x)=0.
\]
Multiply by $\phi^*$ and integrate. The first term vanishes by the Green identity because $\dot\phi(\beta)=0$ and $\phi^*(\alpha)=0$. The normalization of the direct-adjoint product is one, and the displayed formula follows. The potential-only formula is the special case $H=r=0$.
\end{proof}

\begin{proof}[\textbf{Proof of Proposition~\ref{prop:perturbcore}\textup{(iv)}}]
The proof of Proposition~\ref{prop:perturbcore}\textup{(iii)} applies verbatim to the two complex parameters $(d,c)$, because $(d,c)\mapsto\cA_{d,c}$ is affine in operator norm and the same augmented linearization is invertible at every real point of the fixed-sign parameter region. Hence the principal eigenpair is real analytic in $(d,c)$. Differentiation with respect to $d$ yields
\[
(\cA_{d,c}+\lambda)\partial_d\phi+\partial_d\lambda\,\phi+\left(\int_\Omega J(x-y)\phi(y)\,dy-\phi(x)\right)=0.
\]
Pairing with the adjoint eigenfunction eliminates the first term and yields the first formula. The computation with respect to $c$ is identical:
\[
(\cA_{d,c}+\lambda)\partial_c\phi+\partial_c\lambda\,\phi+r(x)\phi'(x)=0,
\]
and pairing yields the second formula. The lack of a general sign reflects the non-self-adjoint character of the drift problem: increasing dispersal or increasing drift may move the principal level in either direction in heterogeneous environments.
\end{proof}

\begin{proof}[\textbf{Proof of Proposition~\ref{prop:perturbcore}\textup{(v)}}]
With $x=L\xi$, $y=L\eta$ and $\sigma=L\rho$,
\[
LJ_\sigma(L(\xi-\eta))=\frac{L}{\sigma}J\left(\frac{L(\xi-\eta)}{\sigma}\right)=\frac1\rho J\left(\frac{\xi-\eta}{\rho}\right)=J_\rho(\xi-\eta).
\]
Moreover $u'(L\xi)=L^{-1}v'(\xi)$ and $\sigma^{-m}=L^{-m}\rho^{-m}$. Substitution in the defining differential inequality for $\Lp$ yields the identity in both directions because the pull-back is a bijection on positive $C^1$ test functions.
\end{proof}

\section{Dependence on the size of a bounded interval}

For $L>0$ set $\Omega_L=(-L,L)$ and consider
\[
\cA_Lu(x)=d\int_{-L}^{L}J(x-y)u(y)\,dy+q(x)u'(x)+b(x)u(x),
\]
where the loss term can be included in $b$. We assume in this section that $J\in C_c^2(\R)$, $q,b\in C^2(\R)$ and $q\ge q_0>0$. We write $\lambda_p(L)=\Lp(\cA_L)$ and choose the direct eigenfunction $\phi_L$ and the adjoint eigenfunction $\phi_L^*$ by
\[
\phi_L(0)=1,\quad \int_{-L}^{L}\phi_L(x)\phi_L^*(x)\,dx=1.
\]
The direct and adjoint boundary conditions are $\phi_L(L)=0$ and $\phi_L^*(-L)=0$.

For the analytic dependence result we use the following stronger, local hypothesis. Let $\mathcal L\subset(0,\infty)$ be open.

\medskip
\noindent\textbf{$\mathrm{(A_{an})}$.}
For every compact $K\Subset\mathcal L$ there exist $\rho_K>0$ and open sets $U_{J,K},U_{q,K},U_{b,K}\subset\mathbb C$ such that $D_K:=\{z\in\mathbb C:\operatorname{dist}(z,K)<\rho_K\}$ does not meet $0$. The functions $J,q,b$ admit bounded holomorphic extensions to $U_{J,K}$, $U_{q,K}$ and $U_{b,K}$, respectively. In addition,
\[
\begin{aligned}
\{z(\xi-\eta):z\in D_K,\ \xi,\eta\in[-1,1]\}&\Subset U_{J,K},\\
\{z\xi:z\in D_K,\ \xi\in[-1,1]\}&\Subset U_{q,K}\cap U_{b,K}.
\end{aligned}
\]
This formulation is deliberately local in the length. Under the standing compact-support assumption it is, for example, satisfied on every $\mathcal L\Subset(0,R_J/2)$ if $J$ is real analytic on $(-R_J,R_J)$ and $q,b$ are real analytic on the relevant real neighborhoods. A nonzero compactly supported function cannot be analytic through an endpoint of its support. Thus no analyticity statement on all of $(0,\infty)$ is available from $J\in C_c^2(\R)$ alone.

\begin{proposition}\label{prop:sizecore}
We collect the adjoint and interval-size differentiability statements in one proposition.

\textup{(i)} The formal adjoint is
\[
\cA_L^*v=d\int_{-L}^{L}J(x-y)v(y)\,dy-(qv)'(x)+b(x)v(x).
\]
It possesses a positive principal eigenfunction $\phi_L^*$ satisfying
\[
\cA_L^*\phi_L^*+\lambda_p(L)\phi_L^*=0\ \hbox{in }(-L,L),\quad \phi_L^*(-L)=0,
\]
and this eigenfunction is unique up to multiplication. For all $u,v\in C^1([-L,L])$ satisfying $u(L)=0$ and $v(-L)=0$,
\[
\int_{-L}^{L}v\cA_Lu\,dx=\int_{-L}^{L}u\cA_L^*v\,dx.
\]
Moreover,
\[
\phi_L'(L)<0,\quad \phi_L^*(L)>0,\quad \phi_L(-L)>0.
\]

\textup{(ii)} Let $\Omega_{\alpha,\beta}=(\alpha,\beta)$ and let $\lambda(\alpha,\beta)=\Lp(\cA_{\Omega_{\alpha,\beta}})$. Assume the coefficients are $C^2$ and normalize the direct and adjoint eigenfunctions by
\[
\int_\alpha^\beta\phi_{\alpha,\beta}(x)\phi_{\alpha,\beta}^*(x)\,dx=1.
\]
Then $\lambda$ is $C^1$ in $(\alpha,\beta)$ and
\[
\partial_\beta\lambda(\alpha,\beta)=q(\beta)\phi_{\alpha,\beta}'(\beta)\phi_{\alpha,\beta}^*(\beta)<0,
\]
while
\[
\partial_\alpha\lambda(\alpha,\beta)=d\phi_{\alpha,\beta}(\alpha)\int_\alpha^\beta J(x-\alpha)\phi_{\alpha,\beta}^*(x)\,dx>0.
\]
Thus moving the outflow endpoint to the right and moving the inflow endpoint to the left both strictly decrease the principal eigenvalue, but the two shape derivatives have different structures.

\textup{(iii)} The maps $L\mapsto\lambda_p(L)$ and, after pull-back to $(-1,1)$ and the above normalization, $L\mapsto\phi_L$, $L\mapsto\phi_L^*$ are continuous. On every compact interval $[L_0,L_1]\Subset(0,\infty)$ the convergence is uniform in $C^1$ for the eigenfunctions.

\textup{(iv)} The map $L\mapsto\lambda_p(L)$ is of class $C^1$ on $(0,\infty)$. With the normalization above,
\[
\partial_L\lambda_p(L)=q(L)\phi_L'(L)\phi_L^*(L)-d\phi_L(-L)\int_{-L}^{L}J(x+L)\phi_L^*(x)\,dx.
\]
In particular, $\partial_L\lambda_p(L)<0\quad L>0$.
Thus $L\mapsto\lambda_p(L)$ is strictly decreasing.
\end{proposition}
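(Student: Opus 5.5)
Part \textup{(i)} follows from the reflection already used in Proposition~\ref{thm:comparison}. Under $x\mapsto-x$ the adjoint operator has drift $q(-x)\ge q_0$ and kernel $J$ (even), with zeroth-order coefficient $b-q'$, which is continuous because $q\in C^1$. Proposition~\ref{thm:CHbounded} then gives a positive eigenfunction vanishing at the reflected outflow endpoint $-L$, together with geometric simplicity. The Green identity is the Fubini and integration-by-parts computation of Proposition~\ref{thm:comparison}: the boundary term $[qvu]_{-L}^{L}$ vanishes because $u(L)=0$ and $v(-L)=0$. This identity forces the adjoint eigenvalue to equal $\lambda_p(L)$. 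For the sign statements:
\begin{itemize}
\item $\phi_L'(L)<0$ is the endpoint identity $q(L)\phi_L'(L)=-d\int J(L-y)\phi_L<0$ from Proposition~\ref{thm:CHbounded}.
\item $\phi_L(-L)>0$: if $\phi_L(-L)=0$, then $-L$ would be a minimum of $\phi_L\ge0$, so $\phi_L'(-L)\ge0$. Evaluating the equation at $-L$ gives $q\phi_L'(-L)=-d\int J(-L-y)\phi_L<0$, a contradiction.
\item $\phi_L^*(L)>0$ follows by the same argument in reflected form.
\end{itemize}

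For \textup{(iii)}, pull back to $(-1,1)$ via Proposition~\ref{prop:perturbcore}\textup{(v)} with $\rho$ replaced by the scale $L$. The kernel becomes $LJ(L(\xi-\eta))$, the drift $L^{-1}q(L\xi)$ and the potential $b(L\xi)$. These converge in $C(\overline{\Omega}^2)$ and $C^1$ as $L\to L_0$, with a common diagonal lower bound. Proposition~\ref{thm:generalKstability} (or Proposition~\ref{thm:stability}) gives continuity of $\lambda_p$ and $C^1$ convergence of $\phi_L$ normalized at $0$. Uniformity on compacts follows from compactness of $[L_0,L_1]$. The adjoint is handled through the reflected problem. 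The normalization $\int\phi_L\phi_L^*=1$ is a continuous positive rescaling, so it preserves continuity.

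For \textup{(ii)} and \textup{(iv)}, I would work on the fixed interval with the pulled-back family $\cT_L$. Since $J,q,b\in C^2$, the map $L\mapsto\cT_L\in\mathcal L(X,Z)$, with $X=\{u\in C^1:u(1)=0\}$ and $Z=C$, is $C^1$. Here the derivative of the kernel involves $J'$, and differentiating the drift coefficient uses $q'$. The augmented map $(u,\lambda)\mapsto(\cT_Lu+\lambda u,\ \ell(u)-1)$, with $\ell$ the evaluation at $0$, has invertible linearization at the principal pair. This holds because the kernel is one-dimensional (Proposition~\ref{thm:CHbounded}), $\phi$ is not in the range (pair with $\phi^*$, using $\int\phi\phi^*>0$), and the operator is Fredholm of index $0$, being a transport isomorphism plus a compact part. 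The $C^1$ implicit function theorem then gives a $C^1$ branch, which positivity and uniqueness identify with the principal one.

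Rather than differentiating the pulled-back formula, I would compute $\partial_\beta\lambda$ and $\partial_\alpha\lambda$ directly via a Hadamard-type argument. For $\partial_\beta$, compare the problems on $(\alpha,\beta)$ and $(\alpha,\beta+h)$. Pair the equation for $\phi_{\beta+h}$ (restricted to $(\alpha,\beta)$) with $\phi^*_\beta$ and apply Green's identity; since $\phi_{\beta+h}(\beta)\ne0$, a boundary term $q(\beta)\phi_{\beta+h}(\beta)\phi^*_\beta(\beta)$ remains. There is also a missing gain over $(\beta,\beta+h)$, which is $O(h^2)$ after pairing because $\phi_{\beta+h}=O(h)$ there. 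Dividing by $h$ and using $\phi_{\beta+h}(\beta)\approx-h\phi_\beta'(\beta)$ together with $C^1$ convergence gives
\[
-(\lambda(\beta+h)-\lambda(\beta))\int\phi\phi^*=q(\beta)\phi_{\beta+h}(\beta)\phi^*(\beta)+o(h),
\]
hence $\partial_\beta\lambda=q(\beta)\phi'(\beta)\phi^*(\beta)<0$. For $\partial_\alpha$, move the inflow point to $\alpha-h$. Here $\phi^*$ vanishes at the boundary, so no transport term appears. The difference of the two operators on $(\alpha,\beta)$ is the gain from $(\alpha-h,\alpha)$, namely $d\int_{\alpha-h}^\alpha J(x-y)\phi(y)\,dy\approx dhJ(x-\alpha)\phi(\alpha)$, which yields the stated positive formula. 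Combining the two via $\lambda_p(L)=\lambda(-L,L)$ and the chain rule gives \textup{(iv)}, and both terms are negative by \textup{(i)}. The main obstacle is justifying these $o(h)$ remainders uniformly. I expect to need the $C^1$ convergence from \textup{(iii)} together with the uniform linear endpoint bound $c(\beta-x)\le\phi\le C(\beta-x)$ from the proof of Proposition~\ref{thm:stability}, in order to control the boundary collar. Continuity of the resulting formula in $L$, again by \textup{(iii)}, upgrades differentiability to $C^1$.
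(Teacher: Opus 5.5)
Your proposal is correct. For (i), (iii) and the $C^1$ dependence you follow the paper: reflection plus Proposition~\ref{thm:CHbounded}, the Fubini/integration-by-parts Green identity, pull-back plus Proposition~\ref{thm:stability}, and the bordered Fredholm map made injective by pairing with $\phi^*$. Your endpoint argument for $\phi_L(-L)>0$ and $\phi_L^*(L)>0$ is in fact more explicit than the paper's one-line justification.

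The genuine difference is in how you compute the shape derivatives. The paper differentiates the eigenvalue equation along the $C^1$ branch. The Leibniz rule on the moving integral gives the inflow source $-dJ(x-\alpha)\phi(\alpha)$, the moving outflow condition gives $\dot\phi(\beta)=-\phi'(\beta)$, and pairing $\dot\phi$ with $\phi^*$ leaves $q(\beta)\phi^*(\beta)\dot\phi(\beta)$. In (iv) the paper differentiates in $L$ directly rather than via the chain rule. Your Hadamard-type comparison instead pairs the perturbed eigenfunction with the unperturbed adjoint. The trace $\phi_{\alpha,\beta+h}(\beta)\approx-h\phi'(\beta)$ and the $O(h^2)$ outflow collar then play the role of $\dot\phi(\beta)$.

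What your route buys: it uses only $C^1$ continuity of eigenpairs and the linear endpoint bound, never the parameter derivative of $\phi$. The paper does need that derivative, up to the endpoint, to integrate $\phi^*q\dot\phi'$ by parts. Combined with continuity of the resulting formula and the classical fact that a continuous function with a continuous one-sided derivative is $C^1$, your argument could even dispense with the implicit-function theorem. The paper's route is shorter once the branch is available.

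Two details to tidy. First, since the implicit-function theorem already gives differentiability, your one-sided quotients ($h>0$) suffice. If you bypass it, $h<0$ needs its own bookkeeping, e.g.\ pairing $\phi_{\alpha,\beta}$ with $\phi^*_{\alpha,\beta+h}$, which requires adjoint convergence uniform up to $\beta$. Second, the chain rule in (iv) needs joint $C^1$ regularity in $(\alpha,\beta)$. That means the two-parameter pull-back $x=m+\ell\xi$ used in the paper, not only the one-parameter family $\cT_L$. Alternatively, move both endpoints at once in your comparison; the outflow boundary term and the inflow gain term then simply add.
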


\begin{proof}[\textbf{Proof of Proposition~\ref{prop:sizecore}\textup{(i)}}]
The Green identity follows from two exact computations. Symmetry of $J$ and Fubini's theorem yield
\[
\int_{-L}^{L}v(x)\int_{-L}^{L}J(x-y)u(y)\,dy\,dx=\int_{-L}^{L}u(y)\int_{-L}^{L}J(y-x)v(x)\,dx\,dy.
\]
For the drift term,
\[
\int_{-L}^{L}vqu'\,dx=[qvu]_{-L}^{L}-\int_{-L}^{L}u(qv)'\,dx=-\int_{-L}^{L}u(qv)'\,dx,
\]
because $u(L)=0$ and $v(-L)=0$. This identifies $\cA_L^*$.

The drift in the adjoint equation is $-q$. Reflecting $x$ to $-x$ converts it into a positive fixed-sign drift, so Proposition~\ref{thm:CHbounded} yields a positive adjoint principal eigenfunction with boundary value zero at $-L$. Let its eigenvalue be $\mu_L$. Pairing the direct and adjoint equations by the Green identity yields
\[
(\lambda_p(L)-\mu_L)\int_{-L}^{L}\phi_L\phi_L^*\,dx=0.
\]
The integral is positive; hence $\mu_L=\lambda_p(L)$. Uniqueness follows from the fixed-sign bounded-interval theorem.

The values $\phi_L(-L)$ and $\phi_L^*(L)$ are strictly positive because the respective boundary condition is imposed only at the opposite endpoint. Finally, evaluate the direct equation as $x\uparrow L$. Since $\phi_L(L)=0$ and the gain integral has a strictly positive limit,
\[
q(L)\phi_L'(L)=-d\int_{-L}^{L}J(L-y)\phi_L(y)\,dy<0.
\]
Thus $\phi_L'(L)<0$.
\end{proof}

\begin{proof}[\textbf{Proof of Proposition~\ref{prop:sizecore}\textup{(ii)}}]
We first justify the $C^1$ dependence needed for differentiation. Put
$m=(\alpha+\beta)/2$, $\ell=(\beta-\alpha)/2$ and pull the problem back by $x=m+\ell\xi$ to $(-1,1)$. The resulting operator depends $C^1$ on $(m,\ell)$ in $\mathcal L(C^1,C)$ because $J,q,b$ are $C^2$. With the fixed normalization $v(0)=1$, the derivative of the augmented eigenproblem in $(\lambda,v)$ is Fredholm of index zero; pairing with the positive adjoint eigenfunction proves injectivity exactly as in part~\textup{(iv)}, hence it is invertible. The implicit-function theorem gives a local $C^1$ eigenpair branch, and uniqueness of the positive principal eigenpair patches these branches over the parameter region $\alpha<\beta$. We now derive the endpoint formulas directly in physical variables.

Keep $\alpha$ fixed and differentiate with respect to $\beta$. Write a dot for $\partial_\beta$. At a fixed interior point $x$,
\[
\partial_\beta\int_\alpha^\beta J(x-y)\phi(y)\,dy=J(x-\beta)\phi(\beta)+\int_\alpha^\beta J(x-y)\dot\phi(y)\,dy.
\]
The first term vanishes because $\phi(\beta)=0$. Hence $(\cA+\lambda)\dot\phi+\dot\lambda\phi=0$.
The moving outflow condition yields $\dot\phi(\beta)+\phi'(\beta)=0$.
Multiply the differentiated equation by $\phi^*$ and integrate. Since $\dot\phi$ no longer vanishes at $\beta$, integration by parts produces
\[
\int_\alpha^\beta\phi^*(\cA+\lambda)\dot\phi\,dx=q(\beta)\phi^*(\beta)\dot\phi(\beta).
\]
Using the normalization of the direct-adjoint product yields
\[
q(\beta)\phi^*(\beta)\dot\phi(\beta)+\dot\lambda=0.
\]
Substitution of $\dot\phi(\beta)=-\phi'(\beta)$ yields
\[
\partial_\beta\lambda=q(\beta)\phi'(\beta)\phi^*(\beta).
\]
The sign is strictly negative by Proposition~\ref{prop:sizecore}\textup{(i)}.

Now keep $\beta$ fixed and differentiate with respect to $\alpha$. Since the lower limit moves,
\[
\partial_\alpha\int_\alpha^\beta J(x-y)\phi(y)\,dy=-J(x-\alpha)\phi(\alpha)+\int_\alpha^\beta J(x-y)\dot\phi(y)\,dy.
\]
The outflow boundary is fixed, so $\dot\phi(\beta)=0$. We obtain
\[
(\cA+\lambda)\dot\phi+\dot\lambda\phi-dJ(x-\alpha)\phi(\alpha)=0.
\]
Pairing with $\phi^*$ makes the first term vanish because the direct variation and the adjoint eigenfunction satisfy the complementary boundary conditions. Therefore
\[
\dot\lambda=d\phi(\alpha)\int_\alpha^\beta J(x-\alpha)\phi^*(x)\,dx.
\]
Every factor is positive. This proves the second formula and its sign.
\end{proof}

\begin{proof}[\textbf{Proof of Proposition~\ref{prop:sizecore}\textup{(iii)}}]
Let $L_n\to L$ and pull every problem back by $x=L_n\xi$ to $(-1,1)$. The resulting kernels, drift coefficients and zeroth-order coefficients converge in the topologies of Proposition~\ref{thm:stability}. The structural constants are uniform for $L_n$ in a compact subset of $(0,\infty)$. Proposition~\ref{thm:stability} therefore yields convergence of the eigenvalues and the normalized direct eigenfunctions. Applying the same argument to the reflected adjoint operators yields convergence of the adjoint eigenfunctions. No subsequence can have another limit because both direct and adjoint positive eigenfunctions are unique under the chosen normalizations.
\end{proof}

\begin{proof}[\textbf{Proof of Proposition~\ref{prop:sizecore}\textup{(iv)}}]
We establish differentiability first. Put $v_L(\xi)=\phi_L(L\xi)$ for $-1\le\xi\le1$. The direct eigenvalue equation is
\[
\mathscr A_Lv_L+\lambda_p(L)v_L=0\ \hbox{in }(-1,1),\quad v_L(1)=0,\quad v_L(0)=1,
\]
where
\[
\mathscr A_Lv(\xi)=dL\int_{-1}^{1}J(L(\xi-\eta))v(\eta)\,d\eta+\frac{q(L\xi)}{L}v'(\xi)+b(L\xi)v(\xi).
\]
Let
\[
X=\{v\in C^1([-1,1]):v(1)=0\},\quad Y=C([-1,1])\times\R,
\]
and define
\[
\mathscr F(L,\lambda,v)=\big(\mathscr A_Lv+\lambda v,v(0)-1\big).
\]
The map $\mathscr F:(0,\infty)\times\R\times X\to Y$ is $C^1$. Indeed, differentiation of the integral kernel is justified by $J\in C_c^2$, and the coefficients $q,b$ are $C^2$.

Fix $L$ and write $\lambda=\lambda_p(L)$, $v=v_L$. The derivative in $(\lambda,v)$ is
\[
D_{(\lambda,v)}\mathscr F(L,\lambda,v)[\mu,w]=\big((\mathscr A_L+\lambda)w+\mu v,w(0)\big).
\]
We prove that this map is an isomorphism. Injectivity follows from the adjoint. If the right-hand side is zero, pair the first component with the pulled-back adjoint eigenfunction. The term containing $(\mathscr A_L+\lambda)w$ vanishes, and positivity of the direct-adjoint pairing yields $\mu=0$. Then $w$ belongs to the kernel of $\mathscr A_L+\lambda$. Geometric simplicity yields $w=cv$, while $w(0)=0$ and $v(0)=1$ imply $c=0$.

For surjectivity, write the transport part with the terminal condition at $1$ as
\[
\mathscr T_Lw=\frac{q(L\xi)}{L}w'(\xi)+\big(b(L\xi)+\lambda\big)w(\xi),\quad w(1)=0.
\]
For every $f\in C([-1,1])$, the equation $\mathscr T_Lw=f$ has a unique solution $w\in X$, represented by the integrating-factor formula. Hence $\mathscr T_L:X\to C([-1,1])$ is an isomorphism. The nonlocal gain is compact from $X$ to $C([-1,1])$, so $\mathscr A_L+\lambda$ is Fredholm of index zero. Adding the scalar normalization does not change the index of the augmented map. Since the augmented derivative is injective, it is surjective. The implicit-function theorem yields $C^1$ dependence of $\lambda_p(L)$ and $v_L$ locally in $L$, and therefore globally on $(0,\infty)$.

We compute the derivative in the original variable. For $x$ in a compact subset of $(-L,L)$ put $\dot\phi(x)=\partial_L\phi_L(x)$ and $\dot\lambda=\partial_L\lambda_p(L)$. Differentiating the moving-domain integral at fixed $x$ yields
\[
\partial_L\int_{-L}^{L}J(x-y)\phi_L(y)\,dy=\int_{-L}^{L}J(x-y)\dot\phi(y)\,dy+J(x-L)\phi_L(L)+J(x+L)\phi_L(-L).
\]
Since $\phi_L(L)=0$, the upper-endpoint term vanishes. Differentiating the eigenvalue equation yields
\[
(\cA_L+\lambda_p(L))\dot\phi+\dot\lambda\phi_L+dJ(x+L)\phi_L(-L)=0.
\]
The moving outflow condition yields the exact identity $0=\partial_L\phi_L(L)=\dot\phi(L)+\phi_L'(L)$, so $\dot\phi(L)=-\phi_L'(L)$.

Multiply the differentiated equation by $\phi_L^*$ and integrate over $(-L,L)$. The Green formula now has a nonzero boundary contribution because $\dot\phi(L)$ need not vanish:
\[
\int_{-L}^{L}\phi_L^*(\cA_L+\lambda)\dot\phi\,dx=q(L)\phi_L^*(L)\dot\phi(L),
\]
the contribution at $-L$ being zero because $\phi_L^*(-L)=0$. Using the normalization $\int\phi_L\phi_L^*=1$, we obtain
\[
q(L)\phi_L^*(L)\dot\phi(L)+\dot\lambda+d\phi_L(-L)\int_{-L}^{L}J(x+L)\phi_L^*(x)\,dx=0.
\]
Substituting $\dot\phi(L)=-\phi_L'(L)$ yields the stated formula.

Every factor in the second term is strictly positive. The first term is strictly negative because $q(L)>0$, $\phi_L^*(L)>0$ and $\phi_L'(L)<0$. Hence $\partial_L\lambda_p(L)<0$.
\end{proof}

\begin{proposition}\label{prop:sizeanalytic}
Let $\mathcal L\subset(0,\infty)$ be open and assume $\mathrm{(A_{an})}$. Then $\lambda_p(L)=\Lp(\cA_L)$ is real analytic on $\mathcal L$. More precisely, if $v_L(\xi)=\phi_L(L\xi),\qquad -1\le\xi\le1$,
and $v_L(0)=1$, then $L\mapsto v_L$ is real analytic from $\mathcal L$ into $C^1([-1,1])$. After pull-back and a fixed nonzero normalization, the adjoint eigenfunction depends real analytically on $L$ as well.

At every $L_0\in\mathcal L$, writing $h=L-L_0$, one has convergent expansions
\[
\mathscr A_{L_0+h}=\sum_{n=0}^\infty h^n\mathscr A_n,\qquad
\lambda_p(L_0+h)=\sum_{n=0}^\infty h^n\lambda_n,\qquad
v_{L_0+h}=\sum_{n=0}^\infty h^nv_n
\]
in $\mathcal L(C^1,C)$, $\R$ and $C^1([-1,1])$, respectively. The coefficients are determined recursively by
\[
(\mathscr A_0+\lambda_0)v_n+\lambda_nv_0
=-\sum_{k=1}^n\mathscr A_kv_{n-k}
-\sum_{k=1}^{n-1}\lambda_kv_{n-k},
\qquad v_n(0)=0,
\]
for $n\ge1$. If $v_0^*$ is the adjoint eigenfunction normalized by $\int_{-1}^{1}v_0v_0^*=1$, then
\[
\lambda_n=-\int_{-1}^{1}v_0^*(\xi)
\left(
\sum_{k=1}^n\mathscr A_kv_{n-k}
+\sum_{k=1}^{n-1}\lambda_kv_{n-k}
\right)(\xi)\,d\xi.
\]
\end{proposition}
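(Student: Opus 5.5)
The plan is to repeat the implicit-function argument of the proof of Proposition~\ref{prop:sizecore}\textup{(iv)}, now in the holomorphic category. We work on the fixed interval $(-1,1)$ with $X=\{v\in C^1([-1,1]):v(1)=0\}$ and $Y=C([-1,1])\times\R$, and with the map $\mathscr F(L,\lambda,v)=(\mathscr A_Lv+\lambda v,\,v(0)-1)$, where $\mathscr A_L$ is the pulled-back operator of that proof. We complexify $X_{\mathbb C}$, $Y_{\mathbb C}$ and $\lambda$. For $z\in D_K$ we define
\[
\mathscr A_zv(\xi)=dz\int_{-1}^{1}J(z(\xi-\eta))v(\eta)\,d\eta+\frac{q(z\xi)}{z}v'(\xi)+b(z\xi)v(\xi),
\]
using the holomorphic extensions supplied by $\mathrm{(A_{an})}$. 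Since $D_K$ avoids $0$, the factor $1/z$ is harmless.

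The first step is to show that $z\mapsto\mathscr A_z$ is holomorphic from $D_K$ into $\mathcal L(C^1,C)$. The compact inclusions in $\mathrm{(A_{an})}$ are what make this work. The maps $z\mapsto J(z(\xi-\eta))$, $q(z\xi)$, $b(z\xi)$ are holomorphic in $z$ and bounded uniformly in $\xi,\eta\in[-1,1]$ on a slightly smaller neighborhood. Cauchy's estimates then give uniform bounds on all $z$-derivatives, so the coefficient maps are holomorphic into $C([-1,1])$ and $C([-1,1]^2)$. A Banach-valued function that is weakly holomorphic and locally bounded is holomorphic, and the integral, multiplication and differentiation operators depend linearly and boundedly on these coefficients. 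Hence $\mathscr A_z$ is holomorphic in operator norm, and the series $\mathscr A_{L_0+h}=\sum h^n\mathscr A_n$ converges in $\mathcal L(C^1,C)$ for small $|h|$.

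The second step is invertibility at real $L_0$ of $D_{(\lambda,v)}\mathscr F[\mu,w]=((\mathscr A_{L_0}+\lambda_0)w+\mu v_0,\,w(0))$. The argument is exactly the one in Proposition~\ref{prop:sizecore}\textup{(iv)}. The transport part with terminal condition at $1$ is an isomorphism $X\to C$. The gain is compact, so the augmented map is Fredholm of index zero. Injectivity follows by pairing with the positive adjoint eigenfunction and then using geometric simplicity from Proposition~\ref{thm:CHbounded}. Complexification preserves this, because the complexified operator is the real operator acting on real and imaginary parts.

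The analytic implicit-function theorem then gives a holomorphic branch $(\lambda(z),v(z))$ near $L_0$. For real $z$ this branch agrees with the $C^1$ branch of Proposition~\ref{prop:sizecore}\textup{(iv)}, by local uniqueness in the implicit-function theorem. So $\lambda_p$ and $v_L$ are real analytic, and the power series in $h$ converge. Inserting the series into $\mathscr F=0$ and collecting powers of $h^n$ gives the stated recursion, and $v_n(0)=0$ comes from the normalization. Pairing the $n$-th equation with $v_0^*$ kills $(\mathscr A_0+\lambda_0)v_n$, since $v_n(1)=0$ and $v_0^*(-1)=0$ make the Green identity of Proposition~\ref{prop:sizecore}\textup{(i)} apply. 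With $\int v_0v_0^*=1$ this yields the formula for $\lambda_n$.

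For the adjoint, reflect $\xi\mapsto-\xi$ and apply the same argument to $\mathscr A_z^*$, normalized at a fixed point. The adjoint has drift term $-(q(z\xi)v/z)'$, so it involves $q'$. Its holomorphy follows again from Cauchy estimates on $q(z\xi)$, and the Fredholm and injectivity structure is the reflected one. The main obstacle I expect is the first step: rigorously upgrading pointwise holomorphy of the pulled-back coefficients to operator-norm holomorphy, uniformly in $(\xi,\eta)$. I would handle it with the Cauchy integral formula on a circle inside $D_K$, which represents $\mathscr A_z$ as a Bochner/Riemann integral of bounded operators.
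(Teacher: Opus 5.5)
Your proposal is correct and follows essentially the same route as the paper. The shared steps are: operator-norm holomorphy of $z\mapsto\mathscr A_z$ from $\mathrm{(A_{an})}$, invertibility of the augmented linearization (transport isomorphism plus compact gain, then adjoint pairing and geometric simplicity), the analytic implicit-function theorem, and coefficient comparison paired with $v_0^*$ for the recursion. The one minor difference is how you identify the real trace of the holomorphic branch with the principal eigenpair: you use local uniqueness in the implicit-function theorem together with continuity of the principal pair (Proposition~\ref{prop:sizecore}\textup{(iii)--(iv)}), whereas the paper checks that $C^1$-closeness to $v_0$ preserves positivity and the sign $v_0'(1)<0$, then invokes the bounded-interval principal-eigenpair theorem; both arguments are valid.
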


\begin{proof}[\textbf{Proof of Proposition~\ref{prop:sizeanalytic}}]
Fix $L_0\in\mathcal L$ and a compact interval $K\Subset\mathcal L$ containing $L_0$ in its interior. On the fixed interval $(-1,1)$ the pulled-back operator is
\[
\mathscr A_Lv(\xi)=dL\int_{-1}^{1}J(L(\xi-\eta))v(\eta)\,d\eta
+\frac{q(L\xi)}{L}v'(\xi)+b(L\xi)v(\xi).
\]
We first verify analyticity of $L\mapsto\mathscr A_L$ in operator norm. Let $D_K$ be the complex neighborhood in $\mathrm{(A_{an})}$ and let $z\in D_K$. Complexifying
\[
X=\{v\in C^1([-1,1];\mathbb C):v(1)=0\},\qquad
Z=C([-1,1];\mathbb C),
\]
define
\[
\mathscr A_zv(\xi)=dz\int_{-1}^{1}J(z(\xi-\eta))v(\eta)\,d\eta
+\frac{q(z\xi)}{z}v'(\xi)+b(z\xi)v(\xi).
\]
The three coefficients are holomorphic in $z$ uniformly in $\xi,\eta$. Since $0\notin D_K$, the factor $z^{-1}$ is holomorphic and uniformly bounded there. Thus $z\mapsto\mathscr A_z\in\mathcal L(X,Z)$ is holomorphic. We record the first two derivatives, because they exhibit all terms created by the moving interval. With $r=\xi-\eta$,
\[
\begin{aligned}
\partial_L\mathscr A_Lv(\xi)
={}&d\int_{-1}^{1}J(Lr)v(\eta)\,d\eta
+dL\int_{-1}^{1}rJ'(Lr)v(\eta)\,d\eta\\
&+\left(\frac{\xi q'(L\xi)}{L}-\frac{q(L\xi)}{L^2}\right)v'(\xi)
+\xi b'(L\xi)v(\xi),
\end{aligned}
\]
and
\[
\begin{aligned}
\partial_L^2\mathscr A_Lv(\xi)
={}&2d\int_{-1}^{1}rJ'(Lr)v(\eta)\,d\eta
+dL\int_{-1}^{1}r^2J''(Lr)v(\eta)\,d\eta\\
&+\left(\frac{\xi^2q''(L\xi)}{L}-\frac{2\xi q'(L\xi)}{L^2}
+\frac{2q(L\xi)}{L^3}\right)v'(\xi)
+\xi^2b''(L\xi)v(\xi).
\end{aligned}
\]
More generally, choose $0<r_K<\rho_K$ so that every closed disk $\overline{B(L,r_K)}$, $L\in K$, is contained in $D_K$. Cauchy's formula for the operator-valued holomorphic function yields
\[
\|\partial_L^n\mathscr A_L\|_{\mathcal L(X,Z)}
\le \frac{n!}{r_K^n}\sup_{|z-L|=r_K}\|\mathscr A_z\|_{\mathcal L(X,Z)}
\le C_K\frac{n!}{r_K^n},
\qquad L\in K.
\]
Hence the Taylor series of $\mathscr A_L$ converges in operator norm, locally uniformly in $L\in\mathcal L$.

We next treat the eigenpair. Define
\[
\mathscr F(z,\mu,v)=\big(\mathscr A_zv+\mu v,\ v(0)-1\big)
\]
from $D_K\times\mathbb C\times X$ to $Z\times\mathbb C$. The preceding estimates show that $\mathscr F$ is holomorphic. At the real eigenpair $(L_0,\lambda_0,v_0)$, where $\lambda_0=\lambda(L_0)$, its derivative with respect to $(\mu,v)$ is
\[
\mathscr T_0[\nu,w]
=\big((\mathscr A_{L_0}+\lambda_0)w+\nu v_0,\ w(0)\big).
\]
We show that $\mathscr T_0$ is an isomorphism. Suppose first that $\mathscr T_0[\nu,w]=0$. Pair the first component with the positive pulled-back adjoint eigenfunction $v_0^*$. The Green identity eliminates $(\mathscr A_{L_0}+\lambda_0)w$ and yields $\nu\int_{-1}^{1}v_0v_0^*\,d\xi=0$.
The integral is strictly positive, hence $\nu=0$. Geometric simplicity then yields $w=cv_0$. Since $w(0)=0$ and $v_0(0)=1$, we get $c=0$. Thus $\mathscr T_0$ is injective.

For surjectivity, split
\[
\mathscr A_{L_0}+\lambda_0=\mathscr Q_0+\mathscr K_0,
\]
where
\[
\mathscr Q_0w=\frac{q(L_0\xi)}{L_0}w'(\xi)
+\big(b(L_0\xi)+\lambda_0\big)w(\xi),\qquad w(1)=0,
\]
and
\[
\mathscr K_0w=dL_0\int_{-1}^{1}J(L_0(\xi-\eta))w(\eta)\,d\eta.
\]
Because $q\ge q_0>0$, $\mathscr Q_0:X\to Z$ is an isomorphism. Indeed, for $f\in Z$ the unique solution of $\mathscr Q_0w=f$, $w(1)=0$, is
\[
w(\xi)=-\int_\xi^1\frac{L_0f(t)}{q(L_0t)}
\exp\!\left(\int_\xi^t
\frac{L_0(b(L_0s)+\lambda_0)}{q(L_0s)}\,ds\right)dt.
\]
The integral operator $\mathscr K_0:X\to Z$ is compact. Therefore $\mathscr A_{L_0}+\lambda_0$ is Fredholm of index zero. To compute the index of the augmented map, compare $\mathscr T_0$ with the block operator
\[
(w,\nu)\longmapsto\big((\mathscr A_{L_0}+\lambda_0)w,\nu\big)
\]
from $X\times\mathbb C$ to $Z\times\mathbb C$. The two operators differ by a finite-rank map, namely the terms $\nu v_0$ in the first component and $w(0)-\nu$ in the second. Hence $\mathscr T_0$ is Fredholm of index zero. Since it is injective, it is surjective and therefore a bounded isomorphism. Its complexification is the same operator on the complex Banach spaces and remains invertible.

The analytic implicit-function theorem now yields $\varepsilon>0$ and unique holomorphic maps $z\mapsto\lambda(z)$ and $z\mapsto v(z)$ for $|z-L_0|<\varepsilon$ such that
\[
\mathscr A_zv(z)+\lambda(z)v(z)=0,\qquad v(z,1)=0,\qquad v(z,0)=1.
\]
For real $z=L$ close to $L_0$, the branch remains positive. Indeed, $v_0$ has a positive minimum on every $[-1,1-\delta]$, while $v_0'(1)<0$ by the endpoint Hopf identity; $C^1$ closeness preserves both properties for $L$ near $L_0$. The bounded-interval principal-eigenpair theorem therefore identifies $\lambda_p(L)$ with $\Lp(\cA_L)$. Thus the principal value is real analytic near $L_0$. Since $L_0$ was arbitrary, it is real analytic on $\mathcal L$. The adjoint statement follows by applying the same argument to the reflected adjoint operator, whose drift has fixed sign and whose normalization is nondegenerate.

It remains only to justify the displayed coefficient recursion. Write the convergent Taylor series without factorials,
\[
\mathscr A_{L_0+h}=\sum_{n\ge0}h^n\mathscr A_n,
\quad \lambda_p(L_0+h)=\sum_{n\ge0}h^n\lambda_n,
\quad v_{L_0+h}=\sum_{n\ge0}h^nv_n.
\]
Substituting in $(\mathscr A_{L_0+h}+\lambda_p(L_0+h))v_{L_0+h}=0$
and comparing the coefficient of $h^n$ yields, for $n\ge1$,
\[
(\mathscr A_0+\lambda_0)v_n+\lambda_nv_0
=-\sum_{k=1}^n\mathscr A_kv_{n-k}
-\sum_{k=1}^{n-1}\lambda_kv_{n-k}.
\]
The normalization $v_{L_0+h}(0)=1$ yields $v_n(0)=0$. Pairing with $v_0^*$ eliminates the first term and yields the formula for $\lambda_n$. Thus every Taylor coefficient is determined uniquely from lower-order coefficients, while the Cauchy estimate above supplies the factorial bounds which guarantee convergence. This completes the proof.
\end{proof}

\begin{proof}[\textbf{Proof of Theorem~\ref{thm:mainbounded}}]
Part \textup{(i)} follows from Proposition~\ref{thm:CHbounded}, Proposition~\ref{thm:comparison} and Proposition~\ref{prop:sizecore}\textup{(i)--(iv)}. The exhaustion assertion is the last part of Proposition~\ref{thm:CHbounded}. Part \textup{(ii)} is Proposition~\ref{prop:sizeanalytic}.
\end{proof}

\section{The critical zero-range limit}

Let $\Omega_L=(-L,L)$ and
\[
\cA_{\sigma,2,L}u(x)=\frac1{\sigma^2}\left(\int_{-L}^{L}J_\sigma(x-y)u(y)\,dy-u(x)\right)+q(x)u'(x)+a(x)u(x).
\]
Assume $J\in C_c^2(\R)$, $q,a\in C^2([-L,L])$ and $q\ge q_0>0$. Set $d_J=D_2(J)/2$ and denote by $\lambda_0$ the Dirichlet principal eigenvalue of $d_Ju''+q(x)u'+a(x)u$
on $(-L,L)$.

\begin{proposition}\label{prop:criticaltools}
The two estimates on which the critical compactness argument rests are the following.

\textup{(i)} Let $\phi_0>0$ be the normalized local Dirichlet principal eigenfunction. For every $\varepsilon>0$ there exists $\sigma_\varepsilon>0$ such that
\[
\cA_{\sigma,2,L}\phi_0+(\lambda_0+\varepsilon)\phi_0\ge0\;\hbox{in }(-L,L)
\]
for $0<\sigma<\sigma_\varepsilon$. Consequently $\Lp(\cA_{\sigma,2,L})\le\lambda_0+\varepsilon$.

\textup{(ii)} There exist constants $c_J,C_J>0$ such that
\[
c_J\min\{\eta^2,1\}\le1-\widehat J(\eta)\le C_J\min\{\eta^2,1\}\;\hbox{for all }\eta\in\R.
\]
Consequently, if $\sigma_n\to0$ and $u_n$ are supported in a fixed bounded interval with $\|u_n\|_2\le C$ and $E_{\sigma_n}(u_n)\le C$, then a subsequence converges strongly in $L^2(\R)$ to a function in $H^1(\R)$ supported in the same interval. The estimate is proved directly below from the Fourier symbol. In particular, the proof does not invoke the Sobolev-norm approximation/characterization theorems of \cite{BBM,Brezis02,PonceCV,Ponce}; retaining the zero-extension support is essential for recovering the missing inflow boundary condition in the advective problem.
For every such strongly convergent subsequence, with limit $u$, one also has the sharp lower-semicontinuity estimate
\[
d_J\int_\R |u'(x)|^2\,dx\le \liminf_{n\to\infty}E_{\sigma_n}(u_n).
\]
\end{proposition}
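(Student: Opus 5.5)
For \textup{(i)} I would use $\phi_0$, extended by zero outside $[-L,L]$, directly as a test function in Proposition~\ref{thm:comparison}. The function $\phi_0$ is smooth on $[-L,L]$ (here $q,a\in C^2$), positive inside and zero at both endpoints, so in particular $\phi_0(L)=0$. Write $\tilde\phi_0$ for its zero extension. Since $\tilde\phi_0$ vanishes outside the interval,
\[
\int_{-L}^{L}J_\sigma(x-y)\phi_0(y)\,dy=(J_\sigma*\tilde\phi_0)(x),
\]
so the truncation costs nothing. What remains is to compare $\sigma^{-2}(J_\sigma*\tilde\phi_0-\phi_0)$ with $d_J\phi_0''$.

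I would split the points into two regions.
\begin{itemize}
\item \emph{Interior, $\dist(x,\{\pm L\})\ge R_J\sigma$.} Use a Taylor expansion with the symmetry of $J$. This gives
\[
\sigma^{-2}(J_\sigma*\phi_0-\phi_0)=d_J\phi_0''+O(\sigma)\|\phi_0'''\|_\infty .
\]
Hence $\cA_{\sigma,2,L}\phi_0+\lambda_0\phi_0=O(\sigma)$ there. On $\{\phi_0\ge\delta\}$ this is at least $-\varepsilon\phi_0$. On the strips where $\phi_0<\delta$ but $x$ is still interior, I use $\phi_0(x)\ge c\,\dist(x,\partial)$ (Hopf) together with the fact that $\phi_0''$ has controlled sign near the boundary: $d_J\phi_0''=-q\phi_0'-(a+\lambda_0)\phi_0$ at the endpoints.
\item \emph{Collars, $x=\pm L\mp t\sigma$ with $0\le t\le R_J$.} This is the delicate region, where $\phi_0=O(\sigma)$. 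Near $x=L$, write $\phi_0(L-s)=|\phi_0'(L)|s+O(s^2)$. The gain–loss term is then $\sigma^{-1}|\phi_0'(L)|\,g(t)+O(1)$, where
\[
g(t)=\int J(z)(t+z)_+\,dz-t\ \ge 0 ,
\]
and $g(t)>0$ for $t<R_+$ by $J(0)>0$. This positive $\sigma^{-1}$ term dominates the bounded drift and potential terms $q\phi_0'+a\phi_0=O(1)$. The same computation applies at $-L$.
\item \emph{Where $g(t)$ degenerates ($t$ near $R_J$).} Here $\phi_0\ge c\sigma$ and the drift term must be absorbed. I would try to show that the $O(1)$ error equals $d_J\phi_0''(L)+O(\sigma)$ plus the nonnegative truncation gain. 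This should follow from a second-order expansion of $(t+z)_+$ versus $t+z$, since $\int J(z)(t+z)\,dz=t$ exactly and the quadratic correction reproduces $d_J\phi_0''$.
\end{itemize}
I expect this bookkeeping in the outer part of the collar to be the main obstacle. Once $\cA_{\sigma,2,L}\phi_0+(\lambda_0+\varepsilon)\phi_0\ge0$ holds everywhere, Proposition~\ref{thm:comparison} gives the eigenvalue bound.

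For \textup{(ii)}, the symbol bounds come first.
\begin{itemize}
\item \emph{Upper bound.} $\widehat J(\eta)=\int J\cos(\eta z)$ is real, and $1-\widehat J(\eta)\le\min\{\eta^2D_2/2,\,2\}$.
\item \emph{Lower bound near zero.} Taylor gives $1-\widehat J(\eta)\ge \eta^2 d_J/2$ for $|\eta|\le\eta_0$, using $1-\cos s\ge s^2/2-s^4/24$.
\item \emph{Lower bound away from zero.} For $|\eta|\ge\eta_0$, $1-\widehat J(\eta)>0$, since $\widehat J=1$ would force $\cos(\eta z)=1$ on $\supp J$, which is impossible because $J$ is continuous with $J(0)>0$. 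Riemann–Lebesgue gives $\widehat J\to0$, so the infimum over $|\eta|\ge\eta_0$ is positive.
\end{itemize}

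Plancherel then gives
\[
E_\sigma(u)=\sigma^{-2}\int(1-\widehat J(\sigma\eta))|\widehat u|^2\,d\eta
\]
(up to the normalization of the transform). The lower symbol bound yields
\[
\int_{|\eta|\le 1/\sigma}\eta^2|\widehat u_n|^2\le C,
\qquad
\int_{|\eta|>1/\sigma}|\widehat u_n|^2\le C\sigma^2 .
\]
Compactness follows from this uniform decay of the high frequencies together with the common compact support. Split frequencies at a large cutoff $K$. On $|\eta|\le K$ the $\widehat u_n$ are equicontinuous and bounded, because they are transforms of functions supported in a fixed set; hence they converge locally uniformly along a subsequence. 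On $|\eta|>K$ the tails are uniformly small, bounded by $C/K^2+C\sigma_n^2$. This gives strong $L^2$ convergence. The limit lies in $H^1(\R)$ by Fatou, and it is supported in the interval because $L^2$ limits preserve the support.

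For the sharp lower semicontinuity, note that $\sigma^{-2}(1-\widehat J(\sigma\eta))\to d_J\eta^2$ pointwise. Apply Fatou to $\sigma_n^{-2}(1-\widehat J(\sigma_n\eta))|\widehat u_n(\eta)|^2$, using that $\widehat u_n\to\widehat u$ pointwise along a further subsequence. This gives $d_J\int|\widehat u'|^2\le\liminf E_{\sigma_n}(u_n)$.
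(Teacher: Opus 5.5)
Your part (ii) is sound and differs from the paper only in the compactness device. You get equicontinuity of $\widehat u_n$ on compact frequency sets from the common compact support, then use Arzel\`a--Ascoli plus the uniform tail bound. The paper instead turns the same tail bound into translation equicontinuity and applies Kolmogorov--Riesz. In the Fatou step you need no further extraction, which in any case could only raise the $\liminf$: on a fixed bounded interval, $L^2$ convergence implies $L^1$ convergence, hence uniform convergence of $\widehat u_n$.

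Part (i) has a genuine gap, exactly where you flagged it, and the repair you suggest does not close it.

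\emph{Why your estimates do not close.} Near an endpoint the only margin is $\varepsilon\phi_0\approx\varepsilon|\phi_0'(\pm L)|\,\dist(x,\{\pm L\})$. This is of order $\varepsilon\sigma$ throughout the layer $\dist(x,\{\pm L\})\lesssim\sigma/\varepsilon$. Both your interior estimate and your proposed second-order collar bookkeeping leave an unstructured remainder of size $O(\sigma)$. Such a remainder cannot be absorbed once $\varepsilon$ is small wherever the truncation gain $\sigma^{-1}|\phi_0'(L)|g(t)$ is absent or degenerate. That happens for $t$ near or beyond $R_+$, and in the strip $R_J\sigma\le\dist\lesssim\sigma/\varepsilon$ of your ``interior''. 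The sign of $\phi_0''$ does not help. The equation cancels $d_J\phi_0''$ exactly, and in any case $d_J\phi_0''(\pm L)=-q(\pm L)\phi_0'(\pm L)$ has opposite signs at the two ends.

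\emph{What is needed.} You need an $O(\sigma^2)$ remainder uniformly up to the boundary. The paper gets it by extending $\phi_0$ to a $C^4$ function $\Phi$ that is negative just outside $[-L,L]$; this is possible because $\phi_0'(-L)>0>\phi_0'(L)$. Then $\widetilde\phi_0\ge\Phi$ near the interval. Evenness of $J$ kills the third moment, so for every $x\in[-L,L]$
\[
\frac1{\sigma^2}\Big(\int_{-L}^{L}J_\sigma(x-y)\phi_0(y)\,dy-\phi_0(x)\Big)\ge\frac1{\sigma^2}\big(J_\sigma*\Phi-\Phi\big)(x)=d_J\phi_0''(x)+O(\sigma^2).
\]
Hence $\cA_{\sigma,2,L}\phi_0+(\lambda_0+\varepsilon)\phi_0\ge\varepsilon\phi_0-C\sigma^2$ on the whole interval. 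By Hopf this can fail only where $\dist(x,\{\pm L\})\lesssim\sigma^2/\varepsilon$, i.e.\ $t=O(\sigma)$. There $g(t)\ge g(0)/2>0$, so your own $\sigma^{-1}$ truncation gain dominates. The degenerate outer collar therefore never needs separate treatment.

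If you prefer your local bookkeeping, you must carry the endpoint expansion of $\phi_0$ to cubic order, not quadratic. Only then does the $O(\sigma)$ part reduce, via the exact moments of $J$, to terms supported in $\{z<-t\}$ and bounded by $Cg(t)$, which the gain absorbs. That $O(\sigma)$ part consists of the cubic Taylor term together with the difference $d_J\phi_0''(L)-d_J\phi_0''(x)$.
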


\begin{proof}[\textbf{Proof of Proposition~\ref{prop:criticaltools}\textup{(i)}}]
Extend $\phi_0$ to a $C^4$ function $\Phi$ across the endpoints so that $\Phi<0$ immediately outside $[-L,L]$. This is possible because $\phi_0'(-L)>0$ and $\phi_0'(L)<0$. Let $\widetilde\phi_0$ be the zero extension. Then $\widetilde\phi_0\ge\Phi$ in a fixed neighborhood of the interval. Since $\Phi$ is smooth across both endpoints, symmetry of $J$ and Taylor's formula yield, uniformly for $x\in[-L,L]$,
\[
\frac1{\sigma^2}\left(\int_\R J(z)\Phi(x-\sigma z)\,dz-\Phi(x)\right)=d_J\phi_0''(x)+O(\sigma^2).
\]
Hence
\[
\cA_{\sigma,2,L}\phi_0+(\lambda_0+\varepsilon)\phi_0\ge\varepsilon\phi_0-C\sigma^2.
\]
This proves the desired inequality whenever $\phi_0\ge2C\sigma^2/\varepsilon$.

It remains to treat the $O(\sigma)$ boundary layers. Near $L$, write $x=L-\sigma t$, $0\le t\le R_J$, and put $\beta=-\phi_0'(L)>0$. Uniformly for $|z|\le R_J$,
\[
\widetilde\phi_0(L-\sigma t-\sigma z)=\begin{cases}\beta\sigma(t+z)+O(\sigma^2),&t+z\ge0,\\0,&t+z<0.\end{cases}
\]
Therefore
\[
\int J(z)\widetilde\phi_0(L-\sigma t-\sigma z)\,dz-\phi_0(L-\sigma t)=\beta\sigma\int_{z\le-t}(-t-z)J(z)\,dz+O(\sigma^2).
\]
Since $J(0)>0$, the integral on the right is bounded below by a positive constant for $0\le t\le\eta$ with some $\eta>0$. Thus the nonlocal term is positive of order $\sigma^{-1}$ in the innermost right collar. The computation at $-L$ is the reflected one and yields the same sign. In the remaining part of the boundary neighborhood, the Hopf lemma yields $\phi_0(x)\ge c\dist(x,\partial\Omega_L)$, so $\varepsilon\phi_0-C\sigma^2\ge0$ for small $\sigma$. This proves the supersolution inequality everywhere.

Proposition~\ref{thm:comparison}, applied to the positive supersolution $\phi_0$, yields the eigenvalue bound.
\end{proof}

\begin{proof}[\textbf{Proof of Proposition~\ref{prop:criticaltools}\textup{(ii)}}]
Since $J$ is even,
\[
1-\widehat J(\eta)=\int_\R J(z)(1-\cos(\eta z))\,dz.
\]
For $|\eta|\le1/R_J$, the elementary inequalities $c t^2\le1-\cos t\le Ct^2$ for $|t|\le1$ yield
\[
c\eta^2\int z^2J(z)\,dz\le1-\widehat J(\eta)\le C\eta^2\int z^2J(z)\,dz.
\]
For $|\eta|\ge1/R_J$, continuity and the fact that $J$ is positive on an interval imply $|\widehat J(\eta)|<1$ for every $\eta\ne0$: equality in
$|\int J(z)e^{-i\eta z}dz|\le\int J=1$ would force $e^{-i\eta z}$ to have a constant phase on an interval on which $J>0$, which is impossible unless $\eta=0$. On each compact annulus the minimum of $1-\widehat J$ is therefore positive, while the Riemann--Lebesgue lemma yields $\widehat J(\eta)\to0$ as $|\eta|\to\infty$. This proves the global two-sided bound.

By Plancherel,
\[
E_\sigma(u)=\int_\R\frac{1-\widehat J(\sigma\xi)}{\sigma^2}|\widehat u(\xi)|^2\,d\xi.
\]
Choose $\kappa>0$ so that $1-\widehat J(\eta)\ge c\eta^2$ for $|\eta|\le\kappa$ and $1-\widehat J(\eta)\ge c$ for $|\eta|\ge\kappa$. For $R>0$ and $n$ so large that $R<\kappa/\sigma_n$, split the Fourier tail into two regions. The energy bound gives
\[
\begin{aligned}
\int_{R<|\xi|\le\kappa/\sigma_n}|\widehat u_n(\xi)|^2\,d\xi
&\le \frac1{R^2}\int_{|\xi|\le\kappa/\sigma_n}\xi^2|\widehat u_n(\xi)|^2\,d\xi
\le \frac{C}{R^2},\\
\int_{|\xi|>\kappa/\sigma_n}|\widehat u_n(\xi)|^2\,d\xi
&\le C\sigma_n^2E_{\sigma_n}(u_n)\le C\sigma_n^2.
\end{aligned}
\]
Thus
\[
\limsup_{n\to\infty}\int_{|\xi|>R}|\widehat u_n(\xi)|^2\,d\xi\le \frac{C}{R^2}.
\]
Translation equicontinuity now follows quantitatively from
\[
\|u_n(\cdot+h)-u_n\|_2^2=\int|e^{ih\xi}-1|^2|\widehat u_n(\xi)|^2\,d\xi
\]
by splitting at $|\xi|=R$: the low-frequency part is at most $h^2R^2\sup_n\|u_n\|_2^2$, and the high-frequency part is at most four times the preceding tail. Kolmogorov--Riesz yields strong $L^2$ compactness.

Let $u_n\to u$ strongly in $L^2$ along the extracted subsequence. Then $\widehat u_n\to\widehat u$ strongly in $L^2$ and, after one further extraction, almost everywhere. Since
\[
\frac{1-\widehat J(\sigma_n\xi)}{\sigma_n^2}\longrightarrow d_J\xi^2
\qquad\hbox{for every }\xi\in\R,
\]
Fatou's lemma gives
\[
d_J\int_\R\xi^2|\widehat u(\xi)|^2\,d\xi
\le\liminf_{n\to\infty}E_{\sigma_n}(u_n).
\]
Hence $u\in H^1(\R)$ and the sharp lower-semicontinuity estimate holds. Support is preserved by strong convergence because all $u_n$ vanish outside the same fixed interval. If that interval is $[-L,L]$, the one-dimensional $H^1$ representative is continuous; since it vanishes almost everywhere on each exterior half-line, its traces at $-L$ and $L$ are zero. Therefore its restriction belongs to $H_0^1(-L,L)$.
\end{proof}

\begin{proof}[\textbf{Proof of Theorem~\ref{thm:maincritical}}]
By Proposition~\ref{prop:criticaltools}\textup{(i)}, $\limsup_{\sigma\to0}\lambda_\sigma\le\lambda_0$.
Extend $\phi_\sigma$ by zero and use symmetry of $J$ to obtain
\[
-\int_{-L}^{L}\phi_\sigma\frac1{\sigma^2}\left(\int_{-L}^{L}J_\sigma(x-y)\phi_\sigma(y)\,dy-\phi_\sigma(x)\right)dx=E_\sigma(\phi_\sigma).
\]
Multiplying the eigenvalue equation by $\phi_\sigma$ and integrating yields
\[
E_\sigma(\phi_\sigma)+\frac{q(-L)}2\phi_\sigma(-L)^2=\int_{-L}^{L}\left(a+\lambda_\sigma-\frac{q'}2\right)\phi_\sigma^2\,dx,
\]
because $\phi_\sigma(L)=0$. The left-hand side is nonnegative, hence
\[
\lambda_\sigma\ge-\left\|a-\frac{q'}2\right\|_\infty.
\]
The eigenvalues and the energies are therefore uniformly bounded.

The uniform energy and $L^2$ bounds allow us to apply Proposition~\ref{prop:criticaltools}\textup{(ii)}. Along a subsequence, $\widetilde\phi_\sigma\to\widetilde\phi$ strongly in $L^2(\R)$, with $\widetilde\phi\in H^1(\R)$ and support contained in $[-L,L]$. Hence $\phi\in H_0^1(-L,L)$.

After extracting once more, $\lambda_\sigma\to\lambda$. Let $\zeta\in C_c^\infty(-L,L)$. Symmetry yields
\[
\int\zeta\,\cM_{\sigma,2}\phi_\sigma=\int\phi_\sigma\,\cM_{\sigma,2}\zeta,
\]
and Taylor's formula yields $\cM_{\sigma,2}\zeta\to d_J\zeta''$ uniformly. For the drift term no derivative compactness is needed: because $\zeta$ is compactly supported,
\[
\int_{-L}^{L}\zeta q\phi_\sigma'\,dx
=-\int_{-L}^{L}(q\zeta)'\phi_\sigma\,dx
\longrightarrow-\int_{-L}^{L}(q\zeta)'\phi\,dx.
\]
The zeroth-order term converges by the same strong $L^2$ convergence. We obtain $d_J\phi''+q\phi'+a\phi+\lambda\phi=0$
in distributions, with $\phi\in H_0^1(-L,L)$, $\phi\ge0$ and $\|\phi\|_2=1$. The local strong maximum principle yields $\phi>0$, hence $\lambda=\lambda_0$ and $\phi=\phi_0$. Uniqueness of the limit removes the subsequence.

Finally, return to the exact energy identity. The strong $L^2$ convergence and $\lambda_\sigma\to\lambda_0$ yield
\[
E_\sigma(\phi_\sigma)+\frac{q(-L)}2\phi_\sigma(-L)^2\longrightarrow\int_{-L}^{L}\left(a+\lambda_0-\frac{q'}2\right)\phi_0^2\,dx.
\]
Multiplying the local equation by $\phi_0$ and integrating by parts, using $\phi_0(-L)=\phi_0(L)=0$, yields
\[
d_J\int_{-L}^{L}|\phi_0'|^2\,dx=\int_{-L}^{L}\left(a+\lambda_0-\frac{q'}2\right)\phi_0^2\,dx.
\]
On the other hand, Proposition~\ref{prop:criticaltools}\textup{(ii)} and Fatou's lemma applied to the Fourier representation of the energies yield
\[
d_J\int_{-L}^{L}|\phi_0'|^2\,dx\le\liminf_{\sigma\to0}E_\sigma(\phi_\sigma).
\]
The boundary term $q(-L)\phi_\sigma(-L)^2/2$ is nonnegative. Hence the convergence of the sum to the local Dirichlet energy forces simultaneously
\[
E_\sigma(\phi_\sigma)\longrightarrow d_J\int_{-L}^{L}|\phi_0'|^2\,dx,\qquad \phi_\sigma(-L)\longrightarrow0.
\]
The energy identity therefore yields the Dirichlet condition at the inflow endpoint.
\end{proof}

\begin{lemma}\label{lem:criticalstability}
Let $\delta_n\downarrow0$ and let $q_n,a_n\in C^2([-R,R])$ satisfy
$q_n\to q$ and $a_n\to a$ in $C^2([-R,R])$, with $\inf_n\min q_n>0$. Then
\[
\Lp\!\left(\delta_n^{-2}(J_{\delta_n}*_{(-R,R)}-I)+q_n\partial_x+a_n\right)
\longrightarrow
\lambda_1^D(d_J\partial_{xx}+q\partial_x+a,(-R,R)).
\]
The corresponding $L^2$-normalized eigenfunctions converge strongly in $L^2(-R,R)$.
\end{lemma}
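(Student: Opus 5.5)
The plan is to repeat the proof of Theorem~\ref{thm:maincritical} with constants made uniform in $n$. Write $\lambda_n$ for the principal value of the $n$-th operator, $\phi_n$ for its positive eigenfunction normalized in $L^2(-R,R)$, and $\lambda_0=\lambda_1^D(d_J\partial_{xx}+q\partial_x+a,(-R,R))$.

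\textbf{Upper bound.} Let $\phi_0$ be the limiting local Dirichlet eigenfunction and reuse the supersolution construction of Proposition~\ref{prop:criticaltools}\textup{(i)}. Since $q_n\to q$ and $a_n\to a$ uniformly, we have
\[
(q_n-q)\phi_0'+(a_n-a)\phi_0=o(1)
\]
uniformly, but it is not bounded by a multiple of $\phi_0$ near $\pm R$. Away from the endpoints this term is absorbed into $\varepsilon\phi_0$, exactly as the $O(\sigma^2)$ error was. In the innermost collars of width $O(\delta_n)$, the nonlocal term is positive of order $\delta_n^{-1}$, which dominates a uniformly bounded error. In the intermediate region $\phi_0\ge c\,\dist(x,\partial)\ge c\eta\delta_n$. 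Here the error $o(1)$ is not obviously below $\varepsilon c\eta\delta_n$, so this step needs care: I would track the drift term $q_n\phi_0'$ exactly rather than perturbatively. Near $R$ one has $q_n\phi_0'\approx -q_n(R)\beta$, which is $O(1)$ and is handled by the same $\delta_n^{-1}$ collar mechanism. In the transition zone, the Taylor expansion of the nonlocal part with the extension $\Phi$ gives $d_J\phi_0''+O(\delta_n^2)$. So the total is
\[
\varepsilon\phi_0+(q_n-q)\phi_0'+(a_n-a)\phi_0-C\delta_n^2 .
\]
If this fails, I would switch test functions and use $\phi_0^{(n)}$, the Dirichlet eigenfunction of $d_J\partial_{xx}+q_n\partial_x+a_n$. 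It is uniformly $C^4$ bounded (because the coefficients converge in $C^2$), has Hopf constants uniform in $n$, and $\lambda_1^D(q_n,a_n)\to\lambda_0$ by classical continuity. The construction of Proposition~\ref{prop:criticaltools}\textup{(i)} then applies with uniform constants. This gives $\limsup\lambda_n\le\lambda_0$, and I expect this uniformity step to be the main obstacle.

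\textbf{Uniform bounds and compactness.} Multiply the equation by $\phi_n$ and integrate, as in the proof of Theorem~\ref{thm:maincritical}. This gives the energy--transport identity
\[
E_{\delta_n}(\phi_n)+\tfrac{q_n(-R)}2\phi_n(-R)^2=\int(a_n+\lambda_n-q_n'/2)\phi_n^2 .
\]
Hence $\lambda_n\ge-\sup_n\|a_n-q_n'/2\|_\infty$, and the energies are uniformly bounded; this is where the $C^1$ convergence of $q_n$ is used. Proposition~\ref{prop:criticaltools}\textup{(ii)} then yields, along a subsequence, strong $L^2$ convergence of the zero extensions to some $\phi\in H_0^1(-R,R)$ with $\|\phi\|_2=1$, $\phi\ge0$, together with $\lambda_n\to\lambda$.

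\textbf{Identification of the limit.} Test against $\zeta\in C_c^\infty(-R,R)$. Symmetry moves the nonlocal operator onto $\zeta$, where it converges uniformly to $d_J\zeta''$. The drift term is integrated by parts as $-\int(q_n\zeta)'\phi_n$, which converges by uniform convergence of $q_n'$ and strong $L^2$ convergence. The potential term converges directly. The limit is therefore a nonnegative, nontrivial $H_0^1$ distributional eigenfunction. By the strong maximum principle and uniqueness of the positive Dirichlet eigenpair, $\lambda=\lambda_0$ and $\phi=\phi_0$. Since the limit is unique, the whole sequence converges, which proves both assertions. If desired, the same energy argument as in Theorem~\ref{thm:maincritical} also gives $\phi_n(-R)\to0$.
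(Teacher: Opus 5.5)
Your proposal is correct and follows the paper's proof: the paper also takes the $n$-dependent local Dirichlet eigenfunction of $d_J\partial_{xx}+q_n\partial_x+a_n$ as the supersolution, with uniform $C^4$ and Hopf bounds and convergence of the local principal values, and then uses the same energy--transport identity, the Fourier compactness of Proposition~\ref{prop:criticaltools}\textup{(ii)}, and identification of the limit through test functions. Your fallback is in fact necessary, not optional: with the fixed $\phi_0$, the error $(q_n-q)\phi_0'$ is of size $\|q_n-q\|_\infty$ near $\pm R$, and it cannot be absorbed in the zone $R_J\delta_n\lesssim\dist(x,\partial(-R,R))\lesssim\|q_n-q\|_\infty/\varepsilon$ when the coefficients converge more slowly than $\delta_n$, so switching to the $n$-dependent eigenfunction is exactly the right move.
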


\begin{proof}[\textbf{Proof of Lemma~\ref{lem:criticalstability}}]
Write
\[
\lambda_n=\Lp\!\left(\delta_n^{-2}(J_{\delta_n}*_{(-R,R)}-I)
+q_n\partial_x+a_n\right)
\]
and let $\nu_n$ be the Dirichlet principal value of
$d_J\partial_{xx}+q_n\partial_x+a_n$ on $(-R,R)$. Let $\psi_n$ be its positive $L^2$-normalized Dirichlet eigenfunction. The gauge
\[
u(x)=\exp\!\left(-\int_0^x\frac{q_n(t)}{2d_J}\,dt\right)v(x)
\]
reduces the latter problem to the self-adjoint Dirichlet operator
\[
d_Jv''+\left(a_n-\frac{q_n'}2-\frac{q_n^2}{4d_J}\right)v.
\]
The transformed potentials converge in $C^1([-R,R])$. The Rayleigh formula for this auxiliary local problem, followed by the one-dimensional equation, therefore gives
\[
\nu_n\to\nu,\qquad \psi_n\to\psi
\quad\hbox{in }C^2([-R,R]),
\]
where $\nu$ and $\psi$ are the limiting local principal value and its positive $L^2$-normalized eigenfunction. The original local equation may be written
$\psi_n''=-d_J^{-1}q_n\psi_n'-d_J^{-1}(a_n+\nu_n)\psi_n$. The preceding $C^2$ convergence and the uniform $C^2$ coefficient bounds first control $\psi_n''$; differentiating this identity once and twice then controls the third and fourth derivatives uniformly. Thus $\{\psi_n\}$ is uniformly bounded in $C^4([-R,R])$. The endpoint Hopf derivatives are bounded away from zero because they converge to those of $\psi$.

Fix $\varepsilon>0$. Extend each $\psi_n$ through the endpoints by a $C^4$ function which is negative just outside $[-R,R]$ and whose $C^4$ norm is bounded independently of $n$. The zero extension dominates this smooth extension. Taylor's formula and the endpoint calculation in Proposition~\ref{prop:criticaltools}\textup{(i)}, now with uniform constants, give
\[
\delta_n^{-2}(J_{\delta_n}*_{(-R,R)}\psi_n-\psi_n)
+q_n\psi_n'+a_n\psi_n+(\nu_n+\varepsilon)\psi_n\ge0
\]
for all sufficiently large $n$. In the interior the residual is
$\varepsilon\psi_n+O(\delta_n^2)$; in the innermost boundary collars the zero-extension correction is bounded below by $c/\delta_n$, and in the intermediate collars the uniform Hopf estimate $\psi_n\ge c\,\dist(\cdot,\partial(-R,R))$ absorbs the $O(\delta_n^2)$ remainder. Proposition~\ref{thm:comparison} consequently yields
\[
\limsup_{n\to\infty}\lambda_n\le\nu.
\]

Let $\phi_n$ be the nonlocal principal eigenfunction normalized by $\|\phi_n\|_2=1$ and extended by zero. Symmetry and integration by parts give the exact identity
\[
E_{\delta_n}(\phi_n)+\frac{q_n(-R)}2\phi_n(-R)^2
=\int_{-R}^{R}\left(a_n+\lambda_n-\frac{q_n'}2\right)\phi_n^2\,dx.
\]
Its nonnegative left-hand side gives a uniform lower bound for $\lambda_n$; the upper bound just proved then gives a uniform energy bound. Proposition~\ref{prop:criticaltools}\textup{(ii)} supplies, along a subsequence,
\[
\phi_n\to\phi\quad\hbox{strongly in }L^2(-R,R),
\qquad \phi\in H_0^1(-R,R),\quad \|\phi\|_2=1.
\]
After also taking $\lambda_n\to\lambda$, test the eigenvalue equation against $\zeta\in C_c^\infty(-R,R)$. Symmetry transfers the nonlocal operator to $\zeta$, and
\[
\delta_n^{-2}(J_{\delta_n}*\zeta-\zeta)\to d_J\zeta''
\quad\hbox{uniformly}.
\]
The $C^2$ convergence of $q_n,a_n$ and the strong $L^2$ convergence of $\phi_n$ therefore yield
\[
d_J\phi''+q\phi'+a\phi+\lambda\phi=0
\quad\hbox{in }\mathcal D'(-R,R).
\]
The limit is nonnegative and nonzero. The strong maximum principle and simplicity of the local Dirichlet principal eigenfunction imply $\lambda=\nu$ and $\phi=\psi$. Thus every subsequence has the same spectral and eigenfunction limit, which proves convergence of the full sequence.
\end{proof}

\begin{proposition}\label{prop:crossovers}
The same compactness mechanism yields two useful crossover limits.

\textup{(i)} Let $m_\sigma$ be a real-valued function such that
\[
\delta_\sigma:=\sigma^{2-m_\sigma}\longrightarrow\delta\in(0,\infty).
\]
Consider
\[
\cA_{\sigma,m_\sigma,L}u=\frac1{\sigma^{m_\sigma}}\left(\int_{-L}^{L}J_\sigma(x-y)u(y)\,dy-u(x)\right)+q(x)u'(x)+a(x)u(x).
\]
Then
\[
\Lp(\cA_{\sigma,m_\sigma,L})\longrightarrow\lambda_\delta,
\]
where $\lambda_\delta$ is the Dirichlet principal eigenvalue of $\delta d_Ju''+q(x)u'+a(x)u$
on $(-L,L)$. If the principal eigenfunctions are normalized in $L^2$, they converge strongly in $L^2$ to the normalized local principal eigenfunction. Moreover,
\[
\delta_\sigma E_\sigma(\phi_\sigma)\longrightarrow\delta d_J\int_{-L}^{L}|\phi_\delta'|^2\,dx.
\]

\textup{(ii)} Assume $q,a\in C^2([-L,L])$, $q\ge q_0>0$, let $m_\sigma\to2$, and suppose
\[
(2-m_\sigma)|\log\sigma|\longrightarrow\theta\in\R.
\]
Then $\delta_\sigma=\sigma^{2-m_\sigma}\to e^{-\theta}$ and
\[
\Lp(\cA_{\sigma,m_\sigma,L})\longrightarrow
\lambda_1^D\left(e^{-\theta}d_J\partial_{xx}+q(x)\partial_x+a(x),(-L,L)\right).
\]
\end{proposition}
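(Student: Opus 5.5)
The plan is to rewrite $\cA_{\sigma,m_\sigma,L}$ as a critical-scale operator with a diffusion prefactor, and then rerun the compactness scheme from the proof of Theorem~\ref{thm:maincritical} and of Lemma~\ref{lem:criticalstability}. Since $\sigma^{-m_\sigma}=\delta_\sigma\sigma^{-2}$,
\[
\cA_{\sigma,m_\sigma,L}u=\delta_\sigma\,\sigma^{-2}\big(J_\sigma*_{(-L,L)}u-u\big)+q u'+a u,\qquad \delta_\sigma\to\delta\in(0,\infty).
\]
Part (i) therefore concerns a critical operator whose nonlocal part carries a factor converging to a positive constant.

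\emph{Upper bound.} I will repeat the proof of Proposition~\ref{prop:criticaltools}(i) with the local Dirichlet eigenfunction $\phi_\delta$ of $\delta d_J\partial_{xx}+q\partial_x+a$. Taylor's formula for a negative $C^4$ extension gives
\[
\delta_\sigma\sigma^{-2}(J_\sigma*\Phi-\Phi)=\delta_\sigma d_J\phi_\delta''+O(\sigma^2).
\]
The difference $(\delta_\sigma-\delta)d_J\phi_\delta''$ is $o(1)$ uniformly, so it can be absorbed into $\varepsilon\phi_\delta$ away from the boundary.

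Near the endpoints I will use the layer estimate. In the innermost collars the zero-extension gain is $\ge c\,\delta_\sigma/\sigma$, and $\delta_\sigma$ is bounded below, so this still dominates. In the intermediate collars, Hopf gives $\phi_\delta\ge c\,\mathrm{dist}$. The extra error there is $o(1)\,|\phi_\delta''|$, and it is handled as follows. Near the endpoint the local equation gives $\delta d_J\phi_\delta''=-q\phi_\delta'+O(\mathrm{dist})$, so $|\phi_\delta''|$ is $O(1)$ but not $O(\mathrm{dist})$. The difficulty is that $o(1)\cdot O(1)$ is not controlled by $\varepsilon\phi_\delta$ when $\phi_\delta\approx \mathrm{dist}$ is small.

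To avoid this, I will instead use $\phi_{\delta_\sigma}$, the local Dirichlet eigenfunction for the exact coefficient $\delta_\sigma$. After the gauge transform of Lemma~\ref{lem:criticalstability}, the family $\phi_{\delta_\sigma}$ is bounded in $C^4$ and its Hopf derivatives are uniform. With this choice the residual is exactly $\varepsilon\phi_{\delta_\sigma}+O(\sigma^2)$, as in that lemma. Proposition~\ref{thm:comparison} then gives $\Lp\le\lambda_{\delta_\sigma}+\varepsilon$. Finally $\lambda_{\delta_\sigma}\to\lambda_\delta$ by continuity of the gauged Rayleigh quotient in $\delta$. This yields the limsup bound.

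\emph{Lower bound and compactness.} Multiplying the equation by $\phi_\sigma$ gives
\[
\delta_\sigma E_\sigma(\phi_\sigma)+\tfrac{q(-L)}2\phi_\sigma(-L)^2=\int_{-L}^{L}\big(a+\lambda_\sigma-\tfrac{q'}2\big)\phi_\sigma^2 .
\]
This gives a lower bound on $\lambda_\sigma$. Together with the upper bound and $\delta_\sigma\ge\delta/2$, it gives a uniform bound on $E_\sigma(\phi_\sigma)$.

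Proposition~\ref{prop:criticaltools}(ii) then provides a strong $L^2$ limit $\phi\in H_0^1(-L,L)$. Next I test against $\zeta\in C_c^\infty(-L,L)$ and transfer the nonlocal part onto $\zeta$ by symmetry. Since $\delta_\sigma\sigma^{-2}(J_\sigma*\zeta-\zeta)\to\delta d_J\zeta''$ uniformly, $\phi\ge0$ is a normalized weak solution of the limiting equation. The strong maximum principle and simplicity give $\lambda=\lambda_\delta$ and $\phi=\phi_\delta$, so the whole family converges.

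\emph{Energy convergence.} The right side of the identity converges to $\int(a+\lambda_\delta-q'/2)\phi_\delta^2=\delta d_J\int|\phi_\delta'|^2$. The Fatou estimate of Proposition~\ref{prop:criticaltools}(ii) gives $\liminf\delta_\sigma E_\sigma(\phi_\sigma)\ge\delta d_J\int|\phi_\delta'|^2$, and the boundary term is nonnegative. Together these force $\delta_\sigma E_\sigma(\phi_\sigma)\to\delta d_J\int|\phi_\delta'|^2$, and also $\phi_\sigma(-L)\to0$.

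\emph{Part (ii).} Write $\delta_\sigma=\exp\{(2-m_\sigma)\log\sigma\}=\exp\{-(2-m_\sigma)|\log\sigma|\}\to e^{-\theta}$ for $\sigma<1$. Part (i) then applies with $\delta=e^{-\theta}$.

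The main obstacle is the boundary-layer supersolution in the upper bound. It requires replacing $\phi_\delta$ by $\phi_{\delta_\sigma}$ and securing uniform $C^4$ and Hopf bounds in the parameter, exactly as in Lemma~\ref{lem:criticalstability}.
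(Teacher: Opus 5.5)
Your proposal is correct and follows the paper's overall scheme. You rewrite $\sigma^{-m_\sigma}=\delta_\sigma\sigma^{-2}$ and get the upper bound from a local Dirichlet supersolution with boundary-collar estimates. You then derive the lower bound, compactness and identification of the limit from the energy--transport identity and Proposition~\ref{prop:criticaltools}\textup{(ii)}. Energy convergence comes from Fatou plus the sign of the inflow flux, and (ii) from $\delta_\sigma=e^{-(2-m_\sigma)|\log\sigma|}$.

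The one deviation is the choice of supersolution, and there your version is the sound one. The paper tests with the fixed eigenfunction $\phi_\delta$ and treats $(\delta_\sigma-\delta)d_J\phi_\delta''$ as an $o(1)$ error absorbed by the Hopf estimate in the intermediate collars. As you observe, this does not close. Since $\delta d_J\phi_\delta''=-q\phi_\delta'\neq0$ at the endpoints, the residual has the unfavorable sign at one of them: at $-L$ if $\delta_\sigma>\delta$, at $L$ if $\delta_\sigma<\delta$. At points with $R_J\sigma\le\dist(x,\partial\Omega_L)\ll|\delta_\sigma-\delta|/\varepsilon$ the truncation gain is absent and $\varepsilon\phi_\delta\approx\varepsilon\,\dist$ is too small. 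So the supersolution inequality genuinely fails there unless $|\delta_\sigma-\delta|=o(\sigma)$. That rate is not available in general; in (ii) the convergence $(2-m_\sigma)|\log\sigma|\to\theta$ can be arbitrarily slow.

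Your choice $\phi_{\delta_\sigma}$ makes the residual exactly $\varepsilon\phi_{\delta_\sigma}-C\sigma^2$, with uniform $C^4$ and Hopf bounds obtained as in Lemma~\ref{lem:criticalstability}. The innermost collar $\dist\le\eta\sigma$ and the Hopf region $\dist\ge C\sigma^2/\varepsilon$ then overlap, and the comparison argument goes through.

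An even shorter route to the eigenvalue and eigenfunction claims is available. Since $\Lp(c\,\mathcal T)=c\,\Lp(\mathcal T)$ for $c>0$, divide the operator by $\delta_\sigma$. Then apply Lemma~\ref{lem:criticalstability} directly with $q_n=q/\delta_{\sigma_n}$ and $a_n=a/\delta_{\sigma_n}$, which converge in $C^2$ with $\inf_n\min q_n>0$. This gives the limit $\delta\,\lambda_1^D(d_J\partial_{xx}+(q/\delta)\partial_x+a/\delta)=\lambda_\delta$. Only the energy statement would then still need the identity argument.
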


\begin{corollary}\label{cor:crossover-homogeneous}
If $q\equiv c>0$ and $a\equiv a_0$, then
\[
\Lp(\cA_{\sigma,m_\sigma,L})\longrightarrow
e^{-\theta}d_J\frac{\pi^2}{4L^2}
+e^{\theta}\frac{c^2}{4d_J}-a_0
=
e^{-\theta}\frac{D_2(J)\pi^2}{8L^2}
+e^{\theta}\frac{c^2}{2D_2(J)}-a_0.
\]
\end{corollary}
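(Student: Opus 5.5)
The corollary follows from Proposition~\ref{prop:crossovers}\textup{(ii)}. With $q\equiv c$ and $a\equiv a_0$, that result gives
\[
\Lp(\cA_{\sigma,m_\sigma,L})\longrightarrow\lambda_1^D\bigl(Du''+cu'+a_0u,(-L,L)\bigr),\qquad D:=e^{-\theta}d_J.
\]
What remains is to compute this constant-coefficient Dirichlet principal eigenvalue explicitly.

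\emph{Gauge transform.} I would use the same gauge as in the proof of Lemma~\ref{lem:criticalstability}, now with $D$ in place of $d_J$. Set $u(x)=e^{-cx/(2D)}v(x)$. A direct computation gives
\[
Du''+cu'+a_0u=e^{-cx/(2D)}\Bigl(Dv''+\bigl(a_0-\tfrac{c^2}{4D}\bigr)v\Bigr).
\]
The exponential factor is positive and bounded on $[-L,L]$. It therefore preserves positivity and both Dirichlet conditions, so the two operators have the same principal Dirichlet eigenvalue.

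\emph{Self-adjoint problem.} The transformed operator is $Dv''$ plus a constant potential. Its principal Dirichlet eigenfunction is $v_0(x)=\cos(\pi x/(2L))$, which is positive and vanishes at $\pm L$. In the paper's sign convention ($\cA\phi+\lambda\phi=0$) this gives
\[
\lambda_1^D=D\frac{\pi^2}{4L^2}+\frac{c^2}{4D}-a_0.
\]

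\emph{Substitution.} Insert $D=e^{-\theta}d_J$. This yields $e^{-\theta}d_J\pi^2/(4L^2)+e^{\theta}c^2/(4d_J)-a_0$. Using $d_J=D_2(J)/2$ converts it into the second displayed form, since
\[
\frac{d_J\pi^2}{4L^2}=\frac{D_2(J)\pi^2}{8L^2},\qquad \frac{c^2}{4d_J}=\frac{c^2}{2D_2(J)}.
\]

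The only point that needs care is identifying the principal value. Positivity of $u_0=e^{-cx/(2D)}v_0$ together with uniqueness of the positive Dirichlet eigenfunction shows that $u_0$ is the principal eigenfunction. The rest is routine.
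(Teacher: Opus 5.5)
Your proposal is correct and matches the paper's proof: you invoke Proposition~\ref{prop:crossovers}\textup{(ii)}, apply the gauge $u=e^{-cx/(2D)}v$ with $D=e^{-\theta}d_J$ to reduce to $Dv''+\bigl(a_0-\tfrac{c^2}{4D}\bigr)v$, and read off the Dirichlet principal value $D\pi^2/(4L^2)+c^2/(4D)-a_0$. You also carry out the gauge computation and the substitution $d_J=D_2(J)/2$ explicitly, which the paper leaves implicit.
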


\begin{proof}
Put $D_\theta=e^{-\theta}d_J$. The gauge transform
$u=e^{-cx/(2D_\theta)}v$ reduces the local Dirichlet operator to
\[
D_\theta v''+\left(a_0-\frac{c^2}{4D_\theta}\right)v.
\]
Its principal value on $(-L,L)$ is
$D_\theta\pi^2/(4L^2)+c^2/(4D_\theta)-a_0$.
\end{proof}

\begin{proof}[\textbf{Proof of Proposition~\ref{prop:crossovers}\textup{(i)}}]
Write the nonlocal part in the form
\[
\frac1{\sigma^{m_\sigma}}(J_\sigma*_{\Omega_L}u-u)=\delta_\sigma\frac1{\sigma^2}(J_\sigma*_{\Omega_L}u-u).
\]
Let $\phi_\delta$ be the positive Dirichlet eigenfunction of the local operator with diffusion coefficient $\delta d_J$. We construct a uniform supersolution. The smooth-extension argument in Proposition~\ref{prop:criticaltools}\textup{(i)} yields
\[
\frac1{\sigma^2}\left(\int J(z)\widetilde\phi_\delta(x-\sigma z)\,dz-\phi_\delta(x)\right)\ge d_J\phi_\delta''(x)-C\sigma^2
\]
away from the innermost boundary collars. Multiplication by $\delta_\sigma$ and the convergence $\delta_\sigma\to\delta$ yield
\[
\cA_{\sigma,m_\sigma,L}\phi_\delta+(\lambda_\delta+\varepsilon)\phi_\delta\ge\varepsilon\phi_\delta-o(1)
\]
in the interior. Near either boundary, the zero-extension correction in Proposition~\ref{prop:criticaltools}\textup{(i)} is positive of order $\delta_\sigma/\sigma$. Since $\delta_\sigma$ stays bounded above and below by positive constants, this term still dominates all bounded drift and zeroth-order terms in the innermost $O(\sigma)$ collars. The intermediate collars are treated by the Hopf estimate $\phi_\delta\ge c\dist(x,\partial\Omega_L)$. Hence, for small $\sigma$,
\[
\cA_{\sigma,m_\sigma,L}\phi_\delta+(\lambda_\delta+\varepsilon)\phi_\delta\ge0
\]
throughout the interval. The bounded-interval comparison theorem yields
\[
\limsup_{\sigma\to0}\Lp(\cA_{\sigma,m_\sigma,L})\le\lambda_\delta.
\]

Let $\lambda_\sigma$ and $\phi_\sigma$ denote the nonlocal principal eigenpair, with $\|\phi_\sigma\|_2=1$. Multiplication by $\phi_\sigma$ and integration yield
\[
\delta_\sigma E_\sigma(\phi_\sigma)+\frac{q(-L)}2\phi_\sigma(-L)^2=\int_{-L}^{L}\left(a+\lambda_\sigma-\frac{q'}2\right)\phi_\sigma^2\,dx.
\]
As in the critical proof, the left-hand side is nonnegative and the upper bound for $\lambda_\sigma$ provides a uniform lower bound for the eigenvalues. Since $\delta_\sigma\ge\delta/2$ for small $\sigma$, the nonlocal energies $E_\sigma(\phi_\sigma)$ are uniformly bounded. Proposition~\ref{prop:criticaltools}\textup{(ii)} therefore yields strong $L^2$ compactness of the zero extensions and an $H_0^1$ limit.

Along a subsequence let $\lambda_\sigma\to\lambda$ and $\phi_\sigma\to\phi$ strongly in $L^2$. For $\zeta\in C_c^\infty(-L,L)$,
\[
\delta_\sigma\int\zeta\cM_{\sigma,2}\phi_\sigma=\delta_\sigma\int\phi_\sigma\cM_{\sigma,2}\zeta\longrightarrow\delta d_J\int\phi\zeta''.
\]
The drift and zeroth-order terms pass to the limit as before. Hence $\delta d_J\phi''+q\phi'+a\phi+\lambda\phi=0$,
with $\phi\in H_0^1(-L,L)$, $\phi\ge0$ and $\|\phi\|_2=1$. The local strong maximum principle and simplicity identify $\lambda=\lambda_\delta$ and $\phi=\phi_\delta$. This also removes the subsequence.

Finally, the energy identity converges to
\[
\delta d_J\int|\phi_\delta'|^2=\int\left(a+\lambda_\delta-\frac{q'}2\right)\phi_\delta^2.
\]
Lower semicontinuity and nonnegativity of the boundary term yield
\[
\delta_\sigma E_\sigma(\phi_\sigma)
\longrightarrow\delta d_J\int_{-L}^{L}|\phi_\delta'|^2\,dx,
\]
as in the proof of Theorem~\ref{thm:maincritical}.
\end{proof}

\begin{proof}[\textbf{Proof of Proposition~\ref{prop:crossovers}\textup{(ii)}}]
The identity
\[
\delta_\sigma=\exp\big(-(2-m_\sigma)|\log\sigma|\big)
\]
yields $\delta_\sigma\to e^{-\theta}$. Proposition~\ref{prop:crossovers}\textup{(i)} applies directly and yields the heterogeneous local Dirichlet principal value displayed above. No constancy of the drift or the potential is used.
\end{proof}

\begin{proposition}\label{prop:supercritical}
Let $m>2$ be fixed.  Assume $J\in C_c^2(\R)$ is even, nonnegative, normalized and
positive on a neighborhood of the origin, and assume
\[
q\in C^2([-L,L]),\qquad a\in C^2([-L,L]),\qquad q\ge q_0>0.
\]
Set
\[
\varepsilon_\sigma:=\sigma^{m-2},
\qquad
\mu_\sigma:=\varepsilon_\sigma\Lp(\cA_{\sigma,m,L}),
\qquad
\mu_D:=d_J\frac{\pi^2}{4L^2}.
\]
Then $\mu_\sigma\longrightarrow\mu_D$.
If $\phi_\sigma>0$ is normalized by $\|\phi_\sigma\|_{L^2(-L,L)}=1$, then, after zero
extension,
\[
\widetilde\phi_\sigma\longrightarrow
e_1:=L^{-1/2}\cos\frac{\pi x}{2L}\,\mathbf 1_{[-L,L]}
\quad\hbox{strongly in }L^2(\R),
\]
and $E_\sigma(\phi_\sigma)\longrightarrow\mu_D$.
In addition,
\[
\varepsilon_\sigma\phi_\sigma(-L)^2\longrightarrow0.
\]
Consequently,
\[
\Lp(\cA_{\sigma,m,L})
=
\frac{D_2(J)\pi^2}{8L^2}\,\sigma^{2-m}
+o(\sigma^{2-m}).
\]
\end{proposition}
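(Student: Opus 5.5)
We follow the scheme of Theorem~\ref{thm:maincritical}, after multiplying the operator by $\varepsilon_\sigma$. Write
\[
\varepsilon_\sigma\cA_{\sigma,m,L}=\frac1{\sigma^2}(J_\sigma*_{\Omega_L}-I)+\varepsilon_\sigma\big(q\partial_x+a\big).
\]
Then $\mu_\sigma$ is the principal value of an operator whose nonlocal part is exactly critical. The drift and potential carry the vanishing factor $\varepsilon_\sigma$.

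\textbf{Upper bound.} We use $e_1$, restricted to $[-L,L]$, as a supersolution, exactly as in Proposition~\ref{prop:criticaltools}\textup{(i)}. Extend $e_1$ smoothly, and negatively, across $\pm L$. In the interior, Taylor's formula gives $\sigma^{-2}(J_\sigma*e_1-e_1)=-\mu_De_1+O(\sigma^2)$. The drift contributes $\varepsilon_\sigma q e_1'=O(\varepsilon_\sigma)$, which is bounded but not small relative to $e_1$ near the endpoints. In the innermost collars of width $O(\sigma)$, the zero-extension correction is positive of order $\sigma^{-1}$ and dominates everything else. In the intermediate region we have $e_1\ge c\,\mathrm{dist}(x,\partial\Omega_L)\ge c\sigma$ beyond the innermost collar. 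There the residual $\varepsilon e_1-C\sigma^2-C\varepsilon_\sigma$ is nonnegative once $e_1\ge C(\sigma^2+\varepsilon_\sigma)/\varepsilon$. The remaining zone $c\sigma\le \mathrm{dist}\le C\varepsilon_\sigma/\varepsilon$ is thin, since $\varepsilon_\sigma=o(1)$.

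This zone is the main obstacle: on it the $O(\varepsilon_\sigma)$ drift term $\varepsilon_\sigma q e_1'$ has the wrong sign near $-L$, where $e_1'>0$, and is not absorbed by $\varepsilon e_1$. I would first try an exponential gauge. Replace $e_1$ by $e^{-\varepsilon_\sigma\int q/(2d_J)}e_1$, or, more simply, by the local Dirichlet eigenfunction of $d_J\partial_{xx}+\varepsilon_\sigma(q\partial_x+a)$. The drift is then built into the profile: its local principal value is $\mu_D+o(1)$ by smooth dependence, as in Lemma~\ref{lem:criticalstability}, and its $C^4$ norms and Hopf constants are uniform. With this choice the residual is $\varepsilon\psi_\sigma-O(\sigma^2)$ in the interior, and the collar argument of Lemma~\ref{lem:criticalstability} applies verbatim. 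Proposition~\ref{thm:comparison} then gives $\limsup\mu_\sigma\le\mu_D$. Alternatively, Lemma~\ref{lem:criticalstability} can be invoked directly with $\delta_n=\sigma_n$, $q_n=\varepsilon_{\sigma_n}q$ and $a_n=\varepsilon_{\sigma_n}a$. Its hypothesis $\inf\min q_n>0$ fails, however, so the proof must be rechecked there: only the Hopf bound and the upper barrier used positivity of the drift.

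\textbf{Lower bound and compactness.} The energy--transport identity for $\|\phi_\sigma\|_2=1$ reads
\[
E_\sigma(\phi_\sigma)+\varepsilon_\sigma\frac{q(-L)}2\phi_\sigma(-L)^2=\mu_\sigma+\varepsilon_\sigma\int_{-L}^{L}\Big(a-\frac{q'}2\Big)\phi_\sigma^2.
\]
Hence $E_\sigma(\phi_\sigma)\le\mu_\sigma+C\varepsilon_\sigma$, which is uniformly bounded by the upper bound. Proposition~\ref{prop:criticaltools}\textup{(ii)} yields, along a subsequence, $\widetilde\phi_\sigma\to\phi$ strongly in $L^2(\R)$. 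The limit satisfies $\phi\in H_0^1(-L,L)$, $\|\phi\|_2=1$ and $\phi\ge0$, together with
\[
d_J\int|\phi'|^2\le\liminf E_\sigma(\phi_\sigma).
\]
Combining these with the identity gives
\[
\mu_D\le d_J\int|\phi'|^2\le\liminf\mu_\sigma\le\limsup\mu_\sigma\le\mu_D,
\]
where the first inequality is the sharp Dirichlet Poincaré inequality. All inequalities are therefore equalities. Equality in Poincaré forces $\phi=\pm e_1$, and nonnegativity forces $\phi=e_1$. This also follows by testing the equation against $\zeta\in C_c^\infty$ as in Theorem~\ref{thm:maincritical}: the drift term is $\varepsilon_\sigma\int(q\zeta)'\phi_\sigma\to0$, which yields $d_J\phi''+\mu\phi=0$.

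\textbf{Conclusions.} Uniqueness of the limit gives convergence of the full family. The identity then gives $E_\sigma(\phi_\sigma)\to\mu_D$. The remaining statement $\varepsilon_\sigma\phi_\sigma(-L)^2\to0$ follows because the sum of the energy and the nonnegative boundary term converges to $\mu_D$, while the energy alone has $\liminf\ge\mu_D$. Finally, $\Lp=\varepsilon_\sigma^{-1}\mu_\sigma$ and $d_J=D_2(J)/2$ give the stated expansion.
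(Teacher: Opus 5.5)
Your proposal is correct and is essentially the paper's proof. The paper's upper barrier is precisely the positive Dirichlet eigenfunction $u_{\varepsilon_\sigma}$ of $d_J\partial_{xx}+\varepsilon_\sigma(q\partial_x+a)$ with uniform $C^4$ and Hopf bounds, giving an interior residual $\eta u_{\varepsilon_\sigma}-C\sigma^2$ and a truncation gain of order $\sigma^{-1}$ in the $O(\sigma^2)$ collars; it then uses the same energy--transport identity, Fourier compactness, Fatou, sharp Poincar\'e inequality and squeeze to get $E_\sigma(\phi_\sigma)\to\mu_D$ and $\varepsilon_\sigma\phi_\sigma(-L)^2\to0$.

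One harmless slip in your exploratory remark: for the bare profile $e_1$, the term $\varepsilon_\sigma q e_1'$ is unfavorable near the outflow endpoint $L$, where $e_1'<0$, not near $-L$. The bad zone is also nonempty only when $\varepsilon_\sigma\gtrsim\sigma$, i.e.\ $m\le3$. Neither point affects the barrier you finally use.
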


\begin{proof}[\textbf{Proof of Proposition~\ref{prop:supercritical}}]
Write
\[
\lambda_\sigma=\Lp(\cA_{\sigma,m,L}),\qquad
\varepsilon_\sigma=\sigma^{m-2},
\qquad
\mu_\sigma=\varepsilon_\sigma\lambda_\sigma.
\]
Multiplication of the eigenvalue equation by $\varepsilon_\sigma$ yields
\begin{equation}
\cM_{\sigma,2}\phi_\sigma
+\varepsilon_\sigma q(x)\phi_\sigma'
+\varepsilon_\sigma a(x)\phi_\sigma
+\mu_\sigma\phi_\sigma=0.
\label{eq:6_20}
\end{equation}
Since $m>2$, $\varepsilon_\sigma\to0$.  The directional boundary condition nevertheless
persists for every positive $\sigma$, and therefore compactness has to be obtained before the
transport term is discarded.

\smallskip
\noindent\emph{Step 1: a uniform upper bound.}
For $\varepsilon\in[0,1]$, let $\nu_\varepsilon$ denote the Dirichlet principal eigenvalue of
\begin{equation}
d_Ju''+\varepsilon q(x)u'+\varepsilon a(x)u
\quad\hbox{in }(-L,L),\qquad u(-L)=u(L)=0,
\label{eq:6_21}
\end{equation}
and normalize its positive eigenfunction $u_\varepsilon$ by $\|u_\varepsilon\|_2=1$.
Simplicity of the first Dirichlet eigenvalue and classical one-dimensional elliptic
regularity imply
\begin{equation}
\nu_\varepsilon\longrightarrow\mu_D,\qquad
u_\varepsilon\longrightarrow e_1
\quad\hbox{in }C^2([-L,L]),
\label{eq:6_22}
\end{equation}
as $\varepsilon\downarrow0$.  The equation in \eqref{eq:6_21}, together with $q,a\in C^2$, yields
a uniform $C^4$ bound.  The Hopf lemma and \eqref{eq:6_22} also yield constants $c_0,C_0>0$ such that
\begin{equation}
u_\varepsilon'(-L)\ge c_0,\qquad
-u_\varepsilon'(L)\ge c_0,\qquad
\|u_\varepsilon\|_{C^4([-L,L])}\le C_0
\label{eq:6_23}
\end{equation}
for all sufficiently small $\varepsilon$.

Fix $\eta>0$ and set $\varepsilon=\varepsilon_\sigma$.  We claim that
\begin{equation}
\cM_{\sigma,2}u_{\varepsilon_\sigma}
+\varepsilon_\sigma q u_{\varepsilon_\sigma}'
+\varepsilon_\sigma a u_{\varepsilon_\sigma}
+(\nu_{\varepsilon_\sigma}+\eta)u_{\varepsilon_\sigma}\ge0
\label{eq:6_24}
\end{equation}
in $(-L,L)$ for all sufficiently small $\sigma$.
The verification is uniform in the vanishing parameter $\varepsilon_\sigma$.

Extend $u_\varepsilon$ to a $C^4$ function $U_\varepsilon$ on a fixed neighborhood of
$[-L,L]$ in such a way that $U_\varepsilon<0$ immediately outside the interval.
The bounds in \eqref{eq:6_23} allow the extensions to be chosen with a common $C^4$ bound.
If $\widetilde u_\varepsilon$ denotes the zero extension, then
$\widetilde u_\varepsilon\ge U_\varepsilon$ on that neighborhood.  Therefore, for every
$x\in[-L,L]$,
\[
\begin{aligned}
\cM_{\sigma,2}u_\varepsilon(x)
&=\frac1{\sigma^2}\left(
\int_\R J(z)\widetilde u_\varepsilon(x-\sigma z)\,dz-u_\varepsilon(x)\right)\\
&\ge
\frac1{\sigma^2}\left(
\int_\R J(z)U_\varepsilon(x-\sigma z)\,dz-U_\varepsilon(x)\right)\\
&=d_Ju_\varepsilon''(x)+O(\sigma^2),
\end{aligned}
\]
where the remainder is uniform in $x$ and $0\le\varepsilon\le\varepsilon_0$.
Using \eqref{eq:6_21}, the left-hand side of \eqref{eq:6_24} is consequently bounded below by
\begin{equation}
\eta u_{\varepsilon_\sigma}(x)-C\sigma^2.
\label{eq:6_26}
\end{equation}
By the uniform Hopf bounds in \eqref{eq:6_23}, there are $r_0,c_1>0$ such that
\[
u_\varepsilon(x)\ge c_1\,\dist(x,\partial\Omega_L)
\qquad
\hbox{if }\dist(x,\partial\Omega_L)\le r_0
\]
for all sufficiently small $\varepsilon$.  Thus \eqref{eq:6_26} proves \eqref{eq:6_24} whenever
\begin{equation}
\dist(x,\partial\Omega_L)\ge C_\eta\sigma^2
\label{eq:6_28}
\end{equation}
after increasing $C_\eta$.

It remains only to consider the two collars of physical width $O(\sigma^2)$.  At the right
endpoint write $x=L-\sigma t$; then \eqref{eq:6_28} yields $0\le t\le C_\eta\sigma$.  With
$\beta_{\varepsilon,+}=-u_\varepsilon'(L)\ge c_0$, Taylor expansion at $L$ yields
uniformly in this collar
\begin{equation}
\begin{aligned}
&\int_\R J(z)\widetilde u_\varepsilon(L-\sigma t-\sigma z)\,dz
-u_\varepsilon(L-\sigma t)\\
&\qquad=
\beta_{\varepsilon,+}\sigma
\int_{z\le-t}(-t-z)J(z)\,dz+O(\sigma^2).
\end{aligned}
\label{eq:6_29}
\end{equation}
Since $t=O(\sigma)$ and $J$ is positive on a neighborhood of the origin, there is
$c_2>0$, independent of $\sigma$ and $\varepsilon$, such that $\int_{z\le-t}(-t-z)J(z)\,dz\ge c_2$
for all sufficiently small $\sigma$.  Hence the truncated nonlocal correction in \eqref{eq:6_29},
after division by $\sigma^2$, is bounded below by $c_0c_2/(2\sigma)$.  It dominates all
remaining terms in \eqref{eq:6_24}.  The reflected computation applies at $-L$.  This proves
\eqref{eq:6_24} on the whole interval.

The comparison theorem yields $\mu_\sigma\le\nu_{\varepsilon_\sigma}+\eta$.
Using \eqref{eq:6_22} and then letting $\eta\downarrow0$ yields
\begin{equation}
\limsup_{\sigma\to0}\mu_\sigma\le\mu_D.
\label{eq:6_30}
\end{equation}

\smallskip
\noindent\emph{Step 2: the normalized energy identity.}
Normalize $\|\phi_\sigma\|_2=1$ and extend $\phi_\sigma$ by zero.  Multiplying \eqref{eq:6_20} by
$\phi_\sigma$ and integrating yields
\begin{equation}
E_\sigma(\phi_\sigma)
+\frac{\varepsilon_\sigma q(-L)}2\phi_\sigma(-L)^2
=
\mu_\sigma
+\varepsilon_\sigma\int_{-L}^{L}
\left(a-\frac{q'}2\right)\phi_\sigma^2\,dx.
\label{eq:6_31}
\end{equation}
Indeed, symmetry yields
$-\int\phi_\sigma\cM_{\sigma,2}\phi_\sigma=E_\sigma(\phi_\sigma)$, while
\[
\int q\phi_\sigma'\phi_\sigma
=
-\frac{q(-L)}2\phi_\sigma(-L)^2
-\frac12\int q'\phi_\sigma^2
\]
because $\phi_\sigma(L)=0$.
The boundary term in \eqref{eq:6_31} is nonnegative.  Hence \eqref{eq:6_30} and
$\varepsilon_\sigma\to0$ imply $E_\sigma(\phi_\sigma)\le C$.

\smallskip
\noindent\emph{Step 3: compactness and the sharp lower bound.}
Proposition~\ref{prop:criticaltools}\textup{(ii)} applies to the zero extensions.  Along a subsequence,
\[
\widetilde\phi_\sigma\to\widetilde\phi
\quad\hbox{strongly in }L^2(\R),
\]
where $\widetilde\phi\in H^1(\R)$ is supported in $[-L,L]$.  Therefore
$\phi\in H_0^1(-L,L)$ and $\|\phi\|_2=1$.
From \eqref{eq:6_31},
\[
\mu_\sigma
\ge
E_\sigma(\phi_\sigma)
-\varepsilon_\sigma\left\|a-\frac{q'}2\right\|_\infty.
\]
The Fourier representation of the energy and Fatou's lemma yield
\begin{equation}
\liminf_{\sigma\to0}\mu_\sigma
\ge d_J\int_{-L}^{L}|\phi'|^2\,dx.
\label{eq:6_33}
\end{equation}
The sharp Poincar\'e inequality yields
\begin{equation}
d_J\int_{-L}^{L}|\phi'|^2\,dx
\ge d_J\frac{\pi^2}{4L^2}\int_{-L}^{L}\phi^2\,dx
=\mu_D.
\label{eq:6_34}
\end{equation}
Combining \eqref{eq:6_30}, \eqref{eq:6_33} and \eqref{eq:6_34},
\begin{equation}
\mu_\sigma\longrightarrow\mu_D.
\label{eq:6_35}
\end{equation}
Equality holds in the Poincar\'e inequality for every subsequential limit.  The equality
case is one-dimensional, so positivity and normalization yield $\phi(x)=L^{-1/2}\cos\frac{\pi x}{2L}=e_1(x)$.
Every subsequence has the same limit, and consequently
\[
\widetilde\phi_\sigma\to e_1
\quad\hbox{strongly in }L^2(\R).
\]

\smallskip
\noindent\emph{Step 4: energy and inflow flux.}
Returning to \eqref{eq:6_31}, its right-hand side converges to $\mu_D$.  From
\eqref{eq:6_33}--\eqref{eq:6_34},
\[
\mu_D\le\liminf_{\sigma\to0}E_\sigma(\phi_\sigma).
\]
Since the boundary term in \eqref{eq:6_31} is nonnegative, it follows that
\[
E_\sigma(\phi_\sigma)\longrightarrow\mu_D,
\qquad
\frac{\varepsilon_\sigma q(-L)}2\phi_\sigma(-L)^2\longrightarrow0.
\]
As $q(-L)>0$, $\sigma^{m-2}\phi_\sigma(-L)^2\longrightarrow0$.
Finally, $\mu_\sigma=\sigma^{m-2}\lambda_\sigma$, and \eqref{eq:6_35} yields
\[
\lambda_\sigma
=
\mu_D\sigma^{2-m}+o(\sigma^{2-m})
=
\frac{D_2(J)\pi^2}{8L^2}\sigma^{2-m}+o(\sigma^{2-m}).
\]
\end{proof}

\begin{proposition}\label{prop:criticaladjoint}
For later perturbation arguments we also record the adjoint critical limit and the resulting first-variation limit.

\textup{(i)} Let $\phi_\sigma^*>0$ be the positive adjoint eigenfunction of $\cA_{\sigma,2,L}$. Normalize it by $\|\phi_\sigma^*\|_{L^2(-L,L)}=1$. Then
\[
\phi_\sigma^*\longrightarrow\phi_0^*\quad \hbox{strongly in }L^2(-L,L),
\]
where $\phi_0^*>0$ is the normalized Dirichlet principal eigenfunction of
\[
d_Jv''-(qv)'+av+\lambda_0v=0\quad \hbox{in }(-L,L),\qquad v(-L)=v(L)=0.
\]
The zero-extension energies satisfy
\[
E_\sigma(\phi_\sigma^*)\longrightarrow d_J\int_{-L}^{L}|(\phi_0^*)'|^2\,dx,
\]
and $\phi_\sigma^*(L)\to0$.

\textup{(ii)} Let $h\in C([-L,L])$ and, for $|\tau|$ small, replace $a$ by $a+\tau h$. Denote the corresponding principal eigenvalue by $\lambda_\sigma(\tau)$. Then
\[
\partial_\tau\lambda_\sigma(0)=-\frac{\int_{-L}^{L}h(x)\phi_\sigma(x)\phi_\sigma^*(x)\,dx}{\int_{-L}^{L}\phi_\sigma(x)\phi_\sigma^*(x)\,dx}.
\]
After normalizing the direct and adjoint eigenfunctions in $L^2$,
\[
\partial_\tau\lambda_\sigma(0)\longrightarrow-\frac{\int_{-L}^{L}h(x)\phi_0(x)\phi_0^*(x)\,dx}{\int_{-L}^{L}\phi_0(x)\phi_0^*(x)\,dx}.
\]
The right-hand side is the first variation of the local Dirichlet principal eigenvalue.
\end{proposition}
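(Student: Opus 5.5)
Part (i) is handled by reduction to the direct problem. Reflecting $x\mapsto -x$ turns the adjoint operator $\cA^*_{\sigma,2,L}v=\cM_{\sigma,2}v-(qv)'+av$ into an operator of the same critical type: the symmetric nonlocal part is unchanged, the drift becomes $\check q(x)=q(-x)>0$, and the potential becomes $\check a=a-q'$ evaluated at $-x$. The outflow endpoint of the reflected problem is the image of $-L$. By Proposition~\ref{prop:sizecore}\textup{(i)} the adjoint principal value equals $\lambda_\sigma$, and $\phi_\sigma^*(-L)=0$.

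Theorem~\ref{thm:maincritical}, or Lemma~\ref{lem:criticalstability} with fixed coefficients, then applies to the reflected operator. It needs $\check q,\check a\in C^2$, i.e.\ $q\in C^3$. If only $q\in C^2$ is available, I would instead rerun the proof directly, since it uses $\check a$ only in sup norm and in the distributional passage to the limit. Concretely:
\begin{itemize}
\item Take the local Dirichlet adjoint eigenfunction as a supersolution, with the boundary-collar argument of Proposition~\ref{prop:criticaltools}\textup{(i)}.
\item Use the energy identity obtained by multiplying by $\phi_\sigma^*$:
\[
E_\sigma(\phi^*_\sigma)+\tfrac{q(L)}{2}\phi_\sigma^*(L)^2=\int\bigl(a+\lambda_\sigma-\tfrac{q'}2\bigr)(\phi^*_\sigma)^2 ,
\]
since $-\int (q v)'v=-\tfrac12\int q' v^2-\tfrac12[qv^2]$ and $v(-L)=0$. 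This gives bounded energies.
\item Apply the Fourier compactness of Proposition~\ref{prop:criticaltools}\textup{(ii)}.
\item Pass to the limit against test functions, where $-\int \zeta(qv)'=\int q\zeta' v$.
\end{itemize}
The limit is a nonnegative normalized $H^1_0$ solution of $d_Jv''-(qv)'+(a+\lambda_0)v=0$. Its eigenvalue is $\lambda_0$, because the adjoint local Dirichlet problem has the same principal value. The strong maximum principle and simplicity identify it with $\phi_0^*$. Finally, comparing the energy identity with the local identity $d_J\int|\phi_0^{*\prime}|^2=\int(a+\lambda_0-q'/2)(\phi_0^*)^2$ and using lower semicontinuity gives energy convergence and $\phi_\sigma^*(L)\to0$, exactly as in the proof of Theorem~\ref{thm:maincritical}.

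For part (ii), the first formula is Proposition~\ref{prop:perturbcore}\textup{(iii)} with $H=r=0$, written without normalizing $\int\phi_\sigma\phi_\sigma^*=1$. The pairing is positive, so dividing by it is legitimate. For the limit, strong $L^2$ convergence of both $\phi_\sigma\to\phi_0$ and $\phi_\sigma^*\to\phi_0^*$ gives convergence of $\int h\phi_\sigma\phi_\sigma^*$ and of $\int\phi_\sigma\phi_\sigma^*$, because $h$ is bounded and a product of two $L^2$-convergent sequences converges in $L^1$. The limiting denominator $\int\phi_0\phi_0^*$ is positive since both functions are positive. The limit is the classical first-variation formula for the local Dirichlet eigenvalue, obtained by pairing the differentiated local equation with $\phi_0^*$.

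The main obstacle is the regularity bookkeeping in the reflection: the adjoint potential $a-q'$ loses one derivative compared with the hypotheses of Theorem~\ref{thm:maincritical}. I would check that the supersolution step needs only $\phi_0^*\in C^4$ near the boundary. If that fails, I would replace it by a regularized $q$, using Lemma~\ref{thm:monotonicity} for the $L^\infty$ perturbation of the potential.
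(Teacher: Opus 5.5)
Your direct rerun (the energy identity with the outflow flux $q(L)\phi_\sigma^*(L)^2/2$, Fourier compactness from Proposition~\ref{prop:criticaltools}\textup{(ii)}, transfer of $\cM_{\sigma,2}$ onto test functions, identification of the limit, and the final energy comparison) is essentially the paper's proof, and your part (ii) matches it. The one difference is that the paper uses neither the reflection nor an adjoint supersolution: the adjoint principal value is the same $\lambda_\sigma$, which already converges to $\lambda_0$ by Theorem~\ref{thm:maincritical}, so the energies are bounded at once, and the loss of regularity in $a-q'$ that you flag as the main obstacle never arises.
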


\begin{proof}[\textbf{Proof of Proposition~\ref{prop:criticaladjoint}\textup{(i)}}]
The nonlocal part is self-adjoint because $J$ is even. Thus the adjoint equation is
\[
\cM_{\sigma,2}\phi_\sigma^*-(q\phi_\sigma^*)'+(a+\lambda_\sigma)\phi_\sigma^*=0,
\]
with the one-sided outflow condition $\phi_\sigma^*(-L)=0$. Here $\lambda_\sigma$ is the same principal eigenvalue as for the direct problem by the Green identity.

Multiply the adjoint equation by $\phi_\sigma^*$ and integrate. The nonlocal term yields $-E_\sigma(\phi_\sigma^*)$. For the drift term we compute without omitting the boundary contribution:
\[
\begin{aligned}
-\int_{-L}^{L}\phi_\sigma^*(q\phi_\sigma^*)'\,dx
&=-\int q\phi_\sigma^*(\phi_\sigma^*)'\,dx-\int q'(\phi_\sigma^*)^2\,dx\\
&=-\frac12[q(\phi_\sigma^*)^2]_{-L}^{L}-\frac12\int q'(\phi_\sigma^*)^2\,dx\\
&=-\frac{q(L)}2\phi_\sigma^*(L)^2-\frac12\int q'(\phi_\sigma^*)^2\,dx.
\end{aligned}
\]
Consequently
\[
E_\sigma(\phi_\sigma^*)+\frac{q(L)}2\phi_\sigma^*(L)^2=\int_{-L}^{L}\left(a+\lambda_\sigma-\frac{q'}2\right)(\phi_\sigma^*)^2\,dx.
\]
The right-hand side is uniformly bounded because $\lambda_\sigma\to\lambda_0$. Hence the adjoint nonlocal energies are uniformly bounded.

Extend $\phi_\sigma^*$ by zero. Proposition~\ref{prop:criticaltools}\textup{(ii)} yields, along a subsequence, strong $L^2(\R)$ convergence to a function $\phi^*\in H^1(\R)$ supported in $[-L,L]$. Thus $\phi^*\in H_0^1(-L,L)$ and $\|\phi^*\|_2=1$. For $\zeta\in C_c^\infty(-L,L)$, symmetry of the kernel yields
\[
\int\zeta\cM_{\sigma,2}\phi_\sigma^*=\int\phi_\sigma^*\cM_{\sigma,2}\zeta\longrightarrow d_J\int\phi^*\zeta''.
\]
For the drift term,
\[
-\int\zeta(q\phi_\sigma^*)'\,dx=\int q\phi_\sigma^*\zeta'\,dx\longrightarrow\int q\phi^*\zeta'\,dx.
\]
Passing to the limit yields
\[
d_J(\phi^*)''-(q\phi^*)'+a\phi^*+\lambda_0\phi^*=0
\]
in distributions. The local adjoint Dirichlet principal eigenfunction is unique and positive, hence $\phi^*=\phi_0^*$. Every subsequence has the same limit, so convergence holds for the full family.

Finally, the exact energy identity converges to
\[
d_J\int_{-L}^{L}|(\phi_0^*)'|^2\,dx=\int_{-L}^{L}\left(a+\lambda_0-\frac{q'}2\right)(\phi_0^*)^2\,dx.
\]
Lower semicontinuity and nonnegativity of the boundary term yield, as in the direct problem,
\[
E_\sigma(\phi_\sigma^*)\to d_J\int| (\phi_0^*)'|^2,\qquad \phi_\sigma^*(L)\to0.
\]
\end{proof}

\begin{proof}[\textbf{Proof of Proposition~\ref{prop:criticaladjoint}\textup{(ii)}}]
For every fixed $\sigma>0$, Proposition~\ref{prop:perturbcore}\textup{(iii)} yields the first-variation formula. By Theorem~\ref{thm:maincritical} and Proposition~\ref{prop:criticaladjoint}\textup{(i)},
\[
\phi_\sigma\to\phi_0,\qquad \phi_\sigma^*\to\phi_0^*
\]
strongly in $L^2$. Hence their products converge in $L^1$:
\[
\|\phi_\sigma\phi_\sigma^*-\phi_0\phi_0^*\|_{L^1}\le\|\phi_\sigma-\phi_0\|_2\|\phi_\sigma^*\|_2+\|\phi_0\|_2\|\phi_\sigma^*-\phi_0^*\|_2\longrightarrow0.
\]
Since $h$ is bounded, both numerator and denominator in the first-variation formula converge. The denominator tends to the strictly positive number $\int\phi_0\phi_0^*$. This proves the limit.
\end{proof}

\section{Subcritical scaling with variable advection}

We first prove the transport-dominated part of Theorem~\ref{thm:mainsubcritical} for variable coefficients.

\begin{proposition}\label{thm:variablesubcritical}
Assume $q,a\in C^2([-L,L])$ and $q_*:=\min_{[-L,L]}q>0$. Then
\[
\sigma^{2-m}\Lp(\cA_{\sigma,m,L})\to\frac{q_*^2}{2D_2(J)}\quad(1<m<2),\qquad
\sigma\Lp(\cA_{\sigma,1,L})\to\cH(q_*),
\]
and
\[
\frac{\sigma}{|\log\sigma|}\Lp(\cA_{\sigma,m,L})\to\frac{q_*(1-m)}{R_+}\qquad(0\le m<1).
\]
\end{proposition}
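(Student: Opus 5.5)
The proof splits into a lower bound common to all three regimes and a matching upper bound whose mechanism depends on the regime.

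\textbf{Lower bound.} Test with the affine exponential $\phi_s(x)=e^{-sx/\sigma}$ on $(-L,L)$. The truncated convolution only removes positive mass, so
\[
\Lp(\cA_{\sigma,m,L})\ge\sup_{s\ge0}\Big\{\inf_{[-L,L]}\Big(\frac{q s}{\sigma}-a\Big)-\frac{M(s)-1}{\sigma^m}\Big\}\ge\sigma^{-m}\cH(q_*\sigma^{m-1})-\max a.
\]
We then insert the asymptotics of $\cH$ from Theorem~\ref{thm:heteroHamiltonian}.
\begin{itemize}
\item For $1<m<2$, take $v=q_*\sigma^{m-1}\to0$; the expansion $\cH(v)=v^2/(2D_2)+O(v^4)$ gives $\sigma^{2-m}\Lp\ge q_*^2/(2D_2)-o(1)$.
\item For $m=1$ the bound is exactly $\sigma^{-1}\cH(q_*)-O(1)$.
\item For $m<1$, take $v=q_*\sigma^{m-1}\to\infty$; then $\cH(v)\sim v\log v/R_+$ gives $\sigma^{-m}v\log v/R_+=q_*(1-m)|\log\sigma|/(R_+\sigma)(1+o(1))$.
\end{itemize}

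\textbf{Upper bounds.} We localize near a minimum point $x_*$ of $q$. Domain monotonicity (Lemma~\ref{thm:monotonicity}) gives $\Lp(\cA_{\sigma,m,L})\le\Lp$ on any subinterval $I_\sigma\subset(-L,L)$ of length $2\ell_\sigma$ containing $x_*$. On $I_\sigma$, potential monotonicity and the Lipschitz bound replace $a$ by $-\|a\|_\infty$ at cost $O(1)$. For the drift, conjugating by $e^{-s_\sigma x/\sigma}$ turns the drift into the slowest-speed problem up to an error $\sup_{I_\sigma}|q-q_*|\,s_\sigma/\sigma$. The scale $\ell_\sigma$ is chosen so that this error is $o$ of the leading order while $\ell_\sigma$ is still large compared with the natural scale of the conjugated problem.

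\emph{Case $1<m<2$.} Conjugate with $s_\sigma=q_*\sigma^{2-m}/D_2$, which satisfies $\sigma^{-m}M'(s_\sigma)\approx q_*/\sigma$. Rescale $x=x_*+\sigma^{(2-m)/2}\,\cdot$ roughly, so that the centered remainder becomes a critical-scale nonlocal operator on a fixed interval $(-R,R)$. Coefficients converging in $C^2$ fall under Lemma~\ref{lem:criticalstability}. This yields $\sigma^{2-m}\Lp\le q_*^2/(2D_2)+O(R^{-2})+o(1)$; we then let $R\to\infty$. Rather than compute this directly, it is cleaner to note that the tilted kernel $J(z)e^{s_\sigma z}/M(s_\sigma)$ has first moment $q_*/\ldots$ matched to the drift, so the residual problem is diffusive. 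Its Dirichlet eigenvalue on $(-R,R)$ after scaling is $O(R^{-2})$ relative to the leading order.

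\emph{Case $m=1$.} Set $x=x_*+\sigma y$. The operator becomes $\sigma^{-1}$ times a fixed-kernel operator $J*_{(-R,R)}-I+q(x_*+\sigma y)\partial_y+\sigma a$. Proposition~\ref{thm:stability} gives convergence, as $\sigma\to0$, to the constant-drift-$q_*$ problem on $(-R,R)$. Proposition~\ref{thm:largeintervalrate}, as $R\to\infty$, gives value $\cH(q_*)+C/R^2$.

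\emph{Case $m<1$.} Take $I_\sigma$ of length $\ell$ fixed but small, on which $q\le q_*+\eta$, and compare with constant drift $q_*+\eta$. We construct an explicit supersolution $w=e^{-sx/\sigma}\,\beta(x)$ with $s=(1-m)|\log\sigma|/R_+\,(1+o(1))$, where $\beta$ is a centered boundary barrier vanishing at the outflow endpoint. The support edge $R_+$ enters because $M(s)\approx e^{sR_+}$ up to polynomial factors, which balances $\sigma^{-m}M(s)$ against $q s/\sigma$. Proposition~\ref{thm:comparison} then gives the bound, and we let $\eta\to0$.

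\textbf{Main obstacle.} The hardest step is the $m<1$ upper bound. Here the conjugated nonlocal part is huge and very non-centered, so neither Taylor expansion nor critical compactness applies. Closing the collar estimate near the outflow endpoint requires careful control of $\int J(z)e^{sz}\,dz$ at the edge, to within $o(|\log\sigma|/\sigma)$.
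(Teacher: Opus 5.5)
Your lower bound and your $m=1$ upper bound are the paper's argument: the affine exponential test plus the asymptotics of $\cH$, then localization at scale $\sigma$, Proposition~\ref{thm:stability}, and exhaustion via Proposition~\ref{thm:largeintervalrate}. One small fix: if $q$ attains $q_*$ only at $\pm L$, center the window at interior points $x_\sigma$ with $q(x_\sigma)\to q_*$, as the paper does.

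\textbf{The case $1<m<2$.} The upper bound does not close as written. First, the tilt must solve $M'(s_\sigma)=q_*\sigma^{m-1}$, so $s_\sigma\approx q_*\sigma^{m-1}/D_2(J)$, not $q_*\sigma^{2-m}/D_2(J)$. After conjugating by $e^{-s_\sigma x/\sigma}$ and rescaling by $\ell=\sigma^{(2-m)/2}$, you do get the critical coefficient $\rho^{-2}$ with $\rho=\sigma^{m/2}$. However, the kernel is the non-even $J_\rho(z)e^{s_\sigma z/\rho}$, and the drift is $q/\ell\to\infty$, cancelled only by the tilted first moment. Lemma~\ref{lem:criticalstability} covers neither feature: it is stated for the even kernel $J_{\delta_n}$ with drifts converging in $C^2$, and its proof uses that symmetry. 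So it cannot be invoked here. Your remark that the residual problem is diffusive is then a heuristic that would need its own tilted supersolution estimate.

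The conjugation is not needed. With $x=x_\sigma+\sigma^{2-m}y$ and $\delta_\sigma=\sigma^{m-1}$, the operator $\sigma^{2-m}\cA_{\sigma,m,L}$ becomes exactly $\delta_\sigma^{-2}(J_{\delta_\sigma}*_{(-R,R)}-I)+q(x_\sigma+\sigma^{2-m}y)\partial_y+\sigma^{2-m}a(x_\sigma+\sigma^{2-m}y)$. The lemma applies to this verbatim and gives $\limsup\sigma^{2-m}\Lp(\cA_{\sigma,m,L})\le d_J\pi^2/(4R^2)+q_*^2/(4d_J)$. Letting $R\to\infty$ finishes the case, since $q_*^2/(4d_J)=q_*^2/(2D_2(J))$.

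\textbf{The case $0\le m<1$.} The missing idea is how the constant drift $c=q_*+\eta$ controls the variable drift. The principal value is not monotone in the drift in general, so the comparison works only through a test function with $w'\le0$. Then $q\le c$ gives $qw'\ge cw'$, and a supersolution for drift $c$ is also one for drift $q$.

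The paper gets this from the constant-drift eigenfunction on the window $x_\sigma+\sigma(-R,R)$. That eigenfunction is strictly decreasing, because $c_\sigma\varphi_\sigma'=-\delta_\sigma J*_{(-R,R)}\varphi_\sigma+(\delta_\sigma-b_\sigma-\lambda^c_\sigma)\varphi_\sigma<0$ once $\lambda^c_\sigma$ is large. The paper then bounds $\lambda^c_\sigma$ by conjugation and the barrier of Lemma~\ref{thm:centeredbarrier}.

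In your explicit $w=e^{-sx/\sigma}\beta$, the requirement $w'\le0$ forces $\beta$ to be nonincreasing. A barrier vanishing at both ends, as in Lemma~\ref{thm:centeredbarrier}, fails this near the inflow end. So $\beta$ must stay positive at the inflow end, where truncation of the convolution is then unfavorable. This is harmless: take $s_\sigma$ to be the exact root of $M'(s)=c\sigma^{m-1}$. Then the loss there is at most $\sigma^{-m}M(s_\sigma)=O(\sigma^{-1})=o(|\log\sigma|/\sigma)$.

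The outflow collar you single out as the main obstacle is in fact easy. On an affine collar the full-line centered expression vanishes exactly, and the negative affine continuation makes truncation favorable. In the middle the remainder is $O(\sigma^{2-m}M''(s_\sigma))=O(\sigma)$.

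With the monotonicity requirement made explicit and the inflow cost estimated, your fixed-window argument with $\eta\downarrow0$ is a valid and somewhat more direct alternative to the paper's scale-$\sigma$ localization.
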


\begin{proof}[\textbf{Proof of Proposition~\ref{thm:variablesubcritical}}]
For $s\ge0$ set $\phi_s(x)=e^{-s(x+L)/\sigma}$. Since the interval convolution is bounded above by the full convolution,
\[
-\frac{\cA_{\sigma,m,L}\phi_s(x)}{\phi_s(x)}
\ge \frac{q(x)s}{\sigma}-a(x)-\frac{M(s)-1}{\sigma^m}.
\]
Hence
\begin{equation}
\begin{aligned}
\Lp(\cA_{\sigma,m,L})
&\ge \sup_{s\ge0}\left\{\inf_{x\in[-L,L]}
\left(\frac{q(x)s}{\sigma}-a(x)\right)-\frac{M(s)-1}{\sigma^m}\right\}\\
&\ge \sigma^{-m}\cH(q_*\sigma^{m-1})-\max a .
\end{aligned}
\label{eq:bounded-affine}
\end{equation}
The small- and large-argument asymptotics of $\cH$ in Theorem~\ref{thm:heteroHamiltonian} yield the required lower limits.

We turn to the upper bounds. Let $x_0\in[-L,L]$ satisfy $q(x_0)=q_*$. In each regime we choose $x_\sigma\in(-L,L)$ with $x_\sigma\to x_0$, $q(x_\sigma)\to q_*$, and with the distance from $x_\sigma$ to the boundary much larger than the localization scale used below. If $x_0$ is an endpoint this is achieved by moving a distance equal to the square root of that scale into the interval.

Assume first that $1<m<2$. Put $\varepsilon_\sigma=\sigma^{2-m}$ and $\delta_\sigma=\sigma^{m-1}$, so that $\sigma=\varepsilon_\sigma\delta_\sigma$. For fixed $R>0$ and small $\sigma$ the interval
$I_{\sigma,R}=x_\sigma+\varepsilon_\sigma(-R,R)$ is contained in $(-L,L)$. Domain monotonicity and the change of variables $x=x_\sigma+\varepsilon_\sigma y$ imply
\[
\begin{aligned}
\varepsilon_\sigma\Lp(\cA_{\sigma,m,L})
&\le \Lp\!\left(\delta_\sigma^{-2}(J_{\delta_\sigma}*_{(-R,R)}-I)
+q(x_\sigma+\varepsilon_\sigma y)\partial_y\right.\\
&\hspace{35mm}\left.+\varepsilon_\sigma a(x_\sigma+\varepsilon_\sigma y),\,(-R,R)\right).
\end{aligned}
\]
The drift converges to $q_*$ in $C^2([-R,R])$ and the zeroth-order coefficient converges to $0$ in $C^2([-R,R])$. Lemma~\ref{lem:criticalstability} therefore yields
\[
\limsup_{\sigma\to0}\varepsilon_\sigma\Lp(\cA_{\sigma,m,L})
\le d_J\frac{\pi^2}{4R^2}+\frac{q_*^2}{4d_J}.
\]
Letting $R\to\infty$ and combining with \eqref{eq:bounded-affine} proves the case $1<m<2$.

Let $m=1$. Choose $x_\sigma$ at distance much larger than $\sigma$ from the boundary and put $x=x_\sigma+\sigma y$. For fixed $R$,
\[
\sigma\Lp(\cA_{\sigma,1,L})
\le
\Lp\!\left(J*_{(-R,R)}-I+q(x_\sigma+\sigma y)\partial_y
+\sigma a(x_\sigma+\sigma y),(-R,R)\right).
\]
The bounded-interval stability theorem yields, as $\sigma\to0$,
\[
\limsup_{\sigma\to0}\sigma\Lp(\cA_{\sigma,1,L})
\le \Lp(J*_{(-R,R)}-I+q_*\partial_y,(-R,R)).
\]
The right-hand side decreases under exhaustion to the homogeneous whole-line value $\cH(q_*)$. This proves the case $m=1$.

Finally let $0\le m<1$ and put $\delta_\sigma=\sigma^{1-m}$. With the same localization $x=x_\sigma+\sigma y$, set
$q_\sigma(y)=q(x_\sigma+\sigma y)$ and $a_\sigma(y)=\sigma a(x_\sigma+\sigma y)$. For fixed $R>2R_J+1$,
\[
\sigma\Lp(\cA_{\sigma,m,L})
\le\Lp\!\left(\delta_\sigma(J*_{(-R,R)}-I)+q_\sigma\partial_y+a_\sigma,(-R,R)\right).
\]
Let $c_\sigma=\max_{[-R,R]}q_\sigma$ and $b_\sigma=\min_{[-R,R]}a_\sigma$. Then $c_\sigma\to q_*$ and $b_\sigma\to0$. Denote by $\lambda_\sigma^c$ the principal value of
$\delta_\sigma(J*_{(-R,R)}-I)+c_\sigma\partial_y+b_\sigma$.
For small $\sigma$,
\[
\lambda_\sigma^c\ge\delta_\sigma\cH(c_\sigma/\delta_\sigma)-b_\sigma>\delta_\sigma-b_\sigma.
\]
If $\varphi_\sigma$ is the corresponding positive eigenfunction, its equation implies
\[
c_\sigma\varphi_\sigma'
=-\delta_\sigma J*_{(-R,R)}\varphi_\sigma
+(\delta_\sigma-b_\sigma-\lambda_\sigma^c)\varphi_\sigma<0.
\]
Thus $\varphi_\sigma'<0$. Since $q_\sigma\le c_\sigma$ and $a_\sigma\ge b_\sigma$,
\[
\big[\delta_\sigma(J*_{(-R,R)}-I)+q_\sigma\partial_y+a_\sigma+\lambda_\sigma^c\big]\varphi_\sigma\ge0,
\]
and the comparison theorem yields
\begin{equation}
\sigma\Lp(\cA_{\sigma,m,L})\le\lambda_\sigma^c.
\label{eq:bounded-strong-upper}
\end{equation}

Let $s_\sigma>0$ solve $\delta_\sigma M'(s_\sigma)=c_\sigma$ and conjugate the constant-drift problem by $e^{-s_\sigma(y+R)}$. The constant function in the centered problem yields the lower estimate, while a positive $C^2$ test function which is affine in boundary collars of width larger than $R_J$ yields the upper estimate. The first moment cancels because $\delta_\sigma M'(s_\sigma)=c_\sigma$. On the middle interval the remainder is bounded by $C\delta_\sigma M''(s_\sigma)$, and
\[
\delta_\sigma M''(s_\sigma)\le C
\]
because $J$ is compactly supported and $\delta_\sigma M'(s_\sigma)=c_\sigma$. In the boundary collars restriction of the convolution removes a negative affine continuation. Consequently,
\[
\lambda_\sigma^c=\delta_\sigma\cH(c_\sigma/\delta_\sigma)-b_\sigma+\rho_\sigma,
\qquad 0\le\rho_\sigma\le C_R .
\]
Here are the details behind the uniform remainder. After conjugation the centered operator is
\[
\mathcal C_\sigma u(y)=\delta_\sigma\left\{
\int_{\Omega_R}J(y-z)e^{s_\sigma(y-z)}u(z)\,dz-M(s_\sigma)u(y)\right\}
+c_\sigma u'(y),
\]
and $\rho_\sigma=\Lp(\mathcal C_\sigma,\Omega_R)$. Since $\mathcal C_\sigma1\le0$, the constant test gives $\rho_\sigma\ge0$. Choose $w\in C^2(\overline\Omega_R)$ with $w>0$ in $\Omega_R$, $w(\pm R)=0$, affine on collars of width $2R_J$, and bounded below by a positive constant on the remaining middle interval. Extend those affine pieces beyond the endpoints. For the full-line expression, the identity $c_\sigma=\delta_\sigma M'(s_\sigma)$ gives the exact formula
\[
\mathcal C_\sigma^{\R}w(y)=\delta_\sigma\int_\R J(z)e^{s_\sigma z}
\{w(y-z)-w(y)+zw'(y)\}\,dz.
\]
It vanishes wherever $w$ is affine. On the middle interval Taylor's formula gives
\[
|\mathcal C_\sigma^{\R}w(y)|
\le \frac{\|w''\|_\infty}{2}\,\delta_\sigma M''(s_\sigma).
\]
Moreover, for $s\ge0$,
\[
\int_{z>0}z^2J(z)e^{sz}\,dz
\le R_J\int_{z>0}zJ(z)e^{sz}\,dz,
\]
while both negative-half-line moments are uniformly bounded. Since
$\delta_\sigma M'(s_\sigma)=c_\sigma$ and $(c_\sigma)$ is bounded, this proves
$\delta_\sigma M''(s_\sigma)\le C$. Near either endpoint the affine continuation is negative outside $\Omega_R$; restricting the convolution to $\Omega_R$ deletes this negative contribution and therefore only increases the centered operator. Hence
$\mathcal C_\sigma w+C_Rw\ge0$ throughout $\Omega_R$, and Proposition~\ref{thm:comparison} gives $\rho_\sigma\le C_R$.

Since $\delta_\sigma\to0$ and
$\cH(v)=R_+^{-1}v\log v\,(1+o(1))$,
\eqref{eq:bounded-strong-upper} yields
\[
\limsup_{\sigma\to0}\frac{\sigma\Lp(\cA_{\sigma,m,L})}{|\log\sigma|}
\le\frac{q_*(1-m)}{R_+}.
\]
The reverse inequality follows from \eqref{eq:bounded-affine}. This completes the proof.
\end{proof}

\begin{proof}[\textbf{Proof of Theorem~\ref{thm:mainsubcritical}}]
The transport-dominated regimes are Proposition~\ref{thm:variablesubcritical}. The critical regime is Theorem~\ref{thm:maincritical}, and the supercritical regime is Proposition~\ref{prop:supercritical}.
\end{proof}

For completeness, we record first an exact conjugation valid for variable coefficients.

\begin{lemma}\label{lem:generalgauge}
Let $\Psi\in C^1([-L,L])$ and put $\phi=e^{-\Psi}u$. Then
\[
\begin{aligned}
\cA_{\sigma,m,L}\phi(x)
&=e^{-\Psi(x)}\bigg[
\frac1{\sigma^m}\left\{\int_{-L}^{L}J_\sigma(x-y)e^{\Psi(x)-\Psi(y)}u(y)\,dy-u(x)\right\}\\
&\hspace{32mm}+q(x)u'(x)+(a(x)-q(x)\Psi'(x))u(x)\bigg].
\end{aligned}
\]
Consequently, multiplication by $e^{-\Psi}$ preserves the generalized principal value after conjugation.
\end{lemma}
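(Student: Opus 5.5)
The identity follows from a direct substitution, followed by a short argument that conjugation is a bijection on positive test functions. Write $\phi=e^{-\Psi}u$, so that $\phi'=e^{-\Psi}(u'-\Psi'u)$. Then
\[
q\phi'+a\phi=e^{-\Psi}\bigl(qu'+(a-q\Psi')u\bigr),
\]
and the loss term is $-\sigma^{-m}e^{-\Psi(x)}u(x)$. For the gain term we factor out $e^{-\Psi(x)}$ inside the integral:
\[
\int_{-L}^{L}J_\sigma(x-y)e^{-\Psi(y)}u(y)\,dy=e^{-\Psi(x)}\int_{-L}^{L}J_\sigma(x-y)e^{\Psi(x)-\Psi(y)}u(y)\,dy.
\]
Collecting the three contributions gives exactly the displayed formula, with the conjugated operator
\[
\widetilde\cA u=\sigma^{-m}\Bigl(\int_{-L}^{L}K_\Psi(x,y)u(y)\,dy-u\Bigr)+qu'+(a-q\Psi')u,
\qquad K_\Psi(x,y)=J_\sigma(x-y)e^{\Psi(x)-\Psi(y)}.
\]

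For the consequence, we read ``preserves the generalized principal value'' as the statement that $\Lp(\cA_{\sigma,m,L})=\Lp(\widetilde\cA)$. Here $\Lp(\widetilde\cA)$ is defined by the same supremum formula, and $\widetilde\cA$ is an operator of the type $\cT$ in Proposition~\ref{thm:generalKstability}. The argument has three steps.

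\begin{enumerate}
\item Since $\Psi\in C^1([-L,L])$, the factor $e^{-\Psi}$ is $C^1$ and bounded above and below by positive constants on $\overline\Omega_L$.
\item Consequently $u\mapsto e^{-\Psi}u$ is a bijection of the positive $C^1$ functions onto themselves. It preserves continuity up to the endpoints and the vanishing at the outflow point $L$, since $\phi(L)=0$ if and only if $u(L)=0$.
\item The identity gives $(\cA_{\sigma,m,L}+\lambda)\phi=e^{-\Psi}(\widetilde\cA+\lambda)u$. Because $e^{-\Psi}>0$, the inequality $(\cA_{\sigma,m,L}+\lambda)\phi\le0$ holds if and only if $(\widetilde\cA+\lambda)u\le0$. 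The two sets of admissible $\lambda$ therefore coincide, and so do their suprema.
\end{enumerate}

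The same equivalence holds for eigenfunctions. Hence $e^{\Psi}\phi_\Omega$ is the positive principal eigenfunction of $\widetilde\cA$. Its simplicity follows either from Proposition~\ref{thm:CHbounded} transported by the bijection, or directly from Proposition~\ref{thm:generalKstability}. The kernel $K_\Psi$ is continuous, nonnegative and bounded below near the diagonal, which is what that proposition requires.

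The only point needing care is that $K_\Psi$ is no longer a symmetric convolution kernel. The generalized value must therefore be understood in the sense of the general kernels $\cT$ of Section~2, not of $\cA_\Omega$ itself. Once this is noted, the argument is routine.
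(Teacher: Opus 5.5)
Your proposal is correct and follows the paper's proof: the same computation of $\phi'$ and of the gain integral after factoring out $e^{-\Psi(x)}$, followed by the observation that multiplication by the positive $C^1$ factor $e^{-\Psi}$ is a bijection between the admissible positive test functions of the two operators. Your additional remarks on the outflow condition, the identification of the conjugated eigenfunction $e^{\Psi}\phi_\Omega$, and simplicity via Proposition~\ref{thm:generalKstability} are accurate, but they go beyond what the paper records.
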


\begin{proof}[\textbf{Proof of Lemma~\ref{lem:generalgauge}}]
Indeed, $\phi'=e^{-\Psi}(u'-\Psi'u)$, while
$\phi(y)=e^{-\Psi(x)}e^{\Psi(x)-\Psi(y)}u(y)$. Hence multiplication by
$e^{-\Psi}$ maps the admissible positive test functions bijectively onto those of the conjugated operator.
\end{proof}

We now assume $q\equiv c>0$. The statements below are refinements of Proposition~\ref{thm:variablesubcritical}; the general phase diagram does not use this assumption. Recall the functions $M$ and $\cH$ introduced in Section~1, and, for $v\ge0$, let $s(v)$ be the unique nonnegative solution of $M'(s(v))=v$. Then $\cH(v)=vs(v)-M(s(v))+1$.

\begin{proposition}\label{thm:conjugation}
Let $v_\sigma=c\sigma^{m-1}$, $s_\sigma=s(v_\sigma)$ and $\kappa_\sigma=s_\sigma/\sigma$. If $\phi(x)=e^{-\kappa_\sigma(x+L)}u(x)$, then
\[
\cA_{\sigma,m,L}\phi=e^{-\kappa_\sigma(x+L)}\left[\cB_\sigma u+a(x)u-\frac{\cH(v_\sigma)}{\sigma^m}u\right],
\]
where
\[
\cB_\sigma u=\frac1{\sigma^m}\left(\int_{-L}^{L}J_\sigma(x-y)e^{\kappa_\sigma(x-y)}u(y)\,dy-M(s_\sigma)u(x)\right)+cu'(x).
\]
Consequently,
\[
\Lp(\cA_{\sigma,m,L})=\frac{\cH(c\sigma^{m-1})}{\sigma^m}+\Lp(\cB_\sigma+a).
\]
\end{proposition}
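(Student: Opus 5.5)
The plan is to verify the conjugation identity by a direct computation and then read off the eigenvalue identity from Lemma~\ref{lem:generalgauge}. That lemma applies with the affine phase $\Psi(x)=\kappa_\sigma(x+L)$, for which $\Psi'=\kappa_\sigma$ and $\Psi(x)-\Psi(y)=\kappa_\sigma(x-y)$.

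Lemma~\ref{lem:generalgauge} gives, for $\phi=e^{-\Psi}u$,
\[
e^{\Psi}\cA_{\sigma,m,L}\phi=\frac1{\sigma^m}\left\{\int_{-L}^{L}J_\sigma(x-y)e^{\kappa_\sigma(x-y)}u(y)\,dy-u(x)\right\}+cu'+(a-c\kappa_\sigma)u .
\]
I would rewrite the loss term $-\sigma^{-m}u$ as $-\sigma^{-m}M(s_\sigma)u+\sigma^{-m}(M(s_\sigma)-1)u$. This isolates $\cB_\sigma u$ and leaves the zeroth-order coefficient
\[
a+\sigma^{-m}(M(s_\sigma)-1)-c\kappa_\sigma=a-\sigma^{-m}\bigl(c\sigma^{m-1}s_\sigma-M(s_\sigma)+1\bigr).
\]
Here I use $c\kappa_\sigma=cs_\sigma/\sigma=\sigma^{-m}\,c\sigma^{m-1}s_\sigma=\sigma^{-m}v_\sigma s_\sigma$. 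Because $s_\sigma=s(v_\sigma)$ solves $M'(s_\sigma)=v_\sigma$, the recalled formula $\cH(v)=vs(v)-M(s(v))+1$ shows the bracket equals $\cH(v_\sigma)$. This proves the displayed identity.

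For the principal values, I would use the bijection of Lemma~\ref{lem:generalgauge}. It maps positive $C^1$ functions $u$ to positive $C^1$ functions $\phi$, and in the bounded-interval setting it preserves continuity up to the endpoints and the outflow behaviour at $L$, since $e^{-\Psi}$ is smooth and positive on $[-L,L]$. Under this bijection, $\cA_{\sigma,m,L}\phi+\lambda\phi\le0$ holds if and only if $(\cB_\sigma+a)u+(\lambda-\sigma^{-m}\cH(v_\sigma))u\le0$. Taking suprema over admissible $\lambda$ gives $\Lp(\cA_{\sigma,m,L})-\sigma^{-m}\cH(v_\sigma)=\Lp(\cB_\sigma+a)$.

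No step is a real obstacle. The only point to check is that the conjugated kernel $J_\sigma(x-y)e^{\kappa_\sigma(x-y)}$ is not a convolution with an even, normalized kernel. This does not affect the argument, because the definition of $\Lp$ is a pure sup over supersolutions and never uses symmetry or normalization. If one also wants the eigenfunction version, Proposition~\ref{thm:generalKstability} covers such non-convolution kernels with positivity near the diagonal.
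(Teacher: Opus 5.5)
Your proposal is correct and is essentially the paper's proof. The paper also factors out $e^{-\kappa_\sigma(x+L)}$, which is exactly the computation of Lemma~\ref{lem:generalgauge} with the affine phase. It then adds and subtracts $\sigma^{-m}M(s_\sigma)u$, identifies the scalar coefficient $\sigma^{-m}(M(s_\sigma)-1)-c\kappa_\sigma=-\sigma^{-m}\cH(v_\sigma)$, and concludes the identity for $\Lp$ from the bijection of positive test functions, as you do.
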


\begin{proof}[\textbf{Proof of Proposition~\ref{thm:conjugation}}]
Differentiate $\phi$ and factor out the exponential:
\[
\phi'=e^{-\kappa_\sigma(x+L)}(u'-\kappa_\sigma u).
\]
Moreover,
\[
\int J_\sigma(x-y)\phi(y)\,dy=e^{-\kappa_\sigma(x+L)}\int J_\sigma(x-y)e^{\kappa_\sigma(x-y)}u(y)\,dy.
\]
Adding and subtracting $\sigma^{-m}M(s_\sigma)u$ yields a scalar coefficient
\[
\frac{M(s_\sigma)-1}{\sigma^m}-c\kappa_\sigma=-\frac{v_\sigma s_\sigma-M(s_\sigma)+1}{\sigma^m}=-\frac{\cH(v_\sigma)}{\sigma^m}.
\]
Multiplication by the positive exponential is a bijection between the positive test functions in the definition of $\Lp$, and the identity follows.
\end{proof}

\begin{lemma}\label{thm:centered}
Assume $q\equiv c>0$. Fix $\eta\in(0,1)$. There exists $C_\eta>0$ such that $|\Lp(\cB_\sigma+a)|\le C_\eta$
for every $m\in[1,2-\eta]$ and all sufficiently small $\sigma$.
\end{lemma}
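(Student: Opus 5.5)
The plan is to bound $\Lp(\cB_\sigma+a)$ from below and from above by test functions. Both bounds will come from the comparison principle, Proposition~\ref{thm:comparison}, and from the generalized-value definition.

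\emph{Lower bound.} Test with the constant function $1$. Because the interval convolution is at most the full one,
\[
\int_{-L}^{L}J_\sigma(x-y)e^{\kappa_\sigma(x-y)}\,dy\le\int_\R J(z)e^{s_\sigma z}\,dz=M(s_\sigma),
\]
so $\cB_\sigma1\le0$. Hence $(\cB_\sigma+a)1\le\max a$, and therefore $\Lp(\cB_\sigma+a)\ge-\max_{[-L,L]}a\ge-\|a\|_\infty$. This bound is uniform in $m$ and $\sigma$.

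\emph{Upper bound.} Rescale by $x=\sigma y$. The interval $(-L,L)$ becomes $(-L/\sigma,L/\sigma)$, and the operator becomes
\[
\sigma^{-1}\Big[\sigma^{1-m}\Big(\int J(y-z)e^{s_\sigma(y-z)}u(z)\,dz-M(s_\sigma)u\Big)+cu'\Big]+a(\sigma y).
\]
Since $\sigma^{1-m}M'(s_\sigma)=c$, this is $\sigma^{-1}$ times the centered operator $\mathcal C_\sigma$ used in the proof of Proposition~\ref{thm:variablesubcritical}, with $\delta_\sigma=\sigma^{1-m}$ and $\delta_\sigma M'(s_\sigma)=c$. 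The difference is that we are now in the regime $m\ge1$, where $\delta_\sigma\ge1$ and $s_\sigma\to0$.

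Follow the recipe of Proposition~\ref{thm:largeintervalrate}, in the original variable. Use the test function $w(x)=\sin(\pi(x+L)/(2L))$ with wavenumber $k=\pi/(2L)$, and on the whole line write $\kappa_\sigma=s_\sigma/\sigma$. The full-line action of the centered operator on $W(x)=\sin(k(x+L))$ is $A_\sigma W+B_\sigma\cos(k(x+L))$, where
\[
A_\sigma=\sigma^{-m}\int J(z)e^{s_\sigma z}(\cos(k\sigma z)-1)\,dz,\qquad
B_\sigma=ck-\sigma^{-m}\int J(z)e^{s_\sigma z}\sin(k\sigma z)\,dz.
\]
Expanding, and using $M'(s_\sigma)=c\sigma^{m-1}$, should give the two estimates
\[
|A_\sigma|\lesssim \sigma^{2-m}M''(s_\sigma)k^2,\qquad
|B_\sigma|\lesssim \sigma^{3-m}k^3M'''(s_\sigma).
\]
Since $s_\sigma\le C\sigma^{m-1}\le C$ for $m\ge1$, both $M''(s_\sigma)$ and $M'''(s_\sigma)$ are bounded. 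The restriction $m\le2-\eta$ gives $\sigma^{2-m}\le\sigma^\eta\to0$, so $A_\sigma,B_\sigma\to0$ uniformly in $m$. The potential contributes at most $\|a\|_\infty$.

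\emph{Boundary collars.} The interior estimate handles points where $w\ge C\sigma$. In the $O(\sigma)$ collars near $\pm L$, truncating the convolution removes the negative sine continuation, exactly as in Proposition~\ref{thm:largeintervalrate} and Proposition~\ref{prop:criticaltools}(i). This produces a positive term of order
\[
\sigma^{-m}\cdot\sigma\,\inf_{|z|\le R_J}e^{s_\sigma z}\gtrsim\sigma^{1-m}.
\]
We must check that this term dominates $|B_\sigma|$ there, and it does, since $B_\sigma\to0$. In the intermediate region, the $B_\sigma$ term is controlled by $w$. Combining the pieces gives $(\cB_\sigma+a)w+Cw\ge0$, and Proposition~\ref{thm:comparison} then yields $\Lp(\cB_\sigma+a)\le C$.

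\emph{Main obstacle.} The hardest step is making the collar and intermediate-zone estimates uniform in $m\in[1,2-\eta]$. The issue is at $m=1$: there the collar gain is only $O(1)$ and $B_\sigma$ is not small relative to it. If this fails, I would instead use the affine-collar test function from the proof of Proposition~\ref{thm:variablesubcritical}, for which the full-line residual vanishes identically on the collars. That choice reduces everything to the middle-interval Taylor bound, of size $\sigma^{2-m}M''(s_\sigma)\le C$.
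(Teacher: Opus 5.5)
Your proposal is correct and follows the paper's proof essentially step for step: the constant test function gives the lower bound, and for the upper bound the sine test function is used with the first moment centered by $M'(s_\sigma)=c\sigma^{m-1}$, so that $|A_\sigma|\le C\sigma^{2-m}$ and $|B_\sigma|\le C\sigma^{3-m}$, together with the truncation gain of order $\sigma^{1-m}$ from the negative sine continuation near $\pm L$; the paper places the cut at $w\sim\sigma^{3-m}$, so its collars have width $o(\sigma)$ and need no intermediate zone, but your $O(\sigma)$ collar plus intermediate region works equally well. Your worry at $m=1$ is unfounded, since there $|B_\sigma|\le C\sigma^{2}$ while the collar gain is of order one, so the affine-collar fallback (the barrier of Lemma~\ref{thm:centeredbarrier}) is not needed.
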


\begin{proof}[\textbf{Proof of Lemma~\ref{thm:centered}}]
The lower bound is immediate from the constant function. Since the interval convolution is dominated by the full convolution,
\[
\cB_\sigma1=\frac1{\sigma^m}\left(\int_{-L}^{L}J_\sigma(x-y)e^{\kappa_\sigma(x-y)}\,dy-M(s_\sigma)\right)\le0.
\]
Consequently $\Lp(\cB_\sigma+a)\ge-\max a\ge-\|a\|_\infty$.

We prove the upper bound with one fixed function. Set $k=\pi/(2L)$ and $w(x)=\sin(k(x+L)),\quad -L\le x\le L$.
Then $w>0$ in $(-L,L)$ and $w(-L)=w(L)=0$. Let $W(x)=\sin(k(x+L))$ be the same sine function on the whole line. For points whose distance to the boundary is at least $R_J\sigma$, the interval and full-space convolutions coincide. Using
\[
W(x-\sigma z)=W(x)\cos(k\sigma z)-\cos(k(x+L))\sin(k\sigma z),
\]
we obtain
\[
\begin{aligned}
\cB_\sigma w(x)
&=A_\sigma w(x)+B_\sigma\cos(k(x+L)),\\
A_\sigma&=\frac1{\sigma^m}\int_\R J(z)e^{s_\sigma z}\big(\cos(k\sigma z)-1\big)\,dz,\\
B_\sigma&=ck-\frac1{\sigma^m}\int_\R J(z)e^{s_\sigma z}\sin(k\sigma z)\,dz.
\end{aligned}
\]
The identity $M'(s_\sigma)=c\sigma^{m-1}$ centers the first moment. Taylor's formula, with remainders uniform for $m\in[1,2-\eta]$, yields
\[
A_\sigma=-\frac{k^2}{2}M''(s_\sigma)\sigma^{2-m}+O(\sigma^{4-m})
\]
and
\[
\begin{aligned}
\frac1{\sigma^m}\int J(z)e^{s_\sigma z}\sin(k\sigma z)\,dz
&=k\sigma^{1-m}M'(s_\sigma)-\frac{k^3}{6}M^{(3)}(s_\sigma)\sigma^{3-m}+O(\sigma^{5-m})\\
&=ck-\frac{k^3}{6}M^{(3)}(s_\sigma)\sigma^{3-m}+O(\sigma^{5-m}).
\end{aligned}
\]
Thus
\[
|A_\sigma|\le C\sigma^{2-m},\qquad |B_\sigma|\le C\sigma^{3-m}.
\]
Since $s_\sigma$ remains in a compact set for $m\in[1,2-\eta]$, all derivatives of $M$ appearing here are uniformly bounded.

Choose $C_0>0$ large and first consider points satisfying $w(x)\ge C_0\sigma^{3-m}$.
The preceding estimates imply
\[
\cB_\sigma w(x)\ge-C\sigma^{2-m}w(x)-C\sigma^{3-m}\ge-C_1w(x).
\]
It remains to treat the two very thin collars where $w(x)<C_0\sigma^{3-m}$. Because $3-m>1$, these collars have width $o(\sigma)$. We treat the right endpoint; the left endpoint is obtained by reflection.

Write $x=L-\sigma t$, where $0\le t\le C\sigma^{2-m}$. The full sine continuation is negative for $y>L$ close to $L$. Hence truncating the convolution removes a negative contribution and increases $\cB_\sigma w$. More precisely, the correction is
\[
R_\sigma^+(x)=-\frac1{\sigma^m}\int_{y>L}J_\sigma(x-y)e^{\kappa_\sigma(x-y)}W(y)\,dy.
\]
With $y=x-\sigma z$, the condition $y>L$ is $z<-t$ and
\[
W(L-\sigma(t+z))=k\sigma(t+z)+O(\sigma^3|t+z|^3).
\]
Since $t+z<0$ on the integration region,
\[
R_\sigma^+(x)=k\sigma^{1-m}\int_{z<-t}J(z)e^{s_\sigma z}(-t-z)\,dz+O(\sigma^{3-m}).
\]
Because $J(0)>0$, there exist $\rho,j_0>0$ with $J(z)\ge j_0$ on $[-\rho,\rho]$. Since $t=o(1)$ and $s_\sigma$ is uniformly bounded, for small $\sigma$
\[
\int_{z<-t}J(z)e^{s_\sigma z}(-t-z)\,dz\ge c_0>0.
\]
Therefore $R_\sigma^+(x)\ge c_1\sigma^{1-m}$, whereas the full-space centered expression is only $O(\sigma^{3-m})$. The positive correction dominates and yields $\cB_\sigma w(x)\ge0$ in the right thin collar. The same computation at $-L$ uses the fact that the sine continuation is again negative outside the interval and yields $\cB_\sigma w(x)\ge0$ there.

Combining the interior and boundary estimates, there is a constant $C_2$, independent of $m\in[1,2-\eta]$ and small $\sigma$, such that
\[
\cB_\sigma w+(C_2+\|a\|_\infty)w+aw\ge0\ \hbox{in }(-L,L).
\]
The function $w$ is positive in the interval and vanishes at the outflow endpoint. The comparison theorem therefore yields $\Lp(\cB_\sigma+a)\le C_2+\|a\|_\infty$.
Together with the lower bound, this proves the theorem.
\end{proof}

\begin{lemma}\label{thm:centeredbarrier}
Assume $q\equiv c>0$. Let $\Omega=(\alpha,\beta)$ be a fixed bounded interval, and let $\cB_{\sigma,\Omega}$ denote the centered operator in Proposition~\ref{thm:conjugation} with the integral restricted to $\Omega$. For fixed $0\le m<2$, put
\[
\eta_\sigma=
\begin{cases}
\sigma,&0\le m<1,\\
\sigma^{2-m},&1\le m<2.
\end{cases}
\]
There are $\sigma_0>0$ and $C_\Omega>0$, independent of $\sigma$, such that
\[
0\le\Lp(\cB_{\sigma,\Omega})\le C_\Omega\eta_\sigma
\qquad(0<\sigma<\sigma_0).
\]
\end{lemma}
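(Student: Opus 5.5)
The lower bound comes from the constant test function, exactly as in Lemma~\ref{thm:centered}. Because the interval convolution is dominated by the full convolution,
\[
\cB_{\sigma,\Omega}1=\frac1{\sigma^m}\Big(\int_\Omega J_\sigma(x-y)e^{\kappa_\sigma(x-y)}\,dy-M(s_\sigma)\Big)\le0 .
\]
Hence the level $0$ is admissible in the definition of $\Lp$, and $\Lp(\cB_{\sigma,\Omega})\ge0$.

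For the upper bound I will not reuse the sine function. The reason is that for $m<1$ the optimizer $s_\sigma=s(c\sigma^{m-1})$ tends to $+\infty$, so the moments $M''(s_\sigma)$ and $M^{(3)}(s_\sigma)$ are no longer bounded. Instead I use the device from the proof of Proposition~\ref{thm:variablesubcritical}. Fix $w\in C^2(\overline\Omega)$ with $w>0$ in $\Omega$ and $w(\alpha)=w(\beta)=0$. Take $w$ affine on fixed collars of width $r_0$ at each endpoint, where $r_0$ is larger than $R_J\sigma_0$, and bounded below by a positive constant on the middle interval. Extend the affine pieces to a function $W$ on $\R$; it is negative just outside $\overline\Omega$. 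The centering identity $\sigma^{-m}M'(s_\sigma)\sigma=c$, i.e.\ $M'(s_\sigma)=c\sigma^{m-1}$, gives the exact full-line formula
\[
\cB^\R_\sigma W(x)=\sigma^{-m}\int_\R J(z)e^{s_\sigma z}\{W(x-\sigma z)-W(x)+\sigma zW'(x)\}\,dz .
\]
This expression vanishes wherever $W$ is affine on $[x-R_J\sigma,x+R_J\sigma]$. In particular it vanishes on all of both collars for $\sigma<\sigma_0$.

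On the middle interval, Taylor's formula gives
\[
|\cB^\R_\sigma W|\le\tfrac12\|w''\|_\infty\,\sigma^{2-m}M''(s_\sigma).
\]
The key estimate is $\sigma^{2-m}M''(s_\sigma)\le C\eta_\sigma$.
\begin{itemize}
\item For $1\le m<2$, $s_\sigma$ stays bounded, so $M''(s_\sigma)\le C$ and the bound is $C\sigma^{2-m}$.
\item For $0\le m<1$, use the inequality from Proposition~\ref{thm:variablesubcritical}:
\[
\int_{z>0}z^2J(z)e^{sz}\,dz\le R_J\int_{z>0}zJ(z)e^{sz}\,dz,
\]
with the negative half-line moments uniformly bounded. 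This gives $M''(s_\sigma)\le C(1+M'(s_\sigma))=C(1+c\sigma^{m-1})$, hence $\sigma^{2-m}M''(s_\sigma)\le C\sigma$.
\end{itemize}
In both cases the bound is $C\eta_\sigma$. Since $w$ has a positive minimum on the middle interval, this yields $\cB^\R_\sigma W\ge-C\eta_\sigma w$ there.

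Near each endpoint, replacing the full-line convolution by the interval convolution removes the contribution of $W$ outside $\Omega$, which is negative. Therefore $\cB_{\sigma,\Omega}w\ge\cB^\R_\sigma W$ on $\Omega$, and in the collars this gives $\cB_{\sigma,\Omega}w\ge0$. Altogether
\[
\cB_{\sigma,\Omega}w+C_\Omega\eta_\sigma w\ge0\quad\text{in }\Omega .
\]
The function $w$ is positive and vanishes at the outflow endpoint $\beta$, so Proposition~\ref{thm:comparison} yields $\Lp(\cB_{\sigma,\Omega})\le C_\Omega\eta_\sigma$.

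The main obstacle is the uniform moment bound in the regime $m<1$, where $s_\sigma\to\infty$. It is handled by the compact support of $J$, which controls the second moment by the first; the first moment is fixed by the centering identity. A secondary point is to check that the truncation correction has the right sign uniformly in $\sigma$. This holds because $W$ is exactly affine and negative outside $\overline\Omega$ within the interaction range.
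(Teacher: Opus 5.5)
Your proposal is correct and follows essentially the paper's argument: the constant test function for the lower bound, an affine-collar barrier whose full-line centered expression $\sigma^{-m}\int_\R J(z)e^{s_\sigma z}\{W(x-\sigma z)-W(x)+\sigma zW'(x)\}\,dz$ vanishes on the affine pieces, the compact-support bound $M''(s)\le C(1+M'(s))$ combined with $M'(s_\sigma)=c\sigma^{m-1}$ when $m<1$, and the favorable sign of truncation at both endpoints. The only small imprecision is that this expression vanishes on each collar only away from a margin of width $R_J\sigma$ at its inner edge. The paper avoids this by taking $w$ affine on width $2\rho$ while using $w\ge c_\rho$ on $[\alpha+\rho,\beta-\rho]$; in your version the globally valid second-order Taylor bound covers that margin, since $w$ is bounded below there. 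That second-order bound also makes the paper's extra $M_3$ term unnecessary.
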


\begin{proof}[\textbf{Proof of Lemma~\ref{thm:centeredbarrier}}]
The constant function yields the lower bound because
\[
\cB_{\sigma,\Omega}1
=\frac1{\sigma^m}\left(
\int_\Omega J_\sigma(x-y)e^{s_\sigma(x-y)/\sigma}\,dy-M(s_\sigma)\right)\le0.
\]

For the upper bound choose $\rho>0$ with $4\rho<\beta-\alpha$ and a function $w\in C^3([\alpha,\beta])$ satisfying
\[
\begin{gathered}
w>0\ \hbox{in }\Omega,\qquad w(\alpha)=w(\beta)=0,\\
w(x)=x-\alpha\quad(\alpha\le x\le\alpha+2\rho),\qquad
w(x)=\beta-x\quad(\beta-2\rho\le x\le\beta),\\
w\ge c_\rho>0\quad\hbox{on }[\alpha+\rho,\beta-\rho].
\end{gathered}
\]
Take $\sigma_0$ so small that $R_J\sigma_0<\rho$.

Affine boundary pieces yield exact first-moment cancellation. If $p(x)=A+Bx$, then its full-line centered expression is
\[
\frac1{\sigma^m}\int_{\mathbb R}J(z)e^{s_\sigma z}
\{p(x-\sigma z)-p(x)\}\,dz+cp'(x)
=-B\sigma^{1-m}M'(s_\sigma)+cB=0,
\]
because $M'(s_\sigma)=c\sigma^{m-1}$.

On the middle interval, no truncation occurs for small $\sigma$. Taylor's formula yields
\[
\begin{aligned}
|\cB_{\sigma,\Omega}w(x)|
&\le \frac{\sigma^{2-m}}2M''(s_\sigma)\|w''\|_\infty
+\frac{\sigma^{3-m}}6M_3(s_\sigma)\|w'''\|_\infty,\\
M_3(s)&=\int_{\mathbb R}|z|^3J(z)e^{sz}\,dz .
\end{aligned}
\]
If $1\le m<2$, then $s_\sigma$ remains bounded and this is at most $C\sigma^{2-m}$. If $0\le m<1$, then $s_\sigma\to+\infty$. For all large $s$, $M''(s)+M_3(s)\le C\{1+M'(s)\}$.
Indeed, on $z\ge0$, $z^2\le R_Jz$ and $z^3\le R_J^2z$; the negative-half-line contributions are uniformly bounded, while their contribution to $M'(s)$ is uniformly bounded in absolute value. Since $M'(s_\sigma)=c\sigma^{m-1}$, the Taylor bound is then at most $C\sigma$.

It remains to verify that truncation has the favorable sign. In the left collar extend $w(y)=y-\alpha$ affinely to $y<\alpha$; this extension is negative outside $\Omega$. The full-line centered expression is zero, and restriction to $\Omega$ deletes the integral of a negative function against the positive tilted kernel. Hence $\cB_{\sigma,\Omega}w\ge0$ there. The continuation $w(y)=\beta-y$ is negative for $y>\beta$, so the identical argument yields $\cB_{\sigma,\Omega}w\ge0$ in the right collar. On the middle interval $w\ge c_\rho$, and the preceding Taylor estimate therefore yields
\[
\cB_{\sigma,\Omega}w+C_\Omega\eta_\sigma w\ge0
\quad\hbox{throughout }\Omega
\]
after increasing $C_\Omega$. The bounded comparison theorem yields
$\Lp(\cB_{\sigma,\Omega})\le C_\Omega\eta_\sigma$.
\end{proof}

\begin{lemma}\label{thm:centerlimit}
Assume $q\equiv c>0$. For every fixed $0\le m<2$,
\[
\Lp(\cB_\sigma+a)\longrightarrow-\max_{[-L,L]}a\quad \hbox{as }\sigma\to0.
\]
The convergence is locally uniform with respect to $m$ on compact subsets of $[0,1)$ and of $[1,2)$.
\end{lemma}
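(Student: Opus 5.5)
The claim is that $\Lp(\cB_\sigma+a)$ tends to $-\max a$, i.e. the centered weak-dispersal operator behaves like a pure transport operator with potential $a$. I would prove the two inequalities separately.

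\textbf{Lower bound.} This is already available. By the computation in Lemma~\ref{thm:centered}, $\cB_\sigma 1\le 0$, so the constant test function gives $\Lp(\cB_\sigma+a)\ge-\max_{[-L,L]}a$ for every $\sigma$ and every $m$.

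\textbf{Upper bound: localization.} Fix $\varepsilon>0$ and choose $x_0\in[-L,L]$ with $a(x_0)=\max a$. By continuity there is a small interval $\Omega=(\alpha,\beta)\Subset(-L,L)$, shifted inward if $x_0$ is an endpoint, on which $a\ge\max a-\varepsilon$.

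\textbf{Upper bound: reduction to $\Omega$.} Let $\cB_{\sigma,\Omega}$ denote the centered operator with the integral restricted to $\Omega$, as in Lemma~\ref{thm:centeredbarrier}. The conjugation of Proposition~\ref{thm:conjugation} is a pointwise multiplication by the same exponential on both domains, so domain monotonicity (Lemma~\ref{thm:monotonicity}) transfers to the conjugated operators. Then potential monotonicity gives
\[
\Lp(\cB_\sigma+a,(-L,L))\le \Lp(\cB_{\sigma,\Omega}+a,\Omega)\le \Lp(\cB_{\sigma,\Omega},\Omega)-\max a+\varepsilon .
\]

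\textbf{Upper bound: conclusion.} Lemma~\ref{thm:centeredbarrier} gives $0\le\Lp(\cB_{\sigma,\Omega})\le C_\Omega\eta_\sigma$ with $\eta_\sigma\to0$. Hence $\limsup_{\sigma\to0}\Lp(\cB_\sigma+a)\le-\max a+\varepsilon$, and letting $\varepsilon\downarrow0$ finishes the proof.

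\textbf{Local uniformity in $m$.} This requires the constant $C_\Omega$ in Lemma~\ref{thm:centeredbarrier} to be uniform for $m$ in a compact $K$. That lemma's proof uses one fixed barrier $w$. Its constants depend only on bounds for $M''(s_\sigma)$ and $M_3(s_\sigma)$ relative to $1+M'(s_\sigma)$, and on $\eta_\sigma$.
\begin{itemize}
\item For $K\subset[1,2)$: $s_\sigma$ stays in a compact set uniformly in $m\in K$, and $\sigma^{2-m}\le\sigma^{2-\max K}\to0$.
\item For $K\subset[0,1)$: the bound $M''+M_3\le C(1+M')$ holds for all large $s$ independently of $m$, and $\eta_\sigma=\sigma$.
\end{itemize}
The collar sign argument is independent of $m$.

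\textbf{Main obstacle.} The one point I would check carefully is this uniformity, together with the fact that domain monotonicity commutes with the conjugation. The latter holds because the conjugation is pointwise and the restricted kernel remains a nonnegative kernel dominated by the larger one.
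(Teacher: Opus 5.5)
Your proposal is correct and follows the paper's proof essentially verbatim. The constant test function gives the lower bound. Localization to a subinterval $\Omega\Subset(-L,L)$ on which $a\ge\max a-\varepsilon$, together with domain and potential monotonicity, reduces the upper bound to $\Lp(\cB_{\sigma,\Omega})$, and Lemma~\ref{thm:centeredbarrier} then gives $0\le\Lp(\cB_{\sigma,\Omega})\le C_\Omega\eta_\sigma\to0$. Your treatment of uniformity in $m$ only spells out what the paper compresses into ``uniformity follows from the explicit barrier bounds'': bounded tilt on $[1,2)$, and on $[0,1)$ the bound $M''+M_3\le C(1+M')$ with $\eta_\sigma=\sigma$.
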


\begin{proof}[\textbf{Proof of Lemma~\ref{thm:centerlimit}}]
Let $a_{\max}=\max_{[-L,L]}a$. The constant function yields $\Lp(\cB_\sigma+a)\ge-a_{\max}$.
Fix $\varepsilon>0$. By continuity of $a$, there is an interval $\Omega=(\alpha,\beta)\Subset(-L,L)$ such that $a(x)\ge a_{\max}-\varepsilon\quad x\in \Omega$.
Domain and potential monotonicity yield
\[
\Lp(\cB_\sigma+a,(-L,L))\le\Lp(\cB_\sigma+a,\Omega)\le\Lp(\cB_\sigma,\Omega)-a_{\max}+\varepsilon.
\]
It remains to prove $\Lp(\cB_\sigma,\Omega)\to0$. Lemma~\ref{thm:centeredbarrier} yields
\[
0\le\Lp(\cB_\sigma,\Omega)\le C_\Omega\eta_\sigma\longrightarrow0
\]
for every fixed $0\le m<2$.
Hence
\[
-a_{\max}\le\liminf_{\sigma\to0}\Lp(\cB_\sigma+a)\le\limsup_{\sigma\to0}\Lp(\cB_\sigma+a)\le-a_{\max}+\varepsilon.
\]
Letting $\varepsilon\downarrow0$ proves the result. Uniformity follows from the explicit barrier bounds; the two formulae for $\eta_\sigma$ account for the harmless change at $m=1$.
\end{proof}

\begin{proposition}\label{thm:subcritical}
Assume $q\equiv c>0$ and $a\in C([-L,L])$. For every fixed $0\le m<2$,
\[
\Lp(\cA_{\sigma,m,L})=\frac{\cH(c\sigma^{m-1})}{\sigma^m}-\max_{[-L,L]}a+o(1).
\]
If $1<m<2$, then
\[
\sigma^{2-m}\Lp(\cA_{\sigma,m,L})\longrightarrow\frac{c^2}{2D_2(J)}.
\]
If $m=1$, then
\[
\sigma\Lp(\cA_{\sigma,1,L})\longrightarrow\cH(c).
\]
\end{proposition}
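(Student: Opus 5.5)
The proof combines the exact conjugation of Proposition~\ref{thm:conjugation} with the centered limit of Lemma~\ref{thm:centerlimit}. By Proposition~\ref{thm:conjugation}, with $v_\sigma=c\sigma^{m-1}$,
\[
\Lp(\cA_{\sigma,m,L})=\frac{\cH(c\sigma^{m-1})}{\sigma^m}+\Lp(\cB_\sigma+a)
\]
holds exactly for every $\sigma>0$. Lemma~\ref{thm:centerlimit} gives $\Lp(\cB_\sigma+a)\to-\max_{[-L,L]}a$ for each fixed $0\le m<2$. Substituting this limit into the identity yields the first assertion immediately. The only point to check is that Lemma~\ref{thm:centerlimit} and Lemma~\ref{thm:centeredbarrier} need merely $a\in C([-L,L])$. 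This holds, since they use only continuity of $a$ (to find an interval where $a\ge a_{\max}-\varepsilon$) together with domain and potential monotonicity.

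For the two limits, multiply the representation by the appropriate power of $\sigma$ and use the asymptotics of $\cH$ from Theorem~\ref{thm:heteroHamiltonian}.

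When $1<m<2$, we have $v_\sigma=c\sigma^{m-1}\to0$, so
\[
\cH(v_\sigma)=\frac{v_\sigma^2}{2D_2(J)}+O(v_\sigma^4).
\]
Hence
\[
\sigma^{2-m}\cdot\sigma^{-m}\cH(v_\sigma)=\frac{c^2\sigma^{2m-2}\sigma^{2-2m}}{2D_2(J)}+O(\sigma^{2m-2}),
\]
which tends to $c^2/(2D_2(J))$. The bounded term $\sigma^{2-m}\Lp(\cB_\sigma+a)$ tends to $0$. Lemma~\ref{thm:centered} already gives a uniform bound on $\Lp(\cB_\sigma+a)$ for $m\in[1,2-\eta]$, so this does not even require the full limit.

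When $m=1$, $v_\sigma=c$ is constant, so $\sigma\Lp=\cH(c)+\sigma\,O(1)\to\cH(c)$.

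No step is a genuine obstacle; the argument is an assembly of earlier results. The only care needed is that the $o(1)$ in the first display is not uniform in $m$ across $m=1$. The statement is for fixed $m$, so the pointwise version of Lemma~\ref{thm:centerlimit} suffices.

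One could alternatively obtain the lower bound $\Lp\ge\sigma^{-m}\cH(c\sigma^{m-1})-\max a$ directly from \eqref{eq:bounded-affine}. The upper bound, however, genuinely needs the centered barrier of Lemma~\ref{thm:centeredbarrier} on a subinterval where $a$ is nearly maximal. That is what converts the $\max a$ into a sharp $o(1)$ rather than an $O(1)$ error.
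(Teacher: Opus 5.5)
Your proposal is correct and follows the paper's proof essentially step for step. The paper obtains the sharp representation from the exact conjugation of Proposition~\ref{thm:conjugation} together with Lemma~\ref{thm:centerlimit}, and then reads off the $1<m<2$ and $m=1$ limits from $\cH(v)=v^2/(2D_2(J))+O(v^4)$ and $v_\sigma\equiv c$; the only difference is that the paper re-derives this expansion of $\cH$ by series inversion instead of citing Theorem~\ref{thm:heteroHamiltonian}.
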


\begin{proof}[\textbf{Proof of Proposition~\ref{thm:subcritical}}]
Proposition~\ref{thm:conjugation} and Lemma~\ref{thm:centerlimit} imply the sharp representation. Since $J$ is even,
\[
M(s)=1+\frac{D_2(J)}2s^2+\frac{D_4(J)}{24}s^4+O(s^6),
\]
so series inversion of $M'(s(v))=v$ yields
\[
s(v)=\frac{v}{D_2(J)}-\frac{D_4(J)}{6D_2(J)^4}v^3+O(v^5).
\]
Substitution in $\cH(v)=vs(v)-M(s(v))+1$ yields
\[
\cH(v)=\frac{v^2}{2D_2(J)}-\frac{D_4(J)}{24D_2(J)^4}v^4+O(v^6).
\]
For $1<m<2$, $v_\sigma=c\sigma^{m-1}\to0$, and therefore
\[
\frac{\cH(v_\sigma)}{\sigma^m}=\frac{c^2}{2D_2(J)}\sigma^{m-2}+O(\sigma^{3m-4}).
\]
The ratio of the displayed error to the leading term is $O(\sigma^{2m-2})\to0$, while the bounded term $-\max a+o(1)$ is also $o(\sigma^{m-2})$. This proves the first limit. For $m=1$, $v_\sigma=c$ and the sharp representation yields $\Lp=\sigma^{-1}\cH(c)-\max a+o(1)$.
\end{proof}

\begin{proposition}\label{thm:higher}
Assume $q\equiv c>0$ and $a\in C([-L,L])$. For compactly supported $J$, the functions $M$ and $\cH$ are analytic near the origin and $\cH(v)=\sum_{k\ge1}h_{2k}v^{2k}$,
where, writing $D_j=D_j(J)=\int_{\mathbb R}z^jJ(z)\,dz$,
\[
h_2=\frac1{2D_2},\qquad
h_4=-\frac{D_4}{24D_2^4},\qquad
h_6=\frac{D_4^2}{72D_2^7}-\frac{D_6}{720D_2^6}.
\]
For every integer $K\ge1$ and every fixed $1<m<2$,
\[
\Lp(\cA_{\sigma,m,L})
=\sum_{k=1}^{K}h_{2k}c^{2k}\sigma^{(2k-1)m-2k}
-\max a
+O\!\left(\sigma^{(2K+1)m-(2K+2)}\right)+o(1).
\]
More intrinsically, define $N(m)=\max\{k\ge1:(2k-1)m-2k\le0\}$.
Then the expansion through every nonvanishing order is
\[
\Lp(\cA_{\sigma,m,L})
=\sum_{k=1}^{N(m)}h_{2k}c^{2k}\sigma^{(2k-1)m-2k}
-\max a+o(1).
\]
The $k$th correction becomes order one at $m=2k/(2k-1)$; the successive thresholds are $4/3,6/5,8/7,\ldots$.
\end{proposition}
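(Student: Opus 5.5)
The plan is to combine the sharp representation of Proposition~\ref{thm:subcritical},
\[
\Lp(\cA_{\sigma,m,L})=\sigma^{-m}\cH(c\sigma^{m-1})-\max_{[-L,L]}a+o(1),
\]
with a convergent power series for $\cH$ near $0$. After that, the expansion is pure bookkeeping of exponents. Nothing beyond this representation is needed from the nonlocal problem; the $o(1)$ already absorbs the centered operator through Lemma~\ref{thm:centerlimit}.

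\textbf{Analyticity and coefficients.} Since $J$ is compactly supported and even, $M(s)=\sum_{j\ge0}D_{2j}s^{2j}/(2j)!$ is entire and even. Hence $M'(s)=D_2s+D_4s^3/6+\cdots$ is odd and analytic, with $M''(0)=D_2>0$. The analytic inverse function theorem gives a unique odd analytic $s(v)$ near $0$ with $M'(s(v))=v$. Because $\cH'(v)=s(v)$, which follows by differentiating $vs-M(s)+1$ and using $M'(s)=v$, the function $\cH$ is analytic and even with $\cH(0)=0$. So $\cH(v)=\sum_{k\ge1}h_{2k}v^{2k}$ with $h_{2k}=s_{2k-1}/(2k)$.

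Inverting the series gives $s_1=1/D_2$ and $s_3=-D_4/(6D_2^4)$. For $s_5$, I solve $D_2s_5+\frac{D_4}{6}\cdot3s_1^2s_3+\frac{D_6}{120}s_1^5=0$, which yields
\[
s_5=\frac{D_4^2}{12D_2^7}-\frac{D_6}{120D_2^6}.
\]
Dividing by $2k$ gives $h_2,h_4,h_6$ as stated; this also matches the $v^4$ term already computed in the proof of Proposition~\ref{thm:subcritical}. The only delicate point in this step is the arithmetic for $h_6$, and I will double-check it by expanding $vs-M(s)+1$ directly.

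\textbf{Exponent bookkeeping.} For $1<m<2$ we have $v_\sigma=c\sigma^{m-1}\to0$, so Taylor's theorem with remainder for the analytic $\cH$ gives
\[
\cH(v_\sigma)=\sum_{k=1}^Kh_{2k}v_\sigma^{2k}+O\big(v_\sigma^{2K+2}\big).
\]
Multiplying by $\sigma^{-m}$ turns each term into $h_{2k}c^{2k}\sigma^{2k(m-1)-m}=h_{2k}c^{2k}\sigma^{(2k-1)m-2k}$. The remainder becomes $O(\sigma^{(2K+1)m-(2K+2)})$. Together with $-\max a+o(1)$ from Proposition~\ref{thm:subcritical}, this is the first displayed formula.

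For the intrinsic version, the exponent $e_k=(2k-1)m-2k=k(2m-2)-m$ is strictly increasing in $k$, because $m>1$. It tends to $+\infty$, so $N(m)$ is finite and at least $1$ since $e_1=-m<0$. Take $K=N(m)$. Then $e_{N+1}>0$, so the remainder term is $o(1)$ and merges with the existing $o(1)$. Every term with $k\le N$ is kept, and those with $e_k=0$ are exact constants. Finally, $e_k=0$ is equivalent to $m=2k/(2k-1)$, which gives the threshold list $4/3,6/5,8/7,\ldots$.

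I expect no real obstacle. The main care is that the $o(1)$ in Proposition~\ref{thm:subcritical} is not quantified, so no statement finer than $o(1)$ beyond the listed terms can be claimed. This is exactly why the result is phrased "through every nonvanishing order".
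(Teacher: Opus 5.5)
Your proposal is correct and follows the paper's proof essentially verbatim: the sharp representation from Proposition~\ref{thm:subcritical}, an odd analytic inverse $s(v)$ of $M'$ with $\cH'=s$, series reversion giving $s_1,s_3,s_5$ (your $s_5$, and hence $h_6$, agree with the paper), and then exponent bookkeeping with $K=N(m)$. The only slip is that $e_1=(2\cdot1-1)m-2=m-2$, not $-m$ (that value is $e_0$); since $m<2$, $e_1$ is still negative, so the conclusion $N(m)\ge1$ is unaffected.
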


\begin{proof}[\textbf{Proof of Proposition~\ref{thm:higher}}]
The exact conjugation yields $\Lp=\sigma^{-m}\cH(c\sigma^{m-1})-\max a+o(1)$.
Compact support makes $M$ entire. Since $M'(0)=0$ and $M''(0)=D_2>0$, the analytic inverse-function theorem yields an odd analytic inverse $s=s(v)$ of $M'(s)=v$ near zero. Therefore $\cH'(v)=s(v),\qquad \cH(0)=0$,
and $\cH$ is even and analytic. Series reversion in
\[
v=D_2s+\frac{D_4}{6}s^3+\frac{D_6}{120}s^5+O(s^7)
\]
yields
\[
s(v)=\frac v{D_2}-\frac{D_4}{6D_2^4}v^3
+\left(\frac{D_4^2}{12D_2^7}-\frac{D_6}{120D_2^6}\right)v^5
+O(v^7).
\]
Integrating $s=\cH'$ proves the three coefficient formulae. Taylor expansion through degree $2K$ has remainder $O(v^{2K+2})$. Substitution of $v=c\sigma^{m-1}$ and multiplication by $\sigma^{-m}$ produces $O\!\left(\sigma^{(2K+1)m-(2K+2)}\right)$.
For fixed $m>1$, the exponent $(2k-1)m-2k$ is strictly increasing in $k$ and eventually positive. Taking $K=N(m)$ makes the remainder exponent, which is the exponent of the $(N(m)+1)$st term, strictly positive. This proves the complete finite expansion and the threshold statement.
\end{proof}

\begin{corollary}\label{cor:higher-thresholds}
For $4/3<m<2$, $\Lp=\frac{c^2}{2D_2}\sigma^{m-2}-\max a+o(1)$.
At $m=4/3$,
\[
\Lp=\frac{c^2}{2D_2}\sigma^{-2/3}+h_4c^4-\max a+o(1).
\]
For $6/5<m<4/3$,
\[
\Lp=\frac{c^2}{2D_2}\sigma^{m-2}
+h_4c^4\sigma^{3m-4}-\max a+o(1),
\]
whereas at $m=6/5$ one must add the finite term $h_6c^6$.
\end{corollary}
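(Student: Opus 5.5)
This follows from Proposition~\ref{thm:higher}, applied with the smallest admissible $K$ in each regime of $m$ and read off term by term. For fixed $1<m<2$, write the exponent of the $k$th term as $e_k(m)=(2k-1)m-2k$. Then $e_1=m-2<0$ always. Next, $e_2=3m-4$ is negative for $m<4/3$, zero at $m=4/3$ and positive for $m>4/3$. Finally, $e_3=5m-6$ is negative for $m<6/5$, zero at $m=6/5$ and positive for $m>6/5$. Since $e_k(m)$ is increasing in $k$ for $m>1$, the first positive exponent fixes the truncation.

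\textbf{Case $4/3<m<2$.} Take $K=1$ in the first expansion of Proposition~\ref{thm:higher}. The remainder exponent is $3m-4>0$, so the error $O(\sigma^{3m-4})$ is $o(1)$. This leaves $h_2c^2\sigma^{m-2}-\max a+o(1)$ with $h_2=1/(2D_2)$.

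\textbf{Case $m=4/3$.} Take $K=2$. The second term is $h_4c^4\sigma^{0}=h_4c^4$. The remainder exponent is $5m-6=2/3>0$. The leading exponent is $m-2=-2/3$, which gives $\frac{c^2}{2D_2}\sigma^{-2/3}+h_4c^4-\max a+o(1)$.

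\textbf{Case $6/5<m<4/3$.} Take $K=2$ again. Now $3m-4<0$, so the $h_4$ term diverges and must be kept. The remainder exponent $5m-6$ is positive.

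\textbf{Case $m=6/5$.} Take $K=3$. The exponent $e_3=0$, so the third term is the constant $h_6c^6$. The remainder exponent $7m-8=2/5$ is positive.

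Each case is therefore just a sign check on the exponents, with the $O(\cdot)$ remainder absorbed into $o(1)$. Equivalently, one can use $N(m)$ directly: $N(m)=1$ on $(4/3,2)$, $N(m)=2$ on $[6/5,4/3]$, and $N(m)=3$ at $m=6/5$. The only point that needs care is the endpoint cases, where the first omitted exponent must be strictly positive. The second form of Proposition~\ref{thm:higher} includes $k$ with exponent equal to $0$, so at the thresholds the constant term is retained, as claimed.
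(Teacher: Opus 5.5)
Your proposal is correct and follows the paper's own argument: apply Proposition~\ref{thm:higher}, check the signs of the exponents $m-2$, $3m-4$, $5m-6$ (which change at $2$, $4/3$, $6/5$), and keep exactly the nonvanishing terms, with the first omitted exponent strictly positive in each regime. One harmless slip is in your closing aside: $N(m)=2$ holds on $(6/5,4/3]$, not on the closed interval $[6/5,4/3]$, since $N(6/5)=3$, as you note yourself.
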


\begin{proof}
Apply Proposition~\ref{thm:higher}. The exponents of the first three Hamiltonian corrections are $m-2$, $3m-4$, and $5m-6$. Their signs change precisely at $m=2$, $m=4/3$, and $m=6/5$, respectively. Retaining exactly the nonvanishing terms gives the four displayed regimes.
\end{proof}

\section{Strong-transport refinements}

Assume $q\equiv c>0$ and consider $0\le m<1$. In this refinement $v_\sigma=c\sigma^{m-1}\to+\infty$ and the optimizing tilt $s_\sigma=s(v_\sigma)$ is unbounded. The bounded-tilt sine estimate is no longer uniform, but the conjugation formula remains valid. Recall $R_+=\sup\{z>0:J(z)>0\}$ from Section~1. Since $J$ is even and nontrivial, $R_+>0$ and $J$ has positive mass in every left neighborhood of $R_+$.

The affine-collar construction in Lemma~\ref{thm:centeredbarrier} has constants which remain controlled when $s_\sigma\to+\infty$ and yields
\[
\Lp(\cB_\sigma+a)=-\max_{[-L,L]}a+o(1)
\qquad(0\le m<1).
\]

\begin{proposition}\label{thm:boundedstrong}
Assume $q\equiv c>0$ and $a\in C([-L,L])$. For every fixed $0\le m<1$,
\[
\Lp(\cA_{\sigma,m,L})
=\frac{\cH(c\sigma^{m-1})}{\sigma^m}
-\max_{[-L,L]}a+o(1)
\]
and
\[
\lim_{\sigma\to0}\frac{\sigma}{|\log\sigma|}\Lp(\cA_{\sigma,m,L})=\frac{c(1-m)}{R_+}.
\]
If, in addition, for some $K>0$ and $\nu>1$,
\[
J(R_+-t)=Kt^\nu(1+o(1))\quad\hbox{as }t\downarrow0,
\]
then, with $p=\nu+1$ and $A=K\Gamma(p)$,
\[
\Lp(\cA_{\sigma,m,L})
=\frac{c}{R_+\sigma}
\left[
(1-m)|\log\sigma|+p\log|\log\sigma|+C_{m,c,J}+o(1)
\right],
\]
where
\[
C_{m,c,J}=\log c+p\log(1-m)-p\log R_+-\log(R_+A)-1.
\]
\end{proposition}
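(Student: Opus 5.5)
The proof has three steps: the sharp representation, its leading order, and its refinement under the edge hypothesis.

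\textbf{Step 1: sharp representation.} Proposition~\ref{thm:conjugation} is exact for every $m$, so
\[
\Lp(\cA_{\sigma,m,L})=\sigma^{-m}\cH(c\sigma^{m-1})+\Lp(\cB_\sigma+a).
\]
It remains to show $\Lp(\cB_\sigma+a)\to-\max a$ for $0\le m<1$. We repeat the argument of Lemma~\ref{thm:centerlimit}.
\begin{itemize}
\item The constant function gives the lower bound $-\max a$, since $\cB_\sigma 1\le 0$.
\item For the upper bound, localize to a subinterval $\Omega\Subset(-L,L)$ on which $a\ge\max a-\varepsilon$.
\item Then apply Lemma~\ref{thm:centeredbarrier} with $\eta_\sigma=\sigma$ for $0\le m<1$, which gives $0\le\Lp(\cB_{\sigma,\Omega})\le C_\Omega\sigma\to0$.
\end{itemize}

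The point needing care is the uniformity in Lemma~\ref{thm:centeredbarrier} when $s_\sigma\to\infty$. Three things must be checked:
\begin{itemize}
\item The affine collars give exact cancellation, because $M'(s_\sigma)=c\sigma^{m-1}$.
\item Truncation has a favorable sign, since the affine continuation is negative outside $\Omega$.
\item On the middle interval the Taylor remainder is bounded by $\sigma^{2-m}M''(s_\sigma)+\sigma^{3-m}M_3(s_\sigma)$. Using $M''+M_3\le C(1+M')$ and $M'(s_\sigma)=c\sigma^{m-1}$, this is $O(\sigma)$.
\end{itemize}
This gives the first displayed formula.

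\textbf{Step 2: leading order.} By Theorem~\ref{thm:heteroHamiltonian}, $\cH(v)\sim v\log v/R_+$ as $v\to\infty$. Take $v=c\sigma^{m-1}$. Then $\log v=(1-m)|\log\sigma|+\log c$, so
\[
\sigma^{-m}\cH(v)\sim \frac{c}{R_+\sigma}(1-m)|\log\sigma|.
\]
The $O(1)$ term $-\max a$ is negligible after multiplying by $\sigma/|\log\sigma|$, which gives the limit $c(1-m)/R_+$.

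\textbf{Step 3: edge refinement.} Substitute into the edge expansion of $\cH$ from Theorem~\ref{thm:heteroHamiltonian}:
\[
\cH(v)=\frac{v}{R_+}\left[\log v+p\log\log v-p\log R_+-\log(R_+A)-1+o(1)\right].
\]
The terms transform as follows.
\begin{itemize}
\item $\log v=(1-m)|\log\sigma|+\log c$.
\item $\log\log v=\log|\log\sigma|+\log(1-m)+o(1)$, since $\log v=(1-m)|\log\sigma|\,(1+o(1))$.
\item The prefactor is $\sigma^{-m}v/R_+=c/(R_+\sigma)$.
\end{itemize}
Collecting constants gives exactly $C_{m,c,J}$. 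The bounded term $-\max a+o(1)$ is $o(\sigma^{-1})$, so it is absorbed into the $o(1)$ inside the bracket.

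\textbf{Main obstacle.} The only substantive step is the uniform $O(\sigma)$ barrier of Step~1 under unbounded tilt, in particular the moment inequality $M''(s)+M_3(s)\le C(1+M'(s))$. This follows from compact support: on $z>0$ one has $z^2\le R_Jz$ and $z^3\le R_J^2z$, and the negative half-line contributions stay bounded because $e^{sz}\le1$ there. The rest is algebraic bookkeeping.
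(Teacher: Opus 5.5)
Your proposal is correct and follows the paper's own proof essentially verbatim. The sharp representation comes from the exact conjugation of Proposition~\ref{thm:conjugation} combined with Lemma~\ref{thm:centerlimit}, through the affine-collar barrier of Lemma~\ref{thm:centeredbarrier} with $\eta_\sigma=\sigma$ and the moment bound $M''+M_3\le C(1+M')$ keeping constants uniform as $s_\sigma\to\infty$; the other two assertions then follow by substituting $v=c\sigma^{m-1}$ into the large-argument and edge asymptotics of $\cH$. The only cosmetic difference is that the paper rederives the Laplace asymptotics of $M$ and $M'$ inside the proof, whereas you quote the resulting expansion of $\cH$ from Theorem~\ref{thm:heteroHamiltonian}.
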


\begin{proof}[\textbf{Proof of Proposition~\ref{thm:boundedstrong}}]
Proposition~\ref{thm:conjugation} and Lemma~\ref{thm:centerlimit} yield
\[
\Lp(\cA_{\sigma,m,L})
=\frac{\cH(c\sigma^{m-1})}{\sigma^m}
-\max_{[-L,L]}a+o(1).
\]
Since
\[
\log(c\sigma^{m-1})=(1-m)|\log\sigma|+\log c,\qquad
\sigma^{-m}c\sigma^{m-1}=\frac c\sigma,
\]
the leading edge asymptotic for $\cH$ yields the normalized limit.

For the refinement put $A=K\Gamma(p)$. Laplace's method at the right edge yields
\[
M(s)=Ae^{R_+s}s^{-p}(1+o(1)),\qquad
M'(s)=R_+Ae^{R_+s}s^{-p}(1+o(1)).
\]
Thus, for $M'(s(v))=v$,
\[
R_+s(v)=\log v+p\log\log v-p\log R_+-\log(R_+A)+o(1),
\]
while $M(s(v))=v/R_++o(v)$. Therefore
\[
\cH(v)=\frac v{R_+}
\left[\log v+p\log\log v-p\log R_+-\log(R_+A)-1+o(1)\right].
\]
For $v_\sigma=c\sigma^{m-1}$, $\log v_\sigma=(1-m)|\log\sigma|+\log c$,
and
\[
\log\log v_\sigma
=\log|\log\sigma|+\log(1-m)+o(1).
\]
Since $\sigma^{-m}v_\sigma=c/\sigma$, substitution yields
\[
\frac{\cH(v_\sigma)}{\sigma^m}
=\frac{c}{R_+\sigma}
\left[
(1-m)|\log\sigma|+p\log|\log\sigma|+C_{m,c,J}+o(1)
\right].
\]
Finally $-\max a+o(1)=o(\sigma^{-1})$, so it is absorbed by the $o(1)$ term inside the bracket. This proves the full edge-controlled expansion.
\end{proof}

\section{Large dispersal ranges}

We begin with the constant-kernel problem used in the large-range comparison.

\begin{proposition}\label{thm:constantkernel}
Let $\varepsilon>0$, $\delta\ge0$, $q\in C([-L,L])$ with $q\ge q_0>0$, and $a\in C([-L,L])$. Consider
\[
\cT_{\varepsilon,\delta}u
=q(x)u'(x)+\varepsilon\int_{-L}^{L}u(y)\,dy-\delta u(x)+a(x)u(x).
\]
Its principal eigenvalue $\lambda_{\varepsilon,\delta}$ is the unique real number satisfying
\[
1=\varepsilon\int_{-L}^{L}\int_x^L\frac1{q(t)}
\exp\left(\int_x^t\frac{a(r)+\lambda_{\varepsilon,\delta}-\delta}{q(r)}\,dr\right)dt\,dx.
\]
Up to multiplication by a positive constant, the corresponding eigenfunction is
\[
\phi_{\varepsilon,\delta}(x)
=\int_x^L\frac1{q(t)}
\exp\left(\int_x^t\frac{a(r)+\lambda_{\varepsilon,\delta}-\delta}{q(r)}\,dr\right)dt.
\]
Put
\[
T_q=\int_{-L}^{L}\frac{dx}{q(x)},\qquad
A_q=\int_{-L}^{L}\frac{a(x)}{q(x)}\,dx.
\]
Then, as $\varepsilon\downarrow0$,
\[
\lambda_{\varepsilon,\delta}
=\delta+\frac1{T_q}\left[
\log\frac1\varepsilon+2\log\log\frac1\varepsilon
-2\log T_q-\log q(-L)-A_q
\right]+o(1).
\]
The remainder is independent of $\delta$; in particular it is uniform when $\delta$ ranges in arbitrary subsets of $[0,\infty)$.
\end{proposition}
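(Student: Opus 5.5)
The proof has two parts: an exact reduction of the rank-one eigenproblem to one scalar equation, and a corner Laplace analysis of that equation in the travel-time variable. Throughout, put $\mu=\lambda-\delta$. The operator $\cT_{\varepsilon,\delta}$ depends on $(\lambda,\delta)$ only through $\mu$, so the whole problem reduces to $\delta=0$ with spectral parameter $\mu$. This is why the remainder will be independent of $\delta$.

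\textbf{Step 1 (exact reduction).} Let $u>0$ solve $\cT_{\varepsilon,\delta}u+\lambda u=0$ with $u(L)=0$, and put $I=\int_{-L}^Lu>0$. Then $u$ solves the first-order equation
\[
qu'+(a+\mu)u=-\varepsilon I,\qquad u(L)=0 .
\]
Terminal integration gives $u=\varepsilon I\,\phi_{\varepsilon,\delta}$, with $\phi_{\varepsilon,\delta}$ as displayed. Integrating this identity over $(-L,L)$ and dividing by $I$ yields $1=\varepsilon F(\mu)$, where
\[
F(\mu)=\int_{-L}^L\phi_{\varepsilon,\delta}(x)\,dx .
\]
Conversely, if $\varepsilon F(\mu)=1$, the displayed $\phi_{\varepsilon,\delta}$ is a positive $C^1$ eigenfunction vanishing at $L$. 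It is therefore the principal one: Proposition~\ref{thm:comparison} and the identification in Proposition~\ref{thm:CHbounded} (equivalently Proposition~\ref{thm:generalKstability}) apply, since the constant kernel $\varepsilon$ is continuous and positive. Existence and uniqueness of the root follow because $F$ is continuous and strictly increasing in $\mu$. Indeed, the exponent has the factor $t-x\ge 0$, which is positive off the diagonal. Moreover $F(\mu)\to0$ as $\mu\to-\infty$ and $F(\mu)\to\infty$ as $\mu\to+\infty$.

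\textbf{Step 2 (travel-time variables).} Let $T(x)=\int_{-L}^x q^{-1}$ and $G(x)=\int_{-L}^x a/q$. Substitute $s=T(x)$ and $\tau=T(t)$, so that $dx=q\,ds$ and $dt/q(t)=d\tau$. Writing $\hat q=q\circ T^{-1}$ and $\hat G=G\circ T^{-1}$, this gives
\[
F(\mu)=\iint_{0<s<\tau<T_q}\hat q(s)\,e^{\mu(\tau-s)+\hat G(\tau)-\hat G(s)}\,ds\,d\tau .
\]
For large $\mu$ the phase $\tau-s$ is maximal exactly at the corner $(s,\tau)=(0,T_q)$. This corner joins the inflow point $-L$ to the outflow point $L$.

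\textbf{Step 3 (corner Laplace analysis).} Put $\alpha=s$ and $\beta=T_q-\tau$. The amplitude is $\hat q(\alpha)e^{\hat G(T_q-\beta)-\hat G(\alpha)}$, which is continuous and equals $q(-L)e^{A_q}$ at the corner. I will show
\[
F(\mu)=q(-L)\,e^{A_q}\,e^{\mu T_q}\,\mu^{-2}\,(1+o(1))\qquad(\mu\to\infty).
\]
To do this, split the region into the part with $\alpha+\beta<\eta$ and its complement. On the small part, uniform continuity puts the amplitude within a factor $1\pm\omega(\eta)$ of its corner value. The remaining integral of $e^{-\mu(\alpha+\beta)}$ over the corner equals $\mu^{-2}(1+o(1))$. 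On the complement, the integrand is bounded by $Ce^{\mu(T_q-\eta)}$, which is $o(e^{\mu T_q}\mu^{-2})$. Only continuity of $q$ and $a$ is needed here. This corner estimate is the main technical point; I expect it to be routine once it is organised this way.

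\textbf{Step 4 (inverting $\varepsilon F=1$).} Since $F$ is increasing and $\varepsilon\downarrow0$, we have $\mu_\varepsilon\to\infty$. Taking logarithms gives
\[
\mu_\varepsilon T_q+A_q+\log q(-L)-2\log\mu_\varepsilon=\log\frac1\varepsilon+o(1).
\]
First this yields $\mu_\varepsilon T_q=\log(1/\varepsilon)(1+o(1))$, hence $\log\mu_\varepsilon=\log\log(1/\varepsilon)-\log T_q+o(1)$. Substituting back gives
\[
\mu_\varepsilon=\frac1{T_q}\Big[\log\frac1\varepsilon+2\log\log\frac1\varepsilon-2\log T_q-\log q(-L)-A_q\Big]+o(1).
\]
Finally, $\lambda_{\varepsilon,\delta}=\delta+\mu_\varepsilon$, and $\mu_\varepsilon$ does not depend on $\delta$. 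Hence the remainder is independent of $\delta$, and in particular uniform over any set of $\delta\ge0$.
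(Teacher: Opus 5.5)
Your proposal is correct and follows the paper's proof essentially step for step: terminal integration gives the scalar equation $\varepsilon F(\mu)=1$, followed by the travel-time substitution, a corner Laplace estimate at the inflow--outflow corner $(0,T_q)$, and the two-pass logarithmic inversion; the only difference is cosmetic, since you prove the corner asymptotic by an $\eta$-splitting with uniform continuity, whereas the paper dilates by $\rho$ and applies dominated convergence. One small slip: the $\mu$-derivative of the exponent is $\int_x^t q^{-1}=T(t)-T(x)$ rather than $t-x$, but it is still positive for $t>x$, so your monotonicity conclusion is unaffected.
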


\begin{corollary}\label{cor:constantkernel-homogeneous}
If $q\equiv c>0$ and $a\equiv a_0$, put
$z=2L(a_0+\lambda_{\varepsilon,\delta}-\delta)/c$. Then
\[
\frac{e^z-1-z}{z^2}=\frac{c}{4\varepsilon L^2},\qquad
\lambda_{\varepsilon,\delta}=\frac{cz}{2L}-a_0+\delta,\qquad
\phi_z(x)=e^{z(L-x)/(2L)}-1.
\]
The root is unique, and $z>0$ exactly when $c/(4\varepsilon L^2)>1/2$.
\end{corollary}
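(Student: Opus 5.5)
The plan is to specialize the characteristic equation of Proposition~\ref{thm:constantkernel} to $q\equiv c$, $a\equiv a_0$. Every integral is then explicit, and the equation reduces to a scalar equation in the single variable $z$. Put $\mu:=a_0+\lambda_{\varepsilon,\delta}-\delta$, so that $z=2L\mu/c$ and $\lambda_{\varepsilon,\delta}=cz/(2L)-a_0+\delta$; the second displayed identity is just this definition rewritten.

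First I compute the eigenfunction formula of Proposition~\ref{thm:constantkernel}. Assume $\mu\neq0$ for now. The inner integral becomes
\[
\int_x^L\frac1c e^{\mu(t-x)/c}\,dt=\frac{e^{\mu(L-x)/c}-1}{\mu}=\frac{e^{z(L-x)/(2L)}-1}{\mu}.
\]
Up to the nonzero factor $1/\mu$ this is $\phi_z$. Here $1/\mu$ is positive when $z>0$. When $z<0$ it is negative, and the displayed quotient itself is the positive representative. At $z=0$ one takes the limit $(L-x)/c$, which is a positive multiple of the limit of $\phi_z/z$.

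Next I integrate over $x\in(-L,L)$:
\[
\int_{-L}^{L}\frac{e^{\mu(L-x)/c}-1}{\mu}\,dx=\frac{c}{\mu^2}\big(e^{2L\mu/c}-1\big)-\frac{2L}{\mu}.
\]
Substituting $\mu=cz/(2L)$ gives
\[
\frac{4L^2}{c}\,\frac{e^z-1-z}{z^2}.
\]
This expression extends continuously to $z=0$ with value $2L^2/c$, which is exactly the direct computation at $\mu=0$. The characteristic equation $1=\varepsilon\cdot(\text{this})$ is therefore equivalent to
\[
\frac{e^z-1-z}{z^2}=\frac{c}{4\varepsilon L^2}.
\]

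For uniqueness and the sign statement, I use the representation
\[
f(z):=\frac{e^z-1-z}{z^2}=\int_0^1(1-s)e^{sz}\,ds,
\]
valid for all real $z$ by Taylor's formula with integral remainder. Differentiation under the integral shows that $f$ is smooth and strictly increasing. Moreover $f(z)\to0$ as $z\to-\infty$, $f(z)\to+\infty$ as $z\to+\infty$, and $f(0)=1/2$. Hence $f$ is a bijection of $\R$ onto $(0,\infty)$. The equation therefore has exactly one root, and this root is positive precisely when $c/(4\varepsilon L^2)>f(0)=1/2$.

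Uniqueness of the root is consistent with the uniqueness of $\lambda_{\varepsilon,\delta}$ in Proposition~\ref{thm:constantkernel}, which completes the argument. The only point needing care is the degenerate value $z=0$, that is $\mu=0$, together with the sign of the normalizing factor $1/\mu$ when $z<0$. Both are handled by the continuous extension of $f$ and by reading the eigenfunction statement up to a nonzero constant.
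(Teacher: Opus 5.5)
Your proposal is correct and follows the paper's own proof. You integrate the characteristic equation of Proposition~\ref{thm:constantkernel} directly (the paper writes it with $\gamma=\mu/c$), then use strict monotonicity of $(e^z-1-z)/z^2=\int_0^1(1-s)e^{sz}\,ds$, whose value at $z=0$ is $1/2$. You also note that the displayed $\phi_z$ is only a nonzero multiple of the positive eigenfunction: it is negative for $z<0$ and vanishes identically at $z=0$. The paper passes over this point silently.
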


\begin{proof}
Direct integration yields
\[
1=\frac{\varepsilon}{c\gamma^2}\big(e^{2L\gamma}-1-2L\gamma\big),
\qquad
\gamma=\frac{a_0+\lambda-\delta}{c}.
\]
Putting $z=2L\gamma$ yields the scalar equation and the explicit eigenfunction. Since
$F'(z)=\int_0^1t(1-t)e^{tz}\,dt>0$, the root is unique on $\R$, and $z>0$ exactly when $F(0)=1/2$ is smaller than the prescribed right-hand side.
\end{proof}

\begin{proof}[\textbf{Proof of Proposition~\ref{thm:constantkernel}}]
Let $S=\int_{-L}^{L}\phi$. The eigenvalue equation, together with the outflow condition $\phi(L)=0$, is
\[
q(x)\phi'(x)+(a(x)+\lambda-\delta)\phi(x)=-\varepsilon S.
\]
Terminal integration yields
\[
\phi(x)=\varepsilon S\int_x^L\frac1{q(t)}
\exp\left(\int_x^t\frac{a(r)+\lambda-\delta}{q(r)}\,dr\right)dt.
\]
The right-hand side is strictly positive for $x<L$. Integrating in $x$ and cancelling $S>0$ yields the scalar equation in the statement. Its right-hand side, without the prefactor $\varepsilon$, is a continuous strictly increasing function of $\lambda$, tends to zero as $\lambda\to-\infty$ and to $+\infty$ as $\lambda\to+\infty$. Hence the scalar equation has a unique real root. The displayed positive function then solves the principal eigenvalue problem, and uniqueness follows from geometric simplicity.

We now derive the asymptotics. Set $\rho=\lambda_{\varepsilon,\delta}-\delta$ and introduce the travel-time coordinate
\[
\tau=Q(x):=\int_{-L}^x\frac{dr}{q(r)},\qquad 0\le\tau\le T_q,
\]
with inverse $x=X(\tau)$. The scalar integral becomes
\[
I(\rho)=\int_0^{T_q}q(X(r))\int_r^{T_q}
\exp\left(\rho(s-r)+\int_r^s a(X(\xi))\,d\xi\right)ds\,dr,
\qquad \varepsilon I(\rho)=1.
\]
Since $\varepsilon I(\rho)=1$ and $I$ is finite at every finite $\rho$, the identity $I(\rho)=\varepsilon^{-1}\to+\infty$, together with the strict monotonicity of $I$, first yields $\rho\to+\infty$. We may therefore use a corner Laplace expansion. Put
\[
Q(r):=q(X(r)),\qquad A(r):=a(X(r)),\qquad
A_q=\int_0^{T_q}A(r)\,dr=\int_{-L}^{L}\frac{a(x)}{q(x)}\,dx.
\]
With the corner variables $u=r$ and $v=T_q-s$, the integral takes the form
\[
\begin{aligned}
I(\rho)
&=e^{\rho T_q+A_q}
\int_{\substack{u,v\ge0\\u+v\le T_q}}
Q(u)e^{-\rho(u+v)}\\
&\qquad\times
\exp\left(-\int_0^uA(\xi)\,d\xi-\int_{T_q-v}^{T_q}A(\xi)\,d\xi\right)dv\,du.
\end{aligned}
\]
After the dilation $U=\rho u$, $V=\rho v$,
\[
\rho^2e^{-\rho T_q-A_q}I(\rho)
=\int_{\substack{U,V\ge0\\U+V\le\rho T_q}}
Q(U/\rho)e^{-(U+V)}R_\rho(U,V)\,dV\,dU,
\]
where
\[
R_\rho(U,V)=
\exp\left(-\int_0^{U/\rho}A(\xi)\,d\xi
-\int_{T_q-V/\rho}^{T_q}A(\xi)\,d\xi\right).
\]
Extend the integrand by zero from the triangle $U+V\le\rho T_q$ to the whole positive quadrant. For every fixed $(U,V)$ the triangle indicator tends to one and
$Q(U/\rho)R_\rho(U,V)\to Q(0)=q(-L)$. The extended integrand is bounded by $q_1e^{2\|a\|_\infty T_q}e^{-(U+V)}$, which is integrable on the positive quadrant. Dominated convergence therefore yields the sharp corner Laplace asymptotic
\begin{equation}
I(\rho)=q(-L)e^{A_q}\frac{e^{\rho T_q}}{\rho^2}(1+o(1))
\qquad(\rho\to+\infty).
\label{eq:9_1}
\end{equation}
Taking logarithms in \eqref{eq:9_1} and using $\varepsilon I(\rho)=1$ yields
\[
\rho T_q
=\log\frac1\varepsilon+2\log\rho-\log q(-L)-A_q+o(1).
\]
The coarse consequence $\rho T_q/\log(1/\varepsilon)\to1$ yields
\[
\log\rho=\log\log\frac1\varepsilon-\log T_q+o(1).
\]
Reinsertion yields
\[
\rho=\frac1{T_q}\left[
\log\frac1\varepsilon+2\log\log\frac1\varepsilon
-2\log T_q-\log q(-L)-A_q
\right]+o(1).
\]
Since $\lambda_{\varepsilon,\delta}=\delta+\rho$, the stated expansion follows. The scalar equation for $\rho$ does not contain $\delta$, so the remainder is independent of $\delta$.
\end{proof}

\begin{proposition}\label{thm:largerange}
Assume $q\in C([-L,L])$, $q\ge q_0>0$, $a\in C([-L,L])$, $m\ge0$ and $J(0)>0$. Put
\[
T_q=\int_{-L}^{L}\frac{dx}{q(x)},\qquad
A_q=\int_{-L}^{L}\frac{a(x)}{q(x)}\,dx,
\]
and
\[
C_m:=\mathbf 1_{\{m=0\}}
+\frac1{T_q}\left[
2\log\frac{m+1}{T_q}-\log\big(J(0)q(-L)\big)-A_q
\right].
\]
Then
\[
\Lp(\cA_{\sigma,m,L})
=\frac{m+1}{T_q}\log\sigma
+\frac{2}{T_q}\log\log\sigma
+C_m+o(1)
\qquad(\sigma\to+\infty).
\]
In particular, $\Lp(\cA_{\sigma,m,L})\to+\infty$.
\end{proposition}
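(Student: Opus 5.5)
The plan is to squeeze the gain kernel between two rank-one (constant) kernels, transfer the squeeze to the principal value by kernel monotonicity (Lemma~\ref{thm:monotonicity}), and then read off the asymptotics from the exact constant-kernel analysis in Proposition~\ref{thm:constantkernel}. For $x,y\in[-L,L]$ one has $|x-y|/\sigma\le 2L/\sigma\to0$. Since $J$ is continuous with $J(0)>0$, there is $\eta_\sigma\to0$ such that
\[
\big(J(0)-\eta_\sigma\big)\le J\!\left(\frac{x-y}{\sigma}\right)\le \big(J(0)+\eta_\sigma\big)\qquad(x,y\in[-L,L]).
\]
For large $\sigma$ the lower constant is positive. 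Hence the gain kernel $\sigma^{-m}J_\sigma(x-y)$ lies between the two constant kernels $\varepsilon_\pm:=\sigma^{-m-1}(J(0)\pm\eta_\sigma)$. I would write $\cA_{\sigma,m,L}$ as $q\partial_x+\sigma^{-m}J_\sigma*_{(-L,L)}-\delta+a$ with $\delta:=\sigma^{-m}$. Lemma~\ref{thm:monotonicity}, applied with the drift and the zeroth-order term $a-\delta$ fixed, then gives
\[
\lambda_{\varepsilon_+,\delta}\le \Lp(\cA_{\sigma,m,L})\le \lambda_{\varepsilon_-,\delta},
\]
in the notation of Proposition~\ref{thm:constantkernel}. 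That proposition identifies $\lambda_{\varepsilon,\delta}$ with the generalized principal value of the rank-one operator, via its positive eigenfunction with outflow condition and geometric simplicity. So the two bounds are genuinely generalized principal values and the comparison is legitimate.

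Next, I would insert the expansion of Proposition~\ref{thm:constantkernel}, which is valid as $\varepsilon\downarrow0$ with a remainder uniform in $\delta\ge0$. This uniformity matters, because $\delta=\sigma^{-m}$ varies with $\sigma$ and equals $1$ when $m=0$. With $\varepsilon=\varepsilon_\pm$ one has
\[
\log\frac1{\varepsilon_\pm}=(m+1)\log\sigma-\log J(0)+o(1),
\]
and
\[
\log\log\frac1{\varepsilon_\pm}=\log(m+1)+\log\log\sigma+o(1).
\]
In both formulas the perturbation $\pm\eta_\sigma$ is absorbed into the $o(1)$. Moreover $\delta=\sigma^{-m}\to\mathbf 1_{\{m=0\}}$. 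Substituting into
\[
\delta+T_q^{-1}\big[\log\varepsilon^{-1}+2\log\log\varepsilon^{-1}-2\log T_q-\log q(-L)-A_q\big]+o(1)
\]
gives, for both $\varepsilon_+$ and $\varepsilon_-$, the same value
\[
\frac{m+1}{T_q}\log\sigma+\frac{2}{T_q}\log\log\sigma+\mathbf 1_{\{m=0\}}+\frac1{T_q}\Big[2\log(m+1)-2\log T_q-\log(J(0)q(-L))-A_q\Big]+o(1).
\]
The constant term is exactly $C_m$. The squeeze then yields the stated expansion, and in particular $\Lp\to+\infty$.

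The only step needing care is the uniformity just used. The corner Laplace expansion in Proposition~\ref{thm:constantkernel} must hold uniformly while $\varepsilon$ itself carries the $\sigma$-dependent factor $J(0)\pm\eta_\sigma$. Since that expansion depends on $\varepsilon$ only through $\log\varepsilon^{-1}\to\infty$, and the scalar equation for $\rho=\lambda-\delta$ is free of $\delta$, this reduces to continuity of the leading logarithms. I therefore expect the main obstacle to be bookkeeping rather than a new estimate. I would nevertheless verify explicitly that $\log\log\varepsilon_\pm^{-1}$ and $\log\log\sigma^{m+1}$ differ by $\log(m+1)+o(1)$, including the case $m=0$, where this difference is $o(1)$.
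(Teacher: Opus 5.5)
Your proposal is correct and follows the paper's proof essentially step for step. The paper squeezes $\sigma^{-m}J_\sigma(x-y)$ between the rank-one kernels $j_\pm(\sigma)\sigma^{-(m+1)}$, where $j_\pm(\sigma)$ are the minimum and maximum of $J$ on $[-2L/\sigma,2L/\sigma]$, applies kernel monotonicity with $\delta_\sigma=\sigma^{-m}$, and substitutes into Proposition~\ref{thm:constantkernel} exactly as you do. The uniformity you flag as the main obstacle is automatic: the remainder in that proposition depends only on $\varepsilon$ (not on $\delta$), and $\varepsilon_\pm(\sigma)\to0$.
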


\begin{proof}[\textbf{Proof of Proposition~\ref{thm:largerange}}]
Since $|x-y|\le2L$ on $[-L,L]^2$, define
\[
j_-(\sigma):=\min_{|z|\le2L/\sigma}J(z),\qquad
j_+(\sigma):=\max_{|z|\le2L/\sigma}J(z).
\]
Continuity and $J(0)>0$ yield $j_-(\sigma),j_+(\sigma)\longrightarrow J(0)$,
and, for all sufficiently large $\sigma$,
\[
\frac{j_-(\sigma)}{\sigma}\le J_\sigma(x-y)\le\frac{j_+(\sigma)}{\sigma}
\qquad (x,y\in[-L,L]).
\]
Thus the gain kernel of $\cA_{\sigma,m,L}$ is squeezed between rank-one kernels with
\[
\varepsilon_\pm(\sigma)=j_\pm(\sigma)\sigma^{-(m+1)},
\qquad
\delta_\sigma=\sigma^{-m}.
\]
Kernel monotonicity yields
\begin{equation}
\lambda_{\varepsilon_+(\sigma),\delta_\sigma}
\le\Lp(\cA_{\sigma,m,L})
\le\lambda_{\varepsilon_-(\sigma),\delta_\sigma}.
\label{eq:9_2}
\end{equation}
For either sign,
\[
\log\frac1{\varepsilon_\pm(\sigma)}
=(m+1)\log\sigma-\log J(0)+o(1),
\]
and hence
\[
\log\log\frac1{\varepsilon_\pm(\sigma)}
=\log\log\sigma+\log(m+1)+o(1).
\]
Proposition~\ref{thm:constantkernel} therefore yields
\[
\begin{aligned}
\lambda_{\varepsilon_\pm(\sigma),\delta_\sigma}
={}&\frac{m+1}{T_q}\log\sigma+\frac2{T_q}\log\log\sigma\\
&+\delta_\sigma
+\frac1{T_q}\left[
2\log\frac{m+1}{T_q}-\log\big(J(0)q(-L)\big)-A_q
\right]+o(1).
\end{aligned}
\]
Since $\delta_\sigma\to\mathbf 1_{\{m=0\}}$ for fixed $m\ge0$, both comparison values in \eqref{eq:9_2} have the same three-term expansion. The sandwich proves the result.
\end{proof}

\begin{lemma}\label{thm:sigmacontinuity}
Fix $m\ge0$, $L>0$, $q\in C([-L,L])$ with $q\ge q_0>0$ and $a\in C([-L,L])$. Then
\[
(0,\infty)\ni\sigma\longmapsto\Lp(\cA_{\sigma,m,L})
\]
is continuous. The principal eigenfunction normalized at one fixed interior point depends continuously on $\sigma$ in $C^1([-L,L])$ on compact subintervals of $(0,\infty)$.
\end{lemma}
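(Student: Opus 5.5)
The plan is to reduce continuity in $\sigma$ to the fixed-interval stability result, Proposition~\ref{thm:generalKstability}. Fix $\sigma_0\in(0,\infty)$ and a sequence $\sigma_n\to\sigma_0$. The interval $(-L,L)$ does not move with $\sigma$, so no pull-back is needed. We write
\[
\cA_{\sigma,m,L}u=\int_{-L}^{L}K_\sigma(x,y)u(y)\,dy+q(x)u'(x)+b_\sigma(x)u(x),
\]
where
\[
K_\sigma(x,y)=\sigma^{-m}J_\sigma(x-y),\qquad b_\sigma=a-\sigma^{-m}.
\]
This places the operator exactly in the framework of Proposition~\ref{thm:generalKstability}, with $q_n=q$ fixed and only continuity of $q$ and $a$ required.

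The hypotheses are checked in three steps.

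\emph{Kernel convergence.} Since $J$ is uniformly continuous with compact support and $\sigma\mapsto\sigma^{-m-1}$ is continuous on $(0,\infty)$, the map $(x,y)\mapsto\sigma^{-m-1}J((x-y)/\sigma)$ converges uniformly on $[-L,L]^2$ as $\sigma\to\sigma_0$. Hence $\|K_{\sigma_n}-K_{\sigma_0}\|_{C(\overline\Omega^2)}\to0$.

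\emph{Potential convergence.} The zeroth-order coefficient satisfies $\|b_{\sigma_n}-b_{\sigma_0}\|_\infty=|\sigma_n^{-m}-\sigma_0^{-m}|\to0$.

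\emph{Common diagonal positivity.} Since $J(0)>0$, there are $r,j>0$ with $J(z)\ge j$ for $|z|\le r$. Restrict to $\sigma\in[\sigma_0/2,2\sigma_0]$ and take $\rho=r\sigma_0/2$. If $|x-y|<\rho$, then $|x-y|/\sigma\le r$, so
\[
K_\sigma(x,y)\ge \sigma^{-m-1}j\ge \kappa:=\min_{[\sigma_0/2,2\sigma_0]}\sigma^{-m-1}j>0.
\]
The same bound holds for the limit kernel.

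With these three facts, Proposition~\ref{thm:generalKstability} yields $\Lp(\cA_{\sigma_n,m,L})\to\Lp(\cA_{\sigma_0,m,L})$. It also yields $C^1([-L,L])$ convergence of the eigenfunctions normalized at a fixed interior point. Since the sequence was arbitrary, this gives continuity of both maps on $(0,\infty)$.

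For uniformity on a compact $[\sigma_1,\sigma_2]\subset(0,\infty)$: a continuous map from a compact set into $C^1([-L,L])$ is uniformly continuous, so sequential continuity suffices. The same pointwise argument, with $\rho$ and $\kappa$ chosen uniformly over $[\sigma_1,\sigma_2]$, gives this directly.

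The only mildly delicate point is the uniform positivity near the diagonal, since Proposition~\ref{thm:generalKstability} needs common constants $\rho$ and $\kappa$. This is why $\sigma$ is kept in a compact subset of $(0,\infty)$: as $\sigma\to0$ the positivity radius shrinks, and as $\sigma\to\infty$ the prefactor $\sigma^{-m-1}$ degenerates. Away from these limits the bounds above are explicit, so I expect no real obstacle.
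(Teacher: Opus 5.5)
Your proposal is correct and follows essentially the same route as the paper: write $\cA_{\sigma,m,L}$ in general-kernel form with $K_\sigma(x,y)=\sigma^{-(m+1)}J((x-y)/\sigma)$ and $b_\sigma=a-\sigma^{-m}$, verify uniform convergence of these coefficients and a common diagonal positivity bound for $\sigma$ in a compact subset of $(0,\infty)$, and then apply Proposition~\ref{thm:generalKstability}. Your explicit constants $\rho=r\sigma_0/2$ and $\kappa=(2\sigma_0)^{-m-1}j$ make quantitative the paper's one-line claim that the kernels have a common strictly positive neighborhood of the diagonal.
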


\begin{proof}[\textbf{Proof of Lemma~\ref{thm:sigmacontinuity}}]
Let $\sigma_n\to\sigma_*>0$. Write the operator in general-kernel form. On $[-L,L]^2$,
\[
K_n(x,y)=\sigma_n^{-(m+1)}J\left(\frac{x-y}{\sigma_n}\right)
\]
converges uniformly to $K_*(x,y)$, while the zeroth-order coefficient $b_n(x)=a(x)-\sigma_n^{-m}$
converges uniformly to $b_*(x)$. Since $\sigma_n$ remains in a compact subset of $(0,\infty)$ and $J(0)>0$, the kernels have a common strictly positive neighborhood along the diagonal. Proposition~\ref{thm:generalKstability} yields convergence of the principal values and $C^1$ convergence of the normalized eigenfunctions.
\end{proof}

\begin{lemma}\label{thm:optimalrange}
Assume $q\in C([-L,L])$, $q\ge q_0>0$, $a\in C([-L,L])$ and $0\le m<2$. Then there exists $\sigma_*\in(0,\infty)$ such that
\[
\Lp(\cA_{\sigma_*,m,L})=\min_{\sigma>0}\Lp(\cA_{\sigma,m,L}).
\]
The set of minimizers is nonempty and compact.
\end{lemma}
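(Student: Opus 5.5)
The strategy is a coercivity-plus-continuity argument. Write $\Lambda(\sigma):=\Lp(\cA_{\sigma,m,L})$. By Lemma~\ref{thm:sigmacontinuity}, $\Lambda$ is continuous on $(0,\infty)$. It therefore suffices to show that $\Lambda(\sigma)\to+\infty$ both as $\sigma\downarrow0$ and as $\sigma\to+\infty$. Once this holds, fix any $\sigma_1>0$. There are $0<a_1<b_1<\infty$ with $\Lambda>\Lambda(\sigma_1)$ outside $[a_1,b_1]$, and clearly $\sigma_1\in[a_1,b_1]$. The minimum of $\Lambda$ over $(0,\infty)$ then equals its minimum over the compact interval $[a_1,b_1]$, which is attained by continuity. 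The set of minimizers is a closed subset of $[a_1,b_1]$, being the preimage of a point under a continuous map restricted to that interval, and no minimizer lies outside it. Hence the minimizer set is nonempty and compact.

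The behaviour as $\sigma\to+\infty$ follows from Proposition~\ref{thm:largerange}. Its leading term $\frac{m+1}{T_q}\log\sigma$ tends to $+\infty$, and that proposition needs only $q,a\in C([-L,L])$ and $J(0)>0$, which are exactly the present hypotheses.

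The behaviour as $\sigma\downarrow0$ uses the affine lower bound \eqref{eq:bounded-affine}, which requires only continuity of $q$ and $a$:
\[
\Lambda(\sigma)\ge\sigma^{-m}\cH(q_0\sigma^{m-1})-\max_{[-L,L]}a,\qquad q_0=\min q.
\]
Here the small-range theorems cannot be quoted directly, since they assume $C^2$ coefficients, and this is the one point needing care. The lower bound alone suffices, and I would split by the value of $m$.
\begin{itemize}
\item For $1<m<2$: the argument $v=q_0\sigma^{m-1}$ tends to $0$ and $\cH(v)\sim v^2/(2D_2)$. The right side is then of order $\sigma^{m-2}\to+\infty$.
\item For $m=1$: the right side is $\sigma^{-1}\cH(q_0)$. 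It diverges because $\cH(q_0)>0$, which holds since $\cH$ is convex with $\cH(0)=0$ and $\cH(v)>0$ for $v>0$.
\item For $0\le m<1$: $v\to\infty$ and $\cH(v)\sim v\log v/R_+$, giving growth of order $\sigma^{-1}|\log\sigma|$.
\end{itemize}
All three asymptotics of $\cH$ are contained in Theorem~\ref{thm:heteroHamiltonian}, or can be read off elementarily from the definition of $\cH$ as a Legendre-type transform. The restriction $m<2$ is essential at this step. For $m>2$ one has $\sigma^{-m}\cH(q_0\sigma^{m-1})\sim\sigma^{m-2}\to0$, so the blow-up at small $\sigma$ fails, consistent with the hypothesis.

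The main obstacle is therefore only to confirm that the divergence at both ends holds under mere continuity of $q$ and $a$. As shown above, this is true because both estimates used, \eqref{eq:bounded-affine} and Proposition~\ref{thm:largerange}, are valid under continuity alone. The continuity of $\Lambda$ itself comes from Lemma~\ref{thm:sigmacontinuity}.
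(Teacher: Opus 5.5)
Your proposal is correct and follows the paper's proof essentially step for step: continuity from Lemma~\ref{thm:sigmacontinuity}, divergence as $\sigma\downarrow0$ from the affine exponential lower bound $\sigma^{-m}\cH(q_0\sigma^{m-1})-\max_{[-L,L]}a$ (which indeed needs only continuous coefficients), divergence as $\sigma\to\infty$ from Proposition~\ref{thm:largerange}, and then compactness of the relevant sublevel set. One small caveat concerns your closing aside: for $m>2$ it is only this particular lower bound that stops diverging, since for $C^2$ coefficients Proposition~\ref{prop:supercritical} gives $\sigma^{m-2}\Lp(\cA_{\sigma,m,L})\to d_J\pi^2/(4L^2)>0$, so the restriction $m<2$ reflects the method (and the finite limit at $m=2$) rather than a failure of the conclusion for $m>2$; this does not affect your argument.
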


\begin{proof}[\textbf{Proof of Lemma~\ref{thm:optimalrange}}]
Lemma~\ref{thm:sigmacontinuity} yields continuity. At the zero-range end, for $s\ge0$ the test function
$\phi_s(x)=e^{-s(x+L)/\sigma}$ satisfies
\[
\Lp(\cA_{\sigma,m,L})\ge \sigma^{-m}\cH(q_0\sigma^{m-1})-\max_{[-L,L]}a,
\]
and the right-hand side tends to $+\infty$ for every $0\le m<2$. At the large-range end, Proposition~\ref{thm:largerange} yields the same divergence. Hence every sublevel set below one fixed finite value is contained in a compact subinterval of $(0,\infty)$. The continuous function attains its global minimum there, and the set of minimizers is closed in that compact interval.
\end{proof}

\begin{proof}[\textbf{Proof of Theorem~\ref{thm:mainlarge}}]
The asymptotic expansion is Proposition~\ref{thm:largerange}. For $0\le m<2$, existence and compactness of the set of minimizers follow from Lemma~\ref{thm:optimalrange}.
\end{proof}

\begin{lemma}\label{thm:boundedthreshold}
Let $b\in C([-L,L])$ satisfy $b>0$ and, for fixed $\sigma>0$ and $m\ge0$, set
\[
\Lambda_{\sigma,m}(r)=\Lp(\cA_{\sigma,m,L}+rb),\qquad r\in\R.
\]
Then $\Lambda_{\sigma,m}$ is continuous and strictly decreasing from $+\infty$ to $-\infty$. Consequently there is a unique $r_{\sigma,m}$ such that $\Lambda_{\sigma,m}(r_{\sigma,m})=0$. If $b\equiv1$, then $r_{\sigma,m}=\Lp(\cA_{\sigma,m,L})$.

At the critical scale $m=2$, let $r_0$ be the unique zero of
\[
r\longmapsto\lambda_1^D\big(d_J\partial_{xx}+q(x)\partial_x+a+rb,(-L,L)\big).
\]
Then $r_{\sigma,2}\to r_0$ as $\sigma\to0$.
\end{lemma}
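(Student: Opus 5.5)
We split the proof into the fixed-$\sigma$ assertions and the critical limit $\sigma\to0$.

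\emph{Fixed $\sigma$.} Lemma~\ref{thm:monotonicity} handles monotonicity and continuity. Since $b>0$ is continuous on $[-L,L]$, we have $0<b_-\le b\le b_+$. Potential monotonicity gives $\Lambda_{\sigma,m}(r_1)\ge\Lambda_{\sigma,m}(r_2)$ for $r_1\le r_2$, and the Lipschitz bound gives $|\Lambda_{\sigma,m}(r_1)-\Lambda_{\sigma,m}(r_2)|\le\|b\|_\infty|r_1-r_2|$.

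For the quantitative behaviour we use the shift rule for constant potentials together with potential monotonicity. For $r\ge0$ we have $rb_-\le rb\le rb_+$, which gives
\[
\Lp(\cA_{\sigma,m,L})-rb_+\le\Lambda_{\sigma,m}(r)\le\Lp(\cA_{\sigma,m,L})-rb_-,
\]
and the reversed inequalities hold for $r\le0$. Hence $\Lambda_{\sigma,m}(r)\to\mp\infty$ as $r\to\pm\infty$. The same sandwich applied to increments gives strict decrease:
\[
\Lambda_{\sigma,m}(r_2)\le\Lambda_{\sigma,m}(r_1)-(r_2-r_1)b_-<\Lambda_{\sigma,m}(r_1)\qquad(r_2>r_1),
\]
because $(r_2-r_1)b\ge(r_2-r_1)b_-$. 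This is where $b>0$, rather than only $b\ge0$, is used. The intermediate value theorem then gives a unique zero $r_{\sigma,m}$. When $b\equiv1$, Corollary~\ref{cor:constant-potential-amplitude} gives $\Lambda_{\sigma,m}(r)=\Lp(\cA_{\sigma,m,L})-r$, so $r_{\sigma,m}=\Lp(\cA_{\sigma,m,L})$.

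\emph{Critical limit.} Put $\mu(r):=\lambda_1^D(d_J\partial_{xx}+q\partial_x+a+rb,(-L,L))$. The local analogue of the argument above, via the Rayleigh formula after the gauge transform of Lemma~\ref{lem:criticalstability} or directly by potential monotonicity of local principal values, shows that $\mu$ is continuous and satisfies $\mu(r_2)\le\mu(r_1)-(r_2-r_1)b_-$. So $\mu$ has the unique zero $r_0$.

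Fix $\varepsilon>0$. Then $\mu(r_0-\varepsilon)\ge\varepsilon b_->0$ and $\mu(r_0+\varepsilon)\le-\varepsilon b_-<0$. Theorem~\ref{thm:maincritical} applies with the potential $a+rb$ for each fixed $r$. The one caveat is that it requires the potential in $C^2$, whereas $b$ is only continuous. We therefore record the following fallback.
\begin{itemize}
\item If $b\in C^2$ is available, Theorem~\ref{thm:maincritical} applies directly at $r=r_0\pm\varepsilon$.
\item Otherwise, choose $b_\eta\in C^2$ with $\|b_\eta-b\|_\infty\le\eta$. The $L^\infty$-Lipschitz dependence of both the nonlocal value (Lemma~\ref{thm:monotonicity}) and the local value on the potential costs at most $|r|\eta$ on each side, uniformly in $\sigma$. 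Taking $\eta\ll\varepsilon b_-/(|r_0|+1)$ preserves the strict signs.
\end{itemize}
In either case we obtain
\[
\Lambda_{\sigma,2}(r_0-\varepsilon)\to\mu(r_0-\varepsilon)>0,\qquad \Lambda_{\sigma,2}(r_0+\varepsilon)\to\mu(r_0+\varepsilon)<0.
\]
For small $\sigma$, strict monotonicity of $\Lambda_{\sigma,2}$ places its zero in $(r_0-\varepsilon,r_0+\varepsilon)$. Hence $r_{\sigma,2}\to r_0$.

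The only delicate step is this regularity mismatch in applying Theorem~\ref{thm:maincritical}, and it is handled by the uniform Lipschitz approximation. Everything else follows from the order properties already established.
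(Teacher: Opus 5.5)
Your proposal is correct and follows the paper's proof essentially step by step: the $b_0\le b\le b_1$ sandwich combined with the constant-shift identity, then the intermediate value theorem, then the sign argument at $r_0\pm\varepsilon$ using Theorem~\ref{thm:maincritical} and strict monotonicity of $\Lambda_{\sigma,2}$. Your additional step of approximating $b$ by $C^2$ functions, paid for by the unit $L^\infty$-Lipschitz dependence of both the nonlocal and the local principal values on the potential uniformly in $\sigma$, is a valid repair of the $C^2$-potential hypothesis of Theorem~\ref{thm:maincritical}, which the paper's proof passes over silently.
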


\begin{proof}[\textbf{Proof of Lemma~\ref{thm:boundedthreshold}}]
Let $b_0=\min b>0$ and $b_1=\max b$. Continuity follows from Lemma~\ref{thm:monotonicity}. If $r_2>r_1$, then potential monotonicity and the constant-shift identity imply
\[
\Lambda_{\sigma,m}(r_2)\le\Lambda_{\sigma,m}(r_1)-(r_2-r_1)b_0<\Lambda_{\sigma,m}(r_1).
\]
For $r\ge0$,
\[
\Lambda_{\sigma,m}(0)-rb_1\le\Lambda_{\sigma,m}(r)\le\Lambda_{\sigma,m}(0)-rb_0,
\]
and the inequalities are reversed when $r\le0$. Hence $\Lambda_{\sigma,m}(r)\to-\infty$ as $r\to+\infty$ and $\Lambda_{\sigma,m}(r)\to+\infty$ as $r\to-\infty$. This proves existence and uniqueness of $r_{\sigma,m}$. If $b\equiv1$, the constant-shift identity yields $r_{\sigma,m}=\Lp(\cA_{\sigma,m,L})$.

The local Dirichlet principal value has the same strict order property, so $r_0$ is well defined. For fixed $\varepsilon>0$, its values at $r_0-\varepsilon$ and $r_0+\varepsilon$ have respectively positive and negative signs. Theorem~\ref{thm:maincritical}, applied to these two fixed potentials, yields the same signs for the nonlocal principal values for all sufficiently small $\sigma$. Thus $r_0-\varepsilon<r_{\sigma,2}<r_0+\varepsilon$. Letting $\varepsilon\downarrow0$ proves the convergence.
\end{proof}

\section{Blow-up of the zeroth-order and dispersal terms}

In the last part of the paper, we investigate the behavior of $\Lp$ as the zeroth-order and dispersal terms blow up, as well as when the dispersal intensity degenerates. This is the nonlocal fixed-drift counterpart of the coefficient-amplitude analysis in \cite[Theorems~9.3 and~9.5]{BR}. The drift prevents a literal transfer of the self-adjoint elliptic conclusions. In particular, the potential amplitude retains the order-theoretic concavity mechanism, whereas the dispersal amplitude does not inherit the self-adjoint monotonicity argument.

Let $\Omega\subset\R$ be an interval and let $V\in C_b(\Omega)$. For $\gamma\in\R$ and $\alpha>0$, define
\[
\begin{aligned}
\cA^{V}_{\gamma,\Omega}u(x)
&=d\left(\int_\Omega J(x-y)u(y)\,dy-u(x)\right)+q(x)u'(x)+\gamma V(x)u(x),\\
\cA^{D}_{\alpha,\Omega}u(x)
&=\alpha d\left(\int_\Omega J(x-y)u(y)\,dy-u(x)\right)+q(x)u'(x)+V(x)u(x),
\end{aligned}
\]
and set
\[
\Lambda_V^\Omega(\gamma)=\Lp(\cA^{V}_{\gamma,\Omega}),\qquad
\Lambda_D^\Omega(\alpha)=\Lp(\cA^{D}_{\alpha,\Omega}).
\]

The bounded coefficient-amplitude statement and the whole-line heterogeneous Hamiltonian representation were stated in Theorems~\ref{thm:coeffamplitudes} and~\ref{thm:heteroHamiltonian}. We now prove them. The bounded potential parameter is controlled by order and geometric interpolation, while analyticity follows from the Fredholm analytic implicit-function theorem. On the line, the logarithmic transform converts the generalized principal value into an exact nonlinear Collatz--Wielandt Hamiltonian; affine phases then recover the exponential-moment structure.

\begin{proof}[\textbf{Proof of Theorem~\ref{thm:coeffamplitudes}}]
We separate the order properties from the analytic perturbation argument.

\smallskip
\noindent\emph{Step 1: concavity, Lipschitz continuity and finiteness of $\Lambda_V^\Omega$.}
Fix $\gamma_1,\gamma_2\in\R$, $0<\theta<1$, and put $\gamma_\theta=\theta\gamma_1+(1-\theta)\gamma_2$. For $i=1,2$, choose $\lambda_i<\Lambda_V^\Omega(\gamma_i)$ and a positive admissible function $\phi_i$ such that
\[
\cA^V_{\gamma_i,\Omega}\phi_i+\lambda_i\phi_i\le0\quad\hbox{in }\Omega.
\]
Set $\phi=\phi_1^\theta\phi_2^{1-\theta}$. For fixed $x\in\Omega$, H\"older's inequality and the weighted arithmetic--geometric mean inequality yield
\[
\begin{aligned}
\frac{\int_\Omega J(x-y)\phi(y)\,dy}{\phi(x)}
&=\int_\Omega J(x-y)
\left(\frac{\phi_1(y)}{\phi_1(x)}\right)^\theta
\left(\frac{\phi_2(y)}{\phi_2(x)}\right)^{1-\theta}dy\\
&\le
\left(\frac{\int_\Omega J(x-y)\phi_1(y)\,dy}{\phi_1(x)}\right)^\theta
\left(\frac{\int_\Omega J(x-y)\phi_2(y)\,dy}{\phi_2(x)}\right)^{1-\theta}\\
&\le \theta\frac{\int_\Omega J(x-y)\phi_1(y)\,dy}{\phi_1(x)}
+(1-\theta)\frac{\int_\Omega J(x-y)\phi_2(y)\,dy}{\phi_2(x)}.
\end{aligned}
\]
Moreover,
\[
\frac{\phi'}{\phi}=\theta\frac{\phi_1'}{\phi_1}+(1-\theta)\frac{\phi_2'}{\phi_2},
\qquad
\gamma_\theta V=\theta\gamma_1V+(1-\theta)\gamma_2V.
\]
Therefore
\[
\frac{\cA^V_{\gamma_\theta,\Omega}\phi}{\phi}
\le \theta\frac{\cA^V_{\gamma_1,\Omega}\phi_1}{\phi_1}
+(1-\theta)\frac{\cA^V_{\gamma_2,\Omega}\phi_2}{\phi_2}
\le-\theta\lambda_1-(1-\theta)\lambda_2.
\]
Taking $\lambda_i\uparrow\Lambda_V^\Omega(\gamma_i)$ proves concavity. At $\gamma=0$ the constant function is admissible at level zero because the truncated mass of $J$ is at most one; hence $\Lambda_V^\Omega(0)\ge0$. Potential monotonicity yields
\[
|\Lambda_V^\Omega(\gamma_1)-\Lambda_V^\Omega(\gamma_2)|
\le \|(\gamma_1-\gamma_2)V\|_\infty,
\]
and the same comparison with constant potentials shows that all these values are finite.

\smallskip
\noindent\emph{Step 2: asymptotic slopes.}
Let $S=\sup_\Omega V$. For $\gamma>0$, the constant test yields $\Lambda_V^\Omega(\gamma)\ge-\gamma S$.
Fix $\varepsilon>0$, continuity of $V$ provides a bounded interval $B\Subset\Omega$ on which $V\ge S-\varepsilon$. By domain and potential monotonicity,
\[
\Lambda_V^\Omega(\gamma)
\le \Lambda_V^B(\gamma)
\le \Lambda_V^B(0)-\gamma(S-\varepsilon).
\]
Division by $\gamma$ and then $\gamma\to+\infty$ yields
\[
-S\le\liminf_{\gamma\to+\infty}\frac{\Lambda_V^\Omega(\gamma)}\gamma
\le\limsup_{\gamma\to+\infty}\frac{\Lambda_V^\Omega(\gamma)}\gamma
\le-(S-\varepsilon).
\]
Letting $\varepsilon\downarrow0$ proves the first slope. The second follows by applying the first one to $-V$ and $-\gamma$.

\smallskip
\noindent\emph{Step 3: real analyticity on a bounded interval.}
Assume now that $\Omega=(a_-,a_+)$ is bounded and $q\in C^1(\overline\Omega)$. Set
\[
X=\{u\in C^1(\overline\Omega):u(a_+)=0\},\qquad Z=C(\overline\Omega),
\]
and fix $x_0\in\Omega$. We treat simultaneously the two affine families. For a real parameter $\tau$ write $\mathcal L_\tau=\mathcal L_0+\tau\cB$,
where either $\cB u=V u$ and $\tau=\gamma$, or
\[
\cB u=d\left(\int_\Omega J(x-y)u(y)\,dy-u(x)\right),\qquad \tau=\alpha.
\]
At a fixed real $\tau_0$ let $(\lambda_0,\phi_0)$ be the normalized principal eigenpair, $\phi_0(x_0)=1$. On the complexifications $X_{\mathbb C},Z_{\mathbb C}$ define
\[
\mathscr F(\tau,\lambda,u)=\big((\mathcal L_\tau+\lambda)u,\ u(x_0)-1\big).
\]
Because $\tau\mapsto\mathcal L_\tau$ is affine, $\mathscr F$ is holomorphic in all variables. Its derivative in $(\lambda,u)$ at $(\tau_0,\lambda_0,\phi_0)$ is
\[
(\mu,h)\longmapsto
\big((\mathcal L_{\tau_0}+\lambda_0)h+\mu\phi_0,\ h(x_0)\big).
\]
The transport part with the terminal condition at $a_+$ is an isomorphism $X\to Z$ and the nonlocal gain is compact, so the first component is Fredholm of index zero. The augmented operator is therefore Fredholm of index zero as well. It is injective: pairing the first component with the positive adjoint eigenfunction yields $\mu=0$, geometric simplicity yields $h=C\phi_0$, and $h(x_0)=0$ yields $C=0$. Hence the augmented operator is a bounded isomorphism, and the same is true after complexification. The analytic implicit-function theorem produces a unique holomorphic eigenpair branch near $\tau_0$. Positivity on the real axis identifies this branch with the principal eigenpair. Since $\tau_0$ was arbitrary, $\Lambda_V^\Omega$ is real analytic on $\R$ and $\Lambda_D^\Omega$ is real analytic on $(0,\infty)$.

Differentiating the eigenvalue equation and pairing with the positive adjoint eigenfunction, normalized by $\int_\Omega\phi\phi^*=1$, yields
\[
(\Lambda_V^\Omega)'(\gamma)
=-\int_\Omega V\phi_\gamma\phi_\gamma^*,
\]
and
\[
(\Lambda_D^\Omega)'(\alpha)
=-d\int_\Omega\phi_\alpha^*(x)
\left(\int_\Omega J(x-y)\phi_\alpha(y)\,dy-\phi_\alpha(x)\right)dx.
\]
The second formula has no fixed sign in general because the factor $J*\phi_\alpha-\phi_\alpha$ changes sign.

\smallskip
\noindent\emph{Step 4: strict concavity for a nonconstant potential.}
Suppose $V$ is not constant and that concavity is not strict. Then there exist $\gamma_1\ne\gamma_2$ and $0<\theta<1$ such that
\[
\Lambda_V^\Omega(\gamma_\theta)
=\theta\Lambda_V^\Omega(\gamma_1)+(1-\theta)\Lambda_V^\Omega(\gamma_2).
\]
Let $\phi_i$ be the positive principal eigenfunction at $\gamma_i$, and set $\phi=\phi_1^\theta\phi_2^{1-\theta}$. The computation in Step~1 now yields
\[
\cA^V_{\gamma_\theta,\Omega}\phi+
\Lambda_V^\Omega(\gamma_\theta)\phi\le0.
\]
The geometric mean is an admissible $C^1$ function up to the outflow endpoint. Indeed, if $\Omega=(a_-,a_+)$, the endpoint identity gives $\phi_i'(a_+)<0$ and
\[
\phi_i(x)=-\phi_i'(a_+)(a_+-x)+o(a_+-x),\qquad
\phi_i'(x)=\phi_i'(a_+)+o(1).
\]
Consequently
\[
\phi(x)=\{-\phi_1'(a_+)\}^{\theta}
\{-\phi_2'(a_+)\}^{1-\theta}(a_+-x)+o(a_+-x),
\]
and the logarithmic derivative formula shows that $\phi'$ has the corresponding finite limit at $a_+$. Thus $\phi(a_+)=0$ and $\phi\in X$. Multiply the last inequality by the positive adjoint eigenfunction at $\gamma_\theta$ and integrate. The Green identity turns the integral into zero. Since the adjoint eigenfunction is strictly positive and the residual is continuous and nonpositive, the residual vanishes pointwise. In the computation of Step~1 the gaps in H\"older's inequality and in the arithmetic--geometric mean inequality are separately nonnegative. Pointwise equality between the first and last expressions therefore forces both gaps to vanish. Thus equality holds in both inequalities in Step~1.

Fix $x\in\Omega$. Equality in H\"older's inequality implies that, on the set where $J(x-y)>0$,
\[
\frac{\phi_1(y)}{\phi_1(x)}=C_x\frac{\phi_2(y)}{\phi_2(x)}.
\]
Because $J>0$ in a neighborhood of the origin and both ratios equal one at $y=x$, continuity yields $C_x=1$. Hence $\phi_1/\phi_2$ is locally constant. The interaction chains generated by the neighborhood on which $J>0$ connect the interval, so $\phi_1=C\phi_2$ on $\Omega$. Subtracting the two eigenvalue equations yields
\[
(\gamma_1-\gamma_2)V(x)+\Lambda_V^\Omega(\gamma_1)-\Lambda_V^\Omega(\gamma_2)=0
\quad\hbox{for all }x\in\Omega,
\]
which forces $V$ to be constant, a contradiction. Thus $\Lambda_V^\Omega$ is strictly concave.

Finally, a differentiable concave function has a nonincreasing derivative. The derivative limits exist, and the fundamental theorem of calculus yields
\[
\frac{\Lambda_V^\Omega(\gamma)-\Lambda_V^\Omega(0)}{\gamma}
=\frac1\gamma\int_0^\gamma(\Lambda_V^\Omega)'(t)\,dt.
\]
For a monotone function the Ces\`aro average has the same limit as the function itself. The asymptotic slopes from Step~2 therefore yield the two limits of $(\Lambda_V^\Omega)'$. Strict concavity makes the derivative strictly decreasing. This completes the proof.
\end{proof}

\begin{proof}[\textbf{Proof of Theorem~\ref{thm:heteroHamiltonian}}]
\smallskip
\noindent\emph{Step 1: nonlinear Collatz--Wielandt representation.}
For $\psi\in C^1(\R)$ put $\phi=e^{-\psi}$. Compact support of $J$ makes every finite-jump exponential moment below well defined. With the change of variables $z=x-y$,
\[
\frac{1}{\phi(x)}\int_\R J(x-y)\phi(y)\,dy
=\int_\R J(z)e^{\psi(x)-\psi(x-z)}\,dz,
\qquad
\frac{\phi'}\phi=-\psi'.
\]
Consequently
\[
-\frac{\cA^D_{\alpha,\R}\phi(x)}{\phi(x)}
=\mathfrak H_\alpha[\psi](x).
\]
The principal value is finite: the constant test gives the lower bound $\Lambda_D^\R(\alpha)\ge-\sup_\R V$, while domain monotonicity on one fixed bounded interval gives a finite upper bound. For this fixed $\psi$, every $\lambda<\inf_x\mathfrak H_\alpha[\psi](x)$ is admissible in the definition of $\Lambda_D^\R(\alpha)$. Hence
\[
\Lambda_D^\R(\alpha)
\ge\sup_{\psi\in C^1(\R)}\inf_x\mathfrak H_\alpha[\psi](x).
\]
Conversely, if $\lambda$ is admissible and $\phi>0$ is an admissible test function, then $\psi=-\log\phi$ satisfies
\[
\lambda\le\mathfrak H_\alpha[\psi](x)\quad\hbox{for every }x,
\]
so $\lambda\le\inf_x\mathfrak H_\alpha[\psi](x)$. Taking the supremum over all admissible levels proves the reverse inequality and therefore the exact formula.

The whole-line exhaustion theorem provides a positive principal eigenfunction $\phi_\alpha$. For $\psi_\alpha=-\log\phi_\alpha$, the eigenvalue equation yields
\[
\mathfrak H_\alpha[\psi_\alpha](x)\equiv\Lambda_D^\R(\alpha),
\]
so the supremum is attained.

\smallskip
\noindent\emph{Step 2: the optimal affine phase.}
Take $\psi(x)=sx$. Then
\[
\mathfrak H_\alpha[sx](x)=q(x)s-V(x)-\alpha d(M(s)-1).
\]
If $s<0$, positivity of $q$ and $M(s)\ge1$ imply $\inf_x\mathfrak H_\alpha[sx](x)\le-\sup_\R V$,
which is the value at $s=0$. Thus only $s\ge0$ need be considered, and the exact formula yields
\[
\Lambda_D^\R(\alpha)\ge
\sup_{s\ge0}\left\{\inf_x(q(x)s-V(x))-\alpha d(M(s)-1)\right\}.
\]
Since $q(x)\ge q_0$, $\inf_x(q(x)s-V(x))\ge q_0s-\sup_\R V$,
and optimization in $s$ yields
\[
\Lambda_D^\R(\alpha)\ge
\alpha d\,\cH\left(\frac{q_0}{\alpha d}\right)-\sup_\R V.
\]
The function $g(s)=\inf_x(q(x)s-V(x))$ is concave as the infimum of affine functions of $s$. Since $M''(s)>0$ for all $s$ and $M(s)$ grows exponentially as $s\to+\infty$, the function $s\longmapsto g(s)-\alpha d(M(s)-1)$
is strictly concave and tends to $-\infty$ at $+\infty$. Hence its maximizing affine slope is unique.

\smallskip
\noindent\emph{Step 3: asymptotics of the kernel Hamiltonian.}
Since $J$ is even,
\[
M(s)=1+\frac{D_2(J)}2s^2+\frac{D_4(J)}{24}s^4+O(s^6),\qquad
M'(s)=D_2(J)s+\frac{D_4(J)}6s^3+O(s^5).
\]
Inverting $M'(s)=v$ at the origin and substituting into $\cH(v)=vs(v)-M(s(v))+1$ yields
\[
\cH(v)=\frac{v^2}{2D_2(J)}-\frac{D_4(J)}{24D_2(J)^4}v^4+O(v^6).
\]

For the large-argument behavior, $M(s)\le e^{R_+s}$ for $s\ge0$. With
$s_v=R_+^{-1}(\log v-2\log\log v)$,
\[
\cH(v)\ge \frac v{R_+}(\log v-2\log\log v)-\frac{v}{(\log v)^2}+1.
\]
Conversely, for $\varepsilon\in(0,R_+)$ put
$m_\varepsilon=\int_{R_+-\varepsilon}^{R_+}J(z)\,dz>0$. Then
$M(s)\ge m_\varepsilon e^{(R_+-\varepsilon)s}$ and
\[
\cH(v)\le \frac{v}{R_+-\varepsilon}\log v+O(v).
\]
Letting $\varepsilon\downarrow0$ proves $\cH(v)/(v\log v)\to R_+^{-1}$.

Under the edge hypothesis, Laplace's method yields
\[
M(s)=Ae^{R_+s}s^{-p}(1+o(1)),\qquad
M'(s)=R_+Ae^{R_+s}s^{-p}(1+o(1)).
\]
If $M'(s(v))=v$, then
\[
R_+s(v)=\log v+p\log\log v-p\log R_+-\log(R_+A)+o(1),
\]
while $M(s(v))=v/R_++o(v)$. Substitution proves the stated expansion of $\cH$. This completes the proof.
\end{proof}

\begin{proposition}\label{thm:commute}
Assume $q,a\in C_b^2(\R)$ and $q\ge q_0>0$, and let
\[
\lambda_0:=\Lp(d_J\partial_{xx}+q(x)\partial_x+a(x),\R).
\]
At the critical scaling $m=2$,
\[
\lim_{L\to\infty}\lim_{\sigma\to0}\Lp(\cA_{\sigma,2,(-L,L)})
=
\lim_{\sigma\to0}\lim_{L\to\infty}\Lp(\cA_{\sigma,2,(-L,L)})
=\lambda_0.
\]
More precisely, for every fixed $L$,
\[
\lim_{\sigma\to0}\Lp(\cA_{\sigma,2,(-L,L)})
=
\lambda_1^D(d_J\partial_{xx}+q(x)\partial_x+a(x),(-L,L)),
\]
whereas for every fixed $\sigma>0$,
\[
\lim_{L\to\infty}\Lp(\cA_{\sigma,2,(-L,L)})
=\Lp(\cA_{\sigma,2,\R}).
\]
\end{proposition}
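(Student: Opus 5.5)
The plan is to prove the two inner limits first, then the two outer limits, and to identify all four quantities with $\lambda_0$. Write $\lambda^D(L):=\lambda_1^D(d_J\partial_{xx}+q\partial_x+a,(-L,L))$.

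\emph{Inner limits.} For fixed $L$, the limit as $\sigma\to0$ is exactly Theorem~\ref{thm:maincritical}, whose hypotheses are met by restricting $q,a\in C_b^2(\R)$ to $[-L,L]$. For fixed $\sigma>0$, the limit as $L\to\infty$ is the exhaustion statement of Proposition~\ref{thm:CHbounded}, applied to $\Omega_n=(-L_n,L_n)\uparrow\R$. Monotonicity in $L$ (Lemma~\ref{thm:monotonicity}) removes the need for subsequences. Both inner limits therefore come directly from earlier results.

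\emph{First iterated limit.} Here we need $\lambda^D(L)\downarrow\lambda_0$ as $L\to\infty$. This is the local exhaustion property of the generalized principal eigenvalue of second-order operators from \cite{BR}. For instance, the Liouville gauge $u=e^{-\int_0^x q/(2d_J)}v$ reduces each $\lambda^D(L)$ to a Dirichlet problem with bounded potential. The limit is then obtained from monotone Dirichlet eigenvalues together with Harnack compactness of the normalized eigenfunctions, exactly as for the nonlocal exhaustion.

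\emph{Second iterated limit, upper bound.} Put $\Lambda_\sigma:=\Lp(\cA_{\sigma,2,\R})$. Domain monotonicity gives $\Lambda_\sigma\le\Lp(\cA_{\sigma,2,(-L,L)})$ for every $L$. Letting $\sigma\to0$ and using Theorem~\ref{thm:maincritical} yields $\limsup_\sigma\Lambda_\sigma\le\lambda^D(L)$ for every $L$. Letting $L\to\infty$ then gives $\limsup_\sigma\Lambda_\sigma\le\lambda_0$.

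\emph{Second iterated limit, lower bound.} I would take a positive generalized eigenfunction $\varphi$ of the local whole-line operator, so that $d_J\varphi''+q\varphi'+a\varphi+\lambda_0\varphi=0$ on $\R$. Such a $\varphi$ exists by \cite{BR}, or as a limit of Dirichlet eigenfunctions on $(-L,L)$ normalized at $0$. I would then use $\varphi$ itself as a test function for $\cA_{\sigma,2,\R}$. The key claim is a uniform ratio estimate
\[
\sup_{x\in\R}\ \sup_{|h|\le1}\ \sum_{k=0}^{4}\frac{|\varphi^{(k)}(x+h)|}{\varphi(x)}\le C .
\]
The case $k=0$ is the local Harnack inequality on unit intervals, which is uniform because $q\ge q_0$ and $q,a$ are bounded. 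The cases $k=1,2$ follow from interior elliptic estimates on unit intervals combined with Harnack. The cases $k=3,4$ follow by differentiating the identity $\varphi''=-d_J^{-1}(q\varphi'+(a+\lambda_0)\varphi)$ once and twice, which uses $q,a\in C_b^2$.

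Given this estimate, Taylor's formula to fourth order, the evenness of $J$ and $\supp J\subset[-R_J,R_J]$ give, for $\sigma R_J\le1$,
\[
\frac1{\sigma^2}\big(J_\sigma*\varphi-\varphi\big)(x)=d_J\varphi''(x)+O(\sigma^2)\,\varphi(x)
\]
uniformly in $x$. Hence $\cA_{\sigma,2,\R}\varphi+(\lambda_0-C\sigma^2)\varphi\le0$ on $\R$. By the definition of $\Lp$ this means $\Lambda_\sigma\ge\lambda_0-C\sigma^2$, and so $\liminf_\sigma\Lambda_\sigma\ge\lambda_0$.

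The main obstacle is the uniform ratio bound for the third and fourth derivatives of $\varphi$. It is needed because $\varphi$ may grow or decay exponentially, so absolute $C^4$ bounds are useless and everything must be stated relative to $\varphi(x)$. I expect it to follow from the Harnack chain plus the differentiated equation as sketched above. If direct control of $\varphi''''$ proves delicate, the fallback is to take the Taylor expansion only to third order, with $J\in C_c^2$ and evenness killing the odd term. That still gives an $o(1)\varphi$ error, which suffices for the limit.
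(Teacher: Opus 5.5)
Your proposal is correct and follows the paper's own argument. The paper takes the two inner limits from Theorem~\ref{thm:maincritical} and nonlocal exhaustion, and gets $\Lp(\cA_{\sigma,2,\R})\to\lambda_0$ from the critical row of Theorem~\ref{thm:mainwholephase}; that proof is exactly your upper bound (domain monotonicity plus the Dirichlet limit, then $L\to\infty$) and your lower bound (the local whole-line eigenfunction as a test function, with relative $C^4$ bounds and a fourth-order Taylor expansion giving $\lambda_0-C\sigma^2$). The only cosmetic difference is that the paper obtains $|\varphi'|\le C\varphi$ from the mean-value theorem and Gronwall, whereas you use interior elliptic estimates.
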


\begin{corollary}\label{cor:commuting-homogeneous}
If $q\equiv c>0$ and $a\equiv a_0$, the common iterated limit in
Proposition~\ref{thm:commute} is $c^2/(2D_2(J))-a_0$. For fixed $L$ the inner limit is
$d_J\pi^2/(4L^2)+c^2/(4d_J)-a_0$, while for fixed $\sigma$ the exhaustion limit is
$\sigma^{-2}\cH(c\sigma)-a_0$.
\end{corollary}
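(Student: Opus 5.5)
The statement to prove is the last item, Corollary~\ref{cor:commuting-homogeneous}, which specializes Proposition~\ref{thm:commute} to $q\equiv c$ and $a\equiv a_0$. I would compute the three quantities separately from explicit formulas already in the paper, and then check that they agree in the limit.

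\emph{Fixed $L$, inner limit.} Theorem~\ref{thm:maincritical} (equivalently the first display of Proposition~\ref{thm:commute}) gives the limit $\lambda_1^D(d_J\partial_{xx}+c\partial_x+a_0,(-L,L))$. The gauge $u=e^{-cx/(2d_J)}v$, as in the proof of Corollary~\ref{cor:crossover-homogeneous} with $\theta=0$, turns this into the Dirichlet problem for $d_Jv''+(a_0-c^2/(4d_J))v$. Its principal value is $d_J\pi^2/(4L^2)+c^2/(4d_J)-a_0$. As $L\to\infty$ this tends to $c^2/(4d_J)-a_0$, and since $d_J=D_2(J)/2$ this equals $c^2/(2D_2(J))-a_0$.

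\emph{Fixed $\sigma$, exhaustion limit.} Proposition~\ref{thm:CHbounded} gives $\Lp(\cA_{\sigma,2,(-L,L)})\downarrow\Lp(\cA_{\sigma,2,\R})$. The last assertion of Corollary~\ref{cor:homogeneous-wholeline-amplitude} with $m=2$ then gives $\Lp(\cA_{\sigma,2,\R})=\sigma^{-2}\cH(c\sigma)-a_0$. If one wants a direct check, rescale $x=\sigma y$ so the operator becomes $\sigma^{-2}$ times a unit-kernel operator with drift $c\sigma$. Proposition~\ref{thm:largeintervalrate} then identifies the exhaustion limit as $\sigma^{-2}(H_1(c\sigma)-0)-a_0=\sigma^{-2}\cH(c\sigma)-a_0$.

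\emph{Outer limit as $\sigma\to0$.} Theorem~\ref{thm:heteroHamiltonian} gives $\cH(v)=v^2/(2D_2(J))+O(v^4)$. Hence $\sigma^{-2}\cH(c\sigma)=c^2/(2D_2(J))+O(\sigma^2)$, and the limit is $c^2/(2D_2(J))-a_0$. This matches the other iterated limit, which identifies the common value $\lambda_0$.

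The only point needing care is to confirm that this value is the whole-line local value $\lambda_0$ of Proposition~\ref{thm:commute}. The upper bound comes from exhaustion by the bounded Dirichlet values above. The lower bound comes from the test function $e^{-sx}$ with $s=c/(2d_J)$, which gives exactly $c^2/(4d_J)-a_0$. This is routine, so I expect no real obstacle beyond checking the constant $d_J=D_2/2$ consistently.
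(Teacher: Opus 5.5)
Your proposal is correct and follows the paper's proof: the gauge transform $u=e^{-cx/(2d_J)}v$ gives the fixed-$L$ limit, the homogeneous whole-line identity of Corollary~\ref{cor:homogeneous-wholeline-amplitude} gives the fixed-$\sigma$ limit, and the expansion $\cH(v)=v^2/(2D_2(J))+O(v^4)$ gives the outer limit. Your two extras are valid but not part of the paper's argument: the rescaled check through Proposition~\ref{thm:largeintervalrate}, and the identification of $\lambda_0$ using the test function $e^{-sx}$ with $s=c/(2d_J)$.
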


\begin{proof}
The fixed-$L$ formula follows from the gauge transform
$u=e^{-cx/(2d_J)}v$. The fixed-$\sigma$ formula is the homogeneous whole-line
identity in Corollary~\ref{cor:homogeneous-wholeline-amplitude}. Finally,
$\cH(v)=v^2/(2D_2(J))+O(v^4)$ as $v\to0$, and hence
$\sigma^{-2}\cH(c\sigma)\to c^2/(2D_2(J))$.
\end{proof}

\begin{proof}[\textbf{Proof of Proposition~\ref{thm:commute}}]
For fixed $L$, Theorem~\ref{thm:maincritical} yields
\[
\Lp(\cA_{\sigma,2,(-L,L)})
\longrightarrow
\lambda_1^D(d_J\partial_{xx}+q(x)\partial_x+a(x),(-L,L)).
\]
The local Dirichlet principal values decrease under exhaustion to $\lambda_0$.
This proves the first iterated limit.

For fixed $\sigma>0$, the exhaustion theorem for the nonlocal operator yields
\[
\Lp(\cA_{\sigma,2,(-L,L)})\downarrow\Lp(\cA_{\sigma,2,\R})
\qquad(L\to\infty).
\]
The critical row of Theorem~\ref{thm:mainwholephase} then implies
$\Lp(\cA_{\sigma,2,\R})\to\lambda_0$ as $\sigma\to0$. The two iterated limits therefore coincide.
\end{proof}

\begin{proof}[\textbf{Proof of Theorem~\ref{thm:mainwholephase}}]
The logarithmic formula is Theorem~\ref{thm:heteroHamiltonian} after replacing $J$ by $J_\sigma$.
For $s\ge0$, the affine phase $\psi(x)=sx/\sigma$ yields
\begin{equation}
\lambda_{\sigma,m}\ge \sigma^{-m}\cH(q_*\sigma^{m-1})-\sup_\R a.
\label{eq:whole-affine}
\end{equation}
This already yields the required lower limits for $m<2$ and, after multiplication by
$\sigma^{m-2}$, the lower limit $0$ for $m>2$.

We prove the matching upper bounds by localization near points where the drift is almost minimal. For each $\sigma$ choose $x_\sigma$ so that $q(x_\sigma)\le q_*+\sigma$. The bounded derivatives of $q$ imply that every rescaling used below converges locally uniformly to the constant drift $q_*$.

If $1<m<2$, put $\varepsilon_\sigma=\sigma^{2-m}$ and
$\delta_\sigma=\sigma^{m-1}$, so that $\sigma=\varepsilon_\sigma\delta_\sigma$. With
$x=x_\sigma+\varepsilon_\sigma y$, multiplication of the eigenvalue equation by
$\varepsilon_\sigma$ yields the rescaled operator
\[
\mathcal B_\sigma v
=\delta_\sigma^{-2}(J_{\delta_\sigma}*v-v)
+q(x_\sigma+\varepsilon_\sigma y)v'
+\varepsilon_\sigma a(x_\sigma+\varepsilon_\sigma y)v.
\]
Hence $\varepsilon_\sigma\lambda_{\sigma,m}=\Lp(\mathcal B_\sigma,\R)$. For every fixed $R>0$, domain monotonicity yields
$\Lp(\mathcal B_\sigma,\R)\le\Lp(\mathcal B_\sigma,(-R,R))$.
On $[-R,R]$ the drift converges in $C^2$ to $q_*$ and the zeroth-order coefficient converges in $C^2$ to $0$. Lemma~\ref{lem:criticalstability} therefore yields
\[
\Lp(\mathcal B_\sigma,(-R,R))
\longrightarrow
\lambda_1^D(d_J\partial_{yy}+q_*\partial_y,(-R,R))
=d_J\frac{\pi^2}{4R^2}+\frac{q_*^2}{4d_J}.
\]
Letting $R\to\infty$ yields
$\limsup\varepsilon_\sigma\lambda_{\sigma,m}\le q_*^2/(4d_J)=q_*^2/(2D_2(J))$.
Together with \eqref{eq:whole-affine}, this proves the row $1<m<2$.

Let $m=1$ and set $x=x_\sigma+\sigma y$. Then
\[
\sigma\lambda_{\sigma,1}
=\Lp\!\left(J*-I+q(x_\sigma+\sigma y)\partial_y+\sigma a(x_\sigma+\sigma y),\R\right).
\]
For fixed $R$, domain monotonicity followed by Proposition~\ref{thm:stability} yields
\[
\limsup_{\sigma\to0}\sigma\lambda_{\sigma,1}
\le\Lp(J*-I+q_*\partial_y,(-R,R)).
\]
Exhaustion as $R\to\infty$ and the homogeneous whole-line formula yield the upper bound
$\cH(q_*)$. The lower bound follows from \eqref{eq:whole-affine}.

It remains to treat $0\le m<1$. Put $\delta_\sigma=\sigma^{1-m}$ and again use
$x=x_\sigma+\sigma y$. Put $q_\sigma(y)=q(x_\sigma+\sigma y)$ and
$a_\sigma(y)=\sigma a(x_\sigma+\sigma y)$. Then
\begin{equation}
\sigma\lambda_{\sigma,m}
=\Lp\!\left(\delta_\sigma(J*-I)+q_\sigma\partial_y+a_\sigma,\R\right).
\label{eq:whole-rescaled}
\end{equation}
Fix $R>R_J+1$ and set $\Omega_R=(-R,R)$,
$c_\sigma=\max_{\Omega_R}q_\sigma$ and $b_\sigma=\min_{\Omega_R}a_\sigma$.
Then $c_\sigma\to q_*$ and $b_\sigma\to0$.
Let $\lambda_\sigma^c$ be the principal eigenvalue on $\Omega_R$ of
\[
\delta_\sigma(J*_{\Omega_R}-I)+c_\sigma\partial_y+b_\sigma.
\]
For small $\sigma$, $\lambda_\sigma^c>\delta_\sigma-b_\sigma$.
Indeed, by domain monotonicity it is bounded below by the corresponding homogeneous whole-line value
$\delta_\sigma\cH(c_\sigma/\delta_\sigma)-b_\sigma$, which diverges.
If $\varphi_\sigma$ is its positive eigenfunction, then
\[
c_\sigma\varphi_\sigma'
=-\delta_\sigma J*_{\Omega_R}\varphi_\sigma
+(\delta_\sigma-b_\sigma-\lambda_\sigma^c)\varphi_\sigma<0.
\]
Thus $\varphi_\sigma'<0$. Since $q_\sigma\le c_\sigma$ and $a_\sigma\ge b_\sigma$,
\[
\big[\delta_\sigma(J*_{\Omega_R}-I)+q_\sigma\partial_y+a_\sigma+\lambda_\sigma^c\big]\varphi_\sigma\ge0.
\]
The bounded comparison theorem and \eqref{eq:whole-rescaled} therefore yield
\begin{equation}
\sigma\lambda_{\sigma,m}\le\lambda_\sigma^c.
\label{eq:whole-upper}
\end{equation}

We record the required weak-dispersal estimate for $\lambda_\sigma^c$. Let $s_\sigma>0$ solve
$\delta_\sigma M'(s_\sigma)=c_\sigma$ and conjugate by $e^{-s_\sigma(y+R)}$. Then
\begin{equation}
\lambda_\sigma^c
=\delta_\sigma\cH(c_\sigma/\delta_\sigma)-b_\sigma+\rho_\sigma,
\qquad 0\le\rho_\sigma\le C_R,
\label{eq:weak-centered}
\end{equation}
with $C_R$ independent of $\sigma$. The lower bound follows from the constant test function in the centered problem. For the upper bound choose a positive $C^2$ function $w$ which vanishes at $\pm R$, is affine in boundary collars of width larger than $R_J$, and is bounded below away from zero on the remaining middle interval. For the full-line centered operator,
\[
\delta_\sigma\!\int J(z)e^{s_\sigma z}
\{w(y-z)-w(y)+zw'(y)\}\,dz
\]
vanishes on the affine pieces and is bounded on the middle interval by
$C\delta_\sigma M''(s_\sigma)$. Since $\delta_\sigma M'(s_\sigma)=c_\sigma$ and $J$ is supported in
$[-R_J,R_J]$, the positive-half-line contribution satisfies
$\delta_\sigma\int_{z>0}z^2J(z)e^{s_\sigma z}\,dz
\le R_J(c_\sigma+O(\delta_\sigma))$, while the negative-half-line contribution is
$O(\delta_\sigma)$. Hence $\delta_\sigma M''(s_\sigma)\le C$ uniformly. Restricting the convolution to
$\Omega_R$ deletes a negative affine continuation and hence has the favorable sign in both boundary collars. Consequently the centered operator satisfies
$\mathcal C_\sigma w+C_Rw\ge0$, and the comparison theorem yields \eqref{eq:weak-centered}.

Now $\delta_\sigma\downarrow0$ and
$\cH(v)=R_+^{-1}v\log v\,(1+o(1))$ as $v\to\infty$. From \eqref{eq:whole-upper}--\eqref{eq:weak-centered},
\[
\limsup_{\sigma\to0}\frac{\sigma\lambda_{\sigma,m}}{|\log\sigma|}
\le\frac{q_*(1-m)}{R_+},
\]
because $\log(c_\sigma/\delta_\sigma)=(1-m)|\log\sigma|+O(1)$.
The affine lower bound \eqref{eq:whole-affine} yields the reverse inequality.

For $m=2$, let $\mathcal L_0=d_J\partial_{xx}+q(x)\partial_x+a(x)$ and
$\lambda_0=\Lp(\mathcal L_0,\R)$. For every fixed $L$, domain monotonicity and
Theorem~\ref{thm:maincritical} yield
\[
\limsup_{\sigma\to0}\lambda_{\sigma,2}
\le\lambda_1^D(\mathcal L_0,(-L,L)),
\]
and the right-hand side decreases to $\lambda_0$ as $L\to\infty$.
Conversely, the one-dimensional local theory on the whole line provides a positive eigenfunction
$\phi_0$ at the generalized principal level $\lambda_0$; see, for instance, \cite[Theorem~1.4]{BR}.  Thus
$\mathcal L_0\phi_0+\lambda_0\phi_0=0$ on $\R$.  The local Harnack inequality yields
$\sup_{[x-1,x+1]}\phi_0\le C\inf_{[x-1,x+1]}\phi_0$ with $C$ independent of $x$.
We justify the relative derivative estimates needed in the nonlocal remainder. By the mean-value theorem there is $\xi_x\in[x,x+1]$ such that
\[
|\phi_0'(\xi_x)|=|\phi_0(x+1)-\phi_0(x)|\le C\phi_0(x).
\]
The vector $(\phi_0,\phi_0')$ solves a first-order system whose coefficient matrix is uniformly bounded. Gronwall's inequality between $x$ and $\xi_x$, followed by Harnack, gives
$|\phi_0'(x)|\le C\phi_0(x)$ uniformly in $x$. Writing the equation as
\[
d_J\phi_0''=-q\phi_0'-(a+\lambda_0)\phi_0
\]
first gives $|\phi_0''|\le C\phi_0$. Differentiating once gives
$d_J\phi_0'''=-q\phi_0''-(q'+a+\lambda_0)\phi_0'-a'\phi_0$, and differentiating once more expresses the fourth derivative as a linear combination of $\phi_0,\phi_0',\phi_0'',\phi_0'''$ with coefficients bounded by the $C_b^2$ norms of $q$ and $a$. The preceding bounds therefore close the induction and yield
\[
|\phi_0^{(k)}(x)|\le C\phi_0(x),\qquad 1\le k\le4,\quad x\in\R.
\]
Taylor's formula, the evenness of $J$, compact support and the Harnack comparison of $\phi_0(x-\theta\sigma z)$ with $\phi_0(x)$ now yield
\[
\sigma^{-2}(J_\sigma*\phi_0-\phi_0)
=d_J\phi_0''+O(\sigma^2\phi_0)
\]
uniformly on $\R$.  Hence $\cA_{\sigma,2,\R}\phi_0+(\lambda_0-C\sigma^2)\phi_0\le0$ and
$\lambda_{\sigma,2}\ge\lambda_0-C\sigma^2$.

Finally assume $m>2$. Set $\mu_\sigma=\sigma^{m-2}\lambda_{\sigma,m}$. Then \eqref{eq:whole-affine} implies $\liminf\mu_\sigma\ge0$. By domain monotonicity and Theorem~\ref{thm:mainsubcritical}, for every fixed $L$,
\[
\limsup_{\sigma\downarrow0}\mu_\sigma\le \frac{d_J\pi^2}{4L^2}.
\]
Letting $L\to\infty$ completes the proof.
\end{proof}

\begin{corollary}\label{cor:wholephase-homogeneous}
If $q\equiv c>0$ and $a\equiv a_0$, then
$\lambda_{\sigma,m}=\sigma^{-m}\cH(c\sigma^{m-1})-a_0$ for every $\sigma>0$ and $m\ge0$. Hence, for every fixed $m>1$ and $N\ge1$,
\[
\lambda_{\sigma,m}=-a_0+\sum_{k=1}^{N}h_{2k}c^{2k}\sigma^{(2k-1)m-2k}+O\!\left(\sigma^{(2N+1)m-2N-2}\right),
\]
where $\cH(v)=\sum_{k\ge1}h_{2k}v^{2k}$ near $0$ and $h_2=(2D_2)^{-1}$, $h_4=-D_4/(24D_2^4)$, $h_6=(10D_4^2-D_2D_6)/(720D_2^7)$. In particular,
\[
\lambda_{\sigma,2}=\frac{c^2}{2D_2}-a_0-\frac{D_4c^4}{24D_2^4}\sigma^2+\frac{(10D_4^2-D_2D_6)c^6}{720D_2^7}\sigma^4+O(\sigma^6).
\]
If $0\le m<1$ and $J(R_+-t)=Kt^\nu(1+o(1))$ as $t\downarrow0$, with $p=\nu+1$ and $A=K\Gamma(p)$, then
\[
\lambda_{\sigma,m}=\frac{c}{R_+\sigma}\left[(1-m)|\log\sigma|+p\log|\log\sigma|+C_{m,c,J}+o(1)\right]-a_0,
\]
where $C_{m,c,J}=\log c+p\log(1-m)-p\log R_+-\log(R_+A)-1$.
\end{corollary}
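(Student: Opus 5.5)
The plan is to reduce everything to the exact homogeneous formula and then expand $\cH$. First, for $q\equiv c$ and $a\equiv a_0$, I will take the identity $\lambda_{\sigma,m}=\sigma^{-m}\cH(c\sigma^{m-1})-a_0$ directly from the last assertion of Corollary~\ref{cor:homogeneous-wholeline-amplitude}.

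That identity was derived there in two steps. The affine phases $e^{-sx/\sigma}$ together with Theorem~\ref{thm:heteroHamiltonian} give the lower bound. Exhaustion by $(-R,R)$, combined with the scaled form of Proposition~\ref{thm:largeintervalrate}, gives the upper bound. The scaled form comes from replacing $J$ by $J_\sigma$ and $d$ by $\sigma^{-m}$, under which the kernel Hamiltonian becomes $\sigma^{-m}\cH(\sigma^{m}c\cdot\sigma^{-1})$. The identity therefore holds for every $\sigma>0$ and $m\ge0$, and no asymptotic hypothesis is involved.

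For $m>1$, set $v_\sigma=c\sigma^{m-1}\to0$. I will use the convergent even series $\cH(v)=\sum_k h_{2k}v^{2k}$ from Proposition~\ref{thm:higher}. Truncating at degree $2N$ leaves a remainder $O(v^{2N+2})$, uniformly on a fixed neighborhood of $0$ by analyticity. Multiplying by $\sigma^{-m}$ turns each term into $h_{2k}c^{2k}\sigma^{(2k-1)m-2k}$ and the remainder into $O(\sigma^{(2N+1)m-2N-2})$. The coefficient $h_6$ is the one computed in Proposition~\ref{thm:higher}: since $D_4^2/(72D_2^7)-D_6/(720D_2^6)=(10D_4^2-D_2D_6)/(720D_2^7)$, the two forms agree.

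For the displayed $m=2$ expansion, I will take $N=3$. The exponents become $0$, $2$ and $4$, with remainder exponent $6$, which gives the stated formula. This step is pure bookkeeping once the series is available.

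For $0\le m<1$, $v_\sigma\to\infty$, and I will invoke the edge expansion of $\cH$ from Theorem~\ref{thm:heteroHamiltonian}, exactly as in the proof of Proposition~\ref{thm:boundedstrong}. The needed substitutions are $\log v_\sigma=(1-m)|\log\sigma|+\log c$, $\log\log v_\sigma=\log|\log\sigma|+\log(1-m)+o(1)$ and $\sigma^{-m}v_\sigma=c/\sigma$. The constant $-a_0$ is kept outside the bracket, since here the whole-line formula is exact rather than accurate only up to $o(1)$.

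The only point that needs genuine care is the first step: checking that the scaled exhaustion argument delivers the exact equality for all $m\ge0$, and not just an inequality. Proposition~\ref{thm:largeintervalrate} was stated for fixed $d$ and $J$, but its $C/R^2$ bound holds for each fixed $\sigma$, which is enough as $R\to\infty$. The remaining steps are substitutions into known expansions.
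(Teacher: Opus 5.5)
Your proposal is correct and follows the paper's route. You establish the exact whole-line identity $\lambda_{\sigma,m}=\sigma^{-m}\cH(c\sigma^{m-1})-a_0$, then substitute $v=c\sigma^{m-1}$ into the even Taylor series of $\cH$ for $m>1$ and into its Laplace edge expansion for $0\le m<1$; your exponent bookkeeping and your check that the two forms of $h_6$ agree are both right. Your explicit upper bound, via exhaustion and the $\sigma$-scaled Proposition~\ref{thm:largeintervalrate}, usefully fills in the paper's brief remark that ``the affine phase is exact'', since Theorem~\ref{thm:heteroHamiltonian} by itself only gives the lower bound.
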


\begin{proof}
For $q\equiv c$ and $a\equiv a_0$, the affine phase is exact and Theorem~\ref{thm:heteroHamiltonian} yields $\lambda_{\sigma,m}=\sigma^{-m}\cH(c\sigma^{m-1})-a_0$. Since $M$ is entire,
\[
\cH(v)=\frac{v^2}{2D_2}-\frac{D_4}{24D_2^4}v^4+\frac{10D_4^2-D_2D_6}{720D_2^7}v^6+O(v^8),
\]
which proves the expansion for $m>1$. Under the edge hypothesis, Laplace's method yields $M(s)=Ae^{R_+s}s^{-p}(1+o(1))$ and $M'(s)=R_+Ae^{R_+s}s^{-p}(1+o(1))$. If $M'(s(v))=v$, then $R_+s(v)=\log v+p\log\log v-p\log R_+-\log(R_+A)+o(1)$ and $M(s(v))=v/R_++o(v)$. Hence
\[
\cH(v)=\frac v{R_+}\left[\log v+p\log\log v-p\log R_+-\log(R_+A)-1+o(1)\right],
\]
and $v=c\sigma^{m-1}$ yields the stated strong-transport formula.
\end{proof}

\paragraph{Data availability.} No data were generated or analysed in this study.

\paragraph{Conflict of interest.} The author declares that there is no conflict of interest.

\end{document}